\documentclass[12pt]{article}

\usepackage{amsmath, amssymb, amsthm, bm, graphicx, xcolor, physics, newtxtext, newtxmath, tikz, eso-pic, enumitem, float}
\usetikzlibrary {arrows.meta}

\usepackage{geometry}
\geometry{letterpaper, top=72pt, bottom=72pt, left=54pt, right=54pt}

\usepackage{hyperref}
\hypersetup{breaklinks, colorlinks=true, linkcolor=blue, urlcolor=cyan, citecolor=magenta}

\usepackage[lastpage, user]{zref}
\usepackage{fancyhdr}
\pagestyle{fancy}
\fancypagestyle{plain}{}

\fancyhead{}
\cfoot{\fontsize{9pt}{0pt}\selectfont\thepage/\zpageref{LastPage}}

\usepackage[numbered, framed]{matlab-prettifier}
\lstset{style=Matlab-editor, basicstyle=\mlttfamily\small, mlshowsectionrules=true}

\newcommand*\CircleAroundChar[2][\small]{\tikz[baseline=(char.base)]{\node[shape=circle, draw, inner sep=1pt](char){#1#2};}}

\theoremstyle{plain}
\newtheorem{theorem}{Theorem}[section]

\theoremstyle{definition}
\newtheorem{definition}[theorem]{Definition}
\newtheorem{proposition}[theorem]{Proposition}
\newtheorem{remark}[theorem]{Remark}
\newtheorem{corollary}[theorem]{Corollary}
\newtheorem{lemma}[theorem]{Lemma}

\newtheorem{conjecture}[theorem]{Conjecture}

\renewenvironment{proof}{{\noindent \bf Proof.}}{\qed}


\usepackage[english]{babel}
\usepackage[autostyle, english = american]{csquotes}
\MakeOuterQuote{"}

\usepackage{eucal}
\let\mathscr\undefined
\usepackage[mathscr,scaled=1.15]{urwchancal}

\let\mathcalcopy\mathcal
\usepackage{ifthen}
\renewcommand{\mathcal}[1]
{\ifthenelse{\equal{#1}{E}}{\mathscr E}
{\ifthenelse{\equal{#1}{H}}{\mathscr H}
{\mathcalcopy #1}}}

\renewcommand{\leq}{\leqslant}
\renewcommand{\geq}{\geqslant}
\renewcommand{\subset}{\subseteq}

\usepackage{multirow,booktabs}
\usepackage{calc}
\usepackage[numbers]{natbib}

\setcounter{secnumdepth}{4}

\newcommand{\veps}{\varepsilon}
\newcommand{\bbR}{\mathbb R}

\begin{document}

%


\title{Ohta\textendash Kawasaki energy for amphiphiles:\\asymptotics and phase-field simulations}
\author{Qiang Du\thanks{Department of Applied Physics and Applied Mathematics, and Data Science Institute, Columbia University, New York, NY 10027, USA. Email: qd2125@columbia.edu.}\qquad James M. Scott\thanks{Department of Applied Physics and Applied Mathematics, Columbia University, New York, NY 10027, USA. Email: jms2555@columbia.edu.}\qquad Zirui Xu\thanks{Department of Applied Physics and Applied Mathematics, Columbia University, New York, NY 10027, USA. Email: zx2250@columbia.edu. Corresponding author.}}
\date{}

\maketitle

\begin{abstract}
We study the minimizers of a degenerate case of the Ohta\textendash Kawasaki energy, defined as the sum of the perimeter and a Coulombic nonlocal term. We start by investigating radially symmetric candidates which give us insights into the asymptotic behaviors of energy minimizers in the large mass limit. In order to numerically study the problems that are analytically challenging, we propose a phase-field reformulation which is shown to Gamma-converge to the original sharp interface model. Our phase-field simulations and asymptotic results suggest that the energy minimizers exhibit behaviors similar to the self-assembly of amphiphiles, including the formation of lipid bilayer membranes.
\end{abstract}

\noindent{\small\textbf{MSC 2020:} 35B36, 35Q92, 49Q20}

\noindent{\small\textbf{Keywords:} variational model, nonlocal model, Gamma-convergence, phase-field model, pattern formation}

\tableofcontents


\section{Introduction}

\subsection{Background and motivation}
\label{Background and motivation}

As soft condensed matter, amphiphiles are known to form various structures in aqueous environments. An amphiphilic molecule usually consists of a hydrophilic head and a hydrophobic tail connected by a covalent bond. Consequently, amphiphiles spontaneously arrange themselves in water in such a way that the hydrophobic tails are segregated from water, protected by the hydrophilic heads. Soft matter systems tend to self-assemble into lower-dimensional structures such as surfaces, curves and points, giving rise to sheet-like membranes, polymer networks and colloidal dispersions, respectively \cite[Pages 107 and 108]{schwarz2002bicontinuous}. One particularly important example is the bilayer membrane formed by lipids in water, which exhibits both rigidity and fluidity in that the membrane resists deformation while allowing rapid lateral diffusion of lipid molecules within each monolayer. The elasticity of the membrane is very different from those of solid materials such as aluminum foil and plastic film. The membrane is soft, which is a crucial property for biological cells and artificial liposomes. The typical energies required to bend a membrane are small enough for thermal fluctuations at room temperature to be important \cite[Pages 3 and 4]{brandt2011molecular}. Indeed, the bending elasticity of membranes is only a high-order effect, as will be discussed in Section \ref{subsec: Elastica functional and Helfrich energy}.

At the macroscopic level, the Helfrich energy introduced in 1973 proved to be a successful continuum model for describing the elasticity of the lipid bilayer membrane (see Appendix \ref{sec:Wilmore}). In this model, the membrane is treated as a two-dimensional surface of zero thickness, with its energy given by the surface integral of a quadratic function in the principle curvatures. However, the actual membrane is of a bilayer structure and nonzero thickness (usually a few nanometers). In order to gain a detailed knowledge at the microscopic level, an atomistic molecular dynamics simulation was carried out in 1992 \cite{damodaran1992structure}. Albeit accurate, such a simulation was time-consuming and thus restricted to a relatively small spatio-temporal scale (60 nm$^3\,\times\,$0.2 ns), rendering the physical processes of interest out of reach\footnote{In recent years, there have been significant improvements using machine learning and neural networks, and the state-of-the-art for $1.27\times10^8$ atoms is $0.8$ ns per day on 4560 nodes of the Summit supercomputer with 27360 GPUs \cite{jia2020pushing}. If they use an NVIDIA A100 GPU like we do in this paper, it would take them 25.6 years instead of 1 day. To put things into perspective, there are about $5\times10^6$ and $10^9$ lipid molecules in a 1 \textmu m$^2$ lipid bilayer and in the plasma membrane of a small animal cell, respectively, with a typical lipid molecule (phosphatidylcholine) consisting of 130 atoms. If we also take into account the surrounding solvent molecules, then the number of atoms would be raised to the power of 3/2 (assuming that the simulation box is a cube). Therefore, for our study, the atomistic molecular dynamics simulations seem out of reach, especially because of the relatively slow kinetics of soft matter systems.}. Since our primary interest is not in individual atoms but rather in the collective behaviors of large numbers of atoms, it is natural to group several neighboring atoms into a single bead, leading to the so-called coarse-graining methods, which reduce the degrees of freedom and accelerate the computation \cite{ingolfsson2014lipid,brandt2011molecular,marrink2019computational,sadeghi2020large}.

On an even coarser scale, a smooth density function is often used to represent the spatial distribution of each type of atoms or atom groups at the mesoscopic level. In 1986, Ohta and Kawasaki derived a density functional theory from statistical physics to explain the mesoscopic periodic patterns formed by diblock copolymers \cite{choksi2003derivation}. This theory was later generalized to triblock copolymers \cite{ren2003triblock,xu2022ternary} as well as mixtures of diblock copolymers and homopolymers \cite{choksi2005diblock}. The latter generalization was recently shown to be capable of modeling the lipid bilayer membrane along with its fusion processes \cite{han2020pathways}, and we study a special case of such a generalization in this paper.

We focus on the sharp interface limit (also known as the strong segregation limit). In our energy, we let $U$ and $V$ denote the regions occupied by the hydrophobic tails and hydrophilic heads of the lipids, respectively. The rest of the space is occupied by water. As their names suggest, the hydrophobic tails are insoluble in water, while the hydrophilic heads are assumed to be miscible with water. Therefore, the interfacial tension exists only on the interface of $U$, but not on the $V$-water interface. An additional Coulombic term accounts for the covalent bonding between the hydrophobic tails and hydrophilic heads.
This energy has been studied in the small mass regime in connection with the spherical micelle formed by amphiphilic surfactants in water \cite{bonacini2016ground}. In the large mass regime, although the question remains largely open, it is believed that the energy minimizers might resemble the lipid bilayer membrane \cite[Page 4]{bonacini2016ground}. 
In fact, a variant of this energy (which makes use of the 1-Wasserstein distance) has been proposed to model the lipid bilayer membrane at the mesoscopic level \cite{peletier2009partial,lussardi2014variational}.

For two subsets $U,V$ of $\bbR^n$ satisfying $U\cap V=\varnothing$, we study the minimization problem of the following energy
\begin{equation}
\label{sharp energy nonrelaxed}
E(U,V) = \text{Per}\;U +\gamma N(U,V),
\end{equation}
under the mass constraints $|U|=m$ and $|V|=\zeta m$, where $\gamma$, $m$ and $\zeta$ are positive constants. The local term $\text{Per}\;U$ is the standard perimeter of $U$ (which equals the surface area for smooth $U\subseteq\bbR^3$). The nonlocal term $N$ is defined as follows
\begin{equation*}
2N(U,V)=\int_U\int_UG(\vec x\!-\!\vec y)\dd{\vec y}\dd{\vec x}-\frac2\zeta\int_U\int_VG(\vec x\!-\!\vec y)\dd{\vec y}\dd{\vec x}+\frac1{\zeta^2}\int_V\int_VG(\vec x\!-\!\vec y)\dd{\vec y}\dd{\vec x},
\end{equation*}
where the Newtonian kernel $G(\vec x\!-\!\vec y)=\big(4\pi|\vec x\!-\!\vec y|\big)^{-1}$ if $n=3$, and $G(\vec x\!-\!\vec y)=-\ln|\vec x\!-\!\vec y|/(2\pi)$ if $n=2$. In this paper we are mainly concerned with the three-dimensional (3-D, i.e., $n=3$) case which is physically most relevant, and we will also consider the 2-D case ($n=2$) for comparison purposes. Note that the 1-D case ($n=1$) with $G(x\!-\!y)=-|x\!-\!y|/2$ and $\zeta=1$ has been solved in \cite[Section 3]{van2008copolymer}: any local minimizer consists of one or multiple non-overlapping bilayer(s), such as two bilayers $VUV0VUV$ (where 0 represents a layer of water of arbitrary thickness, and any $U$ layer is twice as thick as any $V$ layer), and the global minimizer selects the local minimizer whose $U$ layers are of thickness close to $\sqrt[3]{12/\gamma}$. We expect those 1-D results can be generalized to $\zeta\neq1$.

Intuitively, we can imagine that $U$ and $V$ uniformly carry equal amounts of positive and negative charges, respectively, so that the total electrostatic potential energy arising from the electrostatic interactions is given by $N(U,V)$. We define the associated electrostatic potential $\phi$ as
\begin{equation}
\label{eqn: electrostatic potential}
\phi(\vec x)=\int_UG(\vec x\!-\!\vec y)\dd{\vec y}-\frac{1}{\zeta}\int_VG(\vec x\!-\!\vec y)\dd{\vec y},\quad \vec x\in\bbR^n.
\end{equation}
According to \cite[Equation (2.6)]{bonacini2016ground}, by noticing $-\Delta\phi=\bm1_U-\bm1_V/\zeta$, we can rewrite $N$ as
\begin{equation}
\label{alternative expression of N(U,V)}
N(U,V)=\frac12\int_{\bbR^n}\phi(\vec x)\Big(\bm1_U\!-\!\frac{\bm1_V}\zeta\Big)(\vec x)\dd{\vec x}=-\frac12\int_{\bbR^n}\phi(\vec x)\Delta\phi(\vec x)\dd{\vec x}=\frac12\int_{\bbR^n}\big|\nabla\phi(\vec x)\big|^2\dd{\vec x},
\end{equation}
where $-\nabla\phi$ is the electrostatic field.

We mainly focus on the minimizers of the energy $E$ in the large mass regime $m\gg1$ with $\gamma$ fixed. Or equivalently, we can fix $m$ and let $\gamma\gg1$, thanks to the scaling properties of the local and nonlocal terms.

\subsection{Results in the existing literature}
\label{subsec: Results in existing literature}

Throughout this subsection we fix $\gamma=1$ and let $m$ vary. In Proposition \ref{existing qualitative results} we recall some existing results in 3-D ($n=3$). It is believed that these results can be generalized to other dimensions $n\geqslant 2$ \cite[Section 3.1]{bonacini2016ground}.  

\begin{proposition}
\label{existing qualitative results}
The following properties of $E$ are known in the literature for $\zeta=1$, and their proofs can be generalized to any $\zeta>0$:
\begin{enumerate}[label=\protect\CircleAroundChar{\arabic*}]
\item For any $m>0$, the global minimizer exists \cite[Theorem 3.1]{bonacini2016ground}.

\item The global minimizer satisfies the regularity property \cite[Theorem 3.2]{bonacini2016ground}, i.e., there is a representative of $(U,V)$ such that $U$ and $V$ are both open and bounded with finitely many connected components, and that $\partial U$ is of differentiability class $C^\infty$.

\item The global minimizer satisfies the screening property \cite[Theorem 2.1]{bonacini2016ground}, i.e., there is a representative of $(U,V)$ such that $\phi>0$ in $U\cup V$, and that $\phi=0$ in $\mathbb R^3\backslash(U\cup V)$.

\item There exists $m_0>0$ such that a spherical micelle (see Definition \ref{definition of micelle candidate} and Figure \ref{MorphologyCoefficient}-c) is the unique global minimizer for $m<m_0$, and that it is not a global minimizer for $m>m_0$ \cite[Theorem 3.5]{bonacini2016ground}.

\item The global minimum of $E$ has a lower bound and an upper bound. As $m\to\infty$, both bounds scale linearly with $m$ \cite[Section 4]{van2008copolymer}. 

\end{enumerate}
\end{proposition}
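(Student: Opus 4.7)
The plan is to revisit each of the five claims in turn and verify that the proofs cited from \cite{bonacini2016ground,van2008copolymer} for $\zeta=1$ depend on $\zeta$ only through a linear rescaling of the negative charge density carried by $V$. The essential observation is that with the constraints $|U|=m$ and $|V|=\zeta m$, the potential $\phi$ in \eqref{eqn: electrostatic potential} still satisfies $-\Delta\phi = \bm{1}_U - \bm{1}_V/\zeta$, global charge neutrality is preserved, and the identity \eqref{alternative expression of N(U,V)} still yields $N(U,V)=\tfrac12\int_{\bbR^n}|\nabla\phi|^2\geq 0$. Consequently every analytic ingredient that underlies the existing arguments---$L^1$-lower semicontinuity of $N$, $C^{0,\alpha}$-regularity and decay of $\phi$, and maximum-principle comparisons---carries over to any $\zeta>0$ after tracking the $\zeta$-dependent prefactors.

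For claim \CircleAroundChar{1} I would apply the direct method exactly as in \cite[Theorem 3.1]{bonacini2016ground}: minimizing sequences are pre-compact in $L^1_{\mathrm{loc}}$ by perimeter control, the attractive coupling between $U$ and $V$ prevents mass from escaping to infinity, and both terms of $E$ are lower semicontinuous. For claim \CircleAroundChar{2}, the set $U$ is a quasi-minimizer of the perimeter with a bounded Lagrange multiplier proportional to $\phi$, so the Almgren--Tamanini regularity theory yields $C^{1,\alpha}$ smoothness of $\partial U$, upgraded to $C^\infty$ by bootstrapping Schauder estimates on the Euler--Lagrange equation. For claim \CircleAroundChar{5}, both the upper and the lower bounds are reproduced by comparison with a family of periodic bilayer competitors in which each $V$-slab has thickness $\zeta/2$ times the adjacent $U$-slab, combined with a Fubini-type slicing estimate of the nonlocal energy slab by slab.

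Claims \CircleAroundChar{3} and \CircleAroundChar{4} require more care. For the screening property, I would argue by contradiction, assuming $\phi<0$ on a measurable subset $A$ of $U\cup V$ of positive measure, and construct an admissible competitor by transporting small pieces of $U$- and $V$-mass out of $A$ into a region where $\phi$ is larger; the novelty relative to the $\zeta=1$ case is that the transported mass must be split between $U$ and $V$ in the ratio $1:\zeta$ to preserve the constraints, yet this still produces a first-order decrease of $E$ proportional to the infimum of $|\phi|$ on $A$. For claim \CircleAroundChar{4}, the small-mass regime is accessed by the rescaling $U=m^{1/n}\widetilde U$, $V=(\zeta m)^{1/n}\widetilde V$: the perimeter dominates the nonlocal term by a factor of $m^{1/n}$, so the isoperimetric inequality forces $\widetilde U$ to approach a ball, and $V$ is then determined (given $U$ a ball) as the radial minimizer of the strictly convex quadratic functional $N(U,\cdot)$ under the mass constraint.

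The main obstacle I expect is claim \CircleAroundChar{3} under the rigid ratio $|V|/|U|=\zeta$: the single-species perturbation of \cite[Theorem 2.1]{bonacini2016ground} must be replaced by a coupled two-species exchange that keeps $|U|$ and $|V|$ separately fixed, and one must verify that the resulting first variation retains a definite sign on $\{\phi<0\}$. This is a technical matter of choosing the perturbation carefully rather than a conceptual obstruction, but it is the step where the generalization demands the most bookkeeping.
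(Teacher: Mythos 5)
The paper offers no proof of this proposition at all: it is a recollection of results from the literature, the bracketed citations are the entire justification, and the claim that the $\zeta=1$ arguments extend to all $\zeta>0$ is asserted without elaboration. Your sketch is therefore more detailed than anything in the paper, and its guiding observation is the right one: $\zeta$ enters only through the factor $1/\zeta$ multiplying $\bm1_V$ in $-\Delta\phi=\bm1_U-\bm1_V/\zeta$, so charge neutrality and the identity $N=\frac12\int|\nabla\phi|^2$ are unchanged and the cited proofs go through with bookkeeping. Your outlines of items \CircleAroundChar{1}, \CircleAroundChar{2} and \CircleAroundChar{4} follow the same routes as the cited sources.

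Two points, however, are genuinely off. First, in item \CircleAroundChar{5} you assert that \emph{both} the upper and the lower bound are "reproduced by comparison with a family of periodic bilayer competitors." Test configurations can only ever bound the infimum from above; the linear-in-$m$ \emph{lower} bound in \cite[Section 4]{van2008copolymer} requires a different argument (a reduction to one-dimensional cross-sections that bounds the energy density from below uniformly over admissible configurations), and no choice of competitor can replace it. Second, the "main obstacle" you identify in item \CircleAroundChar{3} is not an obstacle: the screening property is a consequence of the optimality of $V$ for the strictly convex functional $N(U,\cdot)$ with $U$ held \emph{fixed}, so the relevant perturbation rearranges $V$-mass only. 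Such a perturbation preserves $|U|=m$ and $|V|=\zeta m$ separately and automatically, no coupled $1:\zeta$ two-species exchange is needed, and the first variation is $-\phi/\zeta$ paired against the perturbation, whose sign analysis is identical to the $\zeta=1$ case; this is precisely the single-species first-variation argument the paper itself runs in Proposition \ref{prop: v must be indicator function}.
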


Next we give a brief review of existing studies for various $\zeta$.

\subsubsection*{The case of \texorpdfstring{$\zeta=1$}{zeta 1}}

In the large mass regime $m\gg1$, only some qualitative properties of the energy minimizers are known in the literature, e.g., there is a uniform bound on the mean curvature of $\partial U$ for the global minimizer for $m\geqslant1$ \cite[Page 9]{bonacini2016ground}. The exact global minimizer is unknown and is conjectured in several works (see \cite[Figure 2(c) and Page 9]{bonacini2016ground} and \cite[Bottom of Page 78]{van2008partial}) to resemble a planar bilayer membrane, cut off at large distance.

In the literature there is a variant of the energy \eqref{sharp energy nonrelaxed}, with the nonlocal term $N$ replaced by the 1-Wasserstein distance. For this variant, as $m\to\infty$, it is energetically preferable for $U$ and $V$ to form a closed bilayer membrane with an approximately uniform thickness \cite{peletier2009partial,lussardi2014variational}. In \cite[Bottom of Page 21]{van2008partial} van Gennip mentioned a failed attempt to generalize this result from such a variant to the problem \eqref{sharp energy nonrelaxed}.

\subsubsection*{The case of \texorpdfstring{$\zeta\ll1$}{small zeta}}

In the limit of $\zeta\to0$, the problem \eqref{sharp energy nonrelaxed} reduces to the so-called surface charge model \cite[Section 6]{bonacini2016optimal}. For the global minimizer, the negative charge is concentrated on the surface of $U$, and acts like a Faraday shield canceling out the positive charge carried by $U$, i.e., we have $\phi=0$ on $\bbR^3\backslash\overline U$.

As mentioned above, for $\zeta=1$ and $m\gg1$ the global minimizer should locally resemble the bilayer membrane shown in Figure \ref{MorphologyCoefficient}-a, where a layer of $V$ of approximately uniform thickness surrounds $U$. For $\zeta\leqslant1$ and $m\gg1$, it is natural to expect that the global minimizer takes on a similar bilayer membrane structure, with the thickness of the $V$ layer converging to 0 as $\zeta\to0$.

\subsubsection*{The case of \texorpdfstring{$\zeta\gg1$}{big zeta}}

In the limit of $\zeta\to\infty$, the mass of $V$ becomes infinitely large, and the negative charge carried by $V$ becomes infinitely dilute, therefore the problem \eqref{sharp energy nonrelaxed} reduces to the liquid drop model \cite{10.1063/5.0148456} where we only consider the positive charge carried by $U$. It is widely believed that the infimum of the energy in the liquid drop model is attained by a ball for $m\ll1$, and is approached by many equally large distant balls for $m\gg1$.

Therefore, for $\zeta\gg1$, it is natural to expect the global minimizer to be a micelle for $m\ll1$, and to be the union of many equally large non-overlapping micelles for $m\gg1$.

\subsection{Our contributions}
\label{subsec: contributions}
We now summarize our main contributions and then outline the overall structure of the rest of this paper.

As mentioned in Section \ref{subsec: Results in existing literature}, regarding the global minimizer for $\zeta=1$ and $m\gg1$, several works (e.g., \cite[Figure 2(c) and Page 9]{bonacini2016ground} and \cite[Bottom of Page 78]{van2008partial}) documented the belief that $U$ is approximately a large disk with a radius of order $\sqrt{m}$ and with a thickness of order 1, and that $V$ is of an approximately uniform thickness surrounding $U$, as can be seen from our numerical simulations in Figure \ref{simulation liposome local minimizer}. However, our numerical results indicate that such a disk-shaped membrane has higher energy than a liposome (see Definition \ref{liposome candidate defintion}) for $\zeta=1$ and $m\gg1$. In Figure \ref{simulation liposome local minimizer}, the disk-shaped membrane is slightly thicker near its rim, and thus the rim carries an energy penalty of order $\sqrt{m}$, proportional to the circumference. Therefore for the disk-shaped membrane, the energy-to-mass ratio $E/m$ converges with order $1/\sqrt m$, which is consistent with \cite[Theorem 8]{van2008copolymer}. Meanwhile, according to Corollary \ref{simple asymptotics of the optimal liposome candidate}, the convergence rate for the liposome is $1/m$. Note that the infimum of $E/m$ is attained in the limit of $m\to\infty$ \cite[Proposition 8.3]{bonacini2016ground}. Our work suggests that the liposome is the global minimizer for $\zeta=1$ and $m\gg1$, since a sphere minimizes the Helfrich energy in Proposition \ref{calculate undetermined coefficients of Helfrich energy}.

In the existing literature, the equal mass case $\zeta=1$ has been the focal point of most studies (e.g., \cite{bonacini2016optimal,bonacini2016ground,van2008partial,van2011h,van2009stability,van2008copolymer}, except for a 1-D study \cite{choksi2005diblock}). In this paper we consider the general cases $\zeta\in(0,\infty)$. As mentioned in the paragraph following \eqref{sharp energy nonrelaxed}, the 1-D results should be qualitatively the same for different $\zeta$. But as we will illustrate in Figure \ref{MorphologyCoefficient}, as $\zeta$ increases, the optimal morphology in 3-D should undergo transitions from the bilayer membrane to cylindrical micelle to spherical micelle. As mentioned in Section \ref{subsec: Results in existing literature}, in the large mass regime, it is natural to expect that for sufficiently large $\zeta$, the global minimizer consists of many spherical micelles which are scattered and non-overlapping, as depicted in Figure \ref{MorphologyCoefficient}-c. Meanwhile, for intermediately large $\zeta$, it is natural to expect the global minimizer to resemble a cylindrical micelle, see Figure \ref{MorphologyCoefficient}-b.

In Sections \ref{subsec:Liposome candidates} and \ref{subsec: Asymptotics of liposome candidates}, we use asymptotic analysis to study the energy minimizer among radially symmetric liposome candidates for $\zeta>0$ and $m\gg1$. An important finding is that the inner $V$ layer is slightly thicker but has slightly less mass compared to the outer $V$ layer. Interestingly, we notice that such slight differences also exist in a variant model (at least in 2-D), where the Coulombic nonlocal term $N$ is replaced by the 1-Wasserstein distance, as mentioned in Remark \ref{remark on asymptotics of liposome candidates}-\CircleAroundChar{3}. If we impose equal mass on the inner and outer $V$ layers at the expense of optimality, then the energy in our model will increase on the same order as the bending energy, as mentioned in Remark \ref{remark on asymptotics of liposome candidates}-\CircleAroundChar{2}.

For a general bilayer membrane which is not necessarily radially symmetric, up to a suitable rescaling, we can consider its Gamma-limit with vanishing thickness as $\gamma\to\infty$. Under the conjecture that its energy converges to the Helfrich energy, in Proposition \ref{calculate undetermined coefficients of Helfrich energy} we calculate the bending and Gaussian moduli using our asymptotic results for 2-D and 3-D radially symmetric liposome candidates (note that a 2-D circular liposome may be viewed as a 3-D cylindrical bilayer which resembles a very long tube \cite[Middle of Figure 1]{wang2021analytical}).
Our calculation reveals that for small $\zeta$, the Gaussian modulus is positive, and thus the optimal structure may resemble triply periodic minimal surfaces (TPMS) which exist ubiquitously in copolymer systems and biological specimens (see Remark \ref{triply periodic minimum surface remark}).

In Figure \ref{several qualitative changes} we summarize the conjectured candidates for 3-D global minimizers in the large mass regime, with the parameters $\zeta_i$ to be defined later. For $\zeta\!<\!\zeta_1$, we believe that the local structure of the global minimizer resembles a bilayer membrane, and that its global structure is a liposome ($\zeta_0\!<\!\zeta\!<\!\zeta_1$) or approximately a TPMS ($\zeta\!<\!\zeta_0$). For $\zeta_1\!<\!\zeta\!<\!\zeta_2$, its local structure resembles a cylindrical micelle, and its global structure resembles a circular ring. For $\zeta\!>\!\zeta_2$, the global minimizer consists of many scattered droplets of spherical micelles.
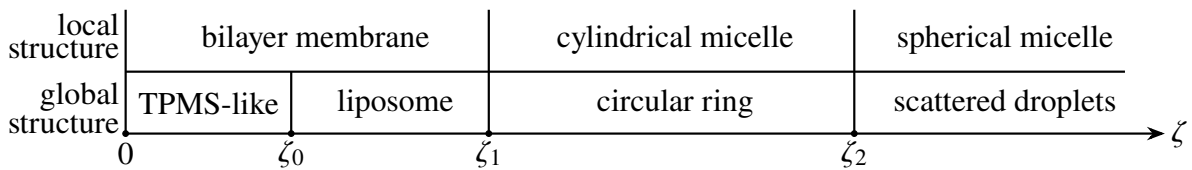
\begin{figure}[H]
\centering
\begin{tikzpicture}

\filldraw[black] (0,-3.1*1.4ex) circle (0.2ex);
\filldraw[black] (8.2843*1.4ex,-3.1*1.4ex) circle (0.2ex);
\filldraw[black] (18.1696*1.4ex,-3.1*1.4ex) circle (0.2ex);
\filldraw[black] (36.4572*1.4ex,-3.1*1.4ex) circle (0.2ex);

\node at (0ex,-3.1*1.4ex-1.4ex) {$0$};
\node at (8.2843*1.4ex,-3.1*1.4ex-1.4ex) {$\zeta_0$};
\node at (18.1696*1.4ex,-3.1*1.4ex-1.4ex) {$\zeta_1$};
\node at (36.4572*1.4ex,-3.1*1.4ex-1.4ex) {$\zeta_2$};
\node at (52*1.4ex+1ex,-3.1*1.4ex) {$\zeta$};

\draw [line width = 0.11*1.4ex](0,0) -- (50*1.4ex,0);
\draw [-{Stealth[length=0.9*1.4ex]},line width = 0.11*1.4ex](0,3.1*1.4ex) -- (0,-3.1*1.4ex) -- (52*1.4ex,-3.1*1.4ex);
\draw [line width = 0.11*1.4ex](8.2843*1.4ex,-3.1*1.4ex) -- (8.2843*1.4ex,0);
\draw [line width = 0.11*1.4ex](18.1696*1.4ex,-3.1*1.4ex) -- (18.1696*1.4ex,3.1*1.4ex);
\draw [line width = 0.11*1.4ex](36.4572*1.4ex,-3.1*1.4ex) -- (36.4572*1.4ex,3.1*1.4ex);

\node[align=right] at (-3.1*1.4ex,1.8*1.4ex) {local\\[-0.3*3.4ex]structure};
\node[align=right] at (-3.1*1.4ex,-1.8*1.4ex) {global\\[-0.2*3.4ex]structure};
\node at (9.5*1.4ex,1.6*1.4ex) {bilayer membrane};
\node at (27.5*1.4ex,1.6*1.4ex) {cylindrical micelle};
\node at (44*1.4ex,1.6*1.4ex) {spherical micelle};
\node at (4.15*1.4ex,-1.6*1.4ex) {TPMS-like};
\node at (13.5*1.4ex,-1.6*1.4ex) {liposome};
\node at (27.5*1.4ex,-1.6*1.4ex) {circular ring};
\node at (44*1.4ex,-1.6*1.4ex) {scattered droplets};

\end{tikzpicture}
\caption{Our conjectures about the 3-D global minimizer for $m\gg1$.}
\label{several qualitative changes}
\end{figure}

In Section \ref{section: Phase-field simulations}, we present numerical evidence for our conjectures. The numerical simulations are based on a phase-field reformulation of the sharp interface model \eqref{sharp energy nonrelaxed}. In order to justify such a reformulation, we prove a Gamma-convergence result in Section \ref{section: Phase-field reformulation}. The novelty of the proof is that our phase-field energy is degenerate: only one of the two order parameters is penalized by the Dirichlet energy, and the potential well has non-isolated minimizers. The rest of this paper is organized as follows. In Section \ref{sec:radialsym}, we restrict ourselves to the simplest radially symmetric case and derive some asymptotic results. In order to better present our asymptotic results, in Section \ref{section Rescaled energy functional} we rescale the energy and propose some conjectures. In Section \ref{section: Phase-field reformulation} we present our phase-field reformulation and prove its Gamma-convergence to the sharp interface model. In Section \ref{section: Phase-field simulations} we present some numerical simulations. In Section \ref{section discussion}, we conclude with remarks about future directions. In Appendix \ref{sec:Wilmore} we provide some background knowledge on the Helfrich and Willmore energies. In Appendix \ref{appendix Calculations of radially symmetric candidates}, we present the detailed calculations in the radially symmetric case. In Appendix \ref{appendix Asymptotics with 1-Wasserstein distance}, we recall the results in a variant model where the Coulombic nonlocal term is replaced by the 1-Wasserstein distance, in order to make it convenient for readers to draw comparisons.

\section{Radially symmetric candidates}\label{sec:radialsym}
In this section, we only consider radially symmetric candidates, thus reducing the variational problem to a finite-dimensional optimization problem which can be analyzed asymptotically. We obtain the asymptotic expansions of the minimum energy and the optimal layer thickness, thus laying the foundation for further studies of non-radially-symmetric cases. In Sections \ref{subsec:Liposome candidates} and \ref{subsec:Micelle candidates}, we consider two types of candidates, namely liposome and micelle, as depicted in Figure \ref{A portion of a radially symmetric candidate}, and then draw comparisons in Section \ref{subsec:Optimal candidates}.

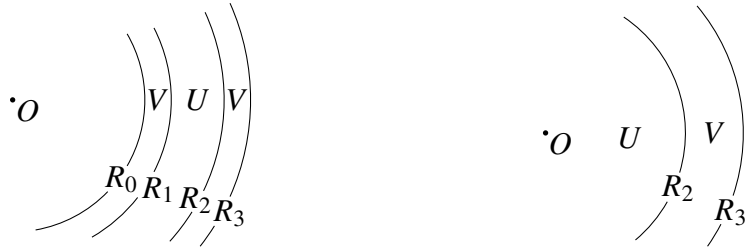
\begin{figure}[H]
\centering
\begin{tikzpicture}
\draw ([shift={(0,0)}]-30:50pt) arc (-30:30:50pt);
\draw ([shift={(0,0)}]-80:50pt) arc (-80:-44:50pt);
\draw ([shift={(0,0)}]-27:60pt) arc (-27:27:60pt);
\draw ([shift={(0,0)}]-60:60pt) arc (-60:-38:60pt);
\draw ([shift={(0,0)}]-25.5:80pt) arc (-25.5:24.5:80pt);
\draw ([shift={(0,0)}]-42:80pt) arc (-42:-32:80pt);
\draw ([shift={(0,0)}]-24:90pt) arc (-24:24:90pt);
\draw ([shift={(0,0)}]-38:90pt) arc (-38:-32:90pt);
\fill (0,0) circle[radius=1pt];
\node at (6pt,-4pt) {$O$};
\node at (55pt,0pt) {$V$};
\node at (70pt,0pt) {$U$};
\node at (85pt,0pt) {$V$};
\node at (41pt,-30pt) {$R_0$};
\node at (55pt,-34pt) {$R_1$};
\node at (69pt,-38pt) {$R_2$};
\node at (82pt,-43pt) {$R_3$};
\end{tikzpicture}
\hspace{100pt}
\begin{tikzpicture}
\draw ([shift={(0,0)}]-16.5:52.9pt) arc (-16.5:56:52.9pt);
\draw ([shift={(0,0)}]-50:52.9pt) arc (-50:-29.5:52.9pt);
\draw ([shift={(0,0)}]-18.5:74.8pt) arc (-18.5:40:74.8pt);
\draw ([shift={(0,0)}]-35:74.8pt) arc (-35:-27.5:74.8pt);
\fill (0,0) circle[radius=1pt];
\node at (6pt,-4pt) {$O$};
\node at (32pt,-2pt) {$U$};
\node at (64pt,0pt) {$V$};
\node at (50pt,-21pt) {$R_2$};
\node at (70pt,-30pt) {$R_3$};
\end{tikzpicture}
\caption{Left: a portion of a liposome candidate. Right: a portion of a micelle candidate. Both are radially symmetric.}
\label{A portion of a radially symmetric candidate}
\end{figure}

\subsection{Liposome candidates}
\label{subsec:Liposome candidates}

Liposome candidates are concentric rings or shells as shown on the left of Figure \ref{A portion of a radially symmetric candidate}. More specifically, we have the following definition.
\begin{definition}
\label{liposome candidate defintion}
We say $(U,V)$ is a liposome candidate, if $U,V\subset\bbR^n$ satisfy
\begin{enumerate}[label=\protect\CircleAroundChar{\arabic*}]
\item $U = B(R_2)\backslash B(R_1)$ and $V = \big(B(R_3)\backslash B(R_2)\big)\cup\big(B(R_1)\backslash B(R_0)\big)$ for some $0<R_0<R_1<R_2<R_3$, where $B(R_i)$ is an $n$-dimensional ball of radius $R_i$ centered at the origin $O$.
\item $R_i$ satisfies the following mass constraints
\begin{align}
&R_3^n-R_0^n=(\zeta\!+\!1)\,(R_2^n\!-\!R_1^n)\,,&\label{volume constraint radial candidates}\\
&R_2^n\!-\!R_1^n=
\left\{
\begin{aligned}
&m/\pi,&&n=2,\\
&3m/(4\pi),&&n=3.
\end{aligned}
\right.\label{2d 3d volume constraint radial candidates}
\end{align}
\end{enumerate}
\end{definition}

\begin{corollary}
\label{simple asymptotics of the optimal liposome candidate}
[of Theorem \ref{asymptotics of the optimal liposome candidate}] With $\zeta$ and $\gamma$ fixed, as $m\to\infty$, the minimizer of $E(U,V)$ among the liposome candidates satisfies
\begin{equation}
\label{differences of radii converge to constants}
(R_1\!-\!R_0\,,\,R_2\!-\!R_1\,,\,R_3\!-\!R_2) \to \sqrt[3]{3/(\gamma\zeta\!+\!\gamma)}\,(\zeta\,,\,2\,,\,\zeta),
\end{equation}
with the following asymptotics
\begin{equation*}
\frac{E(U,V)}m =
\left\{
\begin{aligned}
& \sqrt[3]{\gamma\frac{\zeta\!+\!1}{8/9}} + \frac{8 \pi ^2}{5}\frac{\zeta ^2\!+\!4 \zeta\!+\!1}{\gamma  (\zeta\!+\!1) m^2} + O\Big(\frac{1}{m^3}\Big),&&\text{for}\;n=2,\\
& \sqrt[3]{\gamma\frac{\zeta\!+\!1}{8/9}} + \frac{4 \pi}{15m}\frac{\zeta ^2\!+\!4 \zeta\!+\!16}{\big(\gamma(\zeta\!+\!1)/3\big)^{2/3}} + O\Big(\frac{1}{m^{3/2}}\Big),&&\text{for}\;n=3.\\
\end{aligned}
\right.
\end{equation*}
\end{corollary}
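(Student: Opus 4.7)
Since the statement is framed as a corollary of Theorem \ref{asymptotics of the optimal liposome candidate}, the plan is to outline the proof of the underlying theorem, from which the limits in \eqref{differences of radii converge to constants} and the two-term asymptotics of $E/m$ are read off by substitution.

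The first step is to reduce the liposome problem to an explicit finite-dimensional minimization. A liposome is determined by the four radii $R_0 < R_1 < R_2 < R_3$; the two mass constraints in Definition \ref{liposome candidate defintion} eliminate two of them, leaving (say) $R_1$ and $R_2$ free. The perimeter of $U$ is elementary, namely $2\pi(R_1+R_2)$ in 2-D and $4\pi(R_1^2 + R_2^2)$ in 3-D. For the nonlocal part I use $N(U,V) = \frac{1}{2}\int |\nabla \phi|^2$ from \eqref{alternative expression of N(U,V)} and solve the radial equation $-\phi'' - (n-1)\phi'/r = \mathbf{1}_U - \mathbf{1}_V/\zeta$ piecewise on the five annular regions $(0,R_0)$, $(R_0,R_1)$, $(R_1,R_2)$, $(R_2,R_3)$, $(R_3,\infty)$, imposing continuity of $\phi$ and $\phi'$ at each $R_i$ together with the appropriate decay or boundedness at infinity. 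This yields $\phi$, and hence $E(U,V)$, as a closed-form elementary function of $(R_1,R_2)$.

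Next comes the large-mass asymptotic analysis, in the regime where $R_2 \to \infty$ while the layer thicknesses $r_i := R_i - R_{i-1}$ stay bounded; the natural small parameter is $\varepsilon := 1/R_2$, which satisfies $\varepsilon \sim m^{-1/n}$ up to an order-one factor. The leading-order Euler-Lagrange system reduces to the planar-bilayer problem: minimize the bilayer energy per unit area in $(r_1,r_2,r_3)$ subject to $r_1+r_3 = \zeta r_2$ and the reflection symmetry $r_1 = r_3$. This is an explicit one-dimensional calculus problem, consistent with the 1-D result from \cite{van2008copolymer} recalled in Section \ref{Background and motivation}; its unique positive solution is $r_2 = 2\sqrt[3]{3/(\gamma(\zeta+1))}$ and $r_1 = r_3 = \zeta\sqrt[3]{3/(\gamma(\zeta+1))}$, which proves \eqref{differences of radii converge to constants} and, by direct substitution, produces the leading term $\sqrt[3]{9\gamma(\zeta+1)/8} = \sqrt[3]{\gamma(\zeta+1)/(8/9)}$ of $E/m$. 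By an envelope argument (perturbations of $(R_1,R_2)$ about the leading-order optimum contribute only at quadratic order), the next correction is obtained by substituting the limiting thicknesses into the $O(\varepsilon^2)$ part of the expanded $E$; collecting terms yields the explicit coefficients $\zeta^2 + 4\zeta + 16$ in 3-D and $\zeta^2 + 4\zeta + 1$ in 2-D.

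The main obstacle will be the bookkeeping in this second-order expansion. In 2-D the logarithmic kernel generates terms of the form $R_i^2 \log R_i$ whose cancellations must be tracked precisely, and in 3-D one must identify which curvature-induced cross terms survive the mass constraint at the first non-vanishing order. Controlling the remainder uniformly, so as to justify the error bounds $O(1/m^3)$ in 2-D and $O(1/m^{3/2})$ in 3-D, is a direct but tedious estimate from the closed-form expression for $E(R_1,R_2)$.
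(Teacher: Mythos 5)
Your overall route---solving the radial Poisson equation piecewise to get a closed-form energy, then expanding in the inverse radius---is the same as the paper's (Propositions \ref{proposition: energy for liposome candidates} and \ref{Lagrange multiplier equation}, then Theorem \ref{asymptotics of the optimal liposome candidate}), and your leading-order planar computation correctly reproduces $\sqrt[3]{9\gamma(\zeta+1)/8}$ and the limiting thicknesses. But the step that produces the second-order coefficients is wrong. First, a bookkeeping issue: after the two mass constraints, the free parameters are not $(R_1,R_2)$ --- those two are already linked by $|U|=m$ --- so you have silently discarded the second degree of freedom, namely how the $V$-mass is split between the inner and outer layers. Second, and this is the substantive gap, your envelope argument fails precisely in that discarded direction. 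At leading (planar) order the energy is independent of the inner/outer asymmetry, so the asymmetry is only pinned at the next order: the curvature produces an $O(\varepsilon)$ term in the energy whose derivative with respect to the asymmetry parameter is nonzero, the optimal asymmetry therefore shifts by $O(\varepsilon)$ away from the symmetric configuration $r_1=r_3$, and that shift feeds back into the energy at $O(\varepsilon^2)$ --- the same order as the coefficient you are trying to compute. Evaluating the $O(\varepsilon^2)$ part of the energy at the symmetric leading-order configuration therefore does not give $\zeta^2+4\zeta+1$ (2-D) or $\zeta^2+4\zeta+16$ (3-D). The paper's own Corollary \ref{simple equal volume asymptotics proposition} and Remark \ref{remark on nonrescaled liposome asymptotics}-\CircleAroundChar{1} make this concrete: pinning the split (there, by equal masses rather than equal thicknesses) inflates the second-order coefficient by a factor of $6$ to $21$. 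The paper avoids this by solving the full Lagrange system of Proposition \ref{Lagrange multiplier equation} perturbatively with the ansatz $b=p_1a+p_2a^2+\cdots$, whose first coefficient $p_1=\zeta/2$ (2-D) or $\zeta/6$ (3-D) is exactly the order-$\varepsilon$ asymmetry your argument omits; you would need to carry this extra variable and minimize over it (or equivalently keep the cross term $-\tfrac12(\partial_\delta f_1)^2/\partial_\delta^2 f_0$ in the envelope expansion).

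A secondary error: the small parameter satisfies $\varepsilon=1/R\sim m^{-1/(n-1)}$ (the bilayer has codimension one, so its area scales like $R^{n-1}\sim m$), not $m^{-1/n}$; with your scaling the claimed $O(1/m^2)$ correction in 2-D would not even be consistent with an $O(\varepsilon^2)$ bending term.
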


\begin{corollary}
\label{simple equal volume asymptotics proposition}
[of Proposition \ref{equal volume asymptotics proposition}]
Under the additional assumption that the inner and outer $V$ layers have the same mass \cite[Equation (5.16)]{van2008partial}, i.e., $R_3^n\!-\!R_2^n=R_1^n\!-\!R_0^n$, with $\zeta$ and $\gamma$ fixed, as $m\to\infty$, the minimizer of $E(U,V)$ among the liposome candidates still satisfies \eqref{differences of radii converge to constants}, with the following asymptotics
\begin{equation*}
\frac{E(U,V)}m =
\left\{
\begin{aligned}
& 
\sqrt[3]{\gamma\frac{\zeta\!+\!1}{8/9}}
+
24 \pi ^2\frac{ 2 \zeta ^2\!+\!8 \zeta\!+\!7}{5 \gamma  (\zeta\!+\!1) m^2}
+
O\Big(\frac{1}{m^3}\Big)
,&&\text{for}\;n=2,\\
& \sqrt[3]{\gamma\frac{\zeta\!+\!1}{8/9}} + \frac{4 \pi}{5 m}\frac{7 \zeta ^2\!+\!28 \zeta\!+\!32}{\big(\gamma(\zeta\!+\!1)/3\big)^{2/3}} + O\Big(\frac{1}{m^{3/2}}\Big),&&\text{for}\;n=3.
\end{aligned}
\right.
\end{equation*}
\end{corollary}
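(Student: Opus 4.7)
The plan is to mirror the derivation of Corollary \ref{simple asymptotics of the optimal liposome candidate} (which reduces to Theorem \ref{asymptotics of the optimal liposome candidate}), while incorporating the extra constraint $R_3^n - R_2^n = R_1^n - R_0^n$ to eliminate one degree of freedom. Combining this constraint with \eqref{volume constraint radial candidates} forces each $V$-layer to carry mass $\zeta m/2$; then \eqref{2d 3d volume constraint radial candidates} makes the three radii $R_0, R_2, R_3$ explicit functions of $R_1$ and $m$, so that only $R_1$ (equivalently, the $U$-layer thickness $s_2 := R_2 - R_1$) is free.

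Second, I express the energy explicitly as a function of the remaining free variable. The perimeter $\text{Per}\,U$ is polynomial in $R_1, R_2$. For the nonlocal part, I use \eqref{alternative expression of N(U,V)} together with the radial reduction of Poisson's equation: the potential $\phi(r)$ satisfies $-(r^{n-1}\phi'(r))' = r^{n-1}(\bm1_U - \zeta^{-1}\bm1_V)$ with $\phi(\infty) = 0$ and $\phi'(0) = 0$, so $\phi$ is piecewise polynomial across the four shells. Substituting into $N = \tfrac{1}{2} n\omega_n \int_0^\infty r^{n-1}(\phi'(r))^2\,dr$ yields an explicit rational expression in $R_0, R_1, R_2, R_3$, which, after applying the constraints, becomes a function of $R_1$ (and $m$) alone.

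Third, I perform the large-$m$ expansion. As in Theorem \ref{asymptotics of the optimal liposome candidate}, the radii scale so that $R_1 \to \infty$ while all three thicknesses $s_j := R_j - R_{j-1}$ remain bounded. Expanding in powers of $1/R_1$, the energy per unit mass $E/m$ acquires a leading value $f_0(s_2)$; under the equal-mass constraint, $s_1$ and $s_3$ coincide with $\zeta s_2/2$ at leading order but differ by curvature corrections of order $1/R_1$. Minimizing $f_0$ reproduces both the limit \eqref{differences of radii converge to constants} and the leading energy $\sqrt[3]{9\gamma(\zeta\!+\!1)/8}$, which is identical to Corollary \ref{simple asymptotics of the optimal liposome candidate} because the unconstrained minimizer already has the two $V$-layers of equal thickness at leading order. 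Carrying the expansion to the next non-vanishing order at the minimizing $s_2$ (and optimizing over $R_1$) then produces the stated coefficients in $1/m^2$ for $n=2$ and in $1/m$ for $n=3$.

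The main obstacle is tracking the curvature corrections that distinguish the constrained minimizer from the unconstrained one. Theorem \ref{asymptotics of the optimal liposome candidate} shows that the unconstrained optimum has the inner $V$-layer slightly thicker than, but less massive than, the outer one; imposing equal masses perturbs the configuration away from that nearly symmetric optimum, and the resulting energy penalty shows up only at the correction order. Extracting the precise coefficients $24\pi^2(2\zeta^2\!+\!8\zeta\!+\!7)/(5\gamma(\zeta\!+\!1))$ and $4\pi(7\zeta^2\!+\!28\zeta\!+\!32)/(5(\gamma(\zeta\!+\!1)/3)^{2/3})$ therefore requires expanding $\int_0^\infty r^{n-1}(\phi'(r))^2\,dr$ two orders beyond the planar bilayer approximation and combining these contributions with the corresponding expansions of the perimeter and of the mass constraints. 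Once the one-parameter reduction is in place, this becomes a direct, though lengthy, computation; it is also what makes the comparison with Corollary \ref{simple asymptotics of the optimal liposome candidate} quantitative, showing that the penalty for the equal-mass assumption is on the same order as the bending energy alluded to in Remark \ref{remark on asymptotics of liposome candidates}.
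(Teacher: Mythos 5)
Your proposal follows essentially the same route as the paper: use the equal-mass constraint together with \eqref{volume constraint radial candidates}--\eqref{2d 3d volume constraint radial candidates} to reduce to a single free parameter (the paper uses $\kappa$ with $\kappa^{-n}=(R_1^n+R_2^n)/2$ and then a thickness variable $t$, which is equivalent to your choice of $R_1$ or $s_2$), evaluate the energy explicitly via the radial solution of Poisson's equation as in Proposition \ref{proposition: energy for liposome candidates}, expand in large $m$, and minimize the leading-order term $\gamma(\zeta+1)t^2/24+2/t$ before reading off the correction. The differences are purely in parametrization, so this is the paper's argument.
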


\begin{remark}$ $
\label{remark on nonrescaled liposome asymptotics}
\begin{enumerate}[label=\protect\CircleAroundChar{\arabic*}]
\item The additional assumption in Corollary \ref{simple equal volume asymptotics proposition} was used in \cite[Pages 83 and 84]{van2008partial}. Albeit not the optimal choice, it was thought to be only slightly non-optimal. Our calculations show that the leading-order term of the energy remains the same under this additional assumption, but that the next-order term (corresponding to the bending energy) becomes 6 to 21 times as large, depending on $n$ and $\zeta$. In other words, the bending energy can be decreased by dropping this additional assumption and allowing the inner and outer $V$ layers to have different masses.

\item From \eqref{differences of radii converge to constants} we know that $R_{i+1}\!-\!R_i$ converges to a constant as $m\to\infty$. This can serve as an illustration that the lipid bilayer has an intrinsically preferred thickness, which is consistent with \cite[Remark 4]{van2008copolymer}. In the large mass regime $m\gg1$, the bilayer should be of thickness of order $1$.
\end{enumerate}
\end{remark}

\subsection{Micelle candidates}
\label{subsec:Micelle candidates}

\begin{definition}
\label{definition of micelle candidate}
In Definition \ref{liposome candidate defintion}, if $R_0=R_1=0$, then $(U,V)$ is called a micelle candidate. (Note that for $n=2$, in some literature \cite{alama2022core} the terminology core-shell is used instead.)
\end{definition}

\begin{proposition}
\label{energy of micelle candidate}
The energy \eqref{sharp energy nonrelaxed} of a micelle candidate is $E(U,V) = \text{Per}\;U +\gamma N(U,V)$, where
\begin{equation*}
\text{Per}\;U=\left\{
\begin{aligned}
&2\pi \sqrt{m/\pi},&&n=2,\\
&4\pi \big(3m/(4\pi)\big)^{2/3},&&n=3,
\end{aligned}
\right.
\end{equation*}
and
\begin{equation}
N(U,V) = \left\{
\begin{aligned}
&\pi\zeta^{-2}(\zeta\!+\!1)\big((\zeta\!+\!1)\ln(\zeta\!+\!1)\!-\!\zeta\big)(m/\pi)^2/8,&&n=2,\\
&\pi\zeta ^{-2}(\zeta\!+\!1)\big(4\zeta\!+\!6\!-\!6(\zeta\!+\!1)^{2/3}\big)\big(3m/(4\pi)\big)^{5/3}/15,&&n=3.
\end{aligned}
\right.
\end{equation}
\end{proposition}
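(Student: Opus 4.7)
The perimeter of $U = B(R_2)$ is immediate from $|U|=m$ and \eqref{2d 3d volume constraint radial candidates}: it equals $2\pi R_2$ in 2-D and $4\pi R_2^2$ in 3-D, and substitution of $R_2=(m/\pi)^{1/2}$ or $R_2=(3m/(4\pi))^{1/3}$ yields the two claimed formulas. The substantive task is the explicit evaluation of $N(U,V)$, for which I would not attempt the direct fourfold integrals in the definition but instead work through the electrostatic potential $\phi$, exploiting the expression $N(U,V)=\tfrac12\int_{\bbR^n}|\nabla\phi|^2$ together with the Poisson equation $-\Delta\phi=\bm 1_U-\bm 1_V/\zeta$ from \eqref{alternative expression of N(U,V)}.

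Since $(U,V)$ is radially symmetric, so is $\phi$, and the Poisson equation becomes a piecewise ODE in $r=|\vec x|$ across the three regions: in $r\in[0,R_2]$ one has $-\Delta\phi=1$, in $r\in[R_2,R_3]$ one has $-\Delta\phi=-1/\zeta$, and in $r\geq R_3$ the potential is harmonic. I would integrate these ODEs in each region using the standard radial Laplacian ($r^{-1}(r\phi')'$ for $n=2$ and $r^{-2}(r^2\phi')'$ for $n=3$), producing four integration constants per region. These are pinned down by: regularity at the origin (removing the singular solution there), continuity of $\phi$ and $\phi'$ at $r=R_2$ and $r=R_3$, and the decay/normalization at infinity. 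The total charge $\int(\bm 1_U-\bm 1_V/\zeta)=0$ guarantees $\phi$ and $\nabla\phi$ decay fast enough that $\phi\equiv 0$ on $\{r\geq R_3\}$ (by matching to the unique harmonic decaying solution), which is a convenient technical simplification that also automatically handles the logarithmic case $n=2$.

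Once $\phi$ is known, the simplest way to obtain $N(U,V)$ is through the middle expression in \eqref{alternative expression of N(U,V)},
\[
N(U,V)=\tfrac12\int_{B(R_2)}\phi(r)\dd{\vec x}-\tfrac{1}{2\zeta}\int_{B(R_3)\setminus B(R_2)}\phi(r)\dd{\vec x},
\]
which reduces to one-dimensional integrals $\int_0^{R_2}\phi(r)r^{n-1}\dd r$ and $\int_{R_2}^{R_3}\phi(r)r^{n-1}\dd r$ against explicit polynomial (for $n=3$) or polynomial-plus-logarithm (for $n=2$) integrands. After performing these integrals I would eliminate $R_3$ via the mass constraint $R_3^n=(\zeta+1)R_2^n$ from \eqref{volume constraint radial candidates} specialized to $R_0=R_1=0$, and then substitute the value of $R_2^n$ from \eqref{2d 3d volume constraint radial candidates}. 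Grouping factors of $\zeta+1$ and $\zeta^{-2}$ should produce exactly the claimed closed forms, in particular the combinations $(\zeta+1)\ln(\zeta+1)-\zeta$ in 2-D and $4\zeta+6-6(\zeta+1)^{2/3}$ in 3-D.

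The main obstacle is bookkeeping rather than conceptual: in 2-D, the logarithmic kernel makes the constants of integration unpleasant, and one must be careful that the expected cancellation coming from charge neutrality actually occurs and leaves the clean $(\zeta+1)\ln(\zeta+1)-\zeta$ structure. A useful consistency check I would run at the end is to verify positivity of $N$ and to recover the classical liquid-drop (single-ball) potential in the limit $\zeta\to\infty$, where the shell $V$ becomes thin and diffuse.
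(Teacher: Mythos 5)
Your proposal is correct and follows essentially the same route as the paper: the paper derives this proposition by setting $R_0=R_1=0$ in the general liposome formula (Proposition \ref{proposition: energy for liposome candidates}), whose own proof is exactly the computation you describe — solve the radial Poisson equation piecewise with $\phi\equiv0$ outside $B(R_3)$ (valid by charge neutrality), then evaluate $N$ via \eqref{alternative expression of N(U,V)} and substitute the mass constraints. Your closing consistency check against the liquid-drop self-energy as $\zeta\to\infty$ is a nice addition but not needed.
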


\begin{proof}
Proposition \ref{energy of micelle candidate} is a consequence of Proposition \ref{proposition: energy for liposome candidates} with $R_0=R_1=0$ and
\begin{equation*}
R_3^n/(\zeta\!+\!1)=R_2^n=
\left\{
\begin{aligned}
&m/\pi,&&n=2,\\
&3m/(4\pi),&&n=3.
\end{aligned}
\right.
\end{equation*}
\end{proof}

\begin{corollary}
\label{Minimal energy of micelle candidates}
For a micelle candidate, if $n=2$, then $E/m$ attains its minimum at
\begin{equation*}
m=4 \pi  \Big(\gamma  (\zeta\!+\!1) \big((\zeta\!+\!1) \ln(\zeta\!+\!1)-\zeta\big)/\zeta ^2\Big)^{-2/3},
\quad\text{and}\quad
\min_{m>0}\frac{E}{m} = \frac{3}{2} \sqrt[3]{\gamma \zeta ^{-2}(\zeta\!+\!1) \big((\zeta\!+\!1) \ln(\zeta\!+\!1)-\zeta\big)};
\end{equation*}
if $n=3$, then $E/m$ attains its minimum at
\begin{equation*}
m=20 \pi  \zeta ^2\big(\gamma(\zeta\!+\!1)\big)^{-1}\big/\big(2 \zeta\!+\!3\!-\!3 (\zeta\!+\!1)^{2/3}\big),
\quad\text{and}\quad
\min_{m>0}\frac{E}{m} = \frac{9}{2} \sqrt[3]{\gamma \zeta ^{-2}(\zeta\!+\!1) \big(2 \zeta\!+\!3\!-\!3 (\zeta\!+\!1)^{2/3}\big)/15}.
\end{equation*}
\end{corollary}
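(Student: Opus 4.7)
The plan is to reduce the corollary to a one-variable calculus problem. Proposition~\ref{energy of micelle candidate} expresses $E(U,V) = \text{Per}\;U + \gamma N(U,V)$ as an explicit function of $m$ alone, so dividing through by $m$ places $E/m$ into the unified form
\[
\frac{E(U,V)}{m} \;=\; A\, m^{-1/n} + B\, m^{2/n},
\]
with constants $A,B>0$ depending on $n$, $\zeta$, and $\gamma$. Positivity of $A$ is immediate; positivity of the nonlocal-term coefficient $B$ in each case follows from the elementary inequalities $(\zeta+1)\ln(\zeta+1) > \zeta$ (for $n=2$) and $2\zeta+3 > 3(\zeta+1)^{2/3}$ (for $n=3$) on $\zeta>0$, both checkable by comparing values and derivatives at $\zeta=0$.

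Next I would optimize this one-parameter family. Since the first summand is strictly decreasing, the second is strictly increasing, and $E/m \to +\infty$ at both endpoints $m\to 0^+$ and $m\to\infty$, the global minimizer on $(0,\infty)$ is the unique critical point. Setting the $m$-derivative to zero yields $m_\star = \bigl(A/(2B)\bigr)^{n/3}$, at which the perimeter contribution equals twice the nonlocal contribution, so
\[
\min_{m>0}\frac{E}{m} \;=\; 3B\,m_\star^{2/n} \;=\; 3\bigl(A^2 B/4\bigr)^{1/3}.
\]
Equivalently, the same bound drops out of the weighted AM-GM inequality applied with weights $(2/3,1/3)$ to the two summands, with equality exactly at $m_\star$.

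The final step is routine bookkeeping: read off $A$ and $B$ from Proposition~\ref{energy of micelle candidate} in each dimension and simplify. One obtains $A = 2\sqrt{\pi}$ for $n=2$ and $A = 4\pi\bigl(3/(4\pi)\bigr)^{2/3}$ for $n=3$; the coefficient $B$ carries a factor $\gamma\zeta^{-2}(\zeta+1)\bigl((\zeta+1)\ln(\zeta+1)-\zeta\bigr)$ in the planar case and $\gamma\zeta^{-2}(\zeta+1)\bigl(2\zeta+3-3(\zeta+1)^{2/3}\bigr)$ in the spatial case (after factoring out the common $2$ in $4\zeta+6-6(\zeta+1)^{2/3}$). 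I do not expect any real obstacle: the only care needed is in tracking the powers of $\pi$ and $3$ when computing $A/(2B)$ and $A^2 B/4$, so as to recover the compact closed forms stated in the corollary.
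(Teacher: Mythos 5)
Your proposal is correct and is essentially the argument the paper intends: the corollary is a direct consequence of the explicit formulas in Proposition \ref{energy of micelle candidate}, reduced to minimizing $A\,m^{-1/n}+B\,m^{2/n}$ over $m>0$, and your critical point $m_\star=(A/(2B))^{n/3}$ and minimum value $3(A^2B/4)^{1/3}$ reproduce the stated closed forms in both dimensions. The positivity checks for the coefficients and the AM--GM alternative are fine; no gaps.
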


\begin{remark}$ $
\begin{enumerate}[label=\protect\CircleAroundChar{\arabic*}]
\item Due to the screening property (see Proposition \ref{existing qualitative results}-\CircleAroundChar{3}), for the disjoint union of non-overlapping components, the energy is simply the sum of the energy for each connected component. Therefore, in the large mass regime $m\gg1$, we can construct a candidate consisting of many non-overlapping micelles, each of which has a mass close to the optimal mass given in Corollary \ref{Minimal energy of micelle candidates} (similar to the construction mentioned in \cite[Page 96]{van2008copolymer}). Such a candidate will attain the optimal energy-to-mass ratio asymptotically as $m\to\infty$. In fact, our energy in Proposition \ref{energy of micelle candidate} is of a similar form to \cite[Equation (6.1)]{choksi2010small}. Using the same approach as \cite[Proof of Lemma 6.2]{choksi2010small}, we can prove that the optimal way to allocate the mass $m$ is as follows: let $m_i$ (assuming $m_i>0$) be the mass of each connected component, then all but one $m_i$ (say, $\{m_i\}_{i\neq1}$) must be equal, with $m_1=O(1)$. Therefore, in the large mass limit $m\to\infty$, the optimal way to allocate the mass yields asymptotically the same energy-to-mass ratio as Corollary \ref{Minimal energy of micelle candidates}.

\item According to \cite[Theorem 8]{van2008copolymer}, we can regard a 2-D micelle as the limiting case of an infinitely long 3-D cylindrical micelle, with asymptotically the same energy-to-mass ratio.

\item 
We have also considered the candidates with $R_0=0$ and $R_1>0$, and they always have higher energy-to-mass ratios than the liposome candidates in the large mass limit, according to our calculations. Therefore, they are omitted in our discussions.

\end{enumerate}
\end{remark}

\subsection{Optimal candidates}
\label{subsec:Optimal candidates}
We now compare the candidates that are considered in Sections \ref{subsec:Liposome candidates} and \ref{subsec:Micelle candidates}. By comparing the leading-order term of $E/m$ in Corollaries \ref{simple asymptotics of the optimal liposome candidate} and \ref{Minimal energy of micelle candidates}, we are led to the belief that in the large mass regime in 3-D, as $\zeta$ increases, the preferred morphology should be successively bilayer membrane, cylindrical micelle, and spherical micelle, as shown in Figure \ref{MorphologyCoefficient}. This picture is qualitatively consistent with the predictions by other theories \cite[Section 1.3.2.3]{ben1994statistical}.

\begin{conjecture}
\label{conjecture: infimum energy linear in m}
Let $n=3$. For any fixed $\gamma>0$ and $\zeta>\zeta_0$ (where $\zeta_0$ is defined in Remark \ref{triply periodic minimum surface remark}-\CircleAroundChar{2}, for reasons that will be mentioned therein), we have $\inf\limits_{U,V} E(U,V)/m\to c(\zeta)\,\sqrt[3]{\gamma}$ as $m\to\infty$, where
\begin{equation*}
c(\zeta)=
\left\{
\begin{aligned}
\sqrt[3]{9(\zeta\!+\!1)/8}\,,&&\text{if}\;0<\zeta\leqslant\zeta_1,\\
\frac{3}{2} \sqrt[3]{\zeta ^{-2}(\zeta\!+\!1) \big((\zeta\!+\!1) \ln(\zeta\!+\!1)-\zeta\big)}\,,&&\text{if}\;\zeta_1<\zeta\leqslant\zeta_2,\\
\frac{9}{2} \sqrt[3]{\zeta ^{-2}(\zeta\!+\!1) \big(2 \zeta\!+\!3\!-\!3 (\zeta\!+\!1)^{2/3}\big)/15}\,,&&\text{if}\;\zeta_2<\zeta<\infty.
\end{aligned}
\right.
\end{equation*}
Here $\zeta_1\approx1.81696$ is the unique nonzero root of $\zeta+\zeta^2/3 =(\zeta\!+\!1) \ln (\zeta\!+\!1)$, and $\zeta_2\approx3.64572$ is the unique nonzero root of $5\big((\zeta\!+\!1) \ln (\zeta\!+\!1)-\zeta\big)=9\big(2 \zeta - 3 (\zeta\!+\!1)^{2/3} + 3\big)$.
\end{conjecture}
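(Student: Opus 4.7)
The plan is to establish the conjecture by matching upper and lower bounds on $\inf E(U,V)/m$ as $m \to \infty$. The upper bound is essentially contained in the computations of Sections \ref{subsec:Liposome candidates} and \ref{subsec:Micelle candidates}; the lower bound, which is the substantive content of the conjecture, is the main obstacle.

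For the upper bound, fix $\zeta$ and construct three families of admissible trial pairs. First, a single liposome with $|U|=m$: by Corollary \ref{simple asymptotics of the optimal liposome candidate} this gives $E/m \to \sqrt[3]{9\gamma(\zeta\!+\!1)/8}$. Second, a disjoint union of many long cylindrical tubes, each obtained by extruding the optimal 2-D micelle of Corollary \ref{Minimal energy of micelle candidates}; by the reduction cited from \cite[Theorem 8]{van2008copolymer}, the energy-to-mass ratio of such a tube approaches the optimal 2-D micelle ratio $\frac{3}{2}\sqrt[3]{\gamma\zeta^{-2}(\zeta\!+\!1)\bigl((\zeta\!+\!1)\ln(\zeta\!+\!1)-\zeta\bigr)}$. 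Third, a disjoint union of many spherical micelles each at the optimal 3-D mass from Corollary \ref{Minimal energy of micelle candidates}, giving the constant $\frac{9}{2}\sqrt[3]{\gamma\zeta^{-2}(\zeta\!+\!1)\bigl(2\zeta\!+\!3-3(\zeta\!+\!1)^{2/3}\bigr)/15}$. For the tiled configurations, the screening property (Proposition \ref{existing qualitative results}-\CircleAroundChar{3}) ensures that well-separated components have $\phi$'s with disjoint supports, so they contribute additively and the tiled ratio equals that of a single unit. Taking the minimum of the three constants yields $\limsup_{m\to\infty}\inf E/m \leq c(\zeta)\sqrt[3]{\gamma}$. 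A direct computation then verifies that the three expressions cross precisely at $\zeta_1$ and $\zeta_2$: cubing the pairwise equalities and simplifying reduces them to the transcendental equations listed in the statement, and monotonicity of each branch in $\zeta$ shows that the ordering is as claimed on each subinterval.

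For the lower bound, the strategy is a connected-component decomposition followed by a blow-up/classification argument. By the regularity and screening properties (Proposition \ref{existing qualitative results}-\CircleAroundChar{2},\CircleAroundChar{3}), any admissible $(U,V)$ may be replaced by a representative with finitely many connected components $(U_j,V_j)$ whose potentials $\phi_j$ have disjoint supports, so that $E(U,V) = \sum_j E(U_j,V_j)$ and $m = \sum_j |U_j|$. It therefore suffices to establish the component-wise inequality $E(U_j,V_j) \geq c(\zeta)\sqrt[3]{\gamma}\,|U_j| - o(|U_j|)$. For a single component, one would use the uniform mean-curvature bound on $\partial U$ recorded in \cite[Page 9]{bonacini2016ground} to obtain Hausdorff compactness of the rescaled boundaries, and then classify the possible blow-up limits at the mesoscopic scale where the layer thickness is $O(1)$. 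Each limiting geometry should be (approximately) a smooth 2-surface, a 1-dimensional tube, or a 0-dimensional bubble, and a local slab-wise computation in each case reproduces the bilayer, cylindrical, and spherical constants respectively.

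The hard part will be carrying out this classification rigorously and ruling out hybrid morphologies. Near $\zeta_1$ and $\zeta_2$, the bilayer-cylinder and cylinder-sphere transitions are nearly isoenergetic, so one must show that any asymptotically optimal sequence decouples into components of a single pure type rather than interpolating smoothly between types; quantitative rigidity estimates of the kind needed here are not available from the existing literature. The restriction $\zeta > \zeta_0$ is required because, as computed in Section \ref{section Rescaled energy functional} via the Helfrich limit, the Gaussian modulus changes sign at $\zeta_0$, and for $\zeta < \zeta_0$ TPMS-like geometries can beat the liposome at the next-to-leading order; capturing them rigorously would require a fourth branch of $c(\zeta)$ whose value is not even known in closed form for the Helfrich energy. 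These difficulties together are the reason the statement appears as a conjecture.
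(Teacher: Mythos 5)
The statement you are proving is labeled a \emph{conjecture} in the paper, and the paper supplies no proof: its entire justification is the leading-order comparison of Corollaries \ref{simple asymptotics of the optimal liposome candidate} and \ref{Minimal energy of micelle candidates} carried out in Section \ref{subsec:Optimal candidates}, i.e.\ exactly the upper-bound half of your argument (three explicit families of candidates, additive energy for scattered copies, and the crossing points $\zeta_1,\zeta_2$ obtained by equating the branch constants). Your upper bound therefore matches the paper's reasoning, and your lower-bound outline goes beyond anything in the paper; you are right that the classification/rigidity step is the genuinely open part and is why the statement remains a conjecture. Two cautions on the parts you do assert. First, additivity of the nonlocal term over disjoint components is not automatic: the screening property of Proposition \ref{existing qualitative results} is a property of \emph{global minimizers}, not of arbitrary trial configurations, so for the upper bound you should instead invoke the explicit potential computed in Appendix \ref{appendix Calculations of radially symmetric candidates}, which vanishes identically outside $B(R_3)$ for each micelle candidate and hence kills all cross-interactions; for the lower bound you must first pass to a minimizer (which exists by Proposition \ref{existing qualitative results}) before decomposing, since for a general competitor $E(U,V)\neq\sum_j E(U_j,V_j)$. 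Second, the "long cylindrical tube" is not admissible as stated (infinite mass); one needs finite tubes closed into rings or capped with $o(m)$ error, which the paper delegates to \cite[Theorem 8]{van2008copolymer} and which should be cited explicitly rather than treated as immediate.
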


\begin{remark}$ $
\begin{enumerate}[label=\protect\CircleAroundChar{\arabic*}]
\item The piecewise function $c(\zeta)$ is plotted on the left of Figure \ref{MorphologyCoefficient}, where $\zeta_1$ and $\zeta_2$ are also marked. As $\zeta$ increases, the minimum energy is attained on different branches indicated by different colors, and the preferred morphology should transition from bilayer membrane to cylindrical micelle to spherical micelle, which are illustrated by Figure \ref{MorphologyCoefficient}-a, b and c, respectively.

\item In \cite[Section 5.3]{van2008partial}, van Gennip also compared the energy among the bilayer membrane, cylindrical micelle, and spherical micelle candidates. However, van Gennip only considered the case of $\zeta=1$, and concluded that the bilayer membrane has the lowest energy among all the candidates. Our calculations reveal that cylindrical micelle and spherical micelle may have lower energy for $\zeta\neq1$, and that amphiphiles can self-assemble into not only sheet-like membranes, but also polymer networks and colloidal dispersions.
\end{enumerate}
\end{remark}
\begin{figure}[H]
\centering
\includegraphics[width=0.43\textwidth]{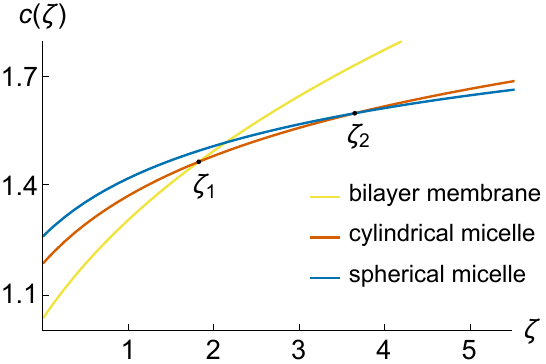}
\hspace{5ex}
\begin{minipage}[t][-5pt][b]{0.23\textwidth}
\includegraphics[scale=0.4]{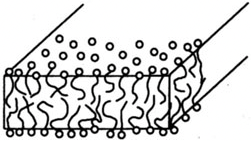}\hspace{-100pt}\raisebox{45pt}{a}\\
\raisebox{25pt}{\includegraphics[scale=0.4]{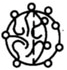}}
\includegraphics[scale=0.4]{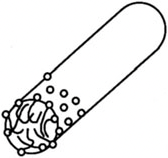}
\hspace{-99pt}\raisebox{16pt}{c}
\hspace{57pt}\raisebox{5pt}{b}
\end{minipage}
\caption{Left: Three branches of $c(\zeta)$. Right: a, bilayer membrane; b, cylindrical micelle; c, spherical micelle. The right image is reproduced from \cite[Figure 1.6]{ben1994statistical}.}
\label{MorphologyCoefficient}
\end{figure}

\section{Rescaled energy functional}
\label{section Rescaled energy functional}

In Section \ref{sec:radialsym}, we have briefly presented some asymptotic results as $m\to\infty$. In order to make it convenient and rigorous to translate our asymptotic results within the Gamma-convergence framework, in this section we rescale the energy functional so that the minimizer converges in Radon measure to lower-dimensional structures (surfaces, curves and points). In Section \ref{subsec: Asymptotics of liposome candidates}, we elaborate on the asymptotic results of the liposome candidates for the rescaled energy, and in Section \ref{subsec: Elastica functional and Helfrich energy}, we propose some conjectures about Gamma-convergence of the rescaled energy.

\begin{definition}
\label{rescaled energy}
Analogous to \cite{peletier2009partial}, we define a rescaled version of \eqref{sharp energy nonrelaxed}:
\begin{equation*}
F_\rho(u,v) =
\left\{
\begin{aligned}
&\rho^{1-d}\,\text{Per}\;U + \rho^{-2-d}N(U,V),&&\text{if}\;(u,v)\in K_\rho\,,\\
&\infty,&&\text{otherwise},
\end{aligned}
\right.
\end{equation*}
where $d$ can be chosen from $\{1,2,3\}$ as needed, $U=\text{supp}(u)$ and $V=\text{supp}(v)$. Moreover,
\begin{equation*}
K_\rho=\big\{(u,v)\in BV(\mathbb R^n;\{0,\rho^{-d}\})\times L^2(\mathbb R^n;\{0,\rho^{-d}\}):vu=0\;\text{a.e.},\;\mbox{$\int$}u=m,\;\mbox{$\int$}v=\zeta m\big\}.
\end{equation*}
\end{definition}

\begin{proposition}
\label{rescaling proposition}
Denote $U_\rho$ to be the dilation of $U$ with a scale factor of $1/\rho$, i.e., $\vec x\in U\Leftrightarrow U_\rho\ni \vec x/\rho$, and similarly for $V_\rho$. Then we have $|U_\rho|/\mbox{$\int$}u=\rho^{d-n}=|V_\rho|/\mbox{$\int$}v$ and $\rho^{d-n}F_\rho(u,v) = \text{Per}\;U_\rho + N(U_\rho\,,V_\rho)$, where $u=\bm1_U/\rho^d$ and $v=\bm1_V/\rho^d$.
\end{proposition}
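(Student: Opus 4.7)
The plan is a direct three-step scaling argument, using only the definitions plus the potential representation \eqref{alternative expression of N(U,V)}. Throughout I assume $(u,v)\in K_\rho$, so that $u=\bm1_U/\rho^d$ and $v=\bm1_V/\rho^d$ with $U\cap V=\varnothing$; the mass constraints then read $|U|=m\rho^d$ and $|V|=\zeta m\rho^d$. Since $U_\rho$ is the image of $U$ under the dilation $\vec x\mapsto\vec x/\rho$, we have $|U_\rho|=\rho^{-n}|U|=m\rho^{d-n}$, giving $|U_\rho|/\int u=\rho^{d-n}$; the identical argument with $\zeta$ inserted yields $|V_\rho|/\int v=\rho^{d-n}$, which settles the first claim.

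For the energy identity I would handle the two terms separately. The perimeter is $(n-1)$-dimensional Hausdorff measure, so under the dilation by $1/\rho$ it scales as $\text{Per}\;U_\rho=\rho^{1-n}\,\text{Per}\;U$. For the nonlocal term I would use $N(U,V)=\tfrac12\int|\nabla\phi|^2$ from \eqref{alternative expression of N(U,V)}; let $\tilde\phi$ denote the analogous potential for $(U_\rho,V_\rho)$. Changing variables $\vec y=\rho\tilde{\vec y}$ in \eqref{eqn: electrostatic potential} and using the homogeneity $G(\rho\vec z)=\rho^{-1}G(\vec z)$ when $n=3$, respectively the additive identity $G(\rho\vec z)=G(\vec z)-\ln\rho/(2\pi)$ when $n=2$, yields $\phi(\rho\tilde{\vec x})=\rho^2\tilde\phi(\tilde{\vec x})+c$ with $c=0$: in the 2-D case the would-be additive constant is multiplied by $|U_\rho|-|V_\rho|/\zeta=\rho^{-n}(m\rho^d-\zeta m\rho^d/\zeta)=0$, and this is the single place where the two mass constraints are genuinely used (the Newtonian integral in 2-D requires such a charge-neutrality condition to be well-defined anyway). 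Differentiating gives $\nabla\tilde\phi(\tilde{\vec x})=\rho^{-1}\nabla\phi(\rho\tilde{\vec x})$, and another dilation substitution in the Dirichlet integral yields $N(U_\rho,V_\rho)=\rho^{-2-n}N(U,V)$ uniformly in $n\in\{2,3\}$.

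Combining these scaling identities with Definition \ref{rescaled energy},
\[
\rho^{d-n}F_\rho(u,v)=\rho^{d-n}\rho^{1-d}\,\text{Per}\;U+\rho^{d-n}\rho^{-2-d}N(U,V)=\rho^{1-n}\,\text{Per}\;U+\rho^{-2-n}N(U,V)=\text{Per}\;U_\rho+N(U_\rho,V_\rho),
\]
which is the desired equality. There is no substantive obstacle; the only mildly delicate point is the cancellation that removes the additive constant produced by the 2-D logarithmic kernel, and this cancellation is exactly what the two mass constraints built into $K_\rho$ guarantee.
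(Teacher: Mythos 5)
Your proof is correct and follows the same route as the paper, which simply invokes the scaling properties of the perimeter and the Newtonian potential; you have just written out the details that the paper leaves implicit. Your observation that the mass constraints in $K_\rho$ are exactly what kill the additive $\ln\rho$ constant from the 2-D logarithmic kernel is a worthwhile clarification of a point the paper glosses over.
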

\begin{proof}
Use the scaling properties of the perimeter and Newtonian potential.
\end{proof}

\begin{remark}
According to Proposition \ref{rescaling proposition}, $F_\rho$ is equivalent to $E$ up to a rescaling, and thus the results in Proposition \ref{existing qualitative results} also apply to $F_\rho$: for any $\zeta,m,\rho>0$ and for any $d$, the global minimizer of $F_\rho$ exists and satisfies the screening property.
\end{remark}

\subsection{Detailed asymptotics of liposome candidates}
\label{subsec: Asymptotics of liposome candidates}
Throughout this subsection, we let $d=1$. We consider liposome candidates given by $U = B(R_2)\backslash B(R_1)$ and $V = \big(B(R_3)\backslash B(R_2)\big)\cup\big(B(R_1)\backslash B(R_0)\big)$ for some $0<R_0<R_1<R_2<R_3$ satisfying $R_3^n-R_0^n=(\zeta\!+\!1)\,(R_2^n\!-\!R_1^n)$ and
\begin{equation*}
R_2^n\!-\!R_1^n=
\left\{
\begin{aligned}
&m\rho/\pi,&&n=2,\\
&3m\rho/(4\pi),&&n=3,
\end{aligned}
\right.
\end{equation*}
where $B(R_i)$ is an $n$-dimensional ball of radius $R_i$ centered at the origin $O$ (see also Definition \ref{liposome candidate defintion}).

\begin{corollary}
\label{rescaled liposome asymptotics}
[of Proposition \ref{rescaling proposition} and Theorem \ref{asymptotics of the optimal liposome candidate}.]
Let $d=1$, with $\zeta$ and $m$ fixed, as $\rho\to0$, the minimizer of $F_\rho$ among the liposome candidates has the following asymptotics if $n=2$,
\begin{equation*}
\begin{aligned}
&
\frac{F_\rho(u,v)}{m} = \sqrt[3]{\frac{\zeta\!+\!1}{8/9}} + 8 \pi ^2\rho^2\frac{\zeta ^2\!+\!4 \zeta\!+\!1}{5(\zeta\!+\!1) m^2} + O\big(\rho^3\big),\hspace{-25pt}
&
R_2\!-\!R_1 = \rho\sqrt[3]{\frac{24}{\zeta\!+\!1}}
+
O\big(\rho^3\big),\\
&
\big(R_3^2\!-\!R_2^2\big)-\big(R_1^2\!-\!R_0^2\big)=\frac{4 \rho^2\zeta  (\zeta\!+\!2)}{\sqrt[3]{3(\zeta\!+\!1)^2}}+
O\big(\rho^3\big),
\hspace{-25pt}
&
\frac{R_1\!+\!R_2}2 = \frac{m}{4\pi}\sqrt[3]{\frac{\zeta\!+\!1}{3}}
+
O\big(\rho^2\big),
\\
&R_1\!-\!R_0 = \rho\sqrt[3]{\frac{3\zeta^3}{\zeta\!+\!1}}
+
\frac{2 \pi\rho^2}{m/\zeta}\frac{\zeta\!+\!2}{\zeta\!+\!1}
+
O\big(\rho^3\big),\hspace{-25pt}
&R_3\!-\!R_2 = \rho\sqrt[3]{\frac{3\zeta^3}{\zeta\!+\!1}}
-
\frac{2 \pi\rho^2}{m/\zeta}\frac{\zeta\!+\!2}{\zeta\!+\!1}
+
O\big(\rho^3\big),
\end{aligned}
\end{equation*}
and has the following asymptotics if $n=3$,
\begin{equation*}
\begin{aligned}
&\frac{F_\rho(u,v)}{m} = \sqrt[3]{\frac{\zeta\!+\!1}{8/9}} + \frac{4 \pi\rho^2}{15m}\frac{\zeta ^2\!+\!4 \zeta\!+\!16}{\big((\zeta\!+\!1)/3\big)^{2/3}} + O\big(\rho^3\big),\hspace{-20pt}
&R_2\!-\!R_1 = \rho\sqrt[3]{\frac{24}{\zeta\!+\!1}}+
O\big(\rho^3\big),\\
&\big(R_3^3\!-\!R_2^3\big)-\big(R_1^3\!-\!R_0^3\big)=\frac{\rho^2\sqrt{m}\zeta(\zeta\!+\!2)}{\sqrt{\pi (\zeta\!+\!1)/6}}+O\big(\rho^3\big),
\hspace{-20pt}
&
\frac{R_1\!+\!R_2}2 =
\frac{\sqrt[6]{(\zeta\!+\!1)/3}}{2\sqrt{2 \pi/m} }
+
O\big(\rho^2\big),\\
&R_1\!-\!R_0 = \rho\sqrt[3]{\frac{3\zeta^3}{\zeta\!+\!1}}
+
\frac{\sqrt{8\pi} (\zeta\!+\!2) \zeta \rho^2}{\sqrt{m}\sqrt[6]{3(\zeta\!+\!1)^5}}
+
O\big(\rho^3\big),\hspace{-20pt}
&R_3\!-\!R_2 = \rho\sqrt[3]{\frac{3\zeta^3}{\zeta\!+\!1}}
-
\frac{\sqrt{8\pi} (\zeta\!+\!2) \zeta \rho^2}{\sqrt{m}\sqrt[6]{3(\zeta\!+\!1)^5}}
+
O\big(\rho^3\big).
\end{aligned}
\end{equation*}
\end{corollary}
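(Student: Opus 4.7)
The plan is to reduce the rescaled problem to the original (non-rescaled) problem via the change of variables in Proposition \ref{rescaling proposition}, and then invoke Theorem \ref{asymptotics of the optimal liposome candidate} (with $\gamma=1$) in the large-mass regime. First, I set up the correspondence: given a liposome candidate $(u,v)$ in the rescaled formulation with radii $0<R_0<R_1<R_2<R_3$ and constraints $\int u = m$, $\int v = \zeta m$, the associated dilated liposome $(U_\rho,V_\rho)$ has radii $\widetilde R_i := R_i/\rho$ and total masses $|U_\rho| = M$, $|V_\rho| = \zeta M$, where $M := m\rho^{d-n} = m/\rho^{\,n-1}$ since $d=1$. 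By Proposition \ref{rescaling proposition}, minimizing $F_\rho$ over rescaled liposome candidates is equivalent to minimizing $E$ (with $\gamma=1$) over non-rescaled liposome candidates of mass $M$, and one obtains the identity $F_\rho(u,v)/m = E(U_\rho,V_\rho)/M$.

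Next, since $\rho \to 0$ forces $M \to \infty$, I would apply Theorem \ref{asymptotics of the optimal liposome candidate} to obtain, for the optimal non-rescaled radii $\widetilde R_i$, the expansions of the layer thicknesses $\widetilde R_{i+1}-\widetilde R_i$ and of $E/M$ to the required order in $1/M$. Multiplying the radii differences by $\rho$ yields $R_{i+1}-R_i = \rho(\widetilde R_{i+1}-\widetilde R_i)$, and substituting $M = m/\rho^{\,n-1}$ converts the $E/M$ expansion into an expansion in $\rho$ with $m$ fixed. This directly produces the stated asymptotics for $F_\rho(u,v)/m$, for $R_2-R_1$, and (after one more term) for $R_1-R_0$ and $R_3-R_2$ with opposite-signed $O(\rho^2)$ corrections.

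For the remaining quantities, the mean radius $(R_1+R_2)/2$ is obtained by combining the leading-order thickness with the mass constraint \eqref{2d 3d volume constraint radial candidates}: when $n=2$ one factors $(R_1+R_2)(R_2-R_1) = m\rho/\pi$ and divides, while when $n=3$ one writes $R_2^3-R_1^3 = (R_2-R_1)\big(3((R_1+R_2)/2)^2 + (R_2-R_1)^2/4\big) = 3m\rho/(4\pi)$ and solves to leading order. The asymmetry quantity $(R_3^n-R_2^n)-(R_1^n-R_0^n)$ then follows from combining these mean radii with the next-to-leading-order corrections in $R_1-R_0$ and $R_3-R_2$, since at leading order the two $V$-layers carry equal $n$-dimensional volume; the asymmetry is precisely an order-$\rho^2$ effect capturing the difference between the inner and outer $V$-layer masses.

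The step I expect to be the main obstacle is the book-keeping inside Theorem \ref{asymptotics of the optimal liposome candidate} itself, where one has to extract not merely the leading-order thickness $\sqrt[3]{3/(\zeta+1)}\,(\zeta,2,\zeta)$ but also the $O(1/M)$ corrections that distinguish the inner and outer $V$-layers; these arise from Euler--Lagrange conditions in which the electrostatic potential $\phi$ induces a curvature-dependent correction that depends on whether a given interface is convex or concave relative to the $U$-slab. However, that computation is carried out in the parent theorem, so here the argument amounts to a bijective rescaling between the two formulations together with the algebraic substitution $M = m/\rho^{\,n-1}$ and $R_i = \rho\widetilde R_i$, performed separately for $n=2$ and $n=3$.
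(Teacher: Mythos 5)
Your proposal is correct and is exactly the argument the paper intends: the corollary is stated as an immediate consequence of Proposition \ref{rescaling proposition} (which gives $F_\rho(u,v)/m = E(U_\rho,V_\rho)/M$ with effective $\gamma=1$, $M=m\rho^{1-n}$, and $\widetilde R_i = R_i/\rho$) combined with Theorem \ref{asymptotics of the optimal liposome candidate} in the limit $M\to\infty$, and your substitutions reproduce every stated expansion with the correct error orders. The only minor redundancy is that $(R_1\!+\!R_2)/2$ and the volume asymmetry need not be re-derived from the mass constraints, since the parent theorem already supplies their expansions directly.
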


\begin{corollary}
\label{rescaled liposome asymptotics under equal mass assumption}
[of Propositions \ref{rescaling proposition} and \ref{equal volume asymptotics proposition}.]
Under the additional assumption that the inner and outer $V$ layers have the same mass \cite[Equation (5.16)]{van2008partial}, i.e., $R_3^n\!-\!R_2^n=R_1^n\!-\!R_0^n$, let $d=1$, with $\zeta$ and $m$ fixed, as $\rho\to0$, the minimizer of $F_\rho$ among the liposome candidates has the following asymptotics if $n=2$,
\begin{equation*}
\begin{aligned}
&\frac{F_\rho(u,v)}{m} = \sqrt[3]{\frac{\zeta\!+\!1}{8/9}}
+
24 \pi ^2\rho^2\frac{ 2 \zeta ^2\!+\!8 \zeta\!+\!7}{5(\zeta\!+\!1) m^2}
+
O\big(\rho^3\big),\hspace{-0pt}
&\\
&R_1\!-\!R_0
=
\rho\sqrt[3]{\frac{3\zeta^3}{\zeta\!+\!1}}
+
\frac{6 \pi\rho^2}{m/\zeta}\frac{\zeta\!+\!2}{\zeta\!+\!1}
+
O\big(\rho^3\big),\hspace{-0pt}
&R_2\!-\!R_1
=
\rho\sqrt[3]{\frac{24}{\zeta\!+\!1}}
+
O\big(\rho^3\big),\\
&R_3\!-\!R_2
=
\rho\sqrt[3]{\frac{3\zeta^3}{\zeta\!+\!1}}
-
\frac{6 \pi\rho^2}{m/\zeta}\frac{\zeta\!+\!2}{\zeta\!+\!1}
+
O\big(\rho^3\big),
\hspace{-0pt}&\frac{R_1\!+\!R_2}2
=
\frac{m}{4\pi}\sqrt[3]{\frac{\zeta\!+\!1}{3}}
+
O\big(\rho^2\big),
\end{aligned}
\end{equation*}
and has the following asymptotics if $n=3$,
\begin{equation*}
\begin{aligned}
&\frac{F_\rho(u,v)}{m} = \sqrt[3]{\frac{\zeta\!+\!1}{8/9}} + \rho^2\frac{4 \pi}{5 m}\frac{7 \zeta ^2\!+\!28 \zeta\!+\!32}{\big((\zeta\!+\!1)/3\big)^{2/3}} + O\big(\rho^3\big),\hspace{-0pt}
&\\
&R_1\!-\!R_0
=
\rho\sqrt[3]{\frac{3\zeta^3}{\zeta\!+\!1}}
+
\frac{\sqrt{8 \pi}(\zeta \!+\!2) \zeta\rho^2}{\sqrt{m}\sqrt[6]{(\zeta\!+\!1)^5/3^5}}
+
O\big(\rho^3\big),\hspace{-0pt}
&R_2\!-\!R_1
=
\rho\sqrt[3]{\frac{24}{\zeta\!+\!1}}
+
O\big(\rho^3\big),\\
&R_3\!-\!R_2
=
\rho\sqrt[3]{\frac{3\zeta^3}{\zeta\!+\!1}}
-
\frac{\sqrt{8 \pi}(\zeta \!+\!2) \zeta\rho^2}{\sqrt{m}\sqrt[6]{(\zeta\!+\!1)^5/3^5}}
+
O\big(\rho^3\big),
\hspace{-0pt}
&\frac{R_1\!+\!R_2}2
=
\frac{\sqrt[6]{ (\zeta\!+\!1)/3}}{2\sqrt{2 \pi/m} }
+
O\big(\rho^2\big).
\end{aligned}
\end{equation*}
\end{corollary}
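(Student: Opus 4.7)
The plan is to reduce Corollary~\ref{rescaled liposome asymptotics under equal mass assumption} to Proposition~\ref{equal volume asymptotics proposition} (the full unrescaled asymptotic expansion of the optimal equal-mass liposome candidate as the total mass tends to infinity) via the dilation identity of Proposition~\ref{rescaling proposition}. Since Proposition~\ref{equal volume asymptotics proposition} already supplies the asymptotics both of the energy $E/M$ and of the radii $\tilde R_i$ as $M\to\infty$, the proof is essentially a change of variables with careful bookkeeping.

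First I would fix $d=1$ and, to any liposome candidate $(u,v)\in K_\rho$ with $\int u=m$ and $\int v=\zeta m$, associate its unrescaled counterpart $(U_\rho,V_\rho)$ obtained by dilation of scale $1/\rho$ as in Proposition~\ref{rescaling proposition}. Reading off that proposition, the unrescaled mass is $M:=|U_\rho|=m/\rho^{n-1}$, the radii transform as $\tilde R_i=R_i/\rho$, the energy-to-mass ratios agree, $F_\rho(u,v)/m=E(U_\rho,V_\rho)/M$, and the equal-mass condition $R_3^n-R_2^n=R_1^n-R_0^n$ is scale invariant and thus transfers to the $\tilde R_i$. Consequently, minimizing $F_\rho$ among equal-mass liposome candidates at parameters $(m,\zeta,\rho)$ is equivalent to minimizing $E$ at mass $M$ with $\gamma=1$ subject to the same scale-invariant constraint, and the regime $\rho\to 0$ with $m$ fixed corresponds exactly to $M\to\infty$, where Proposition~\ref{equal volume asymptotics proposition} applies.

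The remaining step is substitution. With $\gamma=1$, the expansion of $E/M$ from Proposition~\ref{equal volume asymptotics proposition} translates term by term into the expansion of $F_\rho/m$ by substituting $M=m/\rho^{n-1}$; the leading constant $\sqrt[3]{(\zeta+1)/(8/9)}$ is preserved, the next-order correction takes the displayed form with the stated prefactors and appropriate powers of $m$ in the denominator, and the remainder becomes $O(\rho^3)$ for both $n=2$ and $n=3$. Likewise, multiplying each $\tilde R_i$ by $\rho$ converts the radii expansions of Proposition~\ref{equal volume asymptotics proposition} into those of the corollary: the leading layer thicknesses $\sqrt[3]{24/(\zeta+1)}$ for $R_2-R_1$ and $\sqrt[3]{3\zeta^3/(\zeta+1)}$ for $R_1-R_0$ and $R_3-R_2$ come straight from \eqref{differences of radii converge to constants}; the $\rho^2$-order corrections for $R_1-R_0$ and $R_3-R_2$ are equal in magnitude and opposite in sign, reflecting that the equal-mass constraint forces the inner $V$ layer to be slightly thicker than the outer one; and the midline $(R_1+R_2)/2$ picks up only an $O(\rho^2)$ error because the $U$-mass constraint pins $R_2^n-R_1^n$ exactly, so that $R_1+R_2$ is determined to leading order by $R_2-R_1$. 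The main obstacle is merely keeping the powers of $\rho$ and $m$ straight; all substantive asymptotic content has been delegated to Proposition~\ref{equal volume asymptotics proposition}.
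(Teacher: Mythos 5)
Your proposal is correct and is exactly the argument the paper intends: the corollary is stated as a direct consequence of Propositions \ref{rescaling proposition} and \ref{equal volume asymptotics proposition}, and your reduction via the dilation identity (with $d=1$, $\gamma=1$, $M=m\rho^{1-n}$, $\tilde R_i=R_i/\rho$, and $F_\rho/m=E/M$) followed by term-by-term substitution is precisely the bookkeeping the paper leaves implicit. The powers of $\rho$ and $m$ you report all check out against the displayed expansions.
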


\begin{remark}
\label{remark on asymptotics of liposome candidates}$ $
\begin{enumerate}[label=\protect\CircleAroundChar{\arabic*}]
\item Due to the curvature of the bilayer, the inner $V$ layer is slightly thicker than the outer $V$ layer (i.e., $R_1\!-\!R_0>R_3\!-\!R_2$), and $F_\rho$ is penalized for bending on the second order (i.e., the $\rho^2$ term corresponding to the bending energy).

\item Under the equal mass assumption $R_3^n\!-\!R_2^n=R_1^n\!-\!R_0^n$, the second-order term of $F_\rho$ is 6 to 21 times as large as that of the optimal liposome candidate whose inner $V$ layer has slightly less mass than the outer $V$ layer.

\item Under the equal mass assumption, the difference in the thickness between the inner and outer $V$ layers is asymptotically three times that of the optimal liposome candidate, in both 2-D and 3-D. This relation is also true at least in 2-D even if the Coulombic nonlocal term $N$ is replaced by the 1-Wasserstein distance (see Appendix \ref{appendix Asymptotics with 1-Wasserstein distance}). It is therefore natural to wonder how universal this relation can be.
\end{enumerate}
\end{remark}

\subsection{Conjectures about the Gamma-limits}
\label{subsec: Elastica functional and Helfrich energy}

We now propose some conjectures about the Gamma-limits of the energy functional $F'_\rho:=\big(F_\rho-c(\zeta)\,m\big)/\rho^2$ as $\rho\to 0$. Throughout this subsection, $d$ is chosen to be the codimension of the expected geometry of the global minimizer as shown in Figure \ref{MorphologyCoefficient}:
\begin{equation*}
d=
\left\{
\begin{aligned}
1\,,&&\text{if}\;0<\zeta\leqslant\zeta_1\;\text{and}\;n=2,\\
2\,,&&\text{if}\;\zeta_1<\zeta<\infty\;\text{and}\;n=2,
\end{aligned}
\right.
\qquad
d=
\left\{
\begin{aligned}
1\,,&&\text{if}\;0<\zeta\leqslant\zeta_1\;\text{and}\;n=3,\\
2\,,&&\text{if}\;\zeta_1<\zeta\leqslant\zeta_2\;\text{and}\;n=3,\\
3\,,&&\text{if}\;\zeta_2<\zeta<\infty\;\text{and}\;n=3,
\end{aligned}
\right.
\end{equation*}
where $\zeta_1$ and $\zeta_2$ are defined in Conjecture \ref{conjecture: infimum energy linear in m}.

\begin{conjecture}
\label{2-D conjecture of Gamma-convergence}
Let $n=2$ and $\rho\to0$. For $\zeta\in(0,\zeta_1)$, the Gamma-limit of $F'_\rho$ is the elastica functional defined for closed $W^{2\hspace{0.2ex},\hspace{0.2ex}2}$ curves in $\mathbb R^2$ in the sense of Radon measure, similar to the elastica functional $\mathcal{W}$ mentioned in \cite[Theorem 4.1]{peletier2009partial}. For $\zeta\in(\zeta_1,\infty)$, the Gamma-limit of $F'_\rho$ is a mass partition functional defined for weighted Dirac delta point measures, similar to the mass partition functional $\mathsf{E}_0^\text{2d}$ mentioned in \cite[Theorem 6.1]{choksi2010small} (see also the mass partition functional $E_0$ mentioned in \cite[Theorem 4.2]{alama2022core}).
\end{conjecture}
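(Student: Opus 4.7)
The plan is to establish each of the two $\Gamma$-limits by the standard three-step program---equi-coercivity, liminf inequality, and recovery sequence---in the weak topology of Radon measures, carried out separately on each $\zeta$-regime. In the bilayer regime $\zeta\in(0,\zeta_1)$ I would follow the overall architecture of the Peletier--R\"oger proof of convergence to the elastica functional, and in the micelle regime $\zeta\in(\zeta_1,\infty)$ I would follow the Choksi--Peletier and Alama--Bronsard--Choksi routes to convergence to a mass-partition functional. Two features distinguish our setting from those references and drive the technical work: the nonlocal term is the Coulombic $H^{-1}$ energy rather than a Wasserstein distance, so every localization step must be routed through the screening property recalled in Proposition \ref{existing qualitative results}, and the optimal radially symmetric bilayer has unequal inner and outer $V$-layers (Corollary \ref{rescaled liposome asymptotics}), so the one-dimensional reference profile used in the recovery sequence is asymmetric.

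For $\zeta\in(0,\zeta_1)$, I would first prove compactness by controlling the perimeter through the $c(\zeta)m$ subtraction and invoking a local one-dimensional comparison along normals to $\partial U_\rho$, using the rescaled van Gennip--Peletier 1-D minimization underlying Corollary \ref{simple asymptotics of the optimal liposome candidate} (generalized to $\zeta\neq 1$), to identify a limiting closed $W^{2,2}$ curve supporting the Radon limit of $\rho\, u_\rho$. A slicing argument would then deliver the liminf: foliate a thin tubular neighborhood of the limit curve by normal segments, estimate the 1-D energy on each slice, and Taylor expand in the local curvature; matching the resulting $\rho^2$-correction to the coefficients displayed in Corollary \ref{rescaled liposome asymptotics} identifies an elastica integrand of the form $\alpha(\zeta)\kappa^2+\beta(\zeta)$. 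The recovery sequence is the natural one: surround the target curve by a tube of width of order $\rho$, insert the optimal asymmetric 1-D profile, and enforce the mass constraints by a single global Lagrange multiplier; screening is used to show that cross-slice interactions contribute only at order $\rho^3$.

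For $\zeta\in(\zeta_1,\infty)$, the candidate limit is a finite positive combination of Dirac masses $\mu=\sum_i m_i\delta_{x_i}$, and the candidate functional is $F_0(\mu)=\sum_i c_*(m_i)$ with $c_*$ read off from Corollary \ref{Minimal energy of micelle candidates}. Compactness is obtained from a concentration--compactness argument applied to $\rho^d u_\rho$: the perimeter bound and the isoperimetric inequality force the connected components of $U_\rho$ to have volume of order $\rho^n$, and the screening representation $N=\tfrac{1}{2}\int|\nabla\phi|^2$ prevents escape of mass to infinity, so only finitely many atoms survive in the limit. The liminf inequality is obtained by localizing the Dirichlet energy of $\phi$ to each cluster via a partition of unity supported on the connected components of $\{\phi>0\}$, and then applying Corollary \ref{Minimal energy of micelle candidates} cluster-wise. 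The recovery sequence places disjoint rescaled optimal micelles of the prescribed masses at the $x_i$, with additivity of the energy again guaranteed by screening.

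The main obstacle I anticipate is the liminf in the bilayer regime: the slicing-and-Taylor step requires a quantitative stability estimate asserting that the Coulombic energy of a nearly flat, nearly optimal bilayer exceeds its 1-D energy by at most an $O(\kappa^2)$ contribution per unit length. Unlike the 1-Wasserstein case of \cite{peletier2009partial}, in which the optimal transport plan localizes across the membrane, the Coulombic kernel has infinite range, so one must use the screening property to reduce the relevant integration region to a tube of width of order $\rho$ and then control the remaining tail, for instance by decomposing the electrostatic potential $\phi$ into a ``1-D profile'' piece and a curvature correction solving a Poisson equation with right-hand side of order $\kappa$. Establishing this reduction rigorously, together with a careful use of the asymmetric reference profile from Corollary \ref{rescaled liposome asymptotics}, is the technical heart of the argument; once it is in place, both the 2-D elastica limit and, in the next section, the analogous 3-D Helfrich limit should follow by essentially the same template.
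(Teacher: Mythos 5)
The statement you are addressing is stated in the paper as a \emph{conjecture}, and the paper contains no proof of it. The authors support it only indirectly: through the asymptotics of radially symmetric liposome and micelle candidates (Corollaries \ref{simple asymptotics of the optimal liposome candidate}, \ref{Minimal energy of micelle candidates} and \ref{rescaled liposome asymptotics}), through the phase-field simulations of Section \ref{section: Phase-field simulations}, and through the heuristic that the screening property (Proposition \ref{existing qualitative results}-\CircleAroundChar{3}) should make the Coulombic nonlocality decay fast enough as $\rho\to0$ for the limit to be local. There is therefore no proof in the paper against which your argument can be checked step by step, and your text should be read as a research program rather than a proof.

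As a program it is aligned with the paper's own reasoning: you cite the same model results (Peletier--R\"oger for the elastica regime, Choksi--Peletier and Alama et al.\ for the mass-partition regime), you identify the elastica coefficient by matching the $\rho^2$-term of Corollary \ref{rescaled liposome asymptotics} exactly as the paper does conditionally in Proposition \ref{calculate undetermined coefficients of Helfrich energy}, and you correctly isolate screening as the mechanism that must replace the localization of the optimal transport plan in the Wasserstein case. But the step you yourself flag as the "main obstacle" --- a quantitative version of screening asserting that the Coulombic energy of a nearly flat bilayer is its 1-D energy plus an $O(\kappa^2)$ correction per unit length, with a controlled tail outside an $O(\rho)$-tube --- is precisely the open problem; the paper notes in Section \ref{subsec: Results in existing literature} that van Gennip's attempt to carry the Wasserstein result over to this Coulombic functional failed at essentially this point. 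Without that lemma neither the liminf inequality nor the compactness statement in the bilayer regime is established (compactness for general sequences of bounded $F'_\rho$-energy cannot be reduced to radially symmetric or nearly 1-D competitors, which is all the paper's asymptotics cover). In the micelle regime the scaling bookkeeping also needs care: the conjecture concerns $F'_\rho=(F_\rho-c(\zeta)m)/\rho^2$ with $d=2$, so the limit functional must be the \emph{next-order} correction to the per-droplet energy, not the leading-order sum $\sum_i c_*(m_i)$ you write down. In short, the proposal is a sensible plan whose decisive ingredient remains unproved, so the conjecture is not closed by it.
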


\begin{conjecture}
\label{3-D conjecture of Gamma-convergence}
Let $n=3$ and $\rho\to0$. For $\zeta\in(\zeta_0,\zeta_1)$, the Gamma-limit of $F'_\rho$ is a quadratic form in the principal curvatures defined for closed surfaces in the sense of Radon measure, similar to \cite[Conjecture 2.4]{lussardi2014variational}. For $\zeta\in(\zeta_1,\zeta_2)$, the Gamma-limit of $F'_\rho$ is the elastica functional defined for closed $W^{2\hspace{0.2ex},\hspace{0.2ex}2}$ curves in $\mathbb R^3$ in the sense of Radon measure, which is a 3-D generalization of \cite[Theorem 4.1]{peletier2009partial}. For $\zeta\in(\zeta_2,\infty)$, the Gamma-limit of $F'_\rho$ is a mass partition functional defined for weighted Dirac delta point measures, similar to the mass partition functional $\mathsf{E}_0^\text{3d}$ mentioned in \cite[Theorem 4.3]{choksi2010small}.
\end{conjecture}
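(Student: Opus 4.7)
The plan is to establish $\Gamma$-convergence of $F'_\rho$ separately in three regimes corresponding to the three expected local structures of the minimizer: a lamellar bilayer for $\zeta \in (\zeta_0, \zeta_1)$ (with $d=1$), a cylindrical micelle for $\zeta \in (\zeta_1, \zeta_2)$ (with $d=2$), and a spherical micelle for $\zeta > \zeta_2$ (with $d=3$). In each regime, subtracting $c(\zeta)m$ and dividing by $\rho^2$ is engineered to cancel the leading-order perimeter-plus-nonlocal cost of the reference local profile identified in Conjecture~\ref{conjecture: infimum energy linear in m}, leaving a next-order curvature or mass-partition contribution whose coefficients are dictated by the radially symmetric expansions in Corollary~\ref{rescaled liposome asymptotics} and Corollary~\ref{Minimal energy of micelle candidates}. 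The three standard ingredients of $\Gamma$-convergence (compactness, liminf, limsup) must be verified in each regime, with the geometric reduction changing from closed surfaces, to closed curves, to finite sets of points.

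For compactness, the bound $F'_\rho \leq C$ together with Conjecture~\ref{conjecture: infimum energy linear in m} yields control on $\text{Per}\,U$ of the correct order $\rho^{d-1}$, so the normalized measures $\rho^d \bm{1}_U\,\mathcal{L}^n$ and $\rho^d \bm{1}_V\,\mathcal{L}^n / \zeta$ are tight and, up to subsequences, converge weakly as Radon measures to one concentrated on a set of Hausdorff dimension $n-d$. In the bilayer case I would adapt the transversal slicing strategy of \cite{peletier2009partial} to the Coulombic setting, showing that almost every line normal to the expected limit surface meets the configuration in an approximate $1$-D bilayer of width $O(\rho)$, with the limit surface inheriting $W^{2,2}$ regularity from a Willmore-type lower bound extracted from the excess energy. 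In the curve case, the same slicing is carried out in transversal $2$-planes, using the $2$-D micelle structure from Corollary~\ref{Minimal energy of micelle candidates}. In the point case, the screening property (Proposition~\ref{existing qualitative results}-\protect\CircleAroundChar{3}) decouples $F_\rho$ into contributions from separated connected components, after which the ball-partition method of \cite{choksi2010small} applies.

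The liminf inequality would be proved by a blow-up and localization argument: around a typical point of the limit measure, rescaling coordinates by $1/\rho$ brings the configuration near a planar bilayer, an infinite straight cylindrical micelle, or an isolated spherical micelle, and a second-order expansion of the local energy around these reference profiles produces the curvature contribution. Comparison with the optimal radially symmetric minimizers of Corollary~\ref{rescaled liposome asymptotics} pins down the bending and Gaussian moduli in the surface case, the elastica coefficient in the curve case, and the mass-cost function in the point case. For limsup, an explicit recovery sequence is obtained by wrapping the optimal planar, cylindrical, or spherical profile into a tubular neighborhood of the candidate surface, curve, or point cloud, and correcting by an $O(\rho^2)$ perturbation to restore the mass constraints $\int u = m$ and $\int v = \zeta m$; decoupling between well-separated components is again ensured by the screening property.

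The main obstacle is the surface regime $\zeta \in (\zeta_0, \zeta_1)$, which is the 3-D Coulombic analogue of the still-open conjecture~\cite[Conjecture~2.4]{lussardi2014variational} in the 1-Wasserstein setting. The key missing ingredient is a quantitative stability estimate asserting that, if $F'_\rho$ is bounded, then transversal slices are $W^{2,2}$-close to the planar bilayer profile; without such stability, slices could in principle bifurcate, oscillate, or form multiple stacked bilayers without violating the energy budget. A secondary difficulty, absent in the Wasserstein variant, is that the Newtonian kernel entering $N$ is long-range, so decoupling $F_\rho$ across well-separated components demands a quantitative form of the screening property controlling the exterior decay of the potential $\phi$ from~\eqref{eqn: electrostatic potential}; generalizing the arguments of \cite{bonacini2016ground} to this end appears essential before the liminf inequality can be closed in the surface regime.
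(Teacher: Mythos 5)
The statement you are asked to prove is stated in the paper only as a \emph{conjecture}: the authors offer no proof, only heuristic rationale (the screening property of Proposition \ref{existing qualitative results}-\CircleAroundChar{3} suggesting that the nonlocality decays as $\rho\to0$), the radially symmetric expansions of Corollary \ref{rescaled liposome asymptotics} that fix what the limiting coefficients would have to be, and numerical evidence in Section \ref{section: Phase-field simulations}. Your proposal does not close this gap; it is a program rather than a proof, and you say so yourself in the final paragraph. Every one of the three Gamma-convergence ingredients is described at the level of ``I would adapt the slicing strategy of \cite{peletier2009partial}'' or ``the ball-partition method of \cite{choksi2010small} applies,'' without carrying out any of the adaptations, and the two ingredients you correctly identify as missing — a quantitative stability estimate forcing transversal slices to be close to the one-dimensional bilayer profile, and a quantitative screening estimate decoupling the long-range Newtonian interaction across components — are precisely the obstructions that have kept even the $1$-Wasserstein analogue (\cite[Conjecture 2.4]{lussardi2014variational}) open. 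The Coulombic case is, if anything, harder, since the kernel does not localize and the paper's own screening argument (Proposition \ref{existing qualitative results}-\CircleAroundChar{3}) is only known for exact minimizers, not for arbitrary sequences of bounded energy, which is what the liminf inequality requires.

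Two further points where the argument as written would fail even granting the missing estimates. First, the blow-up step for the liminf inequality presupposes that ``rescaling coordinates by $1/\rho$ brings the configuration near a planar bilayer''; but establishing that low-energy configurations are locally close to the reference profile is exactly the content of the missing stability estimate, so the argument is circular as stated. Second, the compactness claim that the limit measure concentrates on a set of Hausdorff dimension $n-d$ with $W^{2,2}$ regularity is asserted, not derived: the energy bound $F'_\rho\leq C$ controls $\mathrm{Per}\,U$ only at the level needed for tightness of the measures, and extracting a rectifiable limit surface with square-integrable second fundamental form from an $O(\rho^2)$ energy excess is the central analytic difficulty of the whole problem. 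The point regime $\zeta>\zeta_2$ is the most plausible piece of your program, being closest to the small-mass results of \cite{bonacini2016ground} and \cite{choksi2010small}, but even there you would need to rule out non-spherical and coalescing components. In short: the statement remains a conjecture, and your proposal, while a reasonable map of what a proof would require, does not constitute one.
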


If the first statement in Conjecture \ref{2-D conjecture of Gamma-convergence} and the second statement in Conjecture \ref{3-D conjecture of Gamma-convergence} are correct, then Proposition \ref{elastica functional minimized by a circle} tells us that the global minimizer should resemble a circle.

\begin{proposition}
\label{elastica functional minimized by a circle}
Let $C$ be a closed $W^{2\hspace{0.2ex},\hspace{0.2ex}2}$ curve in $\mathbb R^3$ with a prescribed length, and let $\kappa$ be its curvature, then the elastica functional $\int_C\kappa^2\dd{s}$ is minimized when $C$ is a circle.
\end{proposition}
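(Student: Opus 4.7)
The plan is to reduce the problem to a combination of two classical inequalities: Fenchel's theorem on the total absolute curvature of a closed space curve, and the Cauchy--Schwarz inequality. First I would fix the prescribed length $L$ and parametrize $C$ by arc length, so that $s\in[0,L]$ and the curvature $\kappa(s)$ is in $L^2([0,L])$ by the $W^{2\hspace{0.2ex},\hspace{0.2ex}2}$ assumption. The elastica integral then becomes $\int_0^L\kappa(s)^2\dd{s}$, and I would produce a lower bound that depends only on $L$.

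Next I would invoke Fenchel's theorem, which asserts that for any closed rectifiable curve in $\mathbb R^3$ the total absolute curvature satisfies $\int_0^L |\kappa(s)|\dd{s}\geq 2\pi$, with equality if and only if $C$ is a convex plane curve. A $W^{2\hspace{0.2ex},\hspace{0.2ex}2}$ curve falls within the regularity class for which this inequality holds (it can be approximated in $W^{2\hspace{0.2ex},\hspace{0.2ex}2}$ by smooth closed curves, and the total curvature is lower semicontinuous under such approximations). Applying Cauchy--Schwarz to the pair $(|\kappa|,1)$ then gives
\begin{equation*}
\Bigl(\int_0^L |\kappa(s)|\dd{s}\Bigr)^2 \leq L\int_0^L \kappa(s)^2 \dd{s},
\end{equation*}
so combining the two inequalities yields $\int_C \kappa^2\dd{s}\geq 4\pi^2/L$.

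Finally I would check that this lower bound is attained by a circle of length $L$: such a circle has constant curvature $\kappa\equiv 2\pi/L$, and direct substitution gives $\int_C\kappa^2\dd{s}=(2\pi/L)^2\cdot L=4\pi^2/L$. For the uniqueness/characterization part, equality in Cauchy--Schwarz forces $|\kappa|$ to be constant a.e., while equality in Fenchel forces $C$ to be a convex planar curve; a planar curve with constant curvature is necessarily an arc of a circle, and being closed it must be a full circle.

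The main obstacle I foresee is the regularity issue: Fenchel's theorem is usually stated for $C^2$ curves, so I would need to justify its use for $W^{2\hspace{0.2ex},\hspace{0.2ex}2}$ curves via a density/approximation argument (smooth curves are dense in the class of closed $W^{2\hspace{0.2ex},\hspace{0.2ex}2}$ curves, and both $\int|\kappa|\dd{s}$ and $\int\kappa^2\dd{s}$ behave well under $W^{2\hspace{0.2ex},\hspace{0.2ex}2}$ convergence). Once the approximation step is in place, the rest of the argument is a short combination of Fenchel and Cauchy--Schwarz with the equality cases tracked carefully.
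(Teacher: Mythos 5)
Your proposal is correct and takes essentially the same route as the paper's proof, which likewise combines the Cauchy--Schwarz inequality $\int_C 1\,\dd{s}\int_C\kappa^2\dd{s}\geq\big(\int_C|\kappa|\dd{s}\big)^2$ with Fenchel's theorem (cited there as \cite[Theorem 7.2.3]{willmore1993riemannian}) and the equality characterization via planar convex curves of constant curvature. Your additional remarks on the $W^{2,2}$ regularity and the density argument are a careful supplement to a step the paper leaves implicit, but they do not change the underlying argument.
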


\begin{proof}
The length of $C$ is given by $\int_C1\dd{s}$ and is assumed to be fixed. By Cauchy\textendash Schwarz inequality we have $\int_C1^2\dd{s}\int_C\kappa^2\dd{s}\geqslant\big(\int_C|\kappa|\dd{s}\big)^2\geqslant4\pi^2$, where the second inequality is due to \cite[Theorem 7.2.3]{willmore1993riemannian} and becomes an equality if and only if $C$ is a planar convex curve.
\end{proof}

\begin{proposition}
\label{calculate undetermined coefficients of Helfrich energy}
Assuming the first statement in Conjecture \ref{3-D conjecture of Gamma-convergence} is correct, i.e., for $n=3$ and $\zeta\in(\zeta_0,\zeta_1)$, the Gamma-limit of $F'_\rho$ is of the following quadratic form (cf. Appendix \ref{sec:Wilmore} for Helfrich and Willmore energies)
\begin{equation}
\label{Helfrich energy}
\int_S(\lambda_1 H^2+\lambda_2 K)\dd{A},
\end{equation}
where $H=(\kappa_1\!+\!\kappa_2)/2$ (with $\kappa_1$ and $\kappa_2$ being principal curvatures, positive if $S$ is a sphere), $K=\kappa_1\kappa_2$, and $\lambda_1,\lambda_2\in\mathbb R$. If the recovery sequence is given by a bilayer with the middle $U$ layer of uniform thickness (similar to the construction in \cite[Section 4]{lussardi2014variational}), then we have
\begin{equation}
\label{moduli in Helfrich energy}
\lambda _1 = \frac{4}{15}\frac{1\!+\!4\zeta\!+\!\zeta^2}{\big((\zeta\!+\!1)/3\big)^{2/3}},\quad\lambda _2 = \frac{4\!-\!4\zeta\!-\!\zeta^2}{5 \big((\zeta\!+\!1)/3\big)^{2/3}}.
\end{equation}
\end{proposition}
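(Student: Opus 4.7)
The plan is to identify $\lambda_1$ and $\lambda_2$ by evaluating the conjectured Gamma-limit on two model surfaces whose principal curvatures are constant, and matching against the explicit asymptotic expansions of $F'_\rho$ already provided by Corollary \ref{rescaled liposome asymptotics}. Because the integrand $\lambda_1 H^2+\lambda_2 K$ is linear in $(\lambda_1,\lambda_2)$, any two independent evaluations pin down both constants. The natural choices are the round sphere, on which $H^2 = K = 1/R^2$ pointwise and the functional integrates to $4\pi(\lambda_1+\lambda_2)$ (independent of $R$, by Gauss\textendash Bonnet), and the circular cylinder, on which $K=0$ so that the functional isolates $\pi L\lambda_1/(2R)$ per axial length $L$. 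The sphere thus fixes the sum $\lambda_1+\lambda_2$ and the cylinder fixes $\lambda_1$ on its own, after which $\lambda_2$ follows by subtraction.

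First I would implement the sphere equation. The 3-D spherical liposome of Definition \ref{liposome candidate defintion} has a middle $U$ layer of uniform radial thickness, and therefore qualifies as the recovery sequence specified in the hypothesis. The $n=3$ asymptotics of Corollary \ref{rescaled liposome asymptotics} give $\lim_{\rho\to 0}F'_\rho = (4\pi/15)(\zeta^2+4\zeta+16)/((\zeta+1)/3)^{2/3}$; the midsurface $|\vec x|=(R_1+R_2)/2$ converges to a sphere on which the Helfrich energy equals $4\pi(\lambda_1+\lambda_2)$, and equating yields the first equation. Next I would implement the cylinder equation. Interpreting the 2-D liposome as the cross-section of a translation-invariant 3-D bilayer, the $n=2$ asymptotics of Corollary \ref{rescaled liposome asymptotics} deliver $F'_\rho \to 8\pi^2(\zeta^2+4\zeta+1)/(5(\zeta+1)m)$ per unit axial length, while the midsurface is a cylinder of radius $R=(m/(4\pi))((\zeta+1)/3)^{1/3}$ on which the Helfrich energy per unit length equals $\pi\lambda_1/(2R)$. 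Solving this determines $\lambda_1$, and subtraction yields $\lambda_2$; direct algebraic simplification recovers the expressions in \eqref{moduli in Helfrich energy}.

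The main obstacle is not the algebraic matching -- which is a two-by-two linear system in $(\lambda_1,\lambda_2)$ -- but rather justifying that the 2-D liposome legitimately plays the role of a 3-D cylindrical recovery sequence in the rescaled framework of Definition \ref{rescaled energy}. A translation-invariant bilayer in $\mathbb R^3$ has infinite total Newtonian energy, so one must verify that an appropriately normalized per-unit-length energy is captured by the 2-D problem with logarithmic kernel, consistent with the cross-sectional identification of \cite[Theorem 8]{van2008copolymer}. A secondary caveat is that the entire derivation rests on Conjecture \ref{3-D conjecture of Gamma-convergence}: should the true Gamma-limit acquire an additional linear-in-$H$ spontaneous-curvature term, the two-parameter sphere/cylinder matching would be under-determined and a further test surface, or a symmetry argument ruling such a term out, would be required.
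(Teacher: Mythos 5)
Your proposal matches the paper's own proof essentially step for step: the paper also evaluates the conjectured quadratic form on a sphere (using the $n=3$ liposome asymptotics of Corollary \ref{rescaled liposome asymptotics}) and on a cylinder (using the $n=2$ asymptotics with the cross-sectional identification via \cite[Theorem 8]{van2008copolymer} and the midsurface radius $(R_1\!+\!R_2)/2\approx m\sqrt[3]{(\zeta\!+\!1)/3}/(4\pi)$), then solves the resulting two-by-two linear system for $(\lambda_1,\lambda_2)$. The caveats you raise (justifying the 2-D-to-cylinder reduction, and the dependence on the conjectured form of the limit) are likewise acknowledged rather than resolved in the paper, which states the result conditionally on Conjecture \ref{3-D conjecture of Gamma-convergence}.
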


\begin{proof}
According to Proposition \ref{Lagrange multiplier equation} and the Proof of Proposition \ref{proposition: energy for liposome candidates}, the liposome candidates shown in Corollary \ref{rescaled liposome asymptotics} satisfy the screening property. For any fixed $U$, there exists a unique $V$ satisfying the screening property \cite[Remark 4.2]{bonacini2016optimal}, which is a necessary condition for a minimizer (Proposition \ref{existing qualitative results}-\CircleAroundChar{3}). Therefore, for the Radon measure of a sphere in 3-D, the recovery sequence is by assumption given by radially symmetric liposome candidates in 3-D. According to \cite[Theorem 8]{van2008copolymer}, the 2-D liposome candidate can be regarded as a 3-D cylindrical bilayer which resembles a very long tube \cite[Middle of Figure 1]{wang2021analytical}. For a sphere in 3-D with radius $R$, we have $K=1/R^2$ and $H=1/R$. According to Corollary \ref{rescaled liposome asymptotics}, we have
\begin{equation*}
\frac{4 \pi}{15}\frac{\zeta ^2\!+\!4 \zeta\!+\!16}{\big((\zeta\!+\!1)/3\big)^{2/3}}=4\pi R^2(\lambda_1/R^2+\lambda_2/R^2).
\end{equation*}
For a cylinder in 3-D with length $L$ and radius $R$, we have $K=0$ and $H=1/(2R)$. Similarly, we have
\begin{equation*}
8 \pi ^2\frac{\zeta ^2\!+\!4 \zeta\!+\!1}{5(\zeta\!+\!1) m}L=2\pi R L\big(\lambda_1/(2R)^2+\lambda_2\cdot0\big).
\end{equation*}
In Corollary \ref{rescaled liposome asymptotics}, the radius $R$ of the 2-D liposome candidate can be approximated by $(R_1\!+\!R_2)/2\approx m\sqrt[3]{(\zeta\!+\!1)/3}\big/(4\pi)$. Therefore the coefficients $\lambda_1$ and $\lambda_2$ can be determined.
\end{proof}

\begin{remark}$ $
\label{triply periodic minimum surface remark}
\begin{enumerate}[label=\protect\CircleAroundChar{\arabic*}]
\item
By the Gauss\textendash Bonnet formula, for closed surfaces in the same homotopy class (i.e., closed surfaces of the same genus $g$), we have $\int_S K\dd{A}=4\pi(1\!-\!g)$, therefore \eqref{Helfrich energy} can be reduced to the Willmore energy $\int_S H^2\dd{A}$ as long as no topological change occurs.

\item
Define $\zeta_0=2(\sqrt{2}\!-\!1)\approx0.82843$.
We are now ready to explain the requirement $\zeta>\zeta_0$ imposed in Conjectures \ref{conjecture: infimum energy linear in m} and \ref{3-D conjecture of Gamma-convergence}. If such conjectures were also true for $\zeta<\zeta_0$, then we would still be able to prove \eqref{moduli in Helfrich energy} in Proposition \ref{calculate undetermined coefficients of Helfrich energy}. However, for $\zeta<\zeta_0$ we have $\lambda_2>0$. Therefore, the quadratic form in Proposition \ref{calculate undetermined coefficients of Helfrich energy} is not positive semi-definite, and it is energetically more favorable to have zero mean curvature and negative Gaussian curvature. As pointed out by \cite[Page 143]{porte1994micellar}, if $\lambda_2$ becomes positive, then it is preferable for a surface to have large genus $g$, or many "holes". For example, triply periodic minimal surfaces shown in Figure \ref{fig: TPMS} possess many "holes". Since the quadratic form is indefinite, it is desirable for those triply periodic minimal surfaces to have infinitesimal lattice constants. However, we expect an equilibrium to be attained at a finitely small lattice constant (on the order of $\rho$) due to counteracting higher-order terms.

\item It surprised us to find that $\lambda_2$ can be positive for relatively small $\zeta$. However, with hindsight we may come up with the following intuitive but non-rigorous explanation. For relatively large $\zeta$, bending of the bilayer is penalized because there will be a slight difference in the thickness between the inner and outer $V$ layers, as mentioned in Remark \ref{remark on asymptotics of liposome candidates}-\CircleAroundChar{1}. For relatively small $\zeta$, such penalty may be relatively small because the $V$ layers have vanishing thickness as $\zeta\to0$. Meanwhile, the middle $U$ layer may prefer saddle-splay deformations, because the $U$ layer becomes more "spread out" in this way and thus the Coulombic repulsion within the $U$ layer may decrease.

\item Our numerical simulations in Figure \ref{figure gyroid simulation} support the observation in Remark \ref{triply periodic minimum surface remark}-\CircleAroundChar{2}. More specifically, our numerical results indicate that a gyroid-like minimizer has lower energy-to-mass ratio than the planar bilayer for $\zeta=0.6<\zeta_0$, and thus the latter cannot be a global minimizer for $m\gg1$. For $\zeta=1>\zeta_0$, our numerical results suggest the opposite.

\item Various cubic bicontinuous structures resembling triply periodic minimal surfaces can be observed in copolymer systems and biological specimens (see \cite{deng1998three,lin2017tunable} and \cite[Section 4.1]{han2018overview}), e.g., in the endoplasmic reticulum, Golgi apparatus and mitochondria \cite[Bottom of Page 144]{schwarz2002bicontinuous}. Our finding provides a plausible explanation for such phenomena in the parameter regime $\zeta<\zeta_0$, and also demonstrates the ability of the Ohta\textendash Kawasaki energy to capture such aspects of amphiphile self-assembly.

\end{enumerate}
\end{remark}

\begin{figure}[H]
\centering
\begin{minipage}{0.18\textwidth}
\includegraphics[scale=0.4]{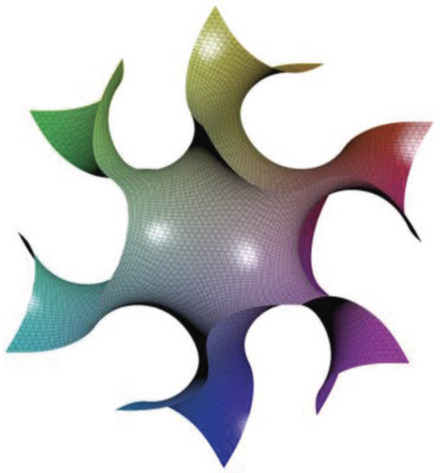}
\end{minipage}
\begin{minipage}{0.15\textwidth}
\includegraphics[scale=0.4]{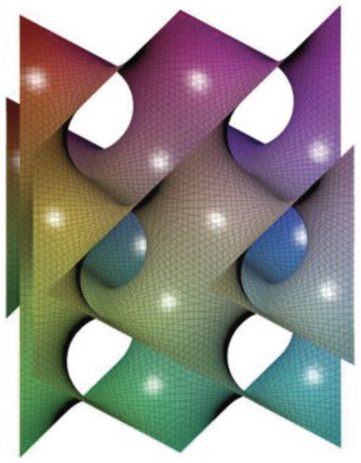}
\end{minipage}
\begin{minipage}{0.17\textwidth}
\includegraphics[scale=0.4]{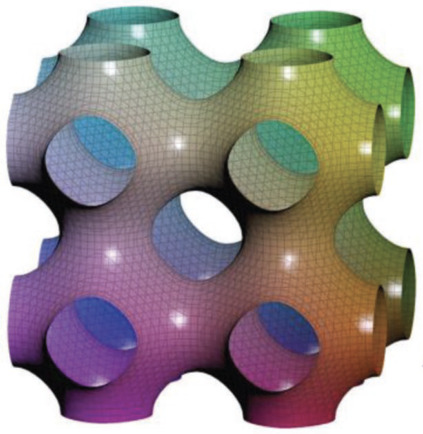}
\end{minipage}
\caption{Triply periodic minimal surfaces. Left: gyroid. Middle: Schwarz diamond. Right: Schwarz primitive. Reproduced from \cite[Figure 1]{han2018overview}.}
\label{fig: TPMS}
\end{figure}

\begin{remark}$ $
\begin{enumerate}[label=\protect\CircleAroundChar{\arabic*}]
\item
Conjectures \ref{2-D conjecture of Gamma-convergence} and \ref{3-D conjecture of Gamma-convergence} states that our nonlocal problem Gamma-converges to a local problem. Our rationale behind those conjectures is as follows. According to the screening property (Proposition \ref{existing qualitative results}-\CircleAroundChar{3}), each connected component of the minimizer satisfies the charge neutrality condition, and the total energy is just the sum of the energy for each connected component. Therefore we can consider each connected component separately. For each connected component, although the problem is nonlocal, the screening property may cause such nonlocality to decay sufficiently fast as $\rho\to0$.

\item Analogously, the screening property is also the reason that the electrostatic potential energy of a crystal is an extensive property. The electrostatic interactions are nonlocal, but an extensive property is local, i.e., it is proportional to the size of the crystal. In a variant model where the nonlocal term is given by the 1-Wasserstein distance, a similar phenomenon occurs: the nonlocal (or global) problem converges to the elastica bending energy which is a local problem \cite[Section 9.3]{peletier2009partial}.
\end{enumerate}
\end{remark}

\section{Phase-field reformulation}
\label{section: Phase-field reformulation}

In order to provide some evidence for our conjectures in Section \ref{subsec: Elastica functional and Helfrich energy}, we propose a phase-field reformulation which will be used for the numerical simulations in Section \ref{section: Phase-field simulations}. We also prove the Gamma-convergence of our phase-field reformulation to a Gamma-limit, which is shown to be equivalent to the original problem \eqref{sharp energy nonrelaxed}.

The phase-field method \cite{du2006simulating} is a useful tool to study the motions of interfaces. The basic idea is to use a narrow but diffuse interface in place of the sharp interface, and the thickness of the interfacial layer is controlled by a diffuseness parameter $\veps$. The interface is implicitly given by the level set of a smooth function, so there is no need to explicitly track the interface. Our phase-field reformulation is very similar to a previous work by two of us \cite[Section III.A]{10.1063/5.0148456}.

\subsection{Diffuse interface energy}
\label{subsec Diffuse interface energy}

On a bounded domain $\Omega\subset\bbR^n$ with $|\Omega|\geq m+\zeta m$, we define the following phase-field functional
\begin{equation*}
\mathcal E(u,v)=\mathcal P(u,v)+\gamma\mathcal N(u,v)+\mathcal C(u,v),
\end{equation*}
where $\mathcal P$ is the diffuse interface version of the perimeter
\begin{equation*}
\mathcal P(u,v) = \frac{\veps}{2}\int_\Omega|\nabla u|^2+\int_\Omega\frac{W(u,v)}{\veps},
\end{equation*}
with the diffuseness parameter $\veps>0$, and the double-well potential $W$ given by (see Figure \ref{W(u,v) plot})
\begin{equation}
\label{formula of potential W}
2W(u,v)/27:=4(u\!-\!u^2)^2/3+\min\{v,0\}^2+\min\{1\!-\!v,0\}^2+\min\{1\!-\!u\!-\!v,0\}^2,
\end{equation}
which is to penalize violations of the three conditions: \CircleAroundChar{1} $u\in\{0,1\}$; \CircleAroundChar{2} $0\leq v\leq1$; \CircleAroundChar{3} $u\!+\!v\leq1$. Since $|\nabla v|^2$ is not penalized in our energy, we choose a degenerate well for $v$ instead of the classical double well $(v-v^2)^2$, in order to prevent $v$ from being trapped in the local minima $0$ and $1$.
\begin{figure}[H]
\centering
\includegraphics[width=0.4\textwidth]{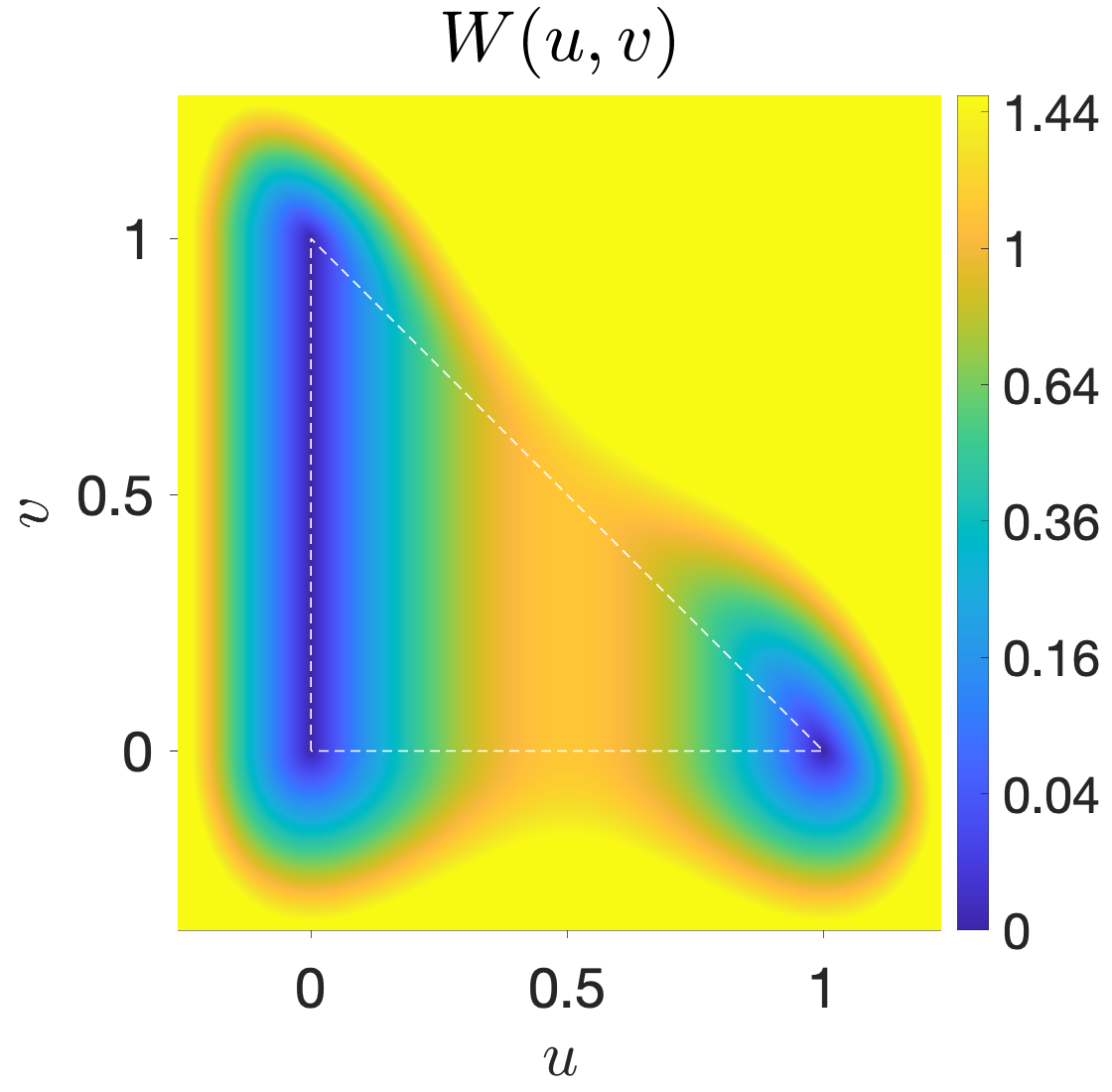}
\caption{Visualization of $W$. For clarity a truncated version $\min\{W,1.5\}$ is plotted.}
\label{W(u,v) plot}
\end{figure}

We choose a function $f$ which strictly increases on $[0,1]$ satisfying $f(0)=0$ and $f(1)=1$. The simplest choice is the identity function $f(z):=z$, which will be used in this section for our proof of the Gamma-convergence. However, for our simulations in Section \ref{section: Phase-field simulations}, we will choose $f$ to be a nonlinear function in order to achieve the numerical efficiency as mentioned in \cite[Section III.A]{10.1063/5.0148456}. As we will see later in Propositions \ref{compactness proposition} and \ref{prop: v must be indicator function}, if $(u,v)$ is a minimizer of $\mathcal E$ for $\veps\ll1$, then $u$ and $v$ should be approximately 0 or 1 at most places in $\Omega$. Therefore, the specific choice of $f$ should have a vanishing effect on the minimizer of $\mathcal E$ as $\veps\to0$.

With penalty coefficients $K_1,K_2>0$, the mass constraint term $\mathcal C$ is defined as follows
\begin{equation*}
2\mathcal C(u,v)=K_1\Big(m-\int_\Omega f(u)\Big)^2+K_2\Big(\zeta m-\int_\Omega f(v)\Big)^2.
\end{equation*}
The nonlocal term $\mathcal N$ is defined as follows
\begin{equation*}
\begin{aligned}
2\mathcal N(u,v)=&\int_\Omega\int_\Omega f\big(u(\vec x)\big)G(\vec x\!-\!\vec y)f\big(u(\vec y)\big)\dd{\vec y}\dd{\vec x}-\frac2\zeta\int_\Omega\int_\Omega f\big(u(\vec x)\big)G(\vec x\!-\!\vec y)f\big(v(\vec y)\big)\dd{\vec y}\dd{\vec x}\\
&+\frac1{\zeta^2}\int_\Omega\int_\Omega f\big(v(\vec x)\big)G(\vec x\!-\!\vec y)f\big(v(\vec y)\big)\dd{\vec y}\dd{\vec x},
\end{aligned}
\end{equation*}
where the nonlocal kernel $G$ is the Green's function of negative Laplacian $-\Delta$. We consider two types of boundary conditions for $G$. The first type is no boundary conditions, under which $G$ is the Green's function in the free space (i.e., the so-called fundamental solution). The second type is periodic boundary conditions, in which case we require $\Omega$ to be a rectangle ($n=2$) or a rectangular cuboid ($n=3$), and according to \cite[Equations (2.1) and (2.2)]{choksi2010small}, $G$ is the periodic extension of the fundamental solution plus a continuous function, satisfying $\int_\Omega G=0$. In Section \ref{subsubsection Justification for boundary conditions}, we justify that the above two types of boundary conditions are equivalent. In fact, for $n=1$ such equivalence has already manifested itself in the striking similarity between Theorems 1 and 2 in \cite{van2008copolymer} (note that $VUVU\cdots U$ therein is equivalent to $VUVU\cdots UV$, since an infinitesimal $V$ block can be appended to the end without energy penalty, as long as $V$-$0$ interfaces are not penalized).

\subsection{The sharp interface Gamma-limit}

In this subsection, we aim to prove the Gamma-convergence of $\mathcal E_\veps$ as $\veps\to0$ and $K_1,K_2\to\infty$ (where the subscript $\veps$ is used to emphasize the dependence of $\mathcal E$ on $\veps$).


As mentioned before, in our proof of the Gamma-convergence, we let $f$ be the identity function. As a first step, we let $K_1,K_2\to\infty$, so that the admissible function space of $\mathcal E_\veps$ is restricted to $\big\{u,v\in L^2(\Omega):\int_\Omega v=\zeta \int_\Omega u=\zeta m\big\}$, and thus the constraint term $\mathcal C$ vanishes. For convenience, in most parts of our proof of the Gamma-convergence, we do not explicitly incorporate the mass constraints, because it should be straightforward except in the proof of the limsup inequality (Proposition \ref{Limsup inequality proposition}), where we do provide more details of the mass constraints. In addition, we have
\begin{align}
2\mathcal N(u,v)=&\int_\Omega\int_\Omega u(\vec x)\,G(\vec x\!-\!\vec y)\,u(\vec y)\dd{\vec y}\dd{\vec x}-\frac2\zeta\int_\Omega\int_\Omega u(\vec x)\,G(\vec x\!-\!\vec y)\,v(\vec y)\dd{\vec y}\dd{\vec x}\notag\\
&+\frac1{\zeta^2}\int_\Omega\int_\Omega v(\vec x)\,G(\vec x\!-\!\vec y)\,v(\vec y)\dd{\vec y}\dd{\vec x}\notag\\
=&\int_\Omega\varphi(\vec x)\big(u(\vec x)-v(\vec x)/\zeta\big)\dd{\vec x}=-\int_{\Omega}\varphi\,\Delta\varphi=\int_{\Omega}|\nabla\varphi|^2,\label{linearized nonlocal term}
\end{align}
where $\varphi=G*(u\!-\!v/\zeta)$, i.e., $\varphi(\vec x)=\int_\Omega G(\vec x\!-\!\vec y)\big(u(\vec y)-v(\vec y)/\zeta\big)\dd{\vec y}$\, for any $\vec x\in\Omega$. If we impose no boundary conditions on $G$ (i.e., $G$ is the fundamental solution), then $\varphi$ satisfies $-\Delta\varphi=u-v/\zeta$; if we impose periodic boundary conditions on $G$, then $\varphi$ satisfies $-\Delta\varphi=\text{constant}+u-v/\zeta$ and $\int_{\Omega}\varphi=0$, where the constant is zero as long as $\int_\Omega v=\zeta\int_\Omega u$.

We assume that $\Omega\subset\bbR^n$ is an open bounded set with Lipschitz boundary. We extend the domain of definition of $\mathcal E_\veps$ to $L^2(\Omega)\times L^2(\Omega)$:
\begin{equation*}
\mathcal E_\veps(u,v) =
\left\{
\begin{aligned}
&\frac{\veps}{2}\int_\Omega|\nabla u|^2+\int_\Omega\frac{W(u,v)}{\veps}+\gamma\mathcal N(u,v),&&u\in W^{1,2}(\Omega),\;v\in L^2(\Omega),\;\mbox{$\int_\Omega v\!=\zeta\int_\Omega u=\!\zeta m$},\\
&\infty,&&\text{otherwise}.
\end{aligned}
\right.
\end{equation*}
We want to prove that $\mathcal E_\veps$ Gamma-converges to the following functional defined on $L^2(\Omega)\times L^2(\Omega)$:
\begin{equation*}
\mathcal E_0(u,v) =
\left\{
\begin{aligned}
&|Du|(\Omega)+\gamma\mathcal N(u,v),&&u\in BV(\Omega;\{0,1\}),\;v\in L^2(\Omega;[0,1]),\;vu=0\;\text{a.e.},\;\mbox{$\int_\Omega v\!=\zeta\int_\Omega u=\!\zeta m$},\\
&\infty,&&\text{otherwise},
\end{aligned}
\right.
\end{equation*}
where $|Du|$ is the absolute value of the distributional derivative of $u$. Note that $\mathcal E_0$ is a relaxed version of $E$, similar to \cite[Equation (3.1)]{bonacini2016optimal}.

Our proof of the Gamma-convergence of $\mathcal E_\veps$ to $\mathcal E_0$ follows closely that of the classical Cahn--Hilliard energy functional given in \cite{leoni2013gamma}. The novelty of our proof lies in the degeneracy of the potential $W$ shown in Figure \ref{W(u,v) plot}, i.e., $W$ has non-isolated minimizers $(u,v)\in\{0\}\times[0,1]$. Furthermore, $|\nabla u|^2$ is penalized in our energy while $|\nabla v|^2$ is not. Sternberg \cite[Section 2]{sternberg1988effect} also considered the case where $W$ has non-isolated minimizers, but both $|\nabla u|^2$ and $|\nabla v|^2$ are penalized there. In the rest of this section, we consider an arbitrary sequence of diffuseness parameters $\{\veps_k:k\in\mathbb N\}\subset\bbR_{>0}$ converging to 0. For brevity we abbreviate the subscript $k$ and write $\mathcal E_\veps$, which should be understood as $\mathcal E_{\veps_k}$ for some $k\in\mathbb N$. We also write $\lim\limits_{\veps\to0}\mathcal E_\veps$ instead of $\lim\limits_{k\to\infty}\mathcal E_{\veps_k}$.

\subsubsection{Compactness result}

\begin{definition}
\label{definition equi-integrable}
A sequence of functions $\{f_k:k\in\mathbb N\}\subset L^1(\Omega)$ is called equi-integrable (i.e., uniformly integrable) if: for any $\tau>0$, there exists an $r>0$, such that
\begin{equation*}
\sup_{k\in\mathbb N}\int_{\{|f_k|\geq r\}}|f_k|<\tau.
\end{equation*}
\end{definition}

\begin{remark}
In Definition \ref{definition equi-integrable}, note that $\{f_1\}$ is always equi-integrable. Indeed, we can approximate $f_1$ using a simple function $g$ so that $\int_\Omega|f_1-g|<\tau/2$, and then choose $r$ such that $\int_{\{|f_1|\geq r\}}|g|<\tau/2$, because $\max|g|<\infty$ and the Lebesgue measure of $\{|f_1|\geq r\}$ converges to 0 as $r\to\infty$. Similarly, any finite subset of $\{f_k:k\in\mathbb N\}$ is always equi-integrable, therefore we have an equivalent definition:
\begin{equation*}
\limsup_{k\to\infty}\int_{\{|f_k|\geq r\}}|f_k|<\tau.
\end{equation*}
\end{remark}

\begin{proposition}
\label{compactness proposition}
As $\veps\to0$, if a sequence $\big\{(u_\veps,v_\veps)\big\} \subset W^{1,2}(\Omega) \times L^2(\Omega)$ satisfies
\begin{equation*}
M:=\sup_{\veps} \mathcal{E}_\veps(u_\veps,v_\veps) < \infty\,,
\end{equation*}
then there exists a subsequence $\big\{(u_\veps,v_\veps)\big\}$ (for brevity, this subsequence is not relabeled) and $(u_0,v_0) \in BV(\Omega; \{0,1\} )\times L^2(\Omega;[0,1])$ such that
\begin{equation*}
u_\veps \to u_0 \text{ in } L^2(\Omega),\quad v_\veps \rightharpoonup v_0 \text{ in } L^2(\Omega),\quad v_0\,u_0=0\;\text{a.e.}
\end{equation*}
\end{proposition}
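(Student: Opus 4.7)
The plan is to treat $u_\veps$ and $v_\veps$ by different techniques, because the functional plays asymmetric roles: $u_\veps$ enjoys a Modica--Mortola structure, while $v_\veps$ has no gradient term so only weak $L^2$ compactness is available. The starting point is the decomposition $W(u,v) = 18(u - u^2)^2 + \tfrac{27}{2} R(u,v)$ suggested by \eqref{formula of potential W}, where
\[
R(u,v) := \min\{v,0\}^2 + \min\{1-v,0\}^2 + \min\{1-u-v, 0\}^2 \geq 0.
\]
Using $\mathcal N \geq 0$ from \eqref{linearized nonlocal term}, the bound $\mathcal E_\veps(u_\veps, v_\veps) \leq M$ separately controls
\[
\frac{\veps}{2}\int_\Omega |\nabla u_\veps|^2 + \int_\Omega \frac{18(u_\veps - u_\veps^2)^2}{\veps} \leq M \qquad\text{and}\qquad \int_\Omega R(u_\veps, v_\veps) \leq \frac{2M\veps}{27}.
\]

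For $u_\veps$ the first bound is the classical Modica--Mortola functional with double-well $18(u - u^2)^2$ having zeros at $\{0,1\}$. The standard compactness theorem therefore produces a subsequence $u_\veps \to u_0$ in $L^1(\Omega)$ with $u_0 \in BV(\Omega;\{0,1\})$. Upgrading to $L^2$ is routine: after extracting a further a.e.\ convergent subsequence, $\int (u_\veps - u_\veps^2)^2 = O(\veps)$ shows that $u_\veps$ concentrates near $\{0,1\}$; truncating to $[0,1]$ (which does not increase the $u$-part of $W$) and applying dominated convergence yields $u_\veps \to u_0$ in $L^2(\Omega)$.

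For $v_\veps$ the second bound implies $\lVert\min\{v_\veps,0\}\rVert_{L^2}^2 + \lVert\min\{1-v_\veps,0\}\rVert_{L^2}^2 = O(\veps)$, so writing $v_\veps$ as its truncation to $[0,1]$ plus an $O(\sqrt\veps)$ $L^2$ remainder yields a uniform bound $\lVert v_\veps \rVert_{L^2(\Omega)} \leq C$. Extracting a weakly convergent subsequence $v_\veps \rightharpoonup v_0$ in $L^2(\Omega)$, I would then identify the pointwise constraints on $v_0$ as follows: each summand of $R$ is convex and continuous in $(u,v)$ (e.g.\ $\min\{1-u-v,0\}^2 = ((u+v-1)_+)^2$ is convex as the square of a nonnegative convex function), so the associated integral functional on $L^2(\Omega)\times L^2(\Omega)$ is sequentially lower semicontinuous along the strong--weak pair $(u_\veps, v_\veps) \to (u_0, v_0)$. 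Passing to the $\liminf$ on both sides of $\int_\Omega R(u_\veps, v_\veps) \to 0$ forces each summand of $R(u_0, v_0)$ to vanish a.e., so $v_0 \in [0,1]$ and $u_0 + v_0 \leq 1$ a.e. Since $u_0 \in \{0,1\}$, the latter inequality forces $v_0 = 0$ on $\{u_0 = 1\}$, which is exactly $v_0 u_0 = 0$ a.e.

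The main obstacle I anticipate is the degeneracy of $W$ in the $v$-variable combined with the absence of any $|\nabla v|^2$ term: this precludes strong compactness for $v_\veps$, and the pointwise constraints on $v_0$ must be extracted purely from convexity and strong--weak lower semicontinuity of the $v$-penalties, rather than from a.e.\ pointwise convergence.
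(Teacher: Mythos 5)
Your proof is correct, and while the overall architecture matches the paper's (Modica--Mortola compactness for $u_\veps$, a uniform $L^2$ bound and weak compactness for $v_\veps$, convexity to pass the pointwise constraints to the weak limit), the key step identifying $v_0\in[0,1]$ and $v_0u_0=0$ is carried out by a genuinely different mechanism. The paper does not split $W$; instead it applies Fatou in $u$ (using a.e.\ convergence of $u_\veps$), then uses the subgradient inequality $W(u_\veps,v_0)\leq W(u_\veps,v_\veps)+W_v(u_\veps,v_0)(v_0-v_\veps)$ for convexity in $v$ alone, and controls the error term via the explicit Lipschitz bound $|W_v(u_\veps,v_0)-W_v(u_0,v_0)|\leq 27|u_\veps-u_0|$ together with $u_\veps\to u_0$ in $L^2$ and $v_\veps\rightharpoonup v_0$. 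You instead isolate the degenerate part $R(u,v)$, observe it is \emph{jointly} convex and continuous in $(u,v)$ (the non-convex $(u-u^2)^2$ term having been split off), and invoke sequential weak lower semicontinuity of convex integral functionals along the strong--weak pair; this is cleaner, avoids computing $W_v$, and would generalize to any jointly convex penalty. Your route also buys a slightly more self-contained $L^2$ upgrade for $u_\veps$ (truncation plus the pointwise bounds $(u-u^2)^2\geq (u)_-^2$ for $u<0$ and $\geq (u-1)_+^2$ for $u>1$), where the paper instead establishes equi-integrability of $u_\veps^2+v_\veps^2$ from the quadratic growth of $W$ and applies Vitali's convergence theorem; both are sound, though you do implicitly rely on the superlinear growth of $(u-u^2)^2$ to supply the $L^1$ bound that the classical Modica--Mortola compactness theorem requires, which is worth stating explicitly.
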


\begin{proof}\\
\noindent\textbf{Step 1:}
We show that $u_\veps^2\!+\!v_\veps^2$ is uniformly bounded in $ L^1(\Omega)$ and equi-integrable. According to \eqref{formula of potential W}, there exists $R>0$ such that $W(u,v)\geq u^2\!+\!v^2$ whenever $u^2\!+\!v^2\geq R$. For any $r \geq R$, we have
\begin{equation*}
\int_{\{u_\veps^2+v_\veps^2 \geq r\}} (u_\veps^2+v_\veps^2)  \leq \int_{ \{ u_\veps^2+v_\veps^2 \geq r\} } W(u_\veps,v_\veps)\leq\mathcal{E}_\veps(u_\veps,v_\veps)\,\veps \leq M\veps\,,
\end{equation*}
which implies that $u_\veps^2\!+\!v_\veps^2$ is equi-integrable. Moreover, since $\Omega$ is bounded, $\int_\Omega (u_\veps^2\!+\!v_\veps^2)$ is bounded by $M\veps+R|\Omega|$.\\[-10pt]

\noindent\textbf{Step 2:} We show that a subsequence of $\{u_\veps\}$ converges pointwise \text{a.e.} to some $u_0\in BV(\Omega;\{0,1\})$ (by mimicking \cite{leoni2013gamma}). Denote $W_1(z):=\min\big\{|z\!-\!z^2|,1\big\}$ and $\eta(z):=\int_0^zW_1(r)\dd{r}$. We have
\begin{equation*}
M\geq\mathcal{E}_\veps(u_\veps,v_\veps)\geq\int_\Omega|\nabla u_\veps|\sqrt{W(u_\veps,v_\veps)}\geq\int_\Omega|\nabla u_\veps|\;|u_\veps-u_\veps^2|\geq\int_\Omega|\nabla u_\veps|\,W_1(u_\veps)=\int_\Omega\big|\nabla\eta(u_\veps)\big|.
\end{equation*}
Denote $\eta_\veps=\eta(u_\veps)$. Since $0\leq W_1\leq1$ and $\eta(0)=0$, we know $|\eta(z)|\leq |z|$ and thus $|\eta_\veps|\leq|u_\veps|$. According to Step 1, $\{u_\veps\}$ and thus $\{\eta_\veps\}$ are uniformly bounded in $L^2(\Omega)$ and thus in $L^1(\Omega)$. Therefore $\{\eta_\veps\}$ is uniformly bounded in $BV(\Omega)$. According to \cite[Theorem 5.5]{evans2015measure}, there exists a subsequence $\{\eta_\veps\}$ (not relabeled) and some $\eta_0\in BV(\Omega)$ such that $\eta_\veps\to\eta_0$ in $L^1(\Omega)$. A further subsequence $\{\eta_\veps\}$ (not relabeled) converges pointwise \text{a.e.} to $\eta_0$. Since $\eta(z)$ is strictly increasing and continuous in $z$, its inverse $\eta^{-1}$ is continuous. Define $u_0=\eta^{-1}(\eta_0)$, since $u_\veps=\eta^{-1}(\eta_\veps)$, we know that $u_\veps$ converges pointwise \text{a.e.} to $u_0$. Moreover, by Fatou's lemma \cite[Lemma 1.83-(i)]{fonseca2007modern} we have $\int_\Omega W_1(u_0)^2\leq\liminf\limits_{\veps\to0}\int_\Omega W_1(u_\veps)^2$, where $\int_\Omega W_1(u_\veps)^2\leq\mathcal{E}_\veps(u_\veps,v_\veps)\,\veps \leq M\veps$. Therefore $W_1(u_0)^2=0$ \text{a.e.}, that is, $u_0\in\{0,1\}$ a.e. So $\eta_0\in\big\{\eta(0),\eta(1)\big\}$ a.e. Since $\eta_0\in BV(\Omega)$, we can write $\eta_0=\eta(0)\,(1\!-\!\bm1_U)+\eta(1)\,\bm1_U$ with a set $U\subset\Omega$ of finite perimeter. Hence, $u_0=\bm1_U$ belongs to $BV(\Omega;\{0,1\})$.\\[-10pt]

\noindent\textbf{Step 3:} We show that the subsequence of $\{u_\veps\}$ obtained in Step 2 converges to $u_0$ in $L^2(\Omega)$. This is guaranteed by Vitali's convergence theorem \cite[Theorems 2.22, 2.24 and 2.29]{fonseca2007modern}, in view of Step 1.\\[-10pt]

\noindent\textbf{Step 4:} We show that a subsequence of $\{v_\veps\}$ converges weakly in $L^2(\Omega)$ to some $v_0\in L^2(\Omega)$. According to Step 1, $\{v_\veps\}$ is uniformly bounded in $L^2(\Omega)$. By \cite[Proposition 2.46-(iv)]{fonseca2007modern}, it has a subsequence (not relabeled) such that $v_\veps\rightharpoonup v_0$ in $L^2(\Omega)$ for some $v_0\in L^2(\Omega)$.\\[-10pt]

\noindent\textbf{Step 5:} We show that $v_0\in L^2(\Omega;[0,1])$ and $v_0\,u_0=0$ a.e. In view of Steps 2, 3 and 4, we can assume that the sequence $\big\{(u_\veps,v_\veps)\big\}$ satisfies $v_\veps\rightharpoonup v_0$ in $L^2(\Omega)$,\; $u_\veps\to u_0$ in $L^2(\Omega)$,\; and $u_\veps\to u_0$ a.e. Our goal is to prove $\int_\Omega W(u_0,v_0)=0$. Since $W(u,v)$ is continuous in $u$, by Fatou's lemma, we have the following
\begin{equation*}
\int_\Omega W(u_0,v_0)\leq\liminf_{\veps\to0}\int_\Omega W(u_\veps,v_0).
\end{equation*}
According to \eqref{formula of potential W}, $W(u,v)$ is convex in $v$, and we know $W_v(u,v):=\partial_vW(u,v)=27\big(\min\{v,0\}+\max\{v\!-\!1,0\}+\max\{v\!+\!u\!-\!1,0\}\big)$, therefore we have
\begin{equation*}
\int_\Omega W(u_\veps,v_0)\leq\int_\Omega W(u_\veps,v_\veps)+\int_\Omega W_v(u_\veps,v_0)\,(v_0\!-\!v_\veps),
\end{equation*}
where the first summand on the right-hand side can be bounded by $\mathcal{E}_\veps(u_\veps,v_\veps)\,\veps$, and the second summand can be split into
\begin{equation*}
\int_\Omega W_v(u_0,v_0)\,(v_0\!-\!v_\veps)+\int_\Omega \big(W_v(u_\veps,v_0)-W_v(u_0,v_0)\big)\,(v_0\!-\!v_\veps),
\end{equation*}
where the first summand converges to 0 because $W_v(u_0,v_0)\in L^2(\Omega)$ and $v_\veps\rightharpoonup v_0$ in $L^2(\Omega)$, and the absolute value of the second summand can be bounded by
\begin{equation*}
\|v_0\!-\!v_\veps\|_{L^2(\Omega)}\cdot\big\|W_v(u_\veps,v_0)-W_v(u_0,v_0)\big\|_{L^2(\Omega)}.
\end{equation*}
Since $\{v_\veps\}$ is uniformly bounded in $L^2(\Omega)$, we know $\|v_0\!-\!v_\veps\|_{L^2(\Omega)}$ is uniformly bounded. Moreover, we have $\big|W_v(u_\veps,v_0)-W_v(u_0,v_0)\big|\leq27|u_\veps\!-\!u_0|$, so $W_v(u_\veps,v_0)\to W_v(u_0,v_0)$ in $L^2(\Omega)$. To summarize, we have proved $\int_\Omega W(u_\veps,v_0)\to0$ as $\veps\to0$. Therefore $W(u_0,v_0)=0$ a.e.
\end{proof}

\subsubsection{Liminf inequality}
\label{Subsection Liminf inequality}

\begin{proposition}
\label{Liminf inequality proposition}
For any $u_0,v_0\in L^2(\Omega)$ and $\{u_\veps\},\{v_\veps\}\subset L^2(\Omega)$ such that both $u_\veps\rightarrow u_0$ and $v_\veps\rightharpoonup v_0$ in $L^2(\Omega)$, we have
\begin{equation*}
\mathcal E_0(u_0,v_0)\leq \liminf_{\veps\to0}\mathcal E_\veps(u_\veps,v_\veps).
\end{equation*}
\end{proposition}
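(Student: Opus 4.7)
The plan is to decompose $\mathcal E_\veps = \mathcal P_\veps + \gamma \mathcal N$ and treat the perimeter-type part and the nonlocal part separately. If $\liminf_{\veps\to0}\mathcal E_\veps(u_\veps,v_\veps)=\infty$, the claim is trivial, so I pass (without relabeling) to a subsequence realizing the liminf and satisfying $\sup_\veps\mathcal E_\veps(u_\veps,v_\veps)<\infty$. Proposition \ref{compactness proposition} then guarantees $u_0\in BV(\Omega;\{0,1\})$, $v_0\in L^2(\Omega;[0,1])$, and $v_0 u_0=0$ a.e.; the mass constraints $\int_\Omega u_0=m$ and $\int_\Omega v_0=\zeta m$ pass to the limit by testing the strong/weak convergences against the constant $1\in L^2(\Omega)$, which is admissible since $\Omega$ is bounded. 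Hence $\mathcal E_0(u_0,v_0)=|Du_0|(\Omega)+\gamma\mathcal N(u_0,v_0)<\infty$ and it suffices to bound each piece.

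For the perimeter part, the key observation is that the $v$-dependent summands in \eqref{formula of potential W} are non-negative, so pointwise $2W(u,v)\geq 36(u-u^2)^2$. The AM\textendash GM inequality applied to $\frac{\veps}{2}|\nabla u|^2+\frac{1}{\veps}W(u,v)$ yields
\begin{equation*}
\mathcal P_\veps(u_\veps,v_\veps)\;\geq\;\int_\Omega|\nabla u_\veps|\sqrt{2W(u_\veps,v_\veps)}\;\geq\;6\int_\Omega|\nabla u_\veps|\,|u_\veps-u_\veps^2|.
\end{equation*}
To safely invoke the chain rule despite possible unboundedness of $u_\veps$, I introduce the truncated profile $\Phi_M(z):=6\int_0^z\min\{|r-r^2|,M\}\dd{r}$, which is $6M$-Lipschitz and satisfies $\Phi_M(0)=0$ and $\Phi_M(1)=1$ for every $M\geq1/4$. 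Then $|D\Phi_M(u_\veps)|(\Omega)\leq\int_\Omega\Phi_M'(u_\veps)|\nabla u_\veps|\leq\mathcal P_\veps(u_\veps,v_\veps)$. Since $\Phi_M$ is Lipschitz and $u_\veps\to u_0$ in $L^2(\Omega)$, I have $\Phi_M(u_\veps)\to \Phi_M(u_0)=\bm1_{U_0}$ in $L^1(\Omega)$, where $U_0=\{u_0=1\}$. The classical lower semicontinuity of total variation then gives
\begin{equation*}
|Du_0|(\Omega)=|D\bm1_{U_0}|(\Omega)\leq\liminf_{\veps\to0}|D\Phi_M(u_\veps)|(\Omega)\leq\liminf_{\veps\to0}\mathcal P_\veps(u_\veps,v_\veps).
\end{equation*}

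For the nonlocal term I rely on the identity \eqref{linearized nonlocal term}, namely $2\mathcal N(u,v)=\|\nabla\varphi\|_{L^2(\Omega)}^2$ with $\varphi=G*(u-v/\zeta)$. Elliptic regularity (together with the boundedness of $\Omega$) makes the convolution $G*$ a compact operator from $L^2(\Omega)$ into $L^2(\Omega)$ under either boundary condition considered in Section \ref{subsec Diffuse interface energy}. Since $u_\veps-v_\veps/\zeta\rightharpoonup u_0-v_0/\zeta$ in $L^2(\Omega)$, compactness of $G*$ produces strong convergence $G*(u_\veps-v_\veps/\zeta)\to G*(u_0-v_0/\zeta)$ in $L^2(\Omega)$; pairing strong with weak gives $\mathcal N(u_\veps,v_\veps)\to \mathcal N(u_0,v_0)$ (in fact as a limit, not just liminf). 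Summing this equality with the previous liminf inequality yields $\mathcal E_0(u_0,v_0)\leq\liminf_{\veps\to0}\mathcal E_\veps(u_\veps,v_\veps)$.

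The main obstacle is the degeneracy of $W$: its minimum set is the entire segment $\{0\}\times[0,1]$ together with $\{1\}\times\{0\}$, so the classical Modica\textendash Mortola coupling of $u$ and $v$ through isolated wells is unavailable. The resolution exploits the asymmetric structure of $\mathcal P_\veps$: because $|\nabla v|^2$ is not penalized, one only expects $v_\veps$ to converge weakly, and the $v$-dependent part of $W$ contributes nothing to the Gamma-limit; meanwhile the $u$-dependence of $W$ alone is a standard double well, so the perimeter argument proceeds classically. The compactness of the Newtonian convolution $G*$ is then exactly the ingredient needed to promote the merely weak control of $v_\veps$ into continuity of the nonlocal functional.
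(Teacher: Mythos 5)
Your proposal is correct and follows essentially the same route as the paper's proof: the same reduction to a bounded-energy subsequence via the compactness proposition, the same Modica--Mortola argument with a truncated antiderivative of $|z-z^2|$ and lower semicontinuity of total variation for the perimeter part, and the same strong-times-weak pairing to get full convergence (not just lower semicontinuity) of $\mathcal N$. The only cosmetic difference is that you justify the strong convergence of $\varphi_\veps=G*(u_\veps-v_\veps/\zeta)$ by citing compactness of the Newtonian convolution on $L^2(\Omega)$, whereas the paper proves the same convergence by hand via a uniform $L^\infty$ bound on $G*\eta$ and dominated convergence (Lemmas \ref{boundedness of inverse Laplacian} and \ref{weak version of boundedness of inverse Laplacian}).
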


\begin{proof}
We assume that the above right-hand side is finite (otherwise there is nothing to prove). Without loss of generality (by extracting a subsequence if necessary), we further assume that the $\liminf$ is actually a limit. By Proposition \ref{compactness proposition}, we know $u_0\in BV(\Omega;\{0,1\})$, $v_0\in L^2(\Omega;[0,1])$, and $u_0v_0=0$ a.e. We can also assume $\mathcal E_\veps(u_\veps,v_\veps)<\infty$ for all $\veps$, and thus $\{u_\veps\}\subset W^{1,2}(\Omega)$.

\noindent\textbf{Step 1:} We first consider the case where $\gamma=0$. Denote $W_1(z):=\min\big\{|z\!-\!z^2|,1\big\}$, and $\eta(z):=\int_0^zW_1(r)\dd{r}$. Because $W_1(u)^2\leq(u-u^2)^2\leq W(u,v)/18$, we have
\begin{equation*}
\mathcal E_\veps(u_\veps,v_\veps)\geq \frac{\veps}{2}\int_\Omega|\nabla u_\veps|^2+\frac{18}{\veps}\int_\Omega W_1(u_\veps)^2\geq6\int_\Omega|\nabla u_\veps|\,W_1(u_\veps)=6\int_\Omega\big|\nabla\eta(u_\veps)\big|.
\end{equation*}
Since $0\leq W_1\leq1$, we know $\big|\eta(z)\!-\!\eta(r)\big|\leq |z\!-\!r|$ for any $z,r\in\bbR$. Denote $\eta_\veps=\eta(u_\veps)$ and $\eta_0=\eta(u_0)$, then $\eta_\veps\to\eta_0$ in $L^2(\Omega)$ and thus in $L^1(\Omega)$. According to \cite[Theorem 5.2]{evans2015measure}, we have
\begin{equation*}
\liminf_{\veps\to0}\int_\Omega|\nabla\eta_\veps|=\liminf_{\veps\to0}|D\eta_\veps|(\Omega)\geq|D\eta_0|(\Omega).
\end{equation*}
Noticing $\eta_0=\eta(1)\,u_0$ and $\eta(1)=1/6$, we obtain
\begin{equation*}
\liminf\limits_{\veps\to0}\mathcal E_\veps(u_\veps,v_\veps)\geq|Du_0|(\Omega)=\text{Per}_\Omega \{\vec x\in\Omega:u_0(\vec x)=1\}.
\end{equation*}

\noindent\textbf{Step 2:} We now consider the case where $\gamma>0$. We want to prove the following
\begin{equation*}
\lim_{\veps\to0}\mathcal N(u_\veps,v_\veps)=\mathcal N(u_0,v_0).
\end{equation*}
Denote $w=u-v/\zeta$. From \eqref{linearized nonlocal term} we recall $2\mathcal N(u,v)=\int_\Omega\varphi\,w$, where $\varphi(\vec x)=G*w:=\int_\Omega G(\vec x\!-\!\vec y)\,w(\vec y)\dd{\vec y}$. Denote $w_\veps=u_\veps-v_\veps/\zeta$ and $w_0=u_0-v_0/\zeta$ with $\varphi_\veps=G*w_\veps$ and $\varphi_0=G*w_0$, then
\begin{equation*}
\begin{aligned}
2\big|\mathcal N(u_\veps,v_\veps)-\mathcal N(u_0,v_0)\big|&=\Big|\int_\Omega\varphi_\veps\,w_\veps-\int_\Omega\varphi_0\,w_0\Big|\to0,
\end{aligned}
\end{equation*}
where we have used Lemma \ref{weak version of boundedness of inverse Laplacian} and the fact that $w_\veps \rightharpoonup w_0$ in $L^2(\Omega)$.
\end{proof}


\begin{lemma}
\label{boundedness of inverse Laplacian}
Recall that $G$ is the Green's function of $-\Delta$ under either no boundary conditions (i.e., the fundamental solution) or periodic boundary conditions. For $\eta\in L^2(\Omega)$, define $\psi(\vec x)=\int_\Omega G(\vec x\!-\!\vec y)\,\eta(\vec y)\dd{\vec y}$, then there is a constant $C\in\bbR_{>0}$ independent of $\eta$ and $\vec x$, such that $|\psi(\vec x)|\leq C\|\eta\|_{L^2(\Omega)}$.
\end{lemma}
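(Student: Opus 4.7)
The plan is to reduce the bound to a uniform estimate on the $L^2$-norm of $G(\vec x - \cdot)$ over $\Omega$, and then verify that estimate in each of the two relevant cases ($n=2,3$, free space or periodic).

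First, by the Cauchy--Schwarz inequality applied pointwise in $\vec x$,
\begin{equation*}
|\psi(\vec x)| \;\leq\; \int_\Omega |G(\vec x - \vec y)|\,|\eta(\vec y)|\,\dd{\vec y} \;\leq\; \|G(\vec x - \cdot)\|_{L^2(\Omega)}\,\|\eta\|_{L^2(\Omega)}.
\end{equation*}
So it suffices to show that there is a constant $C>0$, depending only on $n$, $\Omega$ and the choice of boundary conditions (but not on $\vec x$), such that $\|G(\vec x - \cdot)\|_{L^2(\Omega)} \leq C$ for every $\vec x \in \Omega$.

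For the free-space Green's function, enclose $\Omega - \vec x$ in a ball $B(0,R)$ with $R = \text{diam}(\Omega)$, which is independent of $\vec x$. In three dimensions, $|G(\vec z)|^2 = (4\pi|\vec z|)^{-2}$, and passing to spherical coordinates gives
\begin{equation*}
\int_\Omega |G(\vec x - \vec y)|^2\,\dd{\vec y} \;\leq\; \int_{B(0,R)} \frac{\dd{\vec z}}{16\pi^2|\vec z|^2} \;=\; \frac{R}{4\pi},
\end{equation*}
which is uniform in $\vec x$. In two dimensions, $|G(\vec z)|^2 = (\ln|\vec z|)^2/(4\pi^2)$, and the integral $\int_0^R r(\ln r)^2\,\dd{r}$ is finite, so the same argument yields a uniform bound. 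Hence the lemma holds in the free-space case.

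For the periodic case, recall from the paragraph preceding the lemma that $G$ equals the periodic extension of the fundamental solution plus a continuous function on the rectangle $\Omega$ (the correction being bounded on the closure). The singular part is handled exactly as above, since the singularity on the fundamental-solution piece is locally integrable in $L^2$, and the bounded correction contributes at most $\|\text{correction}\|_{L^\infty(\Omega)}\,|\Omega|^{1/2}$ to $\|G(\vec x - \cdot)\|_{L^2(\Omega)}$, uniformly in $\vec x$. Combining these two contributions yields the desired constant $C$. The only mild obstacle is bookkeeping for the periodic correction, but since it is continuous on a compact set this is routine.
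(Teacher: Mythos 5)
Your proposal is correct and follows essentially the same route as the paper's proof: Cauchy--Schwarz reduces the claim to a uniform bound on $\int_\Omega G(\vec x - \vec y)^2\dd{\vec y}$, which is then verified case by case for the free-space kernel in $n=2,3$ and for the periodic kernel via its decomposition into the fundamental solution plus a bounded regular part. Your enclosing-ball estimate with $R=\operatorname{diam}(\Omega)$ is a slightly more direct (and equally valid) way of getting the uniform kernel bound than the paper's equal-volume rearrangement and its splitting of $(\ln r)^2$ into monotone pieces.
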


\begin{proof}
According to the Cauchy\textendash Schwarz inequality, we have $$|\psi(\vec x)|^2=\Big|\int_\Omega G(\vec x\!-\!\vec y)\,\eta(\vec y)\dd{\vec y}\Big|^2\leq \int_\Omega G(\vec x\!-\!\vec y)^2\dd{\vec y}\,\int_\Omega \eta(\vec y)^2\dd{\vec y}=\int_\Omega G(\vec x\!-\!\vec y)^2\dd{\vec y}\,\|\eta\|_{L^2(\Omega)}^2.$$ Therefore we only need to estimate $\int_\Omega G(\vec x\!-\!\vec y)^2\dd{\vec y}$.

\noindent\textbf{Step 1:} We first consider the case where $G$ is the fundamental solution with $n=3$. In such case, we have $G(\vec x\!-\!\vec y)=\big(4\pi|\vec x\!-\!\vec y|\big)^{-1}$. Inspired by \cite[Page 159]{gilbarg2001elliptic}, we denote $R=\sqrt[3]{3|\Omega|/(4\pi)}$ so that $|\Omega|=\big|B(\vec x;R)\big|$, where $B(\vec x;R)$ is a ball of radius $R$ centered at $\vec x$. Using the monotonicity of $G$, we obtain for any $\vec x\in\Omega$,
\begin{equation*}
\int_\Omega\frac{\dd{\vec y}}{|\vec x\!-\!\vec y|^2}\leq \int_{B(\vec x;R)}\frac{\dd{\vec y}}{|\vec x\!-\!\vec y|^2}=4\pi R=\sqrt[3]{48\pi^2|\Omega|}.
\end{equation*}

\noindent\textbf{Step 2:} We now consider the case where $G$ is the fundamental solution with $n=2$. In such case, we have $G(\vec x\!-\!\vec y)=-\ln|\vec x\!-\!\vec y|/(2\pi)$. Notice that for any $r\in\bbR_{>0}$, we have
\begin{equation*}
(\ln r)^2=-|\ln r|\,\ln r+2\big(\max\{0,\ln r\}\big)^2,
\end{equation*}
where the first summand monotonically decreases and can be bounded in a similar manner to Step 1, while the second summand monotonically increases and can be bounded in terms of $\ln\sup\big\{|\vec x\!-\!\vec y|:\vec x,\vec y\in\Omega\big\}$. Therefore, we can prove that $\int_\Omega\big(\ln|\vec x\!-\!\vec y|\big)^2\dd{\vec y}$ is uniformly bounded in $\vec x$.

\noindent\textbf{Step 3:} We now consider the case where $G$ is equipped with periodic boundary conditions. In this case $G(\vec x\!-\!\vec y)$ is periodic, therefore $\int_\Omega G(\vec x\!-\!\vec y)^2\dd{\vec y}$ is independent of $\vec x$. According to \cite[Equations (2.1) and (2.2)]{choksi2010small}, $G$ is the sum of the fundamental solution and a regular part, therefore $\int_\Omega G(\vec x\!-\!\vec y)^2\dd{\vec y}$ is finite.
\end{proof}

\begin{lemma}
\label{weak version of boundedness of inverse Laplacian}
If $\eta_\veps \rightharpoonup \eta_0$ in $L^2(\Omega)$, define $\psi_\veps(\vec x)=\int_\Omega G(\vec x\!-\!\vec y)\,\eta_\veps(\vec y)\dd{\vec y}$, and analogously for $\psi_0$. Then
\begin{equation*}
\bigg|\int_\Omega(\psi_\veps\,\eta_\veps\!-\!\psi_0\,\eta_0)\bigg|\to0\quad\text{as}\;\veps\to0.
\end{equation*}
\end{lemma}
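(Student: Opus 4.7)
The plan is to reduce the lemma to the compactness of the convolution operator $T\eta := G*\eta$ acting on $L^2(\Omega)$, and then split the difference so that weak-times-strong and strong-times-weak pairings both vanish. All the analytic content has essentially been put in place by the preceding Lemma~\ref{boundedness of inverse Laplacian}; I will use its internal estimate, not just its stated conclusion.

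First I would observe that the proof of Lemma~\ref{boundedness of inverse Laplacian} actually establishes the stronger statement
$$\sup_{\vec x\in\Omega}\int_\Omega G(\vec x\!-\!\vec y)^2\dd{\vec y}\leq C^2<\infty,$$
so that $G(\vec x\!-\!\vec y)\in L^2(\Omega\times\Omega)$ (both for the fundamental solution in dimensions $n=2,3$ and for the periodic Green's function). Consequently the linear operator $T:L^2(\Omega)\to L^2(\Omega)$ defined by $T\eta(\vec x)=\int_\Omega G(\vec x\!-\!\vec y)\eta(\vec y)\dd{\vec y}$ is a Hilbert\textendash Schmidt operator, and in particular is compact. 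Since $\eta_\veps\rightharpoonup\eta_0$ in $L^2(\Omega)$, compactness of $T$ implies strong convergence
$$\psi_\veps=T\eta_\veps\longrightarrow T\eta_0=\psi_0\quad\text{in}\;L^2(\Omega).$$

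With this in hand, I write the telescoping identity
$$\int_\Omega(\psi_\veps\eta_\veps-\psi_0\eta_0)=\int_\Omega(\psi_\veps-\psi_0)\eta_\veps+\int_\Omega\psi_0(\eta_\veps-\eta_0).$$
The first summand is bounded by $\|\psi_\veps-\psi_0\|_{L^2(\Omega)}\|\eta_\veps\|_{L^2(\Omega)}$; since $\{\eta_\veps\}$ is weakly convergent in $L^2(\Omega)$ it is norm-bounded, and the first factor tends to $0$ by the previous paragraph, so this summand vanishes. The second summand vanishes because $\psi_0\in L^2(\Omega)$ (in fact $\psi_0\in L^\infty(\Omega)$ by Lemma~\ref{boundedness of inverse Laplacian}) is a fixed test function against which $\eta_\veps-\eta_0$ converges weakly to $0$.

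The main (and only) subtlety is justifying that $T$ is compact, which amounts to verifying $G\in L^2(\Omega\times\Omega)$; this is not quite the statement of the preceding lemma but follows immediately from the bound extracted in its proof, so no new estimates are needed. Everything else is a standard weak-strong pairing argument. A symmetric alternative would be to argue pointwise convergence of $\psi_\veps$ to $\psi_0$ (using $G(\vec x\!-\!\cdot)\in L^2(\Omega)$ for each fixed $\vec x$) combined with the uniform $L^\infty$ bound on $\psi_\veps$ from Lemma~\ref{boundedness of inverse Laplacian} and dominated convergence to upgrade to $L^2$ convergence, but the Hilbert\textendash Schmidt route is cleaner.
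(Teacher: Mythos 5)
Your proposal is correct. The telescoping decomposition
\[
\int_\Omega(\psi_\veps\,\eta_\veps\!-\!\psi_0\,\eta_0)=\int_\Omega(\psi_\veps\!-\!\psi_0)\,\eta_\veps+\int_\Omega\psi_0\,(\eta_\veps\!-\!\eta_0)
\]
and the treatment of the two summands are exactly what the paper does; the only divergence is in how you establish the key fact $\psi_\veps\to\psi_0$ strongly in $L^2(\Omega)$. You deduce it from the observation that $\sup_{\vec x}\int_\Omega G(\vec x\!-\!\vec y)^2\dd{\vec y}\leq C^2$ (which is indeed what the proof of Lemma~\ref{boundedness of inverse Laplacian} actually establishes) implies $G\in L^2(\Omega\times\Omega)$, so the convolution operator is Hilbert\textendash Schmidt, hence compact, hence carries the weakly convergent $\eta_\veps$ to a strongly convergent $\psi_\veps$. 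The paper instead argues pointwise: for each fixed $\vec x$ the kernel $G(\vec x\!-\!\cdot)$ lies in $L^2(\Omega)$, so weak convergence gives $\psi_\veps(\vec x)\to\psi_0(\vec x)$ pointwise, and the uniform $L^\infty$ bound from Lemma~\ref{boundedness of inverse Laplacian} plus dominated convergence upgrades this to $L^2$ convergence \textemdash{} precisely the ``symmetric alternative'' you mention at the end. Your route is shorter and packages the estimate into a standard operator-theoretic fact; the paper's route is more elementary in that it never invokes compact operators, at the cost of an explicit DCT justification. Both rest on the same integral bound, so the difference is one of presentation rather than substance.
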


\begin{proof}
Notice that according to Lemma \ref{boundedness of inverse Laplacian}, we have $\psi_\veps\,,\,\psi_0\in L^\infty(\Omega)$. Moreover,
\begin{equation*}
\begin{aligned}
\bigg|\int_\Omega(\psi_\veps\,\eta_\veps\!-\!\psi_0\,\eta_0)\bigg|
&\leqslant
\bigg|\int_\Omega(\psi_\veps\!-\!\psi_0)\,\eta_\veps\bigg|
+
\bigg|\int_\Omega\psi_0\,(\eta_\veps\!-\!\eta_0)\bigg|\\
&\leqslant
\|\psi_\veps\!-\!\psi_0\|_{L^2(\Omega)}\,\|\eta_\veps\|_{L^2(\Omega)}+\bigg|\int_\Omega\psi_0\,(\eta_\veps\!-\!\eta_0)\bigg|\,,
\end{aligned}
\end{equation*}
where $\big|\int_\Omega\psi_0\,(\eta_\veps\!-\!\eta_0)\big|\to0$ and $\|\eta_\veps\|_{L^2(\Omega)}$ has an upper bound independent of $\veps$, because $\eta_\veps\rightharpoonup\eta_0$ in $L^2(\Omega)$. In addition, we have
\begin{equation*}
\begin{aligned}
\lim_{\veps\to0}\|\psi_\veps\!-\!\psi_0\|_{L^2(\Omega)}^2&=\lim_{\veps\to0}\int_\Omega\big(\psi_\veps(\vec x)\!-\!\psi_0(\vec x)\big)^2\dd{\vec x}
=\int_\Omega\lim_{\veps\to0}\big(\psi_\veps(\vec x)\!-\!\psi_0(\vec x)\big)^2\dd{\vec x}\\
&=\int_\Omega\lim_{\veps\to0}\Big(\int_\Omega G(\vec x\!-\!\vec y)\big(\eta_\veps(\vec y)\!-\!\eta_0(\vec y)\big)\dd{\vec y}\Big)^2\dd{\vec x}
=0\,,
\end{aligned}
\end{equation*}
where the last equality is again due to $\eta_\veps \rightharpoonup \eta_0$ in $L^2(\Omega)$, and the second equality is due to the Dominated Convergence Theorem. To see why the Dominated Convergence Theorem applies, notice that according to Lemma \ref{boundedness of inverse Laplacian}, $|\psi_0(\vec x)|$ is uniformly bounded in $\vec x$, and that $|\psi_\veps(\vec x)|$ is uniformly bounded in $\vec x$ and $\veps$, because $\|\eta_\veps\|_{L^2(\Omega)}$ is uniformly bounded in $\veps$.
\end{proof}

\subsubsection{Limsup inequality}
\label{Subsection Limsup inequality}

\begin{proposition}
\label{Limsup inequality proposition}
For any $u_0,v_0\in L^2(\Omega)$, there exist $\{u_\veps\},\{v_\veps\}\subset L^2(\Omega)$ such that both $u_\veps\rightarrow u_0$ and $v_\veps\rightarrow v_0$ in $L^2(\Omega)$, satisfying
\begin{equation*}
\mathcal E_0(u_0,v_0)\geq \limsup_{\veps\to0}\mathcal E_\veps(u_\veps,v_\veps).
\end{equation*}
\end{proposition}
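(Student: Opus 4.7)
The plan is to construct the recovery sequence via the classical Modica--Mortola ansatz for $u_\veps$, paired with a truncation of $v_0$ designed to keep the one-sided penalty $\min\{1-u-v,0\}^2$ in $W$ dormant. The nontrivial case is $\mathcal E_0(u_0,v_0)<\infty$, which forces $u_0=\bm1_U$ for a set $U\subset\Omega$ of finite perimeter, $v_0\in L^2(\Omega;[0,1])$ with $v_0 u_0=0$ a.e., and the mass constraints $\int u_0=m$ and $\int v_0=\zeta m$. By the standard density of sets with smooth boundary among sets of finite perimeter (with convergence of perimeters), followed by a diagonal extraction over the smoothing index and $\veps$, it suffices to treat the case where $\partial U$ is smooth; the joint continuity of $\mathcal N$ under strong $L^2$ convergence via Lemma \ref{weak version of boundedness of inverse Laplacian} makes this reduction harmless for the nonlocal term.

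For smooth $U$, let $d_U$ be the signed distance to $\partial U$ (positive inside $U$), and let $q\colon\mathbb R\to(0,1)$ be the monotone solution of the optimal-profile ODE $q'=6q(1-q)$ with $q(0)=1/2$, so that $q$ approaches $0$ and $1$ exponentially and
\begin{equation*}
\int_{\mathbb R}\bigl[\tfrac12(q')^2+18\,q^2(1-q)^2\bigr]\dd{t}=\int_0^1 6s(1-s)\,\dd{s}=1,
\end{equation*}
the correct Modica--Mortola transition cost. Set
\begin{equation*}
u_\veps(\vec x):=q\bigl(d_U(\vec x)/\veps\bigr),\qquad v_\veps(\vec x):=v_0(\vec x)\wedge\bigl(1-u_\veps(\vec x)\bigr).
\end{equation*}
The truncation forces $v_\veps\in[0,1]$ and $u_\veps+v_\veps\le 1$, so only the first summand of $W$ in \eqref{formula of potential W} is active and $W(u_\veps,v_\veps)=18\,u_\veps^2(1-u_\veps)^2$.

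The three convergences are then verified as follows. Strong $L^2$-convergence $u_\veps\to u_0$ follows from the pointwise a.e.\ convergence $q(d_U/\veps)\to\bm1_U$ and dominated convergence. Strong $L^2$-convergence $v_\veps\to v_0$ follows because $v_\veps=0=v_0$ on $U$ (using $v_0u_0=0$), while on $\Omega\setminus U$ we have $|v_\veps-v_0|\le u_\veps\to 0$ in $L^2(\Omega\setminus U)$ by dominated convergence. For the Cahn--Hilliard part, the coarea formula and the change of variables $t=s/\veps$ give
\begin{equation*}
\int_\Omega\bigl[\tfrac\veps2|\nabla u_\veps|^2+\tfrac1\veps W(u_\veps,v_\veps)\bigr]=\int_{\mathbb R}\mathcal H^{n-1}(\{d_U=\veps t\})\,\bigl[\tfrac12(q')^2+18\,q^2(1-q)^2\bigr](t)\,\dd{t},
\end{equation*}
and smoothness of $\partial U$ together with dominated convergence (the bracket decays exponentially in $t$) sends the right-hand side to $\mathrm{Per}(U)=|Du_0|(\Omega)$. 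Convergence $\mathcal N(u_\veps,v_\veps)\to\mathcal N(u_0,v_0)$ is immediate from Lemma \ref{weak version of boundedness of inverse Laplacian}.

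The most delicate step, and the main obstacle, is enforcing the two mass constraints exactly rather than only in the limit; the natural construction yields $|\int u_\veps-m|+|\int v_\veps-\zeta m|=O(\veps)$. I would correct for $u$ by replacing $d_U$ with $d_U+\delta_\veps$ for a scalar $\delta_\veps=O(\veps)$ chosen to pin $\int u_\veps=m$, and for $v$ by adding $c_\veps\phi$ to $v_\veps$, where $\phi\in C_c^\infty(\Omega\setminus\overline U;[0,1])$ satisfies $\int\phi=1$ and $c_\veps=O(\veps)$ balances the remaining deficit so that $\int v_\veps=\zeta m$. Because $\mathrm{supp}\,\phi$ lies at positive distance from $\partial U$, $u_\veps\to 0$ uniformly there, so for small $\veps$ the augmented $v_\veps+c_\veps\phi$ still satisfies $v_\veps+c_\veps\phi+u_\veps\le 1$ and stays in $[0,1]$, keeping all penalty summands of $W$ inactive; its $L^2$-smallness makes the perturbation negligible in $\mathcal N$ as well. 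A final diagonal extraction in the smoothing parameter and $\veps$ then delivers the recovery sequence for the general $u_0\in BV(\Omega;\{0,1\})$.
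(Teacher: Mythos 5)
Your proposal follows the same backbone as the paper's proof: the optimal Modica--Mortola profile of the signed distance (your $q$ with $q'=6q(1-q)$ is exactly the paper's $\big(1+\tanh(3t/\veps)\big)/2$), the coarea formula to reduce to a one-dimensional transition cost of $1$, approximation of $U$ by smooth sets with convergence of perimeters followed by a diagonal extraction, and Lemma \ref{weak version of boundedness of inverse Laplacian} for the nonlocal term. The genuine difference is the treatment of $v_\veps$. The paper sets $v_\veps=v_0\bm1_{\{d_U\le-\delta\}}$ with $\delta=\veps^{2/3}$, so the coupling penalty $\min\{1-u_\veps-v_\veps,0\}^2$ is only exponentially small rather than zero, and the $v$-mass is restored by a multiplicative rescaling (which pushes $v_\veps$ slightly above $1$ and costs $O(\veps^{1/3})$ in the penalty). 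Your truncation $v_\veps=v_0\wedge(1-u_\veps)$ kills the coupling penalty identically and gives a cleaner $O(\veps)$ mass deficit, which is a nice simplification. One claim in your mass-fixing step is overstated, though not fatally: after adding $c_\veps\phi$, the constraints $v+c_\veps\phi\le1$ and $u+v+c_\veps\phi\le1$ can fail wherever $v_0$ is close to $1$ on $\operatorname{supp}\phi$ (there $v_\veps=1-u_\veps$ with $u_\veps$ exponentially small, so the sum exceeds $1$ by up to $c_\veps$); the associated penalty is $\veps^{-1}O(c_\veps^2)=O(\veps)$ and hence negligible, but you should say this rather than assert the penalties stay inactive. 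Note also that $c_\veps\ge0$ automatically since the truncation only removes $v$-mass, so no sign issue arises.
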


\begin{proof}
We assume that the above left-hand side is finite (otherwise there is nothing to prove), therefore we have $u_0\in BV(\Omega;\{0,1\})$, $v_0\in L^2(\Omega;[0,1])$, and $u_0v_0=0$ a.e.

\noindent\textbf{Step 1:} We first consider the case where $\gamma=0$ and $n=1$. We further assume $\Omega=[-1,1]$ and $u_0=\bm1_{[0,1]}$. Therefore $v_0(x)=0$ for $x\in[0,1]$ a.e. We take
\begin{equation*}
u_\veps(x)=\big(1\!+\!\tanh(3x/\veps)\big)\big/2\;\;\text{for}\;\;x\in[-1,1],\quad\text{and}\;\;v_\veps=v_0\bm1_{[-1,-\delta]}\;\;\text{where}\;\;\delta=\veps^{2/3}.
\end{equation*}
By Dominated Convergence Theorem, we have both $u_\veps\to u_0$ and $v_\veps\to v_0$, pointwise \text{a.e.} and thus in $L^2(\Omega)$. We can compute
\begin{equation*}
\frac12\int_\Omega|\nabla u_\veps|^2=\frac12\int_{-1}^1{u'}_{\!\!\veps}(x)^{\,2}\dd{x}=\frac{1}{4 \veps}\bigg(3-\tanh^2\Big(\frac3\veps\Big)\bigg)\tanh\!\Big(\frac3\veps\Big).
\end{equation*}
In addition, we have
\begin{equation*}
W(u_\veps,v_\veps)=18(u_\veps\!-\!u_\veps^2)^2+27\min\{1\!-\!u_\veps\!-\!v_\veps,0\}^2/2\,,
\end{equation*}
where the first summand has the following integral
\begin{equation*}
\int_\Omega18(u_\veps\!-\!u_\veps^2)^2=\frac\veps4 \bigg(3-\tanh^2\Big(\frac3\veps\Big)\bigg)\tanh\!\Big(\frac3\veps\Big),
\end{equation*}
and the second summand can be bounded as follows
\begin{equation*}
\mbox{$\int_\Omega$}\min\{1\!-\!u_\veps\!-\!v_\veps,0\}^2=\mbox{$\int_{-1}^{-\delta}$}\min\{1\!-\!u_\veps\!-\!v_0,0\}^2\leq\mbox{$\int_{-1}^{-\delta}$}u_\veps^2\,,
\end{equation*}
where the inequality is due to $v_0\leq1$ a.e. Since $u_\veps$ increases monotonically, we have
\begin{equation*}
\int_{-1}^{-\delta}u_\veps^2 \leq (1-\delta)\,u_\veps^2(-\delta) \leq \big(1\!-\!\tanh(3\delta/\veps)\big)^2=\Big(1\!-\!\tanh\big(3\veps^{-1/3}\big)\Big)^2.
\end{equation*}
Noticing $1\!-\!\tanh z=2/(e^{2z}\!+\!1)<2e^{-2z}$, we obtain $\int_{-1}^{-\delta}u_\veps^2=o(\veps)$. To summarize, we have
\begin{equation*}
\mathcal E_\veps(u_\veps,v_\veps)=\frac{\veps}{2}\int_\Omega|\nabla u_\veps|^2+\int_\Omega\frac{W(u_\veps,v_\veps)}{\veps}\leq\frac{1}{2}\bigg(3-\tanh^2\Big(\frac3\veps\Big)\bigg)\tanh\!\Big(\frac3\veps\Big)+o(1),
\end{equation*}
where the right-hand side converges to $1$ as $\veps\to0$. Therefore $\mathcal E_0(u_0,v_0)=1\geq\limsup\limits_{\veps\to0}\mathcal E_\veps(u_\veps,v_\veps)$.

So far in our proof of the Gamma-convergence, we have not explicitly dealt with the mass constraints, which should be straightforward because the weak convergence in $L^2(\Omega)$ implies the convergence of the Lebesgue measure. However, we need to provide more details here. To make sure $\int_\Omega v_\veps=\int_\Omega v_0$, we rescale $v_\veps$ defined above, i.e., we redefine $v_\veps=v_0\bm1_{[-1,-\delta]}\int^{0}_{-1}v_0/\int^{-\delta}_{-1}v_0$, where
\begin{equation*}
\delta=\veps^{2/3},\quad\mbox{$\int^{0}_{-1}v_0$}=\zeta m,\quad\mbox{$\int^{-\delta}_{-1}v_0$}=\zeta m-\!\mbox{$\int^0_{-\delta}v_0$}\geqslant\zeta m\!-\!\delta.
\end{equation*}
In this way, we have $v_\veps\leqslant\zeta m/(\zeta m\!-\!\delta)=1+O\big(\veps^{2/3}\big)$ and thus $\veps^{-1}\!\int_\Omega\min\{1\!-\!v_\veps,0\}^2\leqslant O\big(\veps^{1/3}\big)$. Therefore, after the above modification, $v_\veps$ satisfies the mass constraint while $\mathcal E_\veps(u_\veps,v_\veps)$ increases only by order $\veps^{1/3}$.

\noindent\textbf{Step 2:} We now consider the case where $\gamma=0$ and $n>1$. We assume $u_0=\bm1_U$ for some open set $U\subset\bbR^n$ with $\partial U$ being a nonempty compact hypersurface of differentiability class $C^2$. We further assume $\mathcal H^{\!n-1}(\partial U\cap\partial\Omega)=0$, where $\mathcal H$ is the Hausdorff measure. We can rewrite $u_0=g_0(d_U)$ where $g_0=\bm1_{[0,\infty)}$, and $d_U$ is the signed distance function defined by
\begin{equation*}
d_U(\vec x)=
\left\{
\begin{aligned}
&\inf\big\{|\vec x-\vec y|:\vec y\in\partial U\big\},&&\text{if}\;\vec x\in U\,,\\
-&\inf\big\{|\vec x-\vec y|:\vec y\in\partial U\big\},&&\text{if}\;\vec x\not\in U\,.
\end{aligned}
\right.
\end{equation*}
We take $u_\veps=g_\veps(d_U)$ and $v_\veps=h_\veps(d_U)v_0$, where $g_\veps(z):=\big(1\!+\!\tanh(3z/\veps)\big)\big/2$, and $h_\veps:=\bm1_{(-\infty,-\delta]}$ with $\delta=\veps^{2/3}$. The function $d_U$ is Lipschitz continuous and satisfies $|\nabla d_U|=1$ \text{a.e.} in $\bbR^n$ \cite[Theorems 2.1 and 3.1]{doi:10.1137/1.9780898719826.ch7}. Let $l$ denote the size of the level set of $d_U$:
\begin{equation*}
l(r):=\mathcal H^{\!n-1}\big(\{\vec x\in\Omega:d_U(\vec x)=r\}\big),\quad r\in\bbR.
\end{equation*}
By Lemma \ref{coarea formula Lipschitz}, we have
\begin{equation*}
\begin{aligned}
\mathcal E_\veps(u_\veps,v_\veps)&=\frac{\veps}{2}\int_\Omega|\nabla u_\veps|^2+\int_\Omega\frac{W(u_\veps,v_\veps)}{\veps}\\
&=\int_\Omega\bigg(\frac{\veps}{2}|\nabla u_\veps|^2+\frac{18}{\veps}(u_\veps\!-\!u_\veps^2)^2+\frac{27}{2\veps}\min\{1\!-\!u_\veps\!-\!v_\veps,0\}^2\bigg)\\
&\leq\int_\Omega\bigg(\frac{\veps}{2}|\nabla u_\veps|^2+\frac{18}{\veps}(u_\veps\!-\!u_\veps^2)^2+\frac{27}{2\veps}\min\big\{1\!-\!u_\veps\!-\!h_\veps(d_U),0\big\}^2\bigg)\\
&=\int_\Omega\bigg(\frac{\veps}{2}g_\veps'(d_U)^2+\frac1\veps W\Big(g_\veps(d_U),h_\veps(d_U)\Big)\bigg)\,|\nabla d_U|\\
&=\int_{\bbR}\bigg(\frac{\veps}{2}g_\veps'(r)^2+\frac1\veps W\Big(g_\veps(r),h_\veps(r)\Big)\bigg)\,l(r)\dd{r}:=\!\int_{\bbR}\!a_\veps(r)l(r)\dd{r}.
\end{aligned}
\end{equation*}
According to Step 1, we know $\int_{-1}^1a_\veps(r)\dd{r}\leq1+o(1)$. For any fixed $\tau>0$, we have $a_\veps(r)\to0$ as $\veps\to0$ uniformly for $r\in\bbR\backslash(-\tau,\tau)$. Since $\Omega$ is bounded, by Lemma \ref{coarea formula Lipschitz}, $\int_{\bbR}l(r)\dd{r}=\int_\Omega |\nabla d_U|=|\Omega|<\infty$. Because $\partial U$ is of differentiability class $C^2$ with $\mathcal H^{\!n-1}(\partial U\cap\partial\Omega)=0$, we know that $l(r)$ is continuous at $r=0$ satisfying $\lim\limits_{r\to0}l(r)=\mathcal H^{\!n-1}(\partial U\cap\Omega)$ \cite[Lemma 5.8]{leoni2013gamma}. Therefore,
\begin{equation*}
\limsup\limits_{\veps\to0}\mathcal E_\veps(u_\veps,v_\veps)\leq\limsup\limits_{\veps\to0}\big(l(0)+o(1)\big)=\mathcal H^{\!n-1}(\partial U\cap\Omega)=\mathcal E_0(u_0,v_0).
\end{equation*}
We now show both $u_\veps\to u_0$ and $v_\veps\to v_0$ in $L^2(\Omega)$. To this end, we use $|\nabla d_U|=1$ and Lemma \ref{coarea formula Lipschitz} again to obtain
\begin{equation*}
\int_\Omega(u_\veps\!-\!u_0)^2=\int_\Omega\big(g_\veps(d_U)\!-\!g_0(d_U)\big)^2\,|\nabla d_U|=\int_\bbR\big(g_\veps(r)\!-\!g_0(r)\big)^2\,l(r)\dd{r}.
\end{equation*}
As mentioned above, $\int_{\bbR}l<\infty$. Noticing $|g_\veps\!-\!g_0|\leq|g_\veps|+|g_0|\leq2$ and $g_\veps\to g_0$ pointwise \text{a.e.}, by Dominated Convergence Theorem, we have $\int_\Omega(u_\veps\!-\!u_0)^2\to0$. Similarly, we have $\int_\Omega(v_\veps\!-\!v_0)^2\to0$.

In order to guarantee the mass constraint $\int_\Omega u_\veps=m$, we can make some technical modifications to the above $u_\veps$ in a similar way to the classical Cahn--Hilliard energy functional \cite[Equation (26)]{leoni2013gamma}. Similar to Step 1, we can also modify $v_\veps$ to guarantee the mass constraint $\int_\Omega v_\veps=\zeta m$.

\noindent\textbf{Step 3:} We now remove the regularity assumption imposed on $U$ in Step 2. By Lemma \ref{Approximation of a set of finite perimeter}, there exists a sequence of open sets $U_j$ with $\partial U_j$ being a nonempty compact hypersurface of differentiability class $C^2$ and satisfying $\mathcal H^{\!n-1}(\partial U_j\cap\partial\Omega)=0$, such that $\bm 1_{U_j}\to \bm1_U$ in $L^2(\Omega)$ and $\text{Per}_\Omega U_j\to\text{Per}_\Omega U$. According to Step 2, for each fixed $j$ we can find a sequence $\{u_{j,\,\veps}\}\subset W^{1,2}(\Omega)$ and $\{v_{j,\,\veps}\}\subset L^2(\Omega)$ such that both $u_{j,\,\veps}\to \bm 1_{U_j}$ and $v_{j,\,\veps}\to(1\!-\!\bm 1_{U_j})\,v_0$ in $L^2(\Omega)$ as $\veps\to0$, in addition to
\begin{equation*}
\limsup\limits_{\veps\to0}\mathcal E_\veps(u_{j,\,\veps},v_{j,\,\veps})\leq\mathcal H^{\!n-1}(\partial U_j\cap\Omega)=\text{Per}_\Omega U_j.
\end{equation*}
Therefore we have
\begin{equation*}
\limsup\limits_{j\to\infty}\,\limsup\limits_{\veps\to0}\,\mathcal E_\veps(u_{j,\,\veps},v_{j,\,\veps})\leq\limsup\limits_{j\to\infty}\text{Per}_\Omega U_j=\text{Per}_\Omega U.
\end{equation*}
Noticing
\begin{equation*}
\lim\limits_{j\to\infty}\,\lim\limits_{\veps\to0}\,\|u_{j,\,\veps}\!-\!u_0\|_{L^2(\Omega)}=\lim\limits_{j\to\infty}\,\|\bm 1_{U_j}\!-\!\bm1_U\|_{L^2(\Omega)}=0,
\end{equation*}
and
\begin{equation*}
\lim\limits_{j\to\infty}\,\lim\limits_{\veps\to0}\,\|v_{j,\,\veps}\!-\!v_0\|_{L^2(\Omega)}=\lim\limits_{j\to\infty}\,\big\|(1\!-\!\bm 1_{U_j})\,v_0\!-\!(1\!-\!\bm 1_U)v_0\big\|_{L^2(\Omega)}=0,
\end{equation*}
we know
\begin{equation*}
\limsup\limits_{j\to\infty}\,\limsup\limits_{\veps\to0}\,\big(\|u_{j,\,\veps}\!-\!u_0\|_{L^2(\Omega)}+\|v_{j,\,\veps}\!-\!v_0\|_{L^2(\Omega)}\big)=0.
\end{equation*}
By Lemma \ref{Diagonalization Argument lemma}, we have a diagonal sequence $\{u_{j_\veps,\,\veps}\}$ and $\{v_{j_\veps,\,\veps}\}$ such that both $u_{j_\veps,\,\veps}\to u_0$ and $v_{j_\veps,\,\veps}\to v_0$ in $L^2(\Omega)$, and
\begin{equation*}
\limsup_{\veps\to0}\mathcal E_\veps(u_{j_\veps,\,\veps},v_{j_\veps,\,\veps})\leq\text{Per}_\Omega U=\mathcal E_0(u_0,v_0).
\end{equation*}

\noindent\textbf{Step 4:} We now consider the case where $\gamma>0$. According to Step 2 in the proof of Proposition \ref{Liminf inequality proposition}, we have
\begin{equation*}
\lim_{\veps\to0}\mathcal N(u_\veps,v_\veps)=\mathcal N(u_0,v_0).
\end{equation*}
\end{proof}

\begin{lemma}
\label{coarea formula Lipschitz}
Coarea Formula for Lipschitz Functions \cite[Theorem 1.14]{leoni2013gamma}. On an open set $\Omega\subset\bbR^n$, let $\psi:\Omega\to\bbR$ be Lipschitz continuous and let $\eta:\bbR\to\bbR$ be Borel measurable. If $\eta\circ \psi$ is integrable, then
\begin{equation*}
\int_\bbR\eta(r)\,\mathcal H^{n-1}\big(\{\vec x\in\Omega:\psi(\vec x)=r\}\big)\dd{r}=\int_\Omega\eta(\psi)\,|\nabla\psi|.
\end{equation*}
\end{lemma}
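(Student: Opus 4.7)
The plan is to reduce the general formula to the classical coarea identity for Lipschitz maps by a standard measure-theoretic buildup on $\eta$, and then to prove the classical identity by smooth approximation. First I would invoke Rademacher's theorem, which gives that the Lipschitz function $\psi$ is differentiable almost everywhere in $\Omega$, so that $|\nabla\psi|$ is defined a.e.\ and bounded by the Lipschitz constant of $\psi$; in particular $|\nabla\psi|\in L^\infty(\Omega)$ and is Borel measurable. Thus both sides of the claimed identity make sense for Borel $\eta$.

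Next I would focus on the core case $\eta=\bm1_E$ for $E\subset\bbR$ Borel, where the identity becomes
\begin{equation*}
\int_E\mathcal H^{n-1}\big(\{\vec x\in\Omega:\psi(\vec x)=r\}\big)\dd{r}=\int_{\psi^{-1}(E)}|\nabla\psi|.
\end{equation*}
By a monotone class / Dynkin system argument it suffices to check this for $E=(a,b)$, and then the case of general Borel $E$ follows by outer regularity of Lebesgue measure and the corresponding regularity of the measure $A\mapsto\int_{\psi^{-1}(A)}|\nabla\psi|$. To handle the open-interval case, I would extend $\psi$ to a Lipschitz function on all of $\bbR^n$ (by a Kirszbraun or McShane extension) and mollify to obtain smooth $\psi_j$ with uniformly bounded Lipschitz constants satisfying $\psi_j\to\psi$ locally uniformly and $\nabla\psi_j\to\nabla\psi$ in $L^1_{\mathrm{loc}}$. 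For smooth $\psi_j$ the coarea identity follows from Sard's theorem (a.e.\ level set is a $C^\infty$ hypersurface) together with a change of variables in coordinates adapted to the level sets, or equivalently from Fubini applied to the graph.

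The main obstacle is the passage from the smooth approximants back to $\psi$: the inequality $\int_E\mathcal H^{n-1}(\{\psi=r\})\dd{r}\leq\int_{\psi^{-1}(E)}|\nabla\psi|$ follows from lower semicontinuity of $\mathcal H^{n-1}$ of level sets under the convergence of $\psi_j$, but the reverse inequality requires more care and is usually obtained via the area formula for Lipschitz maps, or by a covering argument based on Eilenberg's inequality applied on sets where $\nabla\psi$ is approximately constant. Once the identity is established for $\eta=\bm1_E$ with $E$ Borel, I would extend by linearity to simple Borel $\eta$, then by the monotone convergence theorem to nonnegative Borel $\eta$, and finally to integrable $\eta\circ\psi$ by splitting $\eta=\eta^+-\eta^-$ and using integrability to rule out $\infty-\infty$. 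Since the result is quoted from \cite[Theorem 1.14]{leoni2013gamma}, the streamlined option in the paper is simply to cite that reference rather than reproduce the full measure-theoretic argument.
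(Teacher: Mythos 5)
The paper does not actually prove this lemma: it is stated verbatim with the citation \cite[Theorem 1.14]{leoni2013gamma} and used as a black box, so your closing observation that the streamlined option is simply to cite that reference is exactly what the paper does. Your sketch of a genuine proof follows the standard textbook route (Rademacher's theorem, reduction to $\eta=\bm1_E$ by a monotone class argument plus monotone convergence and the decomposition $\eta=\eta^+-\eta^-$, Lipschitz extension and mollification, Sard's theorem for the smooth approximants), and as an outline this is the right architecture.

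One step, however, would fail as written. You claim that the inequality $\int_E\mathcal H^{n-1}(\{\psi=r\})\dd{r}\leq\int_{\psi^{-1}(E)}|\nabla\psi|$ ``follows from lower semicontinuity of $\mathcal H^{n-1}$ of level sets under the convergence of $\psi_j$.'' The Hausdorff measure of level sets is not semicontinuous under locally uniform (or $W^{1,1}_{\mathrm{loc}}$) convergence of the functions: highly oscillatory smooth approximants can have level sets of much larger $\mathcal H^{n-1}$ measure than the limit. What lower semicontinuity actually provides is semicontinuity of the total variation $u\mapsto\int|\nabla u|$ under $L^1$ convergence, which, combined with the coarea identity for the smooth $\psi_j$, yields $\int_{\psi^{-1}(E)}|\nabla\psi|\leq\liminf_j\int_E\mathcal H^{n-1}(\{\psi_j=r\})\dd{r}$ --- the opposite pairing of inequality and quantity from the one you assert, and in any case an inequality between the limit function and the approximants, not between the two sides of the identity for $\psi$ itself. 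In the standard proofs both inequalities for Lipschitz $\psi$ come from Eilenberg's inequality together with the area formula applied on the set where $\psi$ is differentiable with $\nabla\psi\neq0$ (plus the fact that for a.e.\ $r$ the level set meets the critical set in an $\mathcal H^{n-1}$-null set), not from a semicontinuity argument. Since the lemma is quoted from the literature, this does not affect the paper, but the approximation step of your sketch would need to be replaced by that covering/area-formula argument to constitute a proof.
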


\begin{lemma}
\label{Approximation of a set of finite perimeter}
Approximation of a set of finite perimeter \cite[Lemma 1.15]{leoni2013gamma}. If a bounded open set $\Omega\subset\bbR^n$ has Lipschitz boundary, and $U\subset\bbR^n$ has finite perimeter, then there exists a sequence of open sets $U_j$ with $\partial U_j$ being a nonempty compact hypersurface of differentiability class $C^2$ and satisfying $\mathcal H^{\!n-1}(\partial U_j\cap\partial\Omega)=0$, such that $\bm 1_{U_j}\to \bm1_U$ in $L^2(\Omega)$, $\text{Per}_\Omega U_j\to\text{Per}_\Omega U$, and $|U_j|=|U|$.
\end{lemma}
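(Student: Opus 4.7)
The plan is standard: mollify the characteristic function $\bm 1_U$, apply the coarea formula (Lemma~\ref{coarea formula Lipschitz}) together with Sard's theorem to select a generic smooth superlevel set, and finally adjust the volume by a small normal perturbation of the resulting $C^\infty$ boundary. The mollification converts the $BV$ object into a smooth function whose level sets are automatically smooth for a.e.\ level; coarea then lets us trade perimeter control for level-set selection.

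Concretely, one first reduces to $U$ bounded by intersecting with a large ball $B_R$ for which $\mathcal H^{n-1}(\partial U \cap \partial B_R) = 0$ (holding for a.e.\ $R$). Fix a bounded open $\Omega'$ with $\Omega \Subset \Omega'$ and $|D\bm 1_U|(\partial\Omega') = 0$ (true for all but countably many $\Omega'$). Let $\rho_\veps$ be a standard mollifier and set $f_\veps := \bm 1_U \ast \rho_\veps \in C_c^\infty(\bbR^n)$. Then $0 \leq f_\veps \leq 1$, $f_\veps \to \bm 1_U$ a.e.\ and in $L^2(\Omega')$, and $\int_{\Omega'} |\nabla f_\veps| \to |D\bm 1_U|(\Omega') = \text{Per}_{\Omega'} U$. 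By Lemma~\ref{coarea formula Lipschitz},
\begin{equation*}
\int_{\Omega'} |\nabla f_\veps| = \int_0^1 \mathcal H^{n-1}\bigl(\{f_\veps = t\} \cap \Omega'\bigr) \, dt.
\end{equation*}
Sard's theorem guarantees that a.e.\ $t \in (0,1)$ is a regular value of $f_\veps$, so $\{f_\veps = t\}$ is a compact $C^\infty$ hypersurface. Applying the coarea formula once more to the Lipschitz restriction $f_\veps|_{\partial\Omega}$ shows that a.e.\ $t$ also satisfies $\mathcal H^{n-1}(\{f_\veps = t\} \cap \partial\Omega) = 0$. By Chebyshev's inequality restricted to $t \in (1/4, 3/4)$, pick $t_\veps$ in this interval satisfying both a.e.\ conditions together with
\begin{equation*}
\mathcal H^{n-1}\bigl(\{f_\veps = t_\veps\} \cap \Omega\bigr) \leq 4 \int_\Omega |\nabla f_\veps|.
\end{equation*}
Set $\widetilde U_\veps := \{f_\veps > t_\veps\}$; this is open and bounded with compact $C^\infty$ boundary meeting $\partial\Omega$ only in an $\mathcal H^{n-1}$-null set, $\bm 1_{\widetilde U_\veps} \to \bm 1_U$ in $L^2(\Omega)$ by dominated convergence, and combining the perimeter bound with lower semicontinuity gives $\text{Per}_\Omega \widetilde U_\veps \to \text{Per}_\Omega U$.

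To enforce the mass constraint, let $\lambda_\veps := |U| - |\widetilde U_\veps|$ (which tends to $0$), pick a point on $\partial\widetilde U_\veps$ lying in the interior of $\Omega$ around which $\partial\widetilde U_\veps$ is a smooth graph in local coordinates, and perturb this graph in the normal direction by a compactly supported smooth bump of amplitude $O(|\lambda_\veps|)$ chosen to shift the enclosed volume by exactly $\lambda_\veps$. The resulting $U_\veps$ retains a compact $C^\infty$ boundary, preserves $\mathcal H^{n-1}(\partial U_\veps \cap \partial\Omega) = 0$ (since the perturbation is compactly supported in $\Omega$), satisfies $|U_\veps| = |U|$, and changes the perimeter by $O(|\lambda_\veps|)$, so all convergences persist; extracting a subsequence produces $\{U_j\}$. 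The main technical hurdle is coordinating the three a.e.\ conditions on $t$ — regularity, null intersection with $\partial\Omega$, and the Chebyshev-type perimeter bound — into a single admissible choice; each excludes only a measure-zero subset of $(0,1)$ and the Chebyshev condition holds on a set of positive measure by construction, so a generic $t_\veps$ works.
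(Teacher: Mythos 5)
The paper offers no proof of this lemma—it is quoted directly from Leoni's notes—and your mollify/Sard/coarea/volume-fix skeleton is indeed the standard route to it. But as written the perimeter-convergence step has a genuine gap. Chebyshev on $(1/4,3/4)$ only gives $\mathcal H^{\!n-1}(\{f_\veps=t_\veps\}\cap\Omega)\leq 4\int_\Omega|\nabla f_\veps|$, and "combining the perimeter bound with lower semicontinuity" then yields $\liminf_\veps\text{Per}_\Omega\widetilde U_\veps\geq\text{Per}_\Omega U$ together with a $\limsup$ bounded by $4$ times something that is itself $\geq\text{Per}_\Omega U$; that is boundedness, not convergence. The correct selection uses that $\int_0^1\mathcal H^{\!n-1}(\{f_\veps=t\}\cap\Omega)\dd{t}=\int_\Omega|\nabla f_\veps|\to\text{Per}_\Omega U$ while, for every fixed $t\in(0,1)$, $\liminf_\veps\mathcal H^{\!n-1}(\{f_\veps=t\}\cap\Omega)\geq\text{Per}_\Omega U$ by lower semicontinuity applied to $\bm 1_{\{f_\veps>t\}}\to\bm1_U$ in $L^1(\Omega)$; together these force $\big(\mathcal H^{\!n-1}(\{f_\veps=\cdot\}\cap\Omega)-\text{Per}_\Omega U\big)^+\to0$ in $L^1(0,1)$, so one can choose $t_\veps$ with $\mathcal H^{\!n-1}(\{f_\veps=t_\veps\}\cap\Omega)\leq\text{Per}_\Omega U+o(1)$ while still avoiding the two null sets of bad levels.

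More seriously, even that refined argument needs $\int_\Omega|\nabla f_\veps|\to\text{Per}_\Omega U$, which fails whenever $|D\bm1_U|$ charges $\partial\Omega$—and your proof never uses the Lipschitz regularity of $\partial\Omega$, which is precisely the hypothesis designed to handle this. Take $U=\Omega$: then $\text{Per}_\Omega U=0$, yet $\int_\Omega|\nabla f_\veps|\to\tfrac12\mathcal H^{\!n-1}(\partial\Omega)>0$, and for any $t_\veps>1/2$ the level set $\{f_\veps=t_\veps\}$ lies inside $\Omega$, so $\text{Per}_\Omega\widetilde U_\veps\to\mathcal H^{\!n-1}(\partial\Omega)\neq0$. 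Your auxiliary $\Omega'$ with $|D\bm1_U|(\partial\Omega')=0$ controls the wrong boundary. The missing step is a preliminary modification of $U$: using local Lipschitz charts of $\partial\Omega$, replace $U$ by $\tilde U$ with $\tilde U\cap\Omega=U\cap\Omega$ and $|D\bm1_{\tilde U}|(\partial\Omega)=0$ (e.g.\ by reflecting $U\cap\Omega$ across $\partial\Omega$ so the interior and exterior traces match), and only then mollify. Two minor points: the coarea formula applied to $f_\veps|_{\partial\Omega}$ controls $\mathcal H^{\!n-2}$ of the level sets on $\partial\Omega$, not $\mathcal H^{\!n-1}$—the claim $\mathcal H^{\!n-1}(\{f_\veps=t\}\cap\partial\Omega)=0$ for a.e.\ $t$ follows instead because the level sets are pairwise disjoint and $\mathcal H^{\!n-1}(\partial\Omega)<\infty$, so only countably many levels can carry positive measure. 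The volume-fixing normal bump at the end is fine.
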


\begin{lemma}
\label{Diagonalization Argument lemma}
Diagonalization Argument. If double-indexed sequences $\{a_{j,\,k}\}\subset\bbR$ and $\{b_{j,\,k}\}\subset\bbR$ satisfy
\begin{equation*}
\limsup_{j\to\infty}\,\limsup_{k\to\infty}\,a_{j,\,k}\leq A,\qquad \limsup_{j\to\infty}\,\limsup_{k\to\infty}\,b_{j,\,k}\leq B,
\end{equation*}
for some constants $A,B\in\bbR$. Then there exists a diagonal sequence $j_k\to\infty$ as $k\to\infty$ such that
\begin{equation*}
\limsup_{k\to\infty}\,a_{j_k,\,k}\leq A,\qquad \limsup_{k\to\infty}\,b_{j_k,\,k}\leq B.
\end{equation*}
\end{lemma}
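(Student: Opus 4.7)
The plan is to carry out the standard Attouch-style diagonal extraction, handling the two sequences in parallel by combining the thresholds at each stage. First I would reduce to a one-index problem by setting $\alpha_j := \limsup_{k\to\infty} a_{j,\,k}$ and $\beta_j := \limsup_{k\to\infty} b_{j,\,k}$, so that the hypothesis becomes $\limsup_{j\to\infty}\alpha_j \leq A$ and $\limsup_{j\to\infty}\beta_j \leq B$.

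Next, for each fixed $j \in \mathbb{N}$, I would invoke the definition of $\limsup$ in $k$ to produce an integer $N_j$ such that
\begin{equation*}
a_{j,\,k} \leq \alpha_j + 1/j \quad\text{and}\quad b_{j,\,k} \leq \beta_j + 1/j \qquad\text{for every } k \geq N_j.
\end{equation*}
Picking the common threshold is the only reason two sequences cause no trouble: one simply chooses the larger of the two integers yielded by the two one-sided conditions. Without loss of generality I would then replace $N_j$ by $\max\{N_j, N_{j-1}+1\}$ so that $\{N_j\}$ is strictly increasing, and hence $N_j \to \infty$.

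Now I would define the diagonal index by setting $j_k := j$ whenever $N_j \leq k < N_{j+1}$ (and, say, $j_k := 1$ for $k < N_1$). By construction $j_k \to \infty$ as $k \to \infty$, and for every $k \geq N_1$ one has
\begin{equation*}
a_{j_k,\,k} \leq \alpha_{j_k} + 1/j_k, \qquad b_{j_k,\,k} \leq \beta_{j_k} + 1/j_k.
\end{equation*}
Taking $\limsup_{k\to\infty}$ on both sides, using $1/j_k \to 0$ and the fact that $\{\alpha_{j_k}\}$ is a subsequence of $\{\alpha_j\}$ (so its $\limsup$ is bounded above by $\limsup_{j\to\infty}\alpha_j \leq A$), yields $\limsup_{k\to\infty} a_{j_k,\,k} \leq A$, and analogously for $b$.

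There is no substantial obstacle; the only detail requiring care is the simultaneous handling of the two sequences, which is resolved simply by choosing $N_j$ large enough to control both $a_{j,\,k}$ and $b_{j,\,k}$ at once. The same argument extends verbatim to any finite collection of double-indexed sequences.
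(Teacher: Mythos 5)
Your proof is correct and follows essentially the same Attouch-style diagonal extraction as the paper's; the only organizational difference is that you approximate each row by its own inner limsup $\alpha_j$ and defer the comparison with $A$ to the final step, whereas the paper first selects indices $j_p^*$ whose inner limsups are already within $1/p$ of $A$ and then controls their tails in $k$. (One small caveat: if $\alpha_j=-\infty$ for some $j$, no $N_j$ with $a_{j,k}\leq\alpha_j+1/j$ exists, but replacing $\alpha_j$ by $\max\{\alpha_j,A\}$ throughout repairs this, and note also that $\{\alpha_{j_k}\}$ is not literally a subsequence since $j_k$ repeats values, though $\limsup_k\alpha_{j_k}\leq\limsup_j\alpha_j$ still holds because $j_k$ is non-decreasing and tends to infinity.)
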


\begin{proof}
Define
\begin{equation*}
j_1^*=\min\big\{j\in\mathbb N:\limsup\limits_{k\to\infty}a_{j,\,k}
\leq 
A\!+\!1\;\text{and}\;\limsup\limits_{k\to\infty}b_{j,\,k}
\leq B\!+\!1\big\}.
\end{equation*}
We claim that $j_1^*$ exists. Otherwise for each $j\in\mathbb N$, we have $j\in \mathcal A \cup \mathcal B$, where $\mathcal A:=\big\{j\in\mathbb N:\limsup\limits_{k\to\infty}a_{j,\,k} >
A\!+\!1\big\}$ and $\mathcal B:=\big\{j\in\mathbb N:\limsup\limits_{k\to\infty}b_{j,\,k} > B\!+\!1\big\}$. Therefore $\big|\mathcal A\big|=\infty$ or $\big|\mathcal B\big|=\infty$. If $\big|\mathcal A\big|=\infty$, then $\limsup\limits_{j\to\infty}\,\limsup\limits_{k\to\infty}\,a_{j,\,k}\geq A\!+\!1$, which is a contradiction.

From the definition of $j_1^*$, we know that $k_1^*$ (defined as follows) exists.
\begin{equation*}
k_1^*:=\min\big\{k\in\mathbb N:\sup_{l\geq k}a_{j_1^*,\,l}\leq A\!+\!2\;\text{and}\;\sup_{l\geq k}b_{j_1^*,\,l}\leq B\!+\!2\big\}.
\end{equation*}

Recursively, for $p=[2,\infty)\cap\mathbb N$, define
\begin{equation*}
j_p^*=\min\big\{j>j_{p-1}^*:\limsup\limits_{k\to\infty}a_{j,\,k}\leq A\!+\!1/p\;\text{and}\;\limsup\limits_{k\to\infty}b_{j,\,k}\leq B\!+\!1/p\big\},
\end{equation*}
and
\begin{equation*}
k_p^*=\min\big\{k>k_{p-1}^*:\sup_{l\geq k}a_{j_p^*,\,l}\leq A\!+\!2/p\;\text{and}\;\sup_{l\geq k}b_{j_p^*,\,l}\leq B\!+\!2/p\big\}.
\end{equation*}
For each $p\geq1$ and each $k\in[k_p^*, k_{p+1}^*)\cap\mathbb N$, define $j_k=j_p^*$, then
\begin{equation*}
a_{j_k,\,k}\leq\sup\limits_{l\geq k_p^*}a_{j_p^*,\,l}\leq A\!+\!2/p,\quad b_{j_k,\,k}\leq\sup\limits_{l\geq k_p^*}b_{j_p^*,\,l}\leq B\!+\!2/p.
\end{equation*}
\end{proof}

\subsubsection{Convergence of global minimizers}

\begin{theorem}
\label{theorem Gamma convergence}
$\mathcal E_0$ is the Gamma-limit of $\mathcal E_\veps$ as $\veps\to0$.
\end{theorem}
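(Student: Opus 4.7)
The plan is to assemble the three propositions already established in this subsection---compactness (Proposition \ref{compactness proposition}), the liminf inequality (Proposition \ref{Liminf inequality proposition}), and the limsup inequality (Proposition \ref{Limsup inequality proposition})---into the standard definition of Gamma-convergence. First I would fix the convergence topology on $L^2(\Omega)\times L^2(\Omega)$ as strong $L^2$ convergence on the $u$-component together with weak $L^2$ convergence on the $v$-component. This mixed topology is the natural choice, since it is precisely the mode of convergence delivered by the compactness result for sequences of uniformly bounded energy.

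With this topology fixed, the liminf inequality $\mathcal E_0(u_0,v_0)\leq\liminf_{\veps\to 0}\mathcal E_\veps(u_\veps,v_\veps)$ for any sequence $(u_\veps,v_\veps)$ converging to $(u_0,v_0)$ in the mixed topology is exactly the content of Proposition \ref{Liminf inequality proposition}. For the limsup inequality, Proposition \ref{Limsup inequality proposition} constructs, for every target $(u_0,v_0)\in L^2(\Omega)\times L^2(\Omega)$, a recovery sequence in which both $u_\veps\to u_0$ and $v_\veps\to v_0$ strongly in $L^2(\Omega)$ and $\limsup_{\veps\to 0}\mathcal E_\veps(u_\veps,v_\veps)\leq\mathcal E_0(u_0,v_0)$. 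Since strong $L^2$ convergence implies weak $L^2$ convergence, the same sequence serves as a valid recovery sequence for the mixed topology, and the limsup bound transfers without modification.

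The only subtle point worth highlighting is the asymmetry of the chosen topology, which reflects the asymmetric treatment of $u$ and $v$ in $\mathcal E_\veps$: only $|\nabla u|^2$ is penalized, and the potential $W$ has non-isolated minima along $\{0\}\times[0,1]$, so one cannot expect strong compactness of $v_\veps$ from the energy bound alone. Weak compactness is the best one can hope for, and it is precisely enough for the nonlocal term to pass to the limit via Lemma \ref{weak version of boundedness of inverse Laplacian}. There is therefore no substantive obstacle remaining---all the analytic work has been absorbed into the three preceding propositions, and the theorem is essentially a bookkeeping statement.

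Since the subsection is titled \emph{Convergence of global minimizers}, I would close by observing that Gamma-convergence, combined with the equi-coerciveness established in Proposition \ref{compactness proposition}, implies via the fundamental theorem of Gamma-convergence that any sequence of (almost-)minimizers of $\mathcal E_\veps$ admits a subsequence converging in the mixed topology to a minimizer of $\mathcal E_0$, and that $\min\mathcal E_\veps\to\min\mathcal E_0$. This justifies treating the phase-field reformulation as a numerically tractable surrogate for the sharp interface problem \eqref{sharp energy nonrelaxed}.
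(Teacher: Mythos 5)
Your proposal is correct and matches the paper's proof exactly: the theorem is obtained by citing Proposition \ref{Liminf inequality proposition} for the liminf inequality and Proposition \ref{Limsup inequality proposition} for the limsup inequality, with the mixed topology (strong $L^2$ in $u$, weak $L^2$ in $v$) and the observation that the strongly convergent recovery sequence is a fortiori weakly convergent. Your closing remark on minimizer convergence is the content of the subsequent theorem in the paper, proved there in the same way you indicate.
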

\begin{proof}
See Propositions \ref{Liminf inequality proposition} and \ref{Limsup inequality proposition} for the liminf and limsup inequalities.
\end{proof}

\begin{proposition}
$\mathcal E_\veps$ has a global minimizer.
\end{proposition}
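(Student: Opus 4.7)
The plan is to apply the direct method of the calculus of variations, leveraging the ingredients already established in Proposition~\ref{compactness proposition} and Lemma~\ref{weak version of boundedness of inverse Laplacian}. First I would check that $\inf \mathcal{E}_\veps$ is finite by exhibiting an admissible competitor: take $U, V \subset \Omega$ to be disjoint open sets with $|U| = m$ and $|V| = \zeta m$ (possible since $|\Omega| \geq (1+\zeta)m$), mollify $\bm 1_U$ to obtain a smooth $u$ close to $\bm 1_U$ in $W^{1,2}$ with the correct mass, and set $v = \bm 1_V$; all three of $\mathcal P$, $\mathcal N$, and the (now vanishing) constraint term are finite for such a pair.

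Next I would take a minimizing sequence $\{(u_k, v_k)\}$ with $\sup_k \mathcal{E}_\veps(u_k, v_k) < \infty$ and establish the necessary compactness. Exactly as in Step~1 of the proof of Proposition~\ref{compactness proposition}, the quadratic growth of $W$ away from its zero set gives a uniform $L^2(\Omega)$ bound on $\{u_k\}$ and $\{v_k\}$; combined with the uniform bound on $\tfrac{\veps}{2}\int_\Omega |\nabla u_k|^2$, this yields a uniform bound on $\{u_k\}$ in $W^{1,2}(\Omega)$. Passing to subsequences (not relabeled), I extract $u_k \rightharpoonup u_* $ in $W^{1,2}(\Omega)$ with $u_k \to u_*$ in $L^2(\Omega)$ and pointwise a.e.\ by Rellich--Kondrachov, and $v_k \rightharpoonup v_*$ in $L^2(\Omega)$. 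The mass constraints $\int_\Omega u_k = m$, $\int_\Omega v_k = \zeta m$ pass to the limit since constants are valid test functions for weak $L^2$ convergence.

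Then I would verify lower semicontinuity of each term of $\mathcal E_\veps$ along this subsequence. The Dirichlet term $\tfrac{\veps}{2}\int_\Omega |\nabla u|^2$ is weakly lower semicontinuous on $W^{1,2}(\Omega)$. For the potential term $\tfrac{1}{\veps}\int_\Omega W(u,v)$, the argument of Step~5 of Proposition~\ref{compactness proposition} applies verbatim: $W(\cdot, v)$ is continuous and $W(u,\cdot)$ is convex, so Fatou combined with a subgradient inequality in $v$ (using that $W_v(u_*, v_*) \in L^2(\Omega)$ and $v_k \rightharpoonup v_*$) yields $\int_\Omega W(u_*, v_*) \leq \liminf_k \int_\Omega W(u_k, v_k)$. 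For the nonlocal term, set $w_k = u_k - v_k/\zeta$ so that $w_k \rightharpoonup w_* := u_* - v_*/\zeta$ in $L^2(\Omega)$; Lemma~\ref{weak version of boundedness of inverse Laplacian} then gives $\mathcal N(u_k, v_k) \to \mathcal N(u_*, v_*)$, which is continuity, a fortiori lower semicontinuity.

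Combining these three facts with the preservation of the mass constraints shows $\mathcal E_\veps(u_*, v_*) \leq \liminf_k \mathcal E_\veps(u_k, v_k) = \inf \mathcal E_\veps$, so $(u_*, v_*)$ is a global minimizer. The only mildly delicate step is the lower semicontinuity of the potential term under weak-not-strong convergence of $v_k$, but this is exactly the convexity-plus-strong-$u_k$-convergence argument already developed in Step~5 of Proposition~\ref{compactness proposition}, so no new ideas are required.
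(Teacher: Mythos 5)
Your proposal is correct and follows essentially the same route as the paper's proof: the direct method, with uniform $W^{1,2}\times L^2$ bounds from the growth of $W$ and the Dirichlet term, Rellich--Kondrachov for strong $L^2$ convergence of $u_k$, the convexity-in-$v$ argument of Step~5 of Proposition~\ref{compactness proposition} for the potential term, and the continuity of $\mathcal N$ under weak convergence via Lemma~\ref{weak version of boundedness of inverse Laplacian}. The only differences are that you additionally spell out the finiteness of the infimum and the passage of the mass constraints to the limit, which the paper leaves implicit.
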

\begin{proof}
We use the direct method in the calculus of variations. We can show that for any fixed $\veps$, any minimizing sequence $\big\{(u_j,v_j)\big\}$ of $\mathcal E_\veps$ are uniformly bounded in $W^{1,2}(\Omega) \times L^2(\Omega)$, and thus possesses a subsequence (not relabeled) satisfying $u_j \rightharpoonup u_\infty$ in $W^{1,2}(\Omega)$ and $v_j \rightharpoonup v_\infty$ in $L^2(\Omega)$ for some $u_\infty\in W^{1,2}(\Omega)$ and $v_\infty\in L^2(\Omega)$. By the Rellich\textendash Kondrachov theorem, $u_j \to u_\infty$ both in $L^2(\Omega)$ and pointwise (up to a further subsequence). Similar to Step 5 of the Proof of Proposition \ref{compactness proposition}, we can show $W(u_\infty,v_\infty)\leqslant\liminf\limits_{j\to\infty} W(u_j,v_j)$. Similar to Step 2 of Proposition \ref{Liminf inequality proposition}, we can show $\mathcal N(u_\infty,v_\infty)=\lim\limits_{j\to\infty}\mathcal N(u_j,v_j)$. Finally, since $u_j \rightharpoonup u_\infty$ in $W^{1,2}(\Omega)$ and $u_j \to u_\infty$ in $L^2(\Omega)$, by the Cauchy\textendash Schwarz inequality we know $\|\nabla u_\infty\|_{L^2(\Omega)}\leqslant\liminf\limits_{j\to\infty}\|\nabla u_j\|_{L^2(\Omega)}$. Therefore, we have proven $\mathcal E_\veps(u_\infty,v_\infty)\leqslant\liminf\limits_{j\to\infty}\mathcal E_\veps(u_j,v_j)$, and therefore $(u_\infty,v_\infty)$ is a global minimizer of $\mathcal E_\veps$.
\end{proof}

\begin{theorem}
Let $(u_\veps, v_\veps) \in W^{1,2}(\Omega) \times L^2(\Omega)$ be a global minimizer of $\mathcal E_{\veps}$ with $\veps\to0$, then any subsequence of $\big\{(u_\veps, v_\veps)\big\}$ has a further subsequence (not relabeled) such that $u_\veps \to u_0$ in $L^2(\Omega)$ and $v_\veps \rightharpoonup v_0$ in $L^2(\Omega)$, where $(u_0,v_0) \in BV(\Omega; \{0,1\} )\times L^2(\Omega;[0,1])$ is a global minimizer of $\mathcal E_0$, satisfying $\mathcal E_0(u_0,v_0)=\liminf\limits_{\veps\to0}\mathcal E_{\veps}(u_\veps, v_\veps)$.
\end{theorem}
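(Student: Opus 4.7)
The plan is to follow the standard template for deducing convergence of global minimizers from Gamma-convergence plus compactness, using the three ingredients that the paper has already assembled: the compactness result (Proposition \ref{compactness proposition}), the liminf inequality (Proposition \ref{Liminf inequality proposition}), and the limsup inequality (Proposition \ref{Limsup inequality proposition}). The only preparatory step I would not already find verbatim in the preceding material is establishing a uniform upper bound on $\mathcal E_\veps(u_\veps,v_\veps)$, so I would begin there.

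First I would construct a fixed reference pair $(\tilde u_0,\tilde v_0)$ with $\mathcal E_0(\tilde u_0,\tilde v_0)<\infty$. Since $|\Omega|\geq m+\zeta m$ and $\partial\Omega$ is Lipschitz, I can pick disjoint open sets $\tilde U,\tilde V\subset\Omega$ with $|\tilde U|=m$, $|\tilde V|=\zeta m$, and $\tilde U$ of class $C^2$, and set $\tilde u_0=\bm1_{\tilde U}$, $\tilde v_0=\bm1_{\tilde V}$. Then $\mathcal E_0(\tilde u_0,\tilde v_0)=\mathrm{Per}\;\tilde U+\gamma\mathcal N(\tilde u_0,\tilde v_0)<\infty$. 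By Proposition \ref{Limsup inequality proposition}, there exist $\{(\tilde u_\veps,\tilde v_\veps)\}$ with $\tilde u_\veps\to\tilde u_0$ and $\tilde v_\veps\to\tilde v_0$ in $L^2(\Omega)$, satisfying $\limsup_{\veps\to0}\mathcal E_\veps(\tilde u_\veps,\tilde v_\veps)\leq\mathcal E_0(\tilde u_0,\tilde v_0)$. Since $(u_\veps,v_\veps)$ is a minimizer, we get $\mathcal E_\veps(u_\veps,v_\veps)\leq\mathcal E_\veps(\tilde u_\veps,\tilde v_\veps)$, and therefore $\sup_\veps\mathcal E_\veps(u_\veps,v_\veps)<\infty$.

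Next I would extract a subsequential limit. Given any subsequence of $\{(u_\veps,v_\veps)\}$, the uniform energy bound lets me apply Proposition \ref{compactness proposition} to obtain a further (unrelabeled) subsequence and a pair $(u_0,v_0)\in BV(\Omega;\{0,1\})\times L^2(\Omega;[0,1])$ with $u_0v_0=0$ a.e., such that $u_\veps\to u_0$ in $L^2(\Omega)$ and $v_\veps\rightharpoonup v_0$ in $L^2(\Omega)$. The mass constraints pass to the limit: $\int_\Omega u_0=\lim_\veps\int_\Omega u_\veps=m$ by strong $L^2$ convergence, and $\int_\Omega v_0=\lim_\veps\int_\Omega v_\veps=\zeta m$ by weak $L^2$ convergence (testing against $\bm1_\Omega\in L^2(\Omega)$). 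Hence $(u_0,v_0)$ lies in the effective domain of $\mathcal E_0$.

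It remains to verify global minimality and the energy equality. Applying Proposition \ref{Liminf inequality proposition} along this subsequence gives
\begin{equation*}
\mathcal E_0(u_0,v_0)\leq \liminf_{\veps\to0}\mathcal E_\veps(u_\veps,v_\veps).
\end{equation*}
For the reverse, let $(w_0,z_0)$ be any competitor with $\mathcal E_0(w_0,z_0)<\infty$. By Proposition \ref{Limsup inequality proposition} I obtain a recovery sequence $(w_\veps,z_\veps)$ with $\limsup_{\veps\to0}\mathcal E_\veps(w_\veps,z_\veps)\leq\mathcal E_0(w_0,z_0)$. Using minimality of $(u_\veps,v_\veps)$ and chaining inequalities,
\begin{equation*}
\mathcal E_0(u_0,v_0)\leq \liminf_{\veps\to0}\mathcal E_\veps(u_\veps,v_\veps)\leq \limsup_{\veps\to0}\mathcal E_\veps(u_\veps,v_\veps)\leq \limsup_{\veps\to0}\mathcal E_\veps(w_\veps,z_\veps)\leq \mathcal E_0(w_0,z_0).
\end{equation*}
Taking $(w_0,z_0)=(u_0,v_0)$ forces all four quantities to coincide, which simultaneously yields $\mathcal E_0(u_0,v_0)=\liminf_{\veps\to0}\mathcal E_\veps(u_\veps,v_\veps)$, and letting $(w_0,z_0)$ vary shows that $(u_0,v_0)$ is a global minimizer of $\mathcal E_0$.

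The only mildly subtle point I foresee is making sure the recovery sequence for $(\tilde u_0,\tilde v_0)$ actually satisfies the mass constraints $\int\tilde u_\veps=m$ and $\int\tilde v_\veps=\zeta m$ (so that $\mathcal E_\veps(\tilde u_\veps,\tilde v_\veps)<\infty$ rather than $+\infty$); this is addressed inside the proof of Proposition \ref{Limsup inequality proposition} by the rescaling trick applied to $v_\veps$ and the mass-fixing perturbation of $u_\veps$, so I would simply cite that construction. Everything else is a direct application of the already-established Gamma-convergence machinery.
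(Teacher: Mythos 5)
Your proposal is correct and follows essentially the same route as the paper: the paper simply invokes the abstract statement that Gamma-convergence plus equi-coercivity (which it deduces from Proposition \ref{compactness proposition}) implies convergence of global minimizers, citing \cite[Theorem 1.4.5]{van2008partial}, whereas you unfold that abstract theorem into its standard explicit steps (uniform energy bound via a recovery sequence for a reference competitor, compactness, liminf inequality, and the chaining argument with the limsup inequality). Your version is more self-contained but contains no new idea beyond what the paper's citation encapsulates.
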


\begin{proof}
According to \cite[Bottom of Page 18]{van2008partial}, $\{\mathcal E_\veps\}$ is equi-coercive thanks to the compactness result (Proposition \ref{compactness proposition}). Since $\mathcal E_0$ is the Gamma-limit of $\mathcal E_\veps$ as $\veps\to0$ (Theorem \ref{theorem Gamma convergence}), according to \cite[Theorem 1.4.5]{van2008partial}, $\mathcal E_0$ has a global minimizer $(u_0,v_0)$ such that $\mathcal E_0(u_0,v_0)=\liminf\limits_{\veps\to0}\mathcal E_{\veps}(u_\veps, v_\veps)$. Thanks to the compactness result (Proposition \ref{compactness proposition}) and the limsup inequality (Proposition \ref{Limsup inequality proposition}), we know that the global minimizer sequence $\big\{(u_\veps, v_\veps)\big\}$ is precompact, and every cluster point of this sequence is a global minimizer of $\mathcal E_0$ \cite[Theorem 1.4.5]{van2008partial}.
\end{proof}

\subsection{Equivalence to the original problem}
\label{subsec: boundaryconditions}
We are now left with some justifications to make in order to establish the equivalence between the Gamma-limit $\mathcal E_0$ and the original problem $E$. Recall that $E$ given by \eqref{sharp energy nonrelaxed} is a functional of two subsets of $\bbR^n$, with $G$ being the fundamental solution. In the definition of $\mathcal E_0(u,v)$, the function $v$ is allowed to take any value between 0 and 1. However, as we show in Section \ref{subsec: Justifications}, the minimizer $(u_*,v_*)$ of $\mathcal E_0$ satisfies $v_*\in\{0,1\}$ \text{a.e.}, which means that $v_*$ can be regarded as an indicator function. Since $\mathcal E_0$ is the Gamma-limit of $\mathcal E_\veps$, the boundary conditions on $G$ should be inherited from $\mathcal E_\veps$ to $\mathcal E_0$, which can be either no boundary conditions (i.e., $G$ is the fundamental solution) or periodic boundary conditions. In Section \ref{subsubsection Justification for boundary conditions}, we show the equivalence between those two types of boundary conditions.

\subsubsection{Justification for the relaxation}
\label{subsec: Justifications}

We can regard $\mathcal E_0$ as a relaxed version of $E$, since the second argument of $\mathcal E_0$ is allowed to take intermediate values between 0 and 1. As we now explain, such relaxation does not affect the energy minimizers.

\begin{proposition}
\label{prop: v must be indicator function}
For any fixed $u\in BV(\Omega;\{0,1\})$, if $v_*$ is a global minimizer of $\mathcal N(u,v)$ (given by \eqref{linearized nonlocal term}) among all the $v\in L^2(\Omega;[0,1])$ such that $vu=0$ \text{a.e.} and $\int_\Omega v=\zeta\int_\Omega u$, then $v_*$ must be an indicator function, i.e., $v_*\in\{0,1\}$ a.e.
\end{proposition}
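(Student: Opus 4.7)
The plan is to exploit the fact that $\mathcal N(u,\cdot)$ is a (strictly) convex quadratic form on the convex admissible set, so that the minimizer is uniquely characterized by first-order optimality (KKT) conditions, and then to combine these conditions with elliptic regularity to force $v_*\in\{0,1\}$ a.e.

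\textbf{Step 1 (convexity and uniqueness).} First I would fix $u$ and observe that the admissible set
\[
A_u=\bigl\{v\in L^2(\Omega;[0,1]):vu=0\text{ a.e.},\ \mbox{$\int_\Omega v=\zeta\int_\Omega u$}\bigr\}
\]
is convex, and that effectively $v$ is supported in the complement of $U=\{u=1\}$. For $v_1,v_2\in A_u$ with associated potentials $\varphi_i=G*(u-v_i/\zeta)$, linearity gives $\varphi_{\lambda v_1+(1-\lambda)v_2}=\lambda\varphi_1+(1-\lambda)\varphi_2$, so by \eqref{linearized nonlocal term} and the parallelogram identity, $\mathcal N(u,\cdot)$ is convex, and strictly convex because equality in convexity forces $\nabla(\varphi_1-\varphi_2)=0$, hence $\Delta(\varphi_1-\varphi_2)=0$, hence $v_1=v_2$ a.e.

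\textbf{Step 2 (KKT conditions).} The first variation of $\mathcal N(u,v)$ with respect to $v$ is $-\varphi/\zeta$, again by \eqref{linearized nonlocal term}. Incorporating the mass constraint $\int_\Omega v=\zeta\int_\Omega u$ via a Lagrange multiplier $\mu\in\mathbb R$ and the pointwise constraint $0\leqslant v\leqslant 1$ on $\Omega\setminus U$, one obtains the standard complementary slackness conditions: there exists a constant $c=-\zeta\mu$ such that, for a.e.\ $\vec x\in\Omega\setminus U$,
\[
\varphi_*(\vec x)=c\text{ where }0<v_*(\vec x)<1,\quad \varphi_*(\vec x)\leqslant c\text{ where }v_*(\vec x)=0,\quad \varphi_*(\vec x)\geqslant c\text{ where }v_*(\vec x)=1.
\]
(These conditions are derived by testing against admissible perturbations $v_*+t(w-v_*)$ for $w\in A_u$ and $t\in[0,1]$.)

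\textbf{Step 3 (the main contradiction).} Suppose by contradiction that the set $S=\{\vec x\in\Omega\setminus U:0<v_*(\vec x)<1\}$ has positive Lebesgue measure. By Step 2, $\varphi_*\equiv c$ a.e.\ on $S$. Since $u,v_*\in L^2(\Omega)$ and $\varphi_*=G*(u-v_*/\zeta)$, elliptic regularity (for the fundamental solution or the periodic Green's function) gives $\varphi_*\in W^{2,2}_{\mathrm{loc}}(\Omega)$ with $-\Delta\varphi_*=u-v_*/\zeta$ a.e. Now I invoke the classical Sobolev fact that a $W^{2,2}$ function which is constant on a measurable set has vanishing second derivatives a.e.\ on that set (obtained by applying the Stampacchia-type lemma that $\nabla f=0$ a.e.\ on $\{f=c\}$ twice). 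Hence $\Delta\varphi_*=0$ a.e.\ on $S$, and since $u=0$ on $S\subset\Omega\setminus U$, this forces $v_*=0$ a.e.\ on $S$, contradicting $v_*>0$ on $S$.

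\textbf{Main obstacle.} The only subtle point I expect is justifying that $\Delta\varphi_*=0$ a.e.\ on the level set $\{\varphi_*=c\}$; this requires $\varphi_*\in W^{2,2}$ (rather than just $W^{1,2}$), which I would obtain from standard interior elliptic regularity for $-\Delta\varphi_*=u-v_*/\zeta\in L^2$, together with a boundary-regularity argument in the periodic case. Once this technical point is handled, Steps 1--3 combine immediately to give $v_*\in\{0,1\}$ a.e.
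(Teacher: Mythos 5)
Your proposal is correct and follows essentially the same route as the paper's proof: both establish that $\varphi_*$ is constant a.e.\ on the set where $v_*$ takes intermediate values (the paper via the explicit mean-zero perturbation $\psi=(\varphi_*-c)\bm1_S$, you via the equivalent KKT/bathtub characterization), and both then conclude via the same two ingredients, namely $W^{2,p}$ elliptic regularity for $\varphi_*$ and the Stampacchia-type lemma that weak derivatives vanish a.e.\ on level sets, contradicting $-\Delta\varphi_*=-v_*/\zeta\neq0$ on that set. The only technical care you should retain from the paper is to work with $S_\tau=\{\tau<v_*<1-\tau\}$ when justifying admissibility of the two-sided perturbations, but this is routine.
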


\begin{proof}
Our proof is very similar to \cite[Equation (3.11)]{bonacini2016optimal} (see also \cite[Lemma 4]{van2008copolymer}). Given any fixed $\tau>0$, we want to prove $|S|=0$, where $S=\big\{\vec x\in\Omega:v_*(\vec x)\in(\tau,1\!-\!\tau)\big\}$. If $|S|>0$, define the perturbation $\psi=(\varphi_*\!-\!c)\bm1_S$, where $\varphi_*=G*(u\!-\!v_*/\zeta)$ and $c=\int_S\varphi_*/|S|$. According to \cite[Theorem 9.9]{gilbarg2001elliptic}, we have $\varphi_*\in W^{2\,,\,p}(\Omega)$ for all $1<p<\infty$, and $\Delta\varphi_*=u\!-\!v_*/\zeta$ \text{a.e.} on $\Omega$. In particular, we have $\Delta\varphi_*<-\tau/\zeta$ \text{a.e.} on $S$, because $v_*>\tau>0$ \text{a.e.} on $S$ and $u=0$ \text{a.e.} on $S$ (due to $v_*u=0$ \text{a.e.} on $\Omega$).

We compute the following first-order variation:
\begin{equation*}
\frac\dd{\dd\delta}\mathcal N(u,v_*\!-\!\psi\delta) \Big|_{\delta=0} = \int_\Omega\psi\varphi_*/\zeta = \int_\Omega\varphi_*(\varphi_*\!-\!c)\bm1_S/\zeta = \int_S\varphi_*(\varphi_*\!-\!c)/\zeta = \int_S(\varphi_*\!-\!c)^2/\zeta.
\end{equation*}
Due to the minimality condition on $v_*$, the above left-hand side is zero, and thus $\varphi_*$ is constant \text{a.e.} on $S$. According to \cite[Lemma 7.7]{gilbarg2001elliptic}, which asserts that the weak derivatives of a Sobolev function are zero \text{a.e.} on its level set, we have $\nabla\varphi_*=\vec0$ \text{a.e.} on $S$, and consequently $\Delta\varphi_*=0$ \text{a.e.} on $S$, which is a contradiction.
\end{proof}

\subsubsection{Justification for boundary conditions}
\label{subsubsection Justification for boundary conditions}

\paragraph{Compactly supported minimizers}
We now establish that periodic boundary conditions and no boundary conditions on $G$ are equivalent for the purpose of energy minimization in the sharp interface limit, as long as $\Omega$ is sufficiently large so that the minimizers are compactly supported in $\Omega$. The key is the screening property satisfied by the energy minimizers \cite[Corollary 3.3]{bonacini2016ground}, i.e., different connected components of a minimizer do not interact with each other. So far we have been ambiguously using $G$ for the Green's functions of $-\Delta$ under either periodic boundary conditions or no boundary conditions. From now on, we let $\widetilde G$ denote the former, and let $G$ denote the latter. We use analogous notations for other symbols as well: e.g., $E$ is given by \eqref{sharp energy nonrelaxed} with the nonlocal kernel $G$ being the fundamental solution, and $\widetilde E$ is the periodic counterpart with the nonlocal kernel being the Green's function under periodic boundary conditions.
\begin{proposition}
\label{proposition: equivalence between two boundary conditions}
Let $(U,V)$ be a global minimizer of $E$, and let $(\widetilde U,\widetilde V)$ be a global minimizer of $\widetilde E$, under the mass constraints $|U|=|\widetilde U|=m$ and $|V|=|\widetilde V|=\zeta m$. If $\Omega$ is large enough for $U$, $V$, $\widetilde U$ and $\widetilde V$ to be compactly supported in $\Omega$ with positive distance to $\partial\Omega$, then $E(U,V)=\widetilde E(U,V)=E(\widetilde U,\widetilde V)=\widetilde E(\widetilde U,\widetilde V)$.
\end{proposition}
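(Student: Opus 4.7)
The plan is to show a stronger statement: for any admissible pair $(A,B)$ that is compactly supported in $\Omega$ with positive distance to $\partial\Omega$ and that satisfies the screening property, one has $E(A,B)=\widetilde E(A,B)$. The conclusion then follows by pairing this identity with the minimality of $(U,V)$ and $(\widetilde U,\widetilde V)$.

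First, I would reduce each nonlocal term to an electrostatic integral against the neutral density $\rho := \bm 1_A - \bm 1_B/\zeta$. Using the mass constraints $|A|=m$ and $|B|=\zeta m$, one has $\int_\Omega \rho = 0$, so the expression \eqref{alternative expression of N(U,V)} gives $2N(A,B)=\int \phi\,\rho$ with $\phi=G*\rho$, and analogously $2\widetilde N(A,B)=\int_\Omega \widetilde\phi\,\rho$ with $\widetilde\phi=\widetilde G*\rho$ (convolution on the torus). The key is now that the screening property \textcircled{3} of Proposition \ref{existing qualitative results} (which, as noted in \cite[Corollary 3.3]{bonacini2016ground}, applies to global minimizers of both free-space and periodic variants) forces $\phi$ to vanish outside $A\cup B$ and hence to be compactly supported in $\Omega$ with positive distance to $\partial\Omega$.

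Next I would compare $\widetilde\phi$ to the periodic extension $\phi^{\mathrm{per}}(x):=\sum_{k\in\mathbb Z^n}\phi(x+kL)$ of the free-space potential. Because $\phi$ is supported strictly inside the fundamental cell $\Omega$, only the $k=0$ term contributes on $\Omega$, so $\phi^{\mathrm{per}}$ agrees with $\phi$ on $\Omega$ and is $C^\infty$ across the periodic seams (it vanishes identically in a neighborhood of them). Therefore $-\Delta\phi^{\mathrm{per}}=\rho^{\mathrm{per}}$ on the torus. Since $\widetilde\phi$ satisfies the same equation and both have zero mean after adjusting by a constant, uniqueness on the torus gives $\widetilde\phi=\phi^{\mathrm{per}}+c$ for some constant $c\in\mathbb R$. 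Integrating against $\rho$ and using $\int_\Omega\rho=0$ eliminates $c$, yielding
\begin{equation*}
2\widetilde N(A,B)=\int_\Omega(\phi+c)\,\rho=\int_\Omega\phi\,\rho=2N(A,B).
\end{equation*}
Since the perimeter is intrinsic and does not depend on the choice of Green's function, $E(A,B)=\widetilde E(A,B)$.

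Applying this identity to $(A,B)=(U,V)$ and to $(A,B)=(\widetilde U,\widetilde V)$ (both of which satisfy screening by hypothesis and the remark above), and then combining with the two minimality inequalities $E(U,V)\leq E(\widetilde U,\widetilde V)$ and $\widetilde E(\widetilde U,\widetilde V)\leq\widetilde E(U,V)$, one obtains the chain
\begin{equation*}
E(U,V)\leq E(\widetilde U,\widetilde V)=\widetilde E(\widetilde U,\widetilde V)\leq\widetilde E(U,V)=E(U,V),
\end{equation*}
forcing all four quantities to coincide. The main obstacle is verifying that the screening property is indeed available for the periodic minimizer $(\widetilde U,\widetilde V)$; this should follow by repeating the free-space arguments of \cite{bonacini2016ground} on the torus, using that the screening construction there is local near $\partial(U\cup V)$ and does not see the boundary of $\Omega$ once a positive-distance hypothesis is in place. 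A secondary technical point is ensuring enough regularity of $\phi$ for the periodic extension to be a classical solution on the torus; this follows from $\rho\in L^\infty$ together with $\phi\in W^{2,p}_{\mathrm{loc}}$ for every $p<\infty$ (cf.\ \cite[Theorem 9.9]{gilbarg2001elliptic}), combined with the fact that $\phi$ vanishes in an open neighborhood of $\partial\Omega$.
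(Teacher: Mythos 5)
Your treatment of the free-space minimizer $(U,V)$ is correct and matches the paper's Step 1: screening gives $\phi_{U,V}=0$ outside $U\cup V$, hence (since the support has positive distance to $\partial\Omega$) the periodized potential solves the periodic Poisson problem, differs from $\tilde\phi_{U,V}$ by a constant, and the constant integrates to zero against the neutral density. The minimality chain at the end is also the right way to close the argument.

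The genuine gap is the other half, which you explicitly defer as "the main obstacle": the screening property for the periodic minimizer $(\widetilde U,\widetilde V)$ is not available off the shelf, and it is the substantive content of the paper's proof. Two issues arise. First, what the optimality of $\widetilde V$ for $\widetilde N(\widetilde U,\cdot)$ can give you is a statement about the \emph{periodic} potential $\tilde\phi_{\widetilde U,\widetilde V}$ (namely that it is constant, not zero, on $\Omega\backslash(\widetilde U\cup\widetilde V)$), whereas your periodization lemma is keyed to the \emph{free-space} potential vanishing outside the support. These are different assertions, and the direction of comparison is reversed: for $(\widetilde U,\widetilde V)$ the paper first proves constancy of $\tilde\phi$ outside (Step 3), extends it by that constant to $\bbR^n$, and only then identifies it with the free-space potential up to a constant via a Liouville argument (the difference is harmonic and bounded on $\bbR^n$ because the free-space potential decays at infinity). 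Your citation of \cite[Corollary 3.3]{bonacini2016ground} does not cover the periodic case. Second, establishing the constancy itself is not a routine transcription of the free-space argument: the paper's Step 3 is a two-stage argument using a mass-transport competitor $\widetilde V_r=B(\vec y;r)\cup\widetilde V\backslash B(\vec x;r)$ to show $\tilde\phi(\vec x)\geq\tilde\phi(\vec y)$ for $\vec x\in\widetilde V$ and $\vec y$ outside, followed by the strong minimum principle for superharmonic functions on $\widetilde U$ and the Hopf boundary point lemma at a putative minimum point on $\partial\widetilde U\backslash\partial\widetilde V$ to derive a contradiction. Without supplying this argument (or an equivalent), the identity $\widetilde E(\widetilde U,\widetilde V)=E(\widetilde U,\widetilde V)$ — and hence the full four-way equality — remains unproven.
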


\begin{proof}

\noindent\textbf{Step 1:} We first prove $E(U,V)=\widetilde E(U,V)\geqslant \widetilde E(\widetilde U,\widetilde V)$. It suffices to prove $N(U,V)=\widetilde N(U,V)$, where $\widetilde N$ is the periodic counterpart of $N$ (i.e., $\widetilde N$ is the nonlocal term with the nonlocal kernel being the Green's function under periodic boundary conditions). According to Proposition \ref{existing qualitative results}-\CircleAroundChar{3}, we have $\phi_{U,V}=0$ in $\Omega\backslash(U\cup V)$, where $\phi_{U,V}$ is the electrostatic potential $\phi$ associated with $U$ and $V$, given by \eqref{eqn: electrostatic potential}. Since $\phi_{U,V}$ satisfies periodic boundary conditions and the Poisson's equation $-\Delta\phi_{U,V}=\bm1_U-\bm1_V/\zeta$, we have $\tilde\phi_{U,V}=\text{constant}+\phi_{U,V}$, where $\tilde\phi_{U,V}$ is the solution to $-\Delta\tilde\phi_{U,V}=\bm1_U-\bm1_V/\zeta$ under periodic boundary conditions. According to \eqref{alternative expression of N(U,V)}, we have $2N(U,V)=\int_{U}\phi_{U,V}-\int_{V}\phi_{U,V}/\zeta=\int_{U}\tilde\phi_{U,V}-\int_{V}\tilde\phi_{U,V}/\zeta=2\widetilde N(U,V)$.

\noindent\textbf{Step 2:} We now prove $\widetilde E(\widetilde U,\widetilde V)=E(\widetilde U,\widetilde V)\geqslant E(U,V)$. It suffices to prove $\widetilde N(\widetilde U,\widetilde V)=N(\widetilde U,\widetilde V)$. In Step 3 we will prove $\tilde \phi_{\widetilde U,\widetilde V}=\text{constant}$ in $\Omega\backslash(\widetilde U\cup \widetilde V)$, so we can extend $\tilde \phi_{\widetilde U,\widetilde V}$ to be this constant in $\bbR^n\backslash\Omega$. Since $\phi_{\widetilde U,\widetilde V}$ vanishes at infinity \cite[Lemma 3.1]{bonacini2016optimal}, $\tilde \phi_{\widetilde U,\widetilde V}-\phi_{\widetilde U,\widetilde V}$ is harmonic and bounded in $\bbR^n$, therefore $\tilde \phi_{\widetilde U,\widetilde V}-\phi_{\widetilde U,\widetilde V}=\text{constant}$. Similar to Step 1, we can prove $\widetilde N(\widetilde U,\widetilde V)=N(\widetilde U,\widetilde V)$.

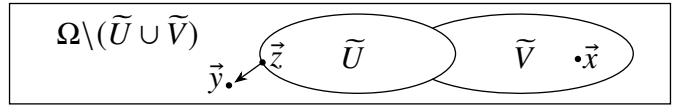
\begin{figure}[H]
\centering
\begin{tikzpicture}
\draw[line width = 0.5pt] (-60pt,11pt) rectangle (190pt,49pt);
\draw[line width = 0.5pt] (71.7pt,30pt) ellipse (37pt and 15pt);
\draw[line width = 0.5pt] (100pt,20.35pt) arc [start angle=-140, end angle=140, x radius=43pt, y radius =15pt];
\fill (35.7pt,26.7pt) circle[radius=1.2pt];
\draw [-{Stealth[length=5pt]},line width = 0.5pt](35.7pt,26.7pt) -- (25pt,19pt);
\node at (41pt,29.5pt) {$\vec z$};
\node at (-15pt,38pt) {$\Omega\backslash(\widetilde U\cup \widetilde V)$};
\node at (70pt,30pt) {$\widetilde U$};
\node at (135pt,30pt) {$\widetilde V$};
\fill (155pt,28pt) circle[radius=1.2pt];
\node at (160pt,29pt) {$\vec x$};
\fill (23pt,18pt) circle[radius=1.2pt];
\node at (18pt,20.8pt) {$\vec y$};
\end{tikzpicture}
\caption{Step 3 in the Proof of Proposition \ref{proposition: equivalence between two boundary conditions}.}
\label{sketch of Step 3 of equivalence proof}
\end{figure}

\noindent\textbf{Step 3:} Now let us prove $\tilde \phi_{\widetilde U,\widetilde V}=\text{constant}$ in $\Omega\backslash(\widetilde U\cup \widetilde V)$. Our proof is similar to \cite[Section 4]{bonacini2016optimal}. According to \cite[Top of Page 8]{bonacini2016ground}, $E$ is a volume perturbation of the perimeter, and $U$ is a quasi-minimizer of the perimeter, so $\partial U$ is of differentiability class $C^\infty$ (Proposition \ref{existing qualitative results}-\CircleAroundChar{2}). Similarly, $\widetilde E$ is also a volume perturbation of the perimeter, so we can assume that $\widetilde U$ and $\widetilde V$ are open, and that $\partial\widetilde U$ is of differentiability class $C^\infty$. If $\widetilde V$ is not open, we can instead consider the points at which the Lebesgue density of $\widetilde V$ is positive, similar to \cite[Page 1141]{bonacini2016optimal}. See Figure \ref{sketch of Step 3 of equivalence proof} for visualization.

Using the optimality of $\widetilde V$ for $\widetilde N(\widetilde U,\,\cdot\,)$, we can prove the following by contradiction:
\begin{equation*}
\tilde \phi_{\widetilde U,\widetilde V}(\vec x)\geqslant\tilde \phi_{\widetilde U,\widetilde V}(\vec y),\quad\text{for any}\;\vec x\in \widetilde V\;\text{and any}\;\vec y\in \Omega\backslash\overline{\widetilde U\cup \widetilde V}.
\end{equation*}
Otherwise, we can decrease the energy by moving some negative charge from a neighborhood of $\vec x$ to a neighborhood of $\vec y$. More precisely, since $\tilde\phi_{\widetilde U,\widetilde V}$ is continuous and $2\widetilde N(\widetilde U,\widetilde V)=\int_{\widetilde U}\tilde\phi_{\widetilde U,\widetilde V}-\int_{\widetilde V}\tilde\phi_{\widetilde U,\widetilde V}/\zeta$, we can define a competitor $\widetilde V_r=B(\vec y;r)\cup\widetilde V\backslash B(\vec x;r)$ where $r$ is small enough, and $\widetilde N(\widetilde U,\widetilde V_r)<\widetilde N(\widetilde U,\widetilde V)$.

On $\Omega\backslash(\widetilde U\cup \widetilde V)$, since $\tilde \phi_{\widetilde U,\widetilde V}$ is harmonic and satisfies the periodic boundary conditions, its minimum $a$ and maximum $b$ are attained on the boundary $\partial(\widetilde U\cup \widetilde V)$. If $\tilde \phi_{\widetilde U,\widetilde V}$ is non-constant in $\Omega\backslash(\widetilde U\cup \widetilde V)$, then $a<b\leqslant\tilde \phi_{\widetilde U,\widetilde V}(\vec x)$ for any $\vec x\in \widetilde V$, and the minimum $a$ can only be attained on $\partial \widetilde U\backslash\partial \widetilde V$, because $\tilde \phi_{\widetilde U,\widetilde V}$ is continuous in $\Omega$. Let $\vec z\in\partial \widetilde U\backslash\partial \widetilde V$ such that $\tilde \phi_{\widetilde U,\widetilde V}(\vec z)=a$. Without loss of generality, let us assume that $\widetilde U$ is connected. Because $\tilde \phi_{\widetilde U,\widetilde V}\geqslant a$ on both $\partial \widetilde U\backslash\partial \widetilde V$ and $\partial \widetilde U\cap\partial \widetilde V$, and $-\Delta\tilde \phi_{\widetilde U,\widetilde V}=1$ on $\widetilde U$, by the strong minimum principle for superharmonic functions, we have $\tilde \phi_{\widetilde U,\widetilde V}>a$ on $\widetilde U$. Since $\widetilde U$ satisfies the interior ball condition ($\partial\widetilde U$ is of differentiability class $C^\infty$), the Hopf boundary point lemma \cite[Lemma 3.4]{gilbarg2001elliptic} states that the outer normal derivative of $\tilde \phi_{\widetilde U,\widetilde V}$ at $\vec z$ is negative, which is a contradiction to the minimality of $a$, because there is $\vec y$ outside of $\widetilde U\cup \widetilde V$ such that $\vec y$ is very close to $\vec z$ and $\tilde \phi_{\widetilde U,\widetilde V}(\vec y)<a$.
\end{proof}

\paragraph{Non-compactly supported minimizers}
Even if $\Omega$ is not large enough and the global minimizer of $\widetilde E$ is not compactly supported in $\Omega$, we believe the asymptotics in Conjecture \ref{Periodic asymptotics conjecture} to hold true. Our rationale is that according to \cite[Theorem 8]{van2008copolymer}, any lower-dimensional structure with zero dipole moment can be extended to a higher dimension using a radially symmetric cutoff function, with asymptotically the same energy-to-mass ratio.
\begin{conjecture}
\label{Periodic asymptotics conjecture}
Assume that $(\widetilde U,\widetilde V)$ satisfies the mass constraints $|\widetilde U|=m$ and $|\widetilde V|=\zeta m$, but we no longer require $(\widetilde U,\widetilde V)$ to be a global minimizer or compactly supported in $\Omega$. Additionally, we assume that $(\widetilde U,\widetilde V)$ has zero dipole moment, i.e., $\int_\Omega\vec x\big(\bm1_{\widetilde U}(\vec x)-\bm1_{\widetilde V}(\vec x)/\zeta\big)\dd{\vec x}=\vec0$, which can be achieved by selecting a suitable translational representative according to Lemma \ref{zero dipole moment by translation}. Let $(\widetilde U_j,\widetilde V_j)$ denote the juxtaposition of $j^n$ copies of $(\widetilde U,\widetilde V)$ (there are $j$ cycles along the direction of each standard basis vector in $\bbR^n$, similar to the cubic crystal structure). Then $\widetilde E(\widetilde U,\widetilde V)=\lim\limits_{j\to\infty}E(\widetilde U_j,\widetilde V_j)/j^n$.
\end{conjecture}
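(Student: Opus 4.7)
The plan is to separate $E(\widetilde U_j, \widetilde V_j)/j^n$ into perimeter and nonlocal parts and show each converges to its periodic analogue. Write $L$ for the side of $\Omega$ (assumed cubic for clarity), $\mu := \bm 1_{\widetilde U} - \bm 1_{\widetilde V}/\zeta$, and $J_j := \{0,\ldots,j-1\}^n$. Note that $\int_\Omega \mu = 0$ by the mass constraints and $\int_\Omega \vec x\,\mu(\vec x)\dd{\vec x} = \vec 0$ by the dipole hypothesis, and both conditions will be used crucially in the nonlocal analysis.

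For the perimeter term, let $\widehat U$ denote the periodic extension of $\widetilde U$ to $\bbR^n$, so that $\widetilde U_j = \widehat U \cap (j\Omega)$. Then
\begin{equation*}
\text{Per}(\widetilde U_j) = \mathcal H^{n-1}\big(\partial^*\widehat U \cap j\Omega\big) + \mathcal H^{n-1}\big(\partial(j\Omega) \cap \widehat U\big) = j^n\,\text{Per}_{\mathbb T^n}\widetilde U + O(j^{n-1}),
\end{equation*}
since the first term is purely periodic (one contribution per cell) and the second is bounded by $\mathcal H^{n-1}(\partial(j\Omega)) = 2n(jL)^{n-1}$. Dividing by $j^n$ yields convergence to the periodic perimeter.

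The heart of the argument is the nonlocal term. Expanding copy by copy and counting pairs $(k,l) \in J_j^2$ with $k-l = \vec\ell$,
\begin{equation*}
\frac{2 N(\widetilde U_j, \widetilde V_j)}{j^n} = \sum_{\vec\ell \in \mathbb Z^n} \Big(\prod_{i=1}^n \max\{0,\,1 - |\ell_i|/j\}\Big)\, I(\vec\ell),\quad I(\vec\ell) := \int_\Omega\int_\Omega G(\vec x - \vec y + L\vec\ell)\,\mu(\vec x)\mu(\vec y)\dd{\vec x}\dd{\vec y}.
\end{equation*}
Each weight is bounded by $1$ and converges monotonically to $1$, so by dominated convergence it suffices to show $\sum_{\vec\ell} |I(\vec\ell)| < \infty$. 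I would Taylor-expand $\vec z \mapsto G(\vec z + L\vec\ell)$ around $\vec z = 0$ for $|\vec\ell|$ large: the constant term integrates against $(\int\mu)^2 = 0$; the linear term gives $\nabla G(L\vec\ell)\cdot\int\!\int (\vec x - \vec y)\mu(\vec x)\mu(\vec y) = 0$ by neutrality; and the quadratic term, after expanding $(\vec x - \vec y)^{\otimes 2}$ and collecting cross terms, reduces to $-\vec M^{\top}\nabla^2 G(L\vec\ell)\,\vec M$ with $\vec M := \int\vec x\,\mu(\vec x)\dd{\vec x}$, which vanishes by hypothesis. The cubic remainder is controlled by $|\nabla^3 G(L\vec\ell)| \lesssim |\vec\ell|^{-(n+1)}$ in both $n=2$ and $n=3$, yielding $|I(\vec\ell)| \lesssim |\vec\ell|^{-(n+1)}$, which is summable over $\mathbb Z^n$.

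It remains to identify $\tfrac{1}{2}\sum_{\vec\ell} I(\vec\ell)$ with $\widetilde N(\widetilde U,\widetilde V)$, and this is where I expect the main technical obstacle. Formally one sets $\hat\phi(\vec x) := \sum_{\vec\ell \in \mathbb Z^n} \int_\Omega G(\vec x - \vec y + L\vec\ell)\mu(\vec y)\dd{\vec y}$; a Taylor argument applied to a single copy of $\mu$ (using $\int\mu = 0$ and zero dipole moment to cancel the two non-integrable leading terms) shows the series converges absolutely to a periodic function satisfying $-\Delta\hat\phi = \mu$ on $\Omega$ in the distributional periodic sense. Hence $\hat\phi$ differs from $\widetilde G * \mu = \tilde\phi$ only by an additive constant, and integrating against the neutral $\mu$ kills the constant, so $\sum_{\vec\ell} I(\vec\ell) = \int_\Omega \hat\phi\,\mu = \int_\Omega \tilde\phi\,\mu = 2\widetilde N$. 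The difficulty is that the naive pointwise sum $\sum_{\vec\ell} G(\cdot + L\vec\ell)$ diverges; the interchange of sum and integral must be rigorously justified by subtracting the zeroth, first, and second Taylor jets of $G(\cdot - \vec y + L\vec\ell)$ term by term, which is exactly the quantitative content of the zero-dipole hypothesis, or equivalently by matching Fourier series on $\mathbb T^n$ via Poisson summation and noting that the absent monopole and dipole precisely eliminate the non-summable low-frequency contributions.
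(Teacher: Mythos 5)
First, a framing point: the paper states this result as a \emph{conjecture} and supplies no proof — only the heuristic appeal to \cite[Theorem 8]{van2008copolymer} and the Ewald-summation discussion in Remark \ref{crystal intuition remark} — so your attempt cannot be checked against a paper argument; you are trying to settle something the authors deliberately left open. Your skeleton is sound and already goes further than the paper's heuristic. The perimeter estimate is fine (your "one contribution per cell" overcounts internal faces by $O(j^{n-1})$, harmlessly). The pair-counting identity $2N(\widetilde U_j,\widetilde V_j)/j^n=\sum_{\vec\ell}\prod_i(1-|\ell_i|/j)_+\,I(\vec\ell)$ is correct, and the multipole cancellation is genuine: since both the monopole and the dipole of $\mu$ vanish, every term of total order $\leq 3$ in the joint Taylor expansion factors through a vanishing moment, so in fact $I(\vec\ell)=O(|\vec\ell|^{-(n+2)})$, absolute summability holds, and dominated convergence gives $\lim_j 2N(\widetilde U_j,\widetilde V_j)/j^n=\sum_{\vec\ell}I(\vec\ell)$.

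Two gaps remain. The smaller one is a false claim: the series $\hat\phi(\vec x)=\sum_{\vec\ell}\int_\Omega G(\vec x-\vec y+L\vec\ell)\,\mu(\vec y)\dd{\vec y}$ does \emph{not} converge absolutely. With only one copy of $\mu$ available, the cancellation stops at the dipole; the quadrupole of $\mu$ is generically nonzero and contributes $\sim\nabla^2G(L\vec\ell)\sim|\vec\ell|^{-n}$, and $\sum_{\vec\ell\in\mathbb Z^n}|\vec\ell|^{-n}$ diverges for $n=2,3$. The potential sum is at best conditionally convergent, with a value a priori dependent on the summation order — precisely the classical ambiguity of lattice sums. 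The larger gap is that the identity $\sum_{\vec\ell}I(\vec\ell)=2\widetilde N(\widetilde U,\widetilde V)$, which is the entire mathematical content of the conjecture once the counting is done, is asserted rather than proven; you flag it yourself as "the main technical obstacle." Closing it requires, for instance, an Ewald splitting of $G$ into short- and long-range parts, Poisson summation applied to the long-range part, the observation that the excluded $\vec k=\vec0$ mode is annihilated because $\int_\Omega\mu=0$, and the verification that the residual shape-dependent term is proportional to $|\vec M|^2$ and hence vanishes under the zero-dipole hypothesis — i.e., a rigorous version, for extended densities rather than point charges, of the result the paper quotes from the Ewald literature in Remark \ref{crystal intuition remark}. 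As written, your argument is a clean and correct reduction of the conjecture to that one lattice-summation lemma, not yet a proof of it.
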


\begin{remark}$ $
\label{crystal intuition remark}
\begin{enumerate}[label=\protect\CircleAroundChar{\arabic*}]
\item
For $n=3$, Conjecture \ref{Periodic asymptotics conjecture} has a well-known discrete variant in which $\widetilde U$ and $\widetilde V$ are replaced by Dirac delta functions representing point charges with zero surface area. When calculating the electrostatic potential energy-to-mass ratio of a crystal made up of many (but finite) unit cells, solid physicists and material scientists usually take a shortcut: they consider only one unit cell and use Ewald summation which implicitly assumes periodic boundary conditions on Poisson's equation \cite[Left of Page 7888]{hummer1998molecular}. Such Ewald summation gives an asymptotically correct value for a large but finite crystal surrounded by vacuum as long as the dipole moment in the unit cell is zero \cite[Equation (1.8)]{smith1981electrostatic}, in which case the net Coulomb interaction decays sufficiently fast so that the summation is absolutely convergent. In this sense, periodic boundary conditions and no boundary conditions are equivalent for zero dipole moment.

\item
For a nonzero dipole moment, the correct electrostatic energy-to-mass ratio is the sum of an intrinsic part and an extrinsic part. The intrinsic part is given by the above Ewald summation and is dependent on the unit cell but independent of the global shape of the crystal \cite{herce2007electrostatic}. The extrinsic part depends on the shape of the crystal, and can be interpreted as the outcome of "effective" charges distributed on the surface of the crystal (those surface charges reproduce the total dipole moment produced by the unit cells in the bulk) \cite{kantorovich1999coulomb, smith1981electrostatic}. In this sense, periodic boundary conditions and no boundary conditions are not equivalent for a nonzero dipole moment.

\item
The above extrinsic part should produce a voltage across the crystal formed by polar unit cells. However, this voltage seems to be absent in everyday life except in pyroelectric materials (whose polarization depends on the temperature). After a sufficiently long time at a constant temperature, even pyroelectric materials will lose such a voltage, because external charges will build up on the surface of the crystal through leakage currents (conducted by the crystal itself or the ambient atmosphere), thus canceling out that voltage in a similar way to grounding or earthing. Therefore, it seems more realistic to assume that the electrostatic potential is zero on the surface of the crystal, which leads to the homogeneous Dirichlet boundary conditions. According to Lemma \ref{gounding minimizes the electrostatic potential energy}, the electrostatic potential energy is actually minimized by the homogeneous Dirichlet boundary conditions. Many physicists and chemists refer to homogeneous Dirichlet boundary conditions as "tin foil" or "conducting" boundary conditions (imagine a crystal wrapped in a tin foil, or submerged in a conducting medium). They also believe that periodic boundary conditions and homogeneous Dirichlet boundary conditions are equivalent (see \cite[Bottom-right of Page 5024]{roberts1994unit}, \cite[Top-left of Page 6134]{figueirido1995finite} and \cite[Right of Page 124106-1]{herce2007electrostatic}). Therefore, the physical reality may be better approximated by periodic boundary conditions than by no boundary conditions in certain scenarios \cite[Page 6167]{kantorovich1999coulomb}.
\end{enumerate}
\end{remark}

The following lemma shows that within a bounded and connected domain, among all the electrostatic potentials that solve the same Poisson's equation, the electrostatic potential energy is minimized by the one satisfying homogeneous Dirichlet boundary conditions.

\begin{lemma}
\label{gounding minimizes the electrostatic potential energy}
Assuming that $\Omega$ is connected. For any fixed $\phi\in W^{1\,,\,2}(\Omega)$, consider the functional $h\mapsto\|\nabla(\phi-h)\|_{L^2(\Omega)}$ defined on the set consisting of all harmonic functions on $\Omega$. Its global minimizer $h_*$ is unique (up to addition of a constant) and satisfies Dirichlet boundary conditions $h_*=\phi$ on $\partial\Omega$.
\end{lemma}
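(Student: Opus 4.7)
The plan is to invoke the classical Dirichlet principle in the form of an orthogonal decomposition in the gradient inner product. Since $\|\nabla(\phi-h)\|_{L^2(\Omega)}$ must be finite, I would implicitly restrict attention to harmonic competitors $h\in W^{1,2}(\Omega)$. First I would construct the candidate minimizer $h_*$ by solving the Dirichlet problem with boundary data $\phi|_{\partial\Omega}$, then show that the residual $\phi-h_*$ is gradient-orthogonal to every harmonic perturbation, and finally read off the minimizing property and uniqueness from connectedness of $\Omega$.

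For the existence of $h_*$, because $\Omega$ has Lipschitz boundary, the trace $\phi|_{\partial\Omega}$ is well defined, and the constrained minimization
\[
\inf\big\{\|\nabla u\|_{L^2(\Omega)}^2 : u\in W^{1,2}(\Omega),\ u-\phi\in W^{1,2}_0(\Omega)\big\}
\]
admits a unique minimizer $h_*$ by the direct method (coercivity comes from the Poincar\'e inequality applied to $u-\phi$). The Euler--Lagrange equation $\int_\Omega \nabla h_*\cdot\nabla\psi=0$ for all $\psi\in W^{1,2}_0(\Omega)$ identifies $h_*$ as weakly harmonic with trace $\phi|_{\partial\Omega}$, which is the asserted boundary condition.

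For the minimality, fix any harmonic competitor $h\in W^{1,2}(\Omega)$. Then $h_*-h$ is harmonic on $\Omega$, while $\phi-h_*\in W^{1,2}_0(\Omega)$, so using $\phi-h_*$ as a test function in the weak harmonicity of $h_*-h$ yields
\[
\int_\Omega \nabla(\phi-h_*)\cdot\nabla(h_*-h)\,\dd{\vec x}=0.
\]
Combining this with the splitting $\phi-h=(\phi-h_*)+(h_*-h)$ gives the Pythagorean identity
\[
\|\nabla(\phi-h)\|_{L^2(\Omega)}^2 = \|\nabla(\phi-h_*)\|_{L^2(\Omega)}^2 + \|\nabla(h_*-h)\|_{L^2(\Omega)}^2,
\]
so $h_*$ is a global minimizer, and equality forces $\nabla(h-h_*)\equiv\vec 0$ a.e., which by connectedness of $\Omega$ means $h-h_*$ is constant.

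The one point requiring genuine care is the rigorous use of $\phi-h_*\in W^{1,2}_0(\Omega)$ as a test function against the harmonic $h_*-h$; this follows from the weak definition of harmonicity (testing against $C_c^\infty(\Omega)$) together with the density of $C_c^\infty(\Omega)$ in $W^{1,2}_0(\Omega)$. Beyond this bookkeeping, no substantive obstacle arises, and the rest is a short manipulation of the Pythagorean identity.
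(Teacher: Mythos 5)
Your proof is correct, but it takes a genuinely different route from the paper's. The paper argues by stationarity: it assumes a global minimizer $h_*$ exists, computes the first variation along an arbitrary harmonic direction $h$, integrates by parts to isolate the boundary term $\int_{\partial\Omega}(h_*-\phi)\,\partial h/\partial\vec n$, and then, if $h_*-\phi$ were non-constant on $\partial\Omega$, constructs a solution of the Neumann problem $\Delta h=0$, $\partial h/\partial\vec n=\text{constant}+\phi-h_*$ to contradict stationarity. You instead invoke the Dirichlet principle: build $h_*$ directly as the minimizer of the Dirichlet energy over $\phi+W^{1,2}_0(\Omega)$, verify the gradient-orthogonality $\int_\Omega\nabla(\phi-h_*)\cdot\nabla(h_*-h)=0$ against any harmonic competitor, and read off minimality and uniqueness from the Pythagorean identity. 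Your approach buys more: it actually proves existence of the minimizer (which the paper's necessary-condition argument presupposes), it avoids the boundary integration by parts and the solvability of an auxiliary Neumann problem (both of which tacitly require regularity of $h$, $h_*$ and $\partial\Omega$ beyond what is stated), and it delivers uniqueness up to a constant cleanly from connectedness. The paper's variational argument is shorter and more physically transparent (it exhibits the perturbation that lowers the energy when the boundary data mismatch is non-constant), but as written it is closer to a heuristic than a complete proof. The only cosmetic discrepancy is that your $h_*$ satisfies $h_*=\phi$ on $\partial\Omega$ exactly, whereas the paper concludes $h_*-\phi=\text{constant}$ there; these agree modulo the "up to addition of a constant" clause in the statement.
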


\begin{proof}
Let $h_*$ be a global minimizer, we can consider a competitor $h_*+\delta h$, where $\delta\in\bbR$ and $h$ is any harmonic function on $\Omega$. Since $h_*$ is stationary, we have
\begin{equation*}
0=\frac{\dd}{\dd \delta}\|\nabla(\phi\!-\!h_*\!-\!\delta h)\|_{L^2(\Omega)}^2\Big|_{\delta=0}
=2\int_\Omega\nabla h\cdot\nabla(h_*\!-\!\phi)=2\int_{\partial\Omega}(h_*\!-\!\phi)\frac{\partial h}{\partial \vec n}-2\int_\Omega(h_*\!-\!\phi)\Delta h.
\end{equation*}
On the one hand, if $h_*-\phi$ is constant on $\partial\Omega$, then the above right-hand side vanishes, because $\int_{\partial\Omega}\partial h/\partial \vec n=\int_{\Omega}\Delta h=0$. On the other hand, if $h_*-\phi$ is non-constant on $\partial\Omega$, then we can find a solution to Laplace's equation $\Delta h=0$ under Neumann boundary conditions $\partial h/\partial \vec n=\text{constant}+\phi-h_*$ on $\partial\Omega$, so that $h_*$ is not stationary against the small perturbations along the direction $h$. In summary, $h_*-\phi$ must be constant on $\partial\Omega$.
\end{proof}

The following lemma shows that any $\widetilde U$ and $\widetilde V$ contained in $\Omega$ can be periodically (or circularly) shifted so that the dipole moment vanishes, thus satisfying the assumption in Conjecture \ref{Periodic asymptotics conjecture}. Without loss of generality, we only present the 2-D case ($n=2$).

\begin{lemma}
\label{zero dipole moment by translation}
For any fixed $g\in L^1\big([0,1]^2\big)$, we extend it periodically in both directions. If $\int_{[0,\,1]^2}g(\vec x)\dd{\vec x}=0$, then there exists $\vec t=(t_1,t_2)\in[0,1]^2$ such that $\int_{[0,\,1]^2}\vec x\,g(\vec x\!+\!\vec t\,)\dd{\vec x}=\vec0$.
\end{lemma}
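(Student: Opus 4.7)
The plan is to define the vector-valued map
\[
F(\vec t) := \int_{[0,1]^2} \vec x\, g(\vec x + \vec t)\, d\vec x\,, \qquad \vec t \in \mathbb R^2,
\]
and show it has a zero in $[0,1]^2$. First I would perform the substitution $\vec y = \vec x + \vec t$ in each component, obtaining
\[
F_1(\vec t) = \int_{t_1}^{1+t_1}\!\!\int_{t_2}^{1+t_2}\! (y_1 - t_1)\, g(y_1,y_2)\, dy_2\, dy_1,
\]
and similarly for $F_2$. Since $g$ is periodic and $\int_{[0,1]^2} g = 0$, the integral of $g$ over any unit square is zero, so the $-t_1$ term drops out and $F_1(\vec t) = \int_{t_1}^{1+t_1}\!\!\int_{t_2}^{1+t_2} y_1\, g(y_1,y_2)\, dy_2\, dy_1$.

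The crucial observation is a decoupling: for each fixed $y_1$, the map $y_2 \mapsto g(y_1,y_2)$ is $1$-periodic, so $\int_{t_2}^{1+t_2} g(y_1,y_2)\, dy_2$ equals $\int_0^1 g(y_1,y_2)\, dy_2$ independent of $t_2$. Hence $F_1$ depends only on $t_1$; set $h(y_1) := \int_0^1 g(y_1,y_2)\, dy_2$, so $F_1(t_1) = \int_{t_1}^{1+t_1} y_1 h(y_1)\, dy_1$. Symmetrically, $F_2$ depends only on $t_2$. Thus the two-dimensional problem reduces to two independent one-dimensional root-finding problems.

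It remains to verify that each $F_i$ is a continuous $1$-periodic function on $\mathbb R$ with vanishing mean over a period. Continuity follows from $y_1 h(y_1) \in L^1_{\mathrm{loc}}(\mathbb R)$ (via Fubini and $h \in L^1([0,1])$). Periodicity follows from the computation
\[
F_1(t_1\!+\!1) - F_1(t_1) = \int_{t_1}^{1+t_1} h(u)\, du = \int_0^1\!\!\int_0^1 g = 0,
\]
where the first equality uses the substitution $u = y_1 - 1$ and periodicity of $h$. The zero-mean property follows from Fubini:
\[
\int_0^1 F_1(t_1)\, dt_1 = \int_{[0,1]^2}\!\! x_1 \Big(\!\int_{[0,1]^2} g(\vec x + \vec t)\, dt_1\, dt_2\Big) d\vec x = 0,
\]
again because the inner periodic integral vanishes. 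A continuous $1$-periodic function on $\mathbb R$ with vanishing mean attains the value $0$ by the intermediate value theorem, so there exist $t_1^\ast$ and $t_2^\ast$ in $[0,1]$ with $F_1(t_1^\ast) = F_2(t_2^\ast) = 0$, and $\vec t = (t_1^\ast, t_2^\ast)$ is the desired translation.

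There is no serious obstacle here; the only genuinely novel step is spotting the decoupling, and once one computes $\partial F_1/\partial t_2 = 0$ (which follows immediately from $1$-periodicity of $g$ in the second coordinate), the remainder is a routine one-dimensional IVT argument. The proof extends verbatim to $n$ dimensions: $F_i$ depends only on $t_i$, and each $F_i$ is a continuous periodic mean-zero function of one variable.
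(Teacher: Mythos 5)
Your proposal is correct and follows essentially the same route as the paper's proof: observe that each component of the translated dipole moment depends only on the corresponding coordinate of $\vec t$ (the decoupling), show that this one-variable function is continuous, periodic, and has zero mean over a period, and conclude by the intermediate value theorem. The only cosmetic difference is that you perform the change of variables $\vec y = \vec x + \vec t$ up front, whereas the paper keeps the integrand in the form $x_1\,g(x_1\!+\!t_1,x_2)$ and computes the mean $\int_0^1 h_1(t_1,0)\dd{t_1}=\tfrac12\int_{[0,1]^2}g=0$ directly.
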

\begin{proof}
Define $\vec h(\,\vec t\,)=\big(h_1(t_1,t_2),h_2(t_1,t_2)\big)=\int_{[0,\,1]^2}\vec x\,g(\vec x\!+\!\vec t\,)\dd{\vec x}$, then $\vec h$ is continuous in $\bbR^2$. Since $g$ is periodic in both directions, $\vec h$ is also periodic. Moreover, we have $h_1(t_1,t_2)=\int_0^1\int_0^1 x_1\,g(x_1\!+\!t_1,x_2\!+\!t_2)\dd{x_2}\dd{x_1}=\int_0^1\int_0^1 x_1\,g(x_1\!+\!t_1,x_2)\dd{x_2}\dd{x_1}=h_1(t_1,0)$, and
\begin{equation*}
\begin{aligned}
\int_0^1 h_1(t_1,0)\dd{t_1}&=\int_0^1\int_0^1\int_0^1 x_1\,g(x_1\!+\!t_1,x_2)\dd{x_2}\dd{x_1}\dd{t_1}\\
&=\int_0^1\int_0^1\int_0^1 x_1\,g(x_1\!+\!t_1,x_2)\dd{t_1}\dd{x_2}\dd{x_1}\\
&=\int_0^1\int_0^1\int_0^1 x_1\,g(t_1,x_2)\dd{t_1}\dd{x_2}\dd{x_1}\\
&=\frac12\int_0^1\int_0^1 g(t_1,x_2)\dd{t_1}\dd{x_2}=0.
\end{aligned}
\end{equation*}
Therefore, there exists $t^*_1\in[0,1]$ such that for any $t_2\in\bbR$, we have $h_1(t^*_1,t_2)=h_1(t^*_1,0)=0$. Similarly, there exists $t^*_2\in[0,1]$ such that for any $t_1\in\bbR$, we have $
h_2(t_1,t^*_2)=0$. In summary we have $\vec h(t^*_1,t^*_2)=\vec 0$.
\end{proof}

\section{Phase-field simulations}
\label{section: Phase-field simulations}
In this section, we present some numerical simulations based on the phase-field reformulation proposed in Section \ref{section: Phase-field reformulation}.

The numerical methods used in the simulations are very similar to those adopted in a previous work for the liquid drop model \cite[Section III]{10.1063/5.0148456}. The main difference is that two phase-field functions are used here, whereas only one was used in the previous work. Similar to \cite[Section III.A]{10.1063/5.0148456}, we use the following nonlinear function for the definition of $\mathcal E$ in Section \ref{subsec Diffuse interface energy}:
\begin{equation*}
f(z):=3z^2\!-\!2z^3.
\end{equation*}
For the time-marching scheme, we still use the convex splitting scheme, but our choice of the convex splitting is different from \cite[Equation (8)]{10.1063/5.0148456}. For the potential well $W$ defined by \eqref{formula of potential W}, our choice here is $W = W_1 + W_2$, where $W_1(u,v) = 87 u^2/2+27 u v +27 v^2$ and $W_2=W-W_1$. It is easy to check the convexity of $W_1$ and the concavity of $W_2$ on $[-0.1,1.1]\times[-0.1,1.1]$, which are sufficient to stabilize our numerical scheme. Similar to \cite[Equation (5)]{10.1063/5.0148456}, in order to find the local minimizers, we use the following $L^2$ gradient flow which is called the penalized Allen\textendash Cahn\textendash Ohta\textendash Kawasaki (pACOK) dynamics:
\begin{equation*}
\frac{\partial u}{\partial t}=-L_1\frac{\delta \mathcal E}{\delta u},\qquad \frac{\partial v}{\partial t}=-L_2\frac{\delta \mathcal E}{\delta v},
\end{equation*}
where $L_1,L_2>0$ are called mobility coefficients. We use the semi-implicit time-marching scheme similar to \cite[Equation (8)]{10.1063/5.0148456}.

In most simulations, the phase-fields almost vanish near $\partial\Omega$. According to Section \ref{subsubsection Justification for boundary conditions}, no boundary conditions (i.e., $G$ is the fundamental solution) and periodic boundary conditions on $G$ are equivalent for the purpose of energy minimization. Therefore we only need to consider periodic boundary conditions, which allow us to use the Fourier spectral method similar to \cite[Section III.D]{10.1063/5.0148456}.

Although the perimeter of $V$ is not penalized in \eqref{sharp energy nonrelaxed}, we may add a penalty term $\int_\Omega|\nabla v|^2$ to $\mathcal E$ in order for $v$ to converge faster. In practice, the penalty coefficient of $\int_\Omega|\nabla v|^2$ can be chosen to be around $1/1250000$ of that of $\int_\Omega|\nabla u|^2$. Note that doing so is optional, and it will not significantly alter the numerical results but only provide some regularization for $v$.

In Sections \ref{subsec: Simulations in 2-D} and \ref{subsec: Simulations in 3-D}, we present some possible local minimizers as well as some snapshots taken from pACOK dynamics, in order to provide numerical evidence for our conjectures about the Gamma-expansion of the sharp interface energy in 2-D and 3-D for $\zeta<\zeta_1$ (see the first statement in Conjecture \ref{2-D conjecture of Gamma-convergence} and the first statement in Conjecture \ref{3-D conjecture of Gamma-convergence}, respectively).

\subsection{Simulations in 2-D}
\label{subsec: Simulations in 2-D}

In our 2-D simulations, we choose $\zeta=1$, $\gamma=1500$, $\veps=5\times10^{-2}$, $K_1 = 3\times10^{4}$, $K_2 = 24\times10^{3}/5$, $L_1 = 1$, $L_2 = 5$, $\Delta t=25\times10^{-5}/2$ (which is the time step), and the simulation domain is chosen to be $[0,13/5]\times[0,13/5]$ which is then discretized into $256\times256$ uniform grid points. We visualize the results using RGB images of $256\times256$ pixels, where the hydrophobic region $U$ is drawn in purplish pink, and the hydrophilic region $V$ is drawn in greenish yellow. More specifically, we assign an RGB triplet to every possible value of $(u,v)$. We assign $(211,95,183)$, $(220,220,98)$ and $(255,255,255)$ to $(1,0)$, $(0,1)$ and $(0,0)$, respectively. For other values of $(u,v)$, the RGB triplet is linearly interpolated from the above three, and then truncated to $[0,255]^3$. The color at the position $\vec x$ is then determined by the RGB triplet assigned to $\big(u(\vec x),v(\vec x)\big)$.

We present 13 local minimizers in Figure \ref{figure local minimizers 2D}. Next to each local minimizer is $(m,\mathcal E/m)$. Each local minimizer is obtained by choosing a suitable initial value, and then numerically simulating the pACOK dynamics until the iteration converges. The reason we treat them as local minimizers is that they appear to be stable against random perturbations in our simulations. One could further investigate their stability by evaluating the second-order variation of the energy or using rigorously validated numerics (similar to \cite{martine2022microscopic}).

\begin{figure}[H]
\raggedleft
\parbox[b][][t]{13ex}{\centering$(0.1\,,15.632)$}\parbox[b][][t]{13ex}{$ $}\parbox[b][][t]{13ex}{\centering$(0.4\,,15.049)$}\parbox[b][][t]{13ex}{\centering$(0.6\,,14.929)$}\parbox[b][][t]{13ex}{\centering$(0.8\,,14.89)$}\parbox[b][][t]{13ex}{\centering$(1\,,14.873)$}\parbox[b][][t]{13ex}{\centering$(1.2\,,14.864)$}\\
\includegraphics[width=91ex, bb=0 0 1816 521]{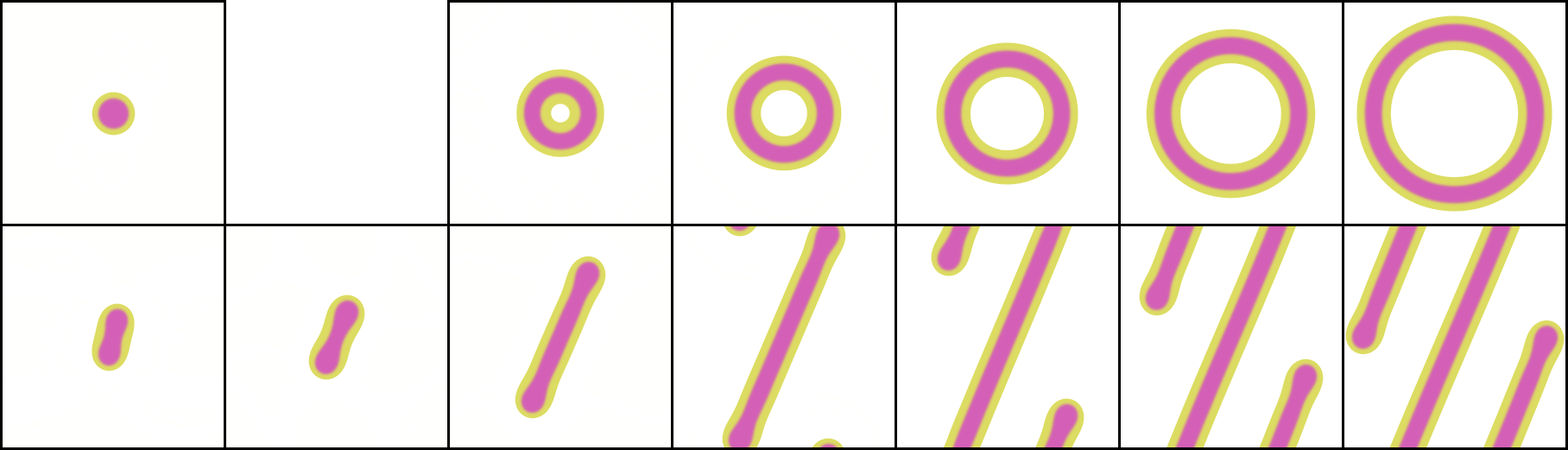}\\
\makebox[13ex][c]{$(0.15\,,15.573)$}\makebox[13ex][c]{$(0.2\,,15.37)$}\makebox[13ex][c]{$(0.4\,,15.109)$}\makebox[13ex][c]{$(0.6\,,15.021)$}\makebox[13ex][c]{$(0.8\,,14.977)$}\makebox[13ex][c]{$(1\,,14.951)$}\makebox[13ex][c]{$(1.2\,,14.933)$}

\includegraphics[width=55ex]{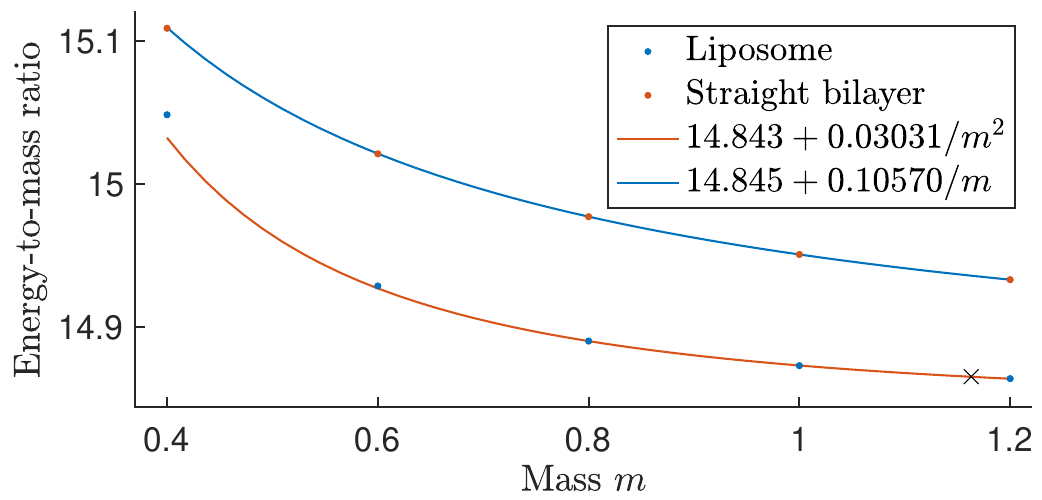}\hspace{8ex}
\caption{Top: local minimizers obtained in numerical simulations. Each square box represents the simulation domain containing a local minimizer. Next to each local minimizer are the mass and the energy-to-mass ratio. Bottom: curve fitting to the energy-to-mass ratios of the above local minimizers.}
\label{figure local minimizers 2D}
\end{figure}

As we can see in Figure \ref{figure local minimizers 2D}, the local minimizer is a micelle for $m=0.1$, which is consistent with the 2-D generalization of Proposition \ref{existing qualitative results}-\CircleAroundChar{4}. As $m$ increases to $0.15$ and $0.2$, the micelle is no longer stable and deforms into a shape that is similar to the eye-mask shaped local minimizers in \cite[Figure 8]{10.1063/5.0148456}. As $m$ further increases to $0.4$ and beyond, as shown in the second row of Figure \ref{figure local minimizers 2D}, the local minimizer becomes more and more elongated, resembling a straight bilayer of approximately uniform thickness, except that near the two ends the $U$ layer is slightly thicker while the $V$ layer is slightly thinner. The two open ends cause the straight bilayer to have slightly higher energy than the local minimizer shown in the first row of Figure \ref{figure local minimizers 2D}, which resembles a liposome. The liposome and straight bilayer seem to prefer roughly the same thickness, which is consistent with Remark \ref{remark on nonrescaled liposome asymptotics}-\CircleAroundChar{2}. Their energy-to-mass ratios also seem to converge to roughly the same constant as $m\to\infty$, and the convergence rate for the liposome seems to be second-order, which is consistent with Corollary \ref{simple asymptotics of the optimal liposome candidate}. The convergence rate for the straight bilayer seems to be first-order, indicating that the two open ends carry asymptotically constant energy penalties. Our numerical results therefore suggest a Gamma-expansion very similar to \cite[Equation (1.6)]{peletier2009partial} for $n=2$ and $\zeta=1$.

For the liposome local minimizers in Figure \ref{figure local minimizers 2D}, we note that the inner $V$ layer is slightly thicker than the outer $V$ layer, and that such a difference becomes less noticeable as $m$ increases, which are consistent with Remark \ref{remark on asymptotics of liposome candidates}-\CircleAroundChar{1}. Our numerical evidence indicates that for $n=2$ and $\zeta=1$, the optimal liposome candidate (whose asymptotics is given in Corollaries \ref{simple asymptotics of the optimal liposome candidate} and \ref{rescaled liposome asymptotics}) is indeed a local minimizer, and might even be a global minimizer. In fact, if the first statement in Conjecture \ref{2-D conjecture of Gamma-convergence} is true, i.e., for $\zeta<\zeta_1$ the second-order term in the Gamma-expansion of our energy is the elastica functional, then the global minimizer should be approximately (or exactly) circular according to Proposition \ref{elastica functional minimized by a circle}.

In order to provide some numerical evidence for the first statement in Conjecture \ref{2-D conjecture of Gamma-convergence} and verify the resistance of the bilayer to bending, we carry out two simulations of the pACOK dynamics, as shown in Figure \ref{Dynamics 2D figure}. In the first simulation, we choose $m=1.163$, and we choose the initial value to be a bilayer of approximately uniform thickness, which resembles a random non-convex closed curve. We observe that over time, the bilayer becomes convex and resembles an ellipse, and eventually becomes circular after a sufficiently long time. The terminal value is shown in the top-right of Figure \ref{Dynamics 2D figure}, and its energy-to-mass ratio is indicated by the black cross $\times$ in the bottom-right of Figure \ref{figure local minimizers 2D}. In the second simulation, we choose $m=0.6$, and we choose the initial value by making a small hole in the liposome local minimizer shown in the top-middle of Figure \ref{figure local minimizers 2D}. We observe that over time, the bilayer straightens and eventually converges to a straight bilayer, which is a rigid transformation of the straight bilayer shown in the middle of the second row in Figure \ref{figure local minimizers 2D}. The convergence is relatively slow in both simulations, which is not surprising if the elastica functional is the second-order term in the Gamma-expansion.

\begin{figure}[H]
\centering

\parbox[b][][t]{13ex}{\centering14.8973}\parbox[b][][t]{13ex}{\centering14.8788}\parbox[b][][t]{13ex}{\centering14.8709}\parbox[b][][t]{13ex}{\centering14.8686}\parbox[b][][t]{13ex}{\centering14.867}\parbox[b][][t]{13ex}{\centering14.8657}\parbox[b][][t]{13ex}{\centering14.8655}

\includegraphics[width=91ex, bb=0 0 1816 521]{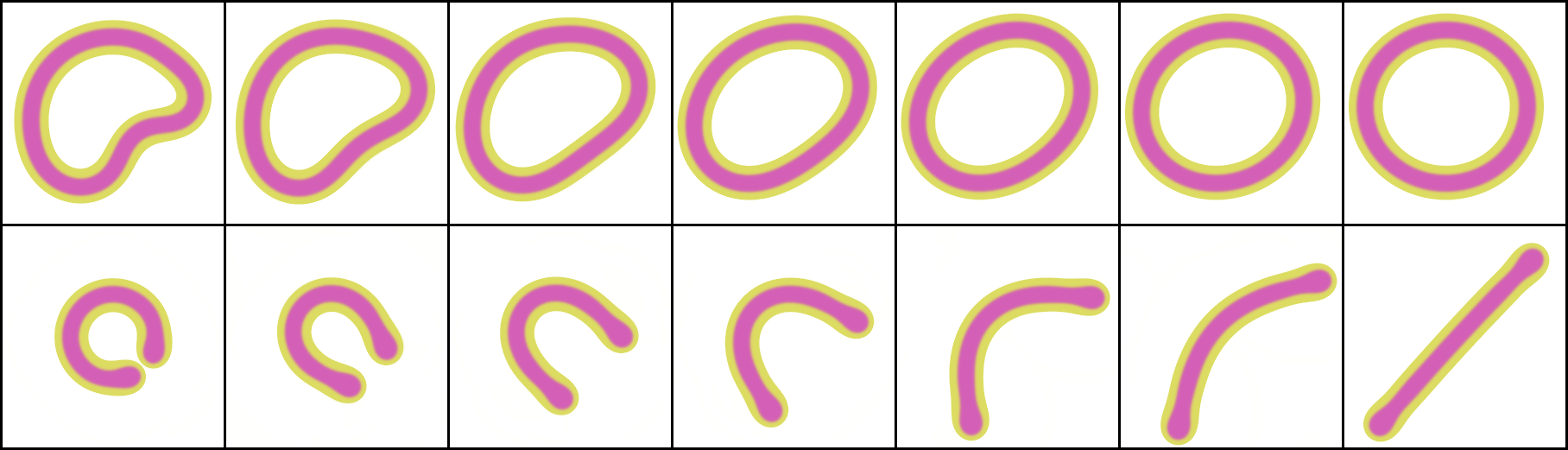}

\parbox[b][][t]{13ex}{\centering15.0947}\parbox[b][][t]{13ex}{\centering15.0711}\parbox[b][][t]{13ex}{\centering15.059}\parbox[b][][t]{13ex}{\centering15.047}\parbox[b][][t]{13ex}{\centering15.0328}\parbox[b][][t]{13ex}{\centering15.0257}\parbox[b][][t]{13ex}{\centering15.0213}

\caption{Numerical simulations of pACOK dynamics. Each square box represents a snapshot with $t$ increasing from left to right. Next to each snapshot is the energy-to-mass ratio. Top: the first simulation with $m=1.163$ and the initial value resembling a random curve. Bottom: the second simulation with $m=0.6$ and the initial value resembling a perforated liposome.}
\label{Dynamics 2D figure}
\end{figure}

\subsection{Simulations in 3-D}
\label{subsec: Simulations in 3-D}
In our 3-D simulations, we choose $\gamma=500$, $L_1 = 1$, $L_2 = 4$, and the simulation domain $[0,X]\times[0,Y]\times[0,Z]$ is discretized into $\texttt{N}\times \texttt{M}\times \texttt{P}$ uniform grid points. We visualize the results using the following two MATLAB commands:
\begin{center}
\centering
\texttt{isosurface(u+v,1/2); isosurface(u,1/2);}
\end{center}
where the former is set to be greenish yellow and transparent, representing the boundary of $\overline{U\cup V}$, while the latter is set to be purplish pink and opaque, representing the boundary of $U$. In order to visualize the inner structures, we also plot the cross-section with the cutting plane parallel to the front view and passing through the center of the simulation box.

We present 8 stationary points in Figure \ref{simulation liposome local minimizer}, each of which is the terminal value of the pACOK dynamics starting from a suitable initial value and evolving over a long period of time until the shape barely changes. We present two simulations of the pACOK dynamics in Figure \ref{pACOK of holes}. In Figures \ref{simulation liposome local minimizer} and \ref{pACOK of holes}, we choose $\zeta=1$, $\veps=6\times10^{-2}$, $K_1 = 5/2\times10^{4}$, $K_2 = 4\times10^{3}$, and $\Delta t=2.1\times10^{-5}$. In the top rows of Figures \ref{simulation liposome local minimizer} and \ref{pACOK of holes}, we choose $X = Y = Z = 3.66$ and $\texttt{M}=\texttt{N}=\texttt{P}=256$. In the last row in Figure \ref{pACOK of holes} and in the left three columns of the last row in Figure \ref{simulation liposome local minimizer}, we choose $X = Y = 4Z = 3.66$ and $\texttt{M}=\texttt{N}=4\texttt{P}=256$. In the rightmost column of the last row in Figure \ref{simulation liposome local minimizer}, we choose $X/2 = Y/2 = 4Z = 3.66$ and $\texttt{M}/2=\texttt{N}/2=4\texttt{P}=256$.

\begin{figure}[H]
\centering

\parbox[b][][t]{65ex}{\raggedright
\parbox[b][][t]{13.0375ex}{\centering$(1\,,10.694)$}\parbox[b][][t]{13.0375ex}{\centering$(1.6\,,10.554)$}\parbox[b][][t]{13.0375ex}{\centering$(2.4\,,10.477)$}\parbox[b][][t]{13.0375ex}{\centering$(7\,,10.378)$}

\includegraphics[width=52.15ex]{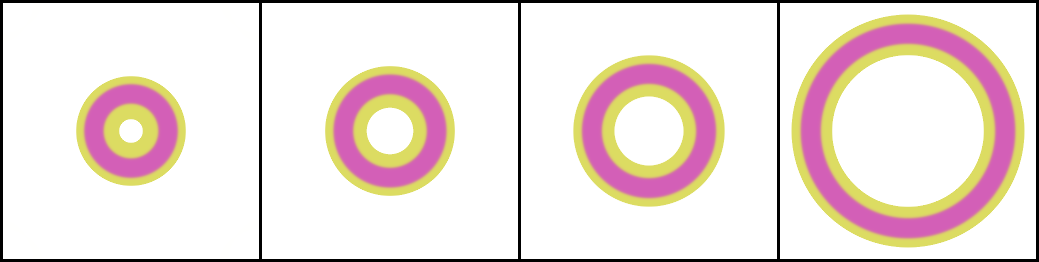}}

\parbox[b][][t]{65ex}{\raggedright\includegraphics[width=52.15ex]{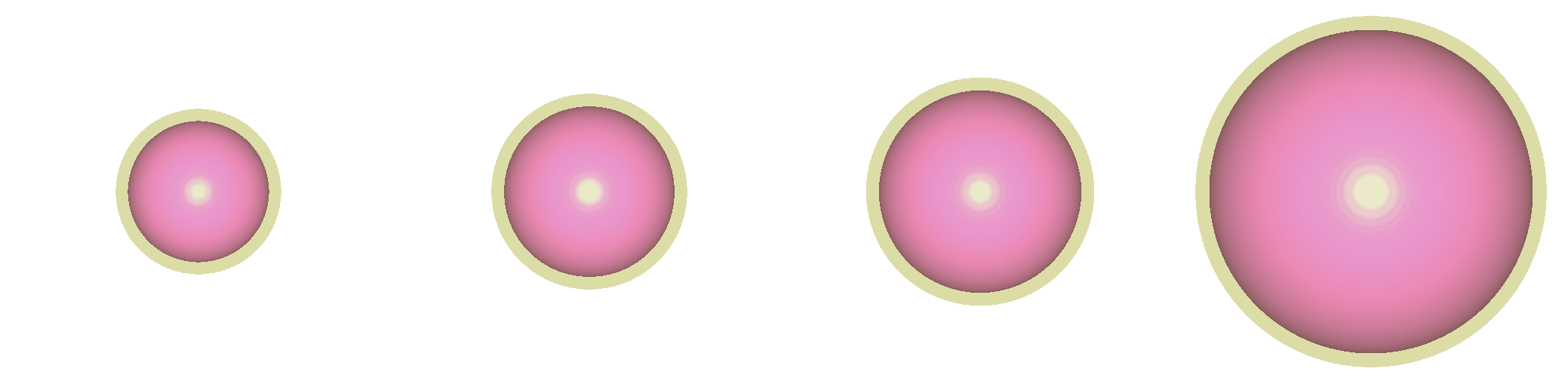}}

\includegraphics[width=65ex]{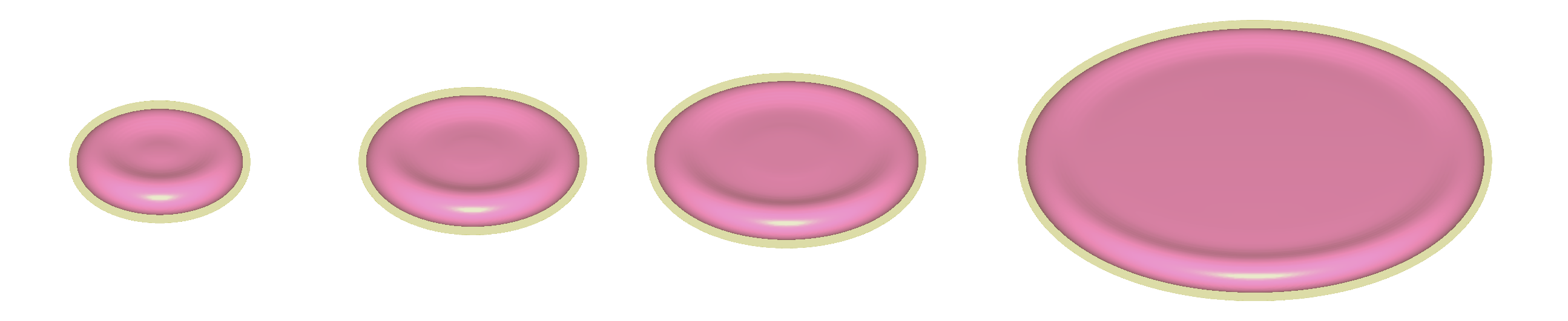}

\includegraphics[width=65ex]{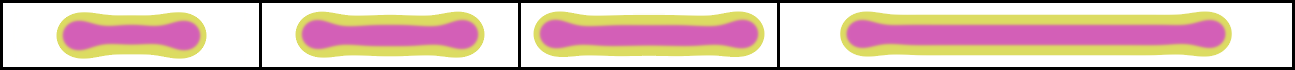}

\parbox[b][][t]{13.0375ex}{\centering$(1\,,10.841)$}\parbox[b][][t]{13.0375ex}{\centering$(1.6\,,10.733)$}\parbox[b][][t]{13.0375ex}{\centering$(2.4\,,10.659)$}\parbox[b][][t]{26.075ex}{\centering$(7\,,10.521)$}

\includegraphics[width=55ex]{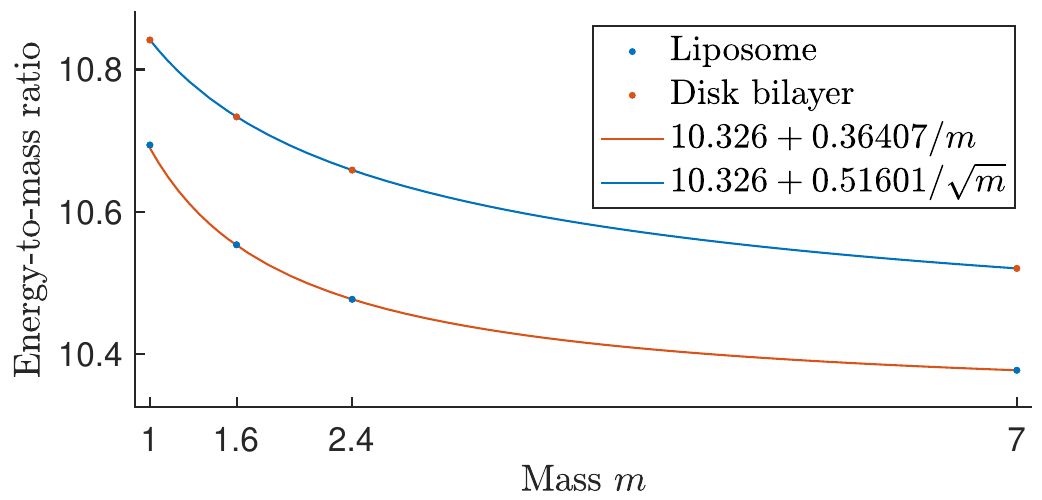}\hspace{10ex}
\caption{Top: stationary points obtained in numerical simulations. Each rectangle represents the cross-section of the simulation box. Next to each stationary point are the mass and the energy-to-mass ratio. Bottom: curve fitting to the energy-to-mass ratios of the above stationary points.}
\label{simulation liposome local minimizer}
\end{figure}

The stationary points in Figure \ref{simulation liposome local minimizer} resemble liposomes and disk bilayers, and they prefer roughly the same thickness, which is consistent with Remark \ref{remark on nonrescaled liposome asymptotics}-\CircleAroundChar{2}. Near the rim of a disk bilayer, the $U$ layer is slightly thicker while the $V$ layer is slightly thinner, which is a typical manifestation of frustration. Therefore, as $m\to\infty$, the disk bilayer should have a radius of order $\sqrt{m}$ and a thickness of order 1. Its rim should have a perimeter of order $\sqrt{m}$ and thus carry an energy penalty of order $\sqrt{m}$. Consequently, its energy-to-mass ratio should converge to a constant with order $1/2$, which is confirmed by the bottom of Figure \ref{simulation liposome local minimizer}. We can also see that the energy-to-mass ratio of the liposome converges to roughly the same constant with order $1$, which is consistent with Corollary \ref{simple asymptotics of the optimal liposome candidate}. Our numerical calculations show that the liposome has lower energy than the disk bilayer, so that the latter cannot be a global minimizer. The liposome seems to be a local minimizer in our simulation. However, we are not confident that the disk bilayer is also a local minimizer, although its shape remains almost unchanged after $3.5\times 10^7$ iterations. This is because the disk bilayer resembles an open surface, and we expect that the perimeter of its rim is penalized on the first order in the Gamma-expansion, and that the bending energy is only a second-order effect (see Section \ref{section Rescaled energy functional}). Therefore, although the disk bilayer has zero bending energy, its priority should be to close in on itself and form a closed surface for sufficiently large $m$. In fact, we can see that for $m=7$ the disk bilayer in Figure \ref{simulation liposome local minimizer} has higher energy than the curved bilayer shown in the top-left of Figure \ref{pACOK of holes}, because the latter has smaller rim perimeter despite larger bending energy.


As mentioned above, for a bilayer resembling an open surface, we expect a first-order energy penalty associated with its rim. In order to gain more insights, we carry out two simulations in Figure \ref{pACOK of holes}. In the first and second simulations, the initial value is chosen by making a hole in the liposome ($m=7$) and the disk bilayer ($m=2.4$) that are obtained in Figure \ref{simulation liposome local minimizer}, respectively. We observe that the hole diminishes and vanishes over time (cf. Bottom of Figure \ref{Dynamics 2D figure} where the hole enlarges). Therefore, our numerical results demonstrate the self-healing property of lipid bilayers in 3-D, which is consistent with experimental observations \cite{creasy2008self} and is essential to biological membranes.
\begin{figure}[H]
\centering

\parbox[b][][t]{13.0375ex}{\centering10.457}\parbox[b][][t]{13.0375ex}{\centering10.431}\parbox[b][][t]{13.0375ex}{\centering10.419}\parbox[b][][t]{13.0375ex}{\centering10.392}\parbox[b][][t]{13.0375ex}{\centering10.386}\parbox[b][][t]{13.0375ex}{\centering10.379}\parbox[b][][t]{13.0375ex}{\centering10.378}

\includegraphics[width=91.15ex]{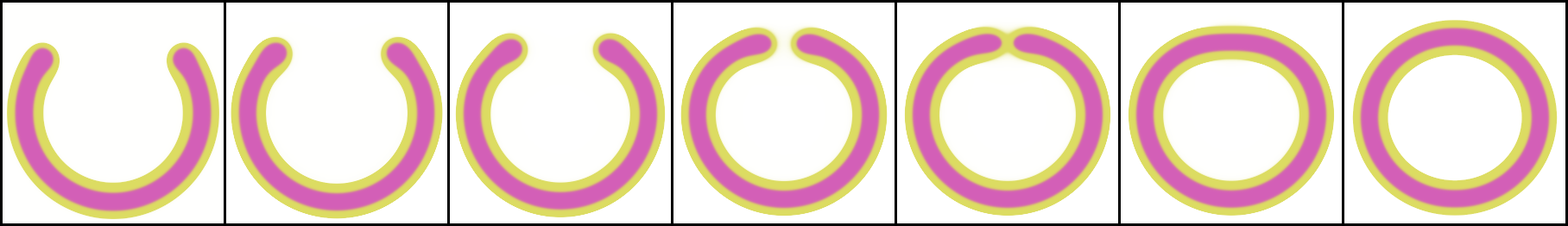}

\includegraphics[width=91.15ex]{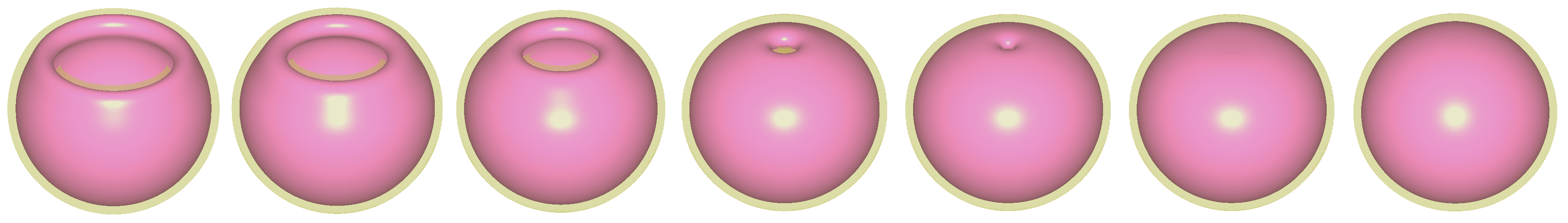}

\includegraphics[width=65.15ex]{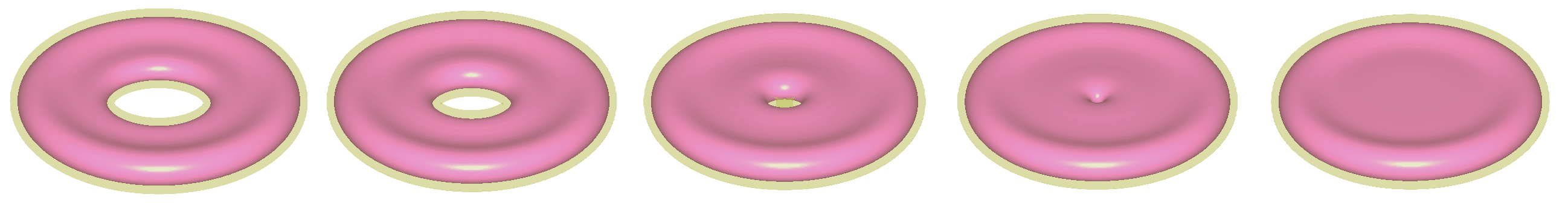}

\includegraphics[width=65.15ex]{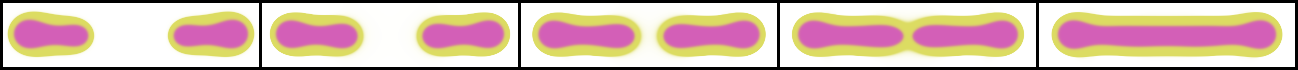}

\parbox[b][][t]{13.0375ex}{\centering10.828}\parbox[b][][t]{13.0375ex}{\centering10.767}\parbox[b][][t]{13.0375ex}{\centering10.7}\parbox[b][][t]{13.0375ex}{\centering10.678}\parbox[b][][t]{13.0375ex}{\centering10.659}

\caption{Numerical simulations of pACOK dynamics with $t$ increasing from left to right. Next to each snapshot is the energy-to-mass ratio. Top: the first simulation with $m=7$ and the initial value resembling a perforated liposome. Bottom: the second simulation with $m=2.4$ and the initial value resembling a perforated disk bilayer.}
\label{pACOK of holes}
\end{figure}

In order to provide some numerical evidence for the first statement in Conjecture \ref{3-D conjecture of Gamma-convergence} (i.e., the Willmore energy appears in the second-order Gamma-expansion), we present two numerical simulations in Figure \ref{figure torus simulation}. Recall that the Clifford torus and its image under a conformal transformation are non-isolated local minimizers of the Willmore energy (see Figure \ref{Clifford torus}). In the first and second simulations, we start from an initial value resembling a torus and a deformed torus, respectively, let it evolve according to the pACOK dynamics over a long period of time until the shape barely changes, and plot the terminal value in the left and right of Figure \ref{figure torus simulation}, respectively. In both simulations we choose $m=11.27$, $\zeta=1$, $\veps=0.1$, $K_1 = 1.5\times10^{4}$, $K_2 = 2.4\times10^{3}$, $\Delta t=9\times10^{-5}/8$, $X = Y = 2Z = 5.124$, and $\texttt{M}=\texttt{N}=2\texttt{P}=256$. In Figure \ref{figure torus simulation}, the terminal values shown in the left and right resemble the Clifford torus and its image under a conformal map, respectively. The former seems to be a local minimizer, while the latter has slightly higher energy than the former and seems to be evolving very slowly in the direction of becoming the former. We think the reason why the latter is not a local minimizer is that diffuse interfaces are used in our phase-field simulations, or that the bilayer has nonzero thickness, so that higher-order terms in the Gamma-expansion destroy the non-isolated local minimality.

\begin{figure}[H]
\centering
\parbox[b][][t]{18.2525ex}{\centering10.2523}\parbox[b][][t]{18.2525ex}{\centering10.2524}

\includegraphics[width=36.61ex]{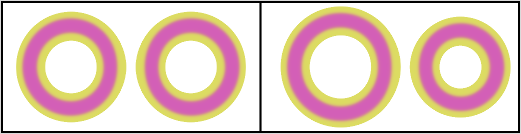}

\includegraphics[width=36.61ex]{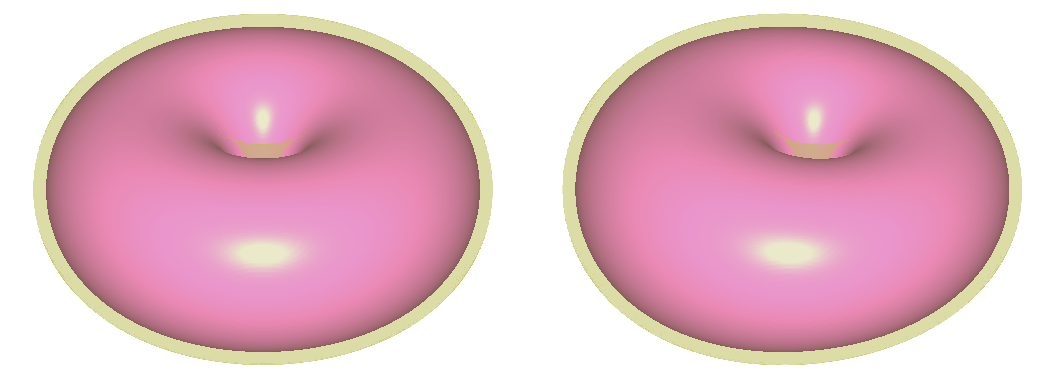}
\caption{Terminal values of the pACOK dynamics after a sufficiently long time. Left: the first simulation with the initial value resembling a torus. Right: the second simulation with the initial value resembling a deformed torus.}
\label{figure torus simulation}
\end{figure}

In order to provide numerical evidence for Remark \ref{triply periodic minimum surface remark}-\CircleAroundChar{2} (i.e., triply periodic minimal surfaces may be preferred over planar bilayer for small $\zeta$), we present four local minimizers shown in Figure \ref{figure gyroid simulation}. They are obtained as the terminal values of the pACOK dynamics starting from suitable initial values after a sufficiently long time. We choose $\veps=3.5\times10^{-2}$, $K_1 = 30\times10^{4}/7$, $K_2 = 48\times10^{3}/7$, $\Delta t=21\times10^{-5}/40$, and $\texttt{M}=\texttt{N}=\texttt{P}=512$. We choose $\zeta=0.6$ for the left two local minimizers, and choose $\zeta=1$ for the right two local minimizers. We choose $X = Y = Z = 3.51$ for the left three local minimizers, and choose $X = Y = Z = 3.6855$ for the rightmost local minimizer. From left to right, we choose $m=7.6886$, $11.5742$, $7.1262$ and $11.8021$, respectively, so that the respective energy-to-mass ratios are locally minimized with respect to $m$. From Figure \ref{figure gyroid simulation} we can see that for $\zeta=0.6$, the gyroid-like local minimizer has lower energy-to-mass ratio than the planar bilayer, and vice versa for $\zeta=1$. This combined with Conjecture \ref{Periodic asymptotics conjecture} is consistent with Remark \ref{triply periodic minimum surface remark}-\CircleAroundChar{2}.
\begin{figure}[H]
\centering

\parbox[b][][t]{18.4ex}{\centering9.6212}\parbox[b][][t]{18.4ex}{\centering9.5983}\parbox[b][][t]{18.4ex}{\centering10.3749}\parbox[b][][t]{18.4ex}{\centering10.3964}

\parbox[b][][t]{18.4ex}{\centering\includegraphics[width=12.61148ex]{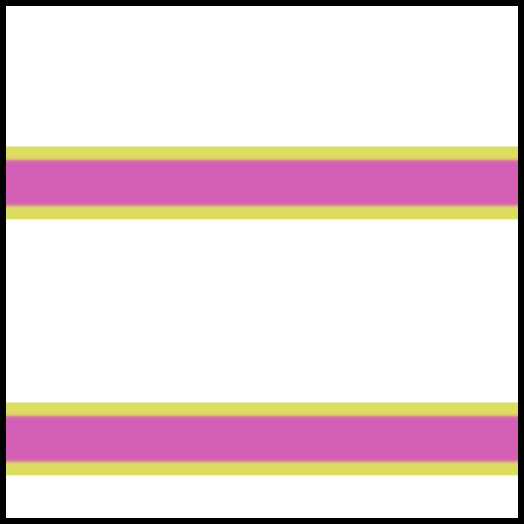}}\parbox[b][][t]{18.4ex}{\centering\includegraphics[width=12.61148ex]{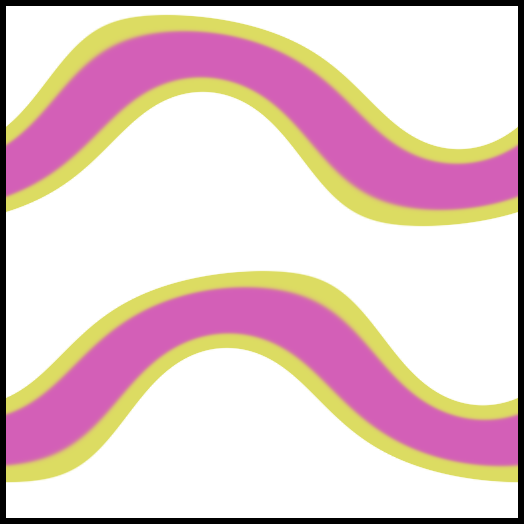}}\parbox[b][][t]{18.4ex}{\centering\includegraphics[width=12.61148ex]{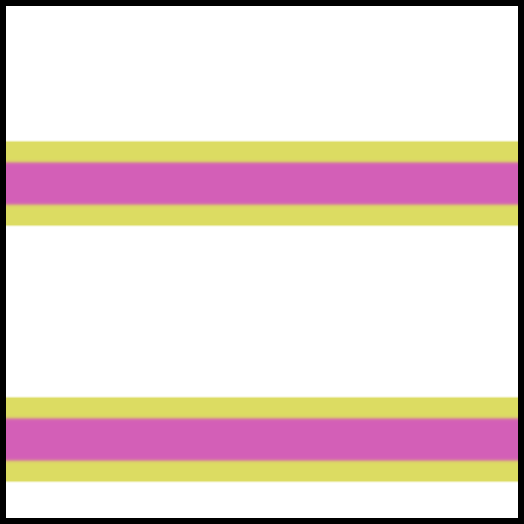}}\parbox[b][][t]{18.4ex}{\centering\includegraphics[width=13.242055ex]{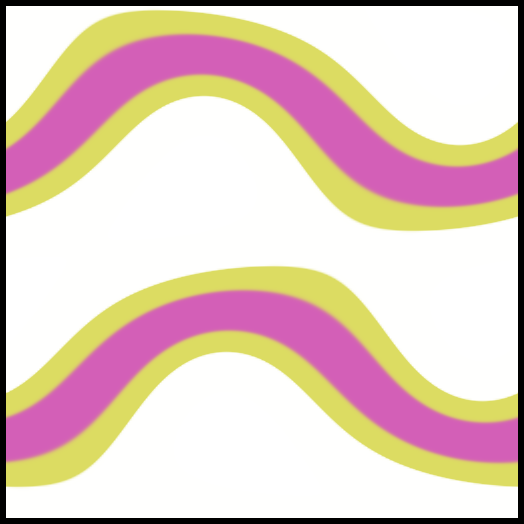}}

\parbox[t][][b]{18.4ex}{\centering\includegraphics[width=17.49335ex]{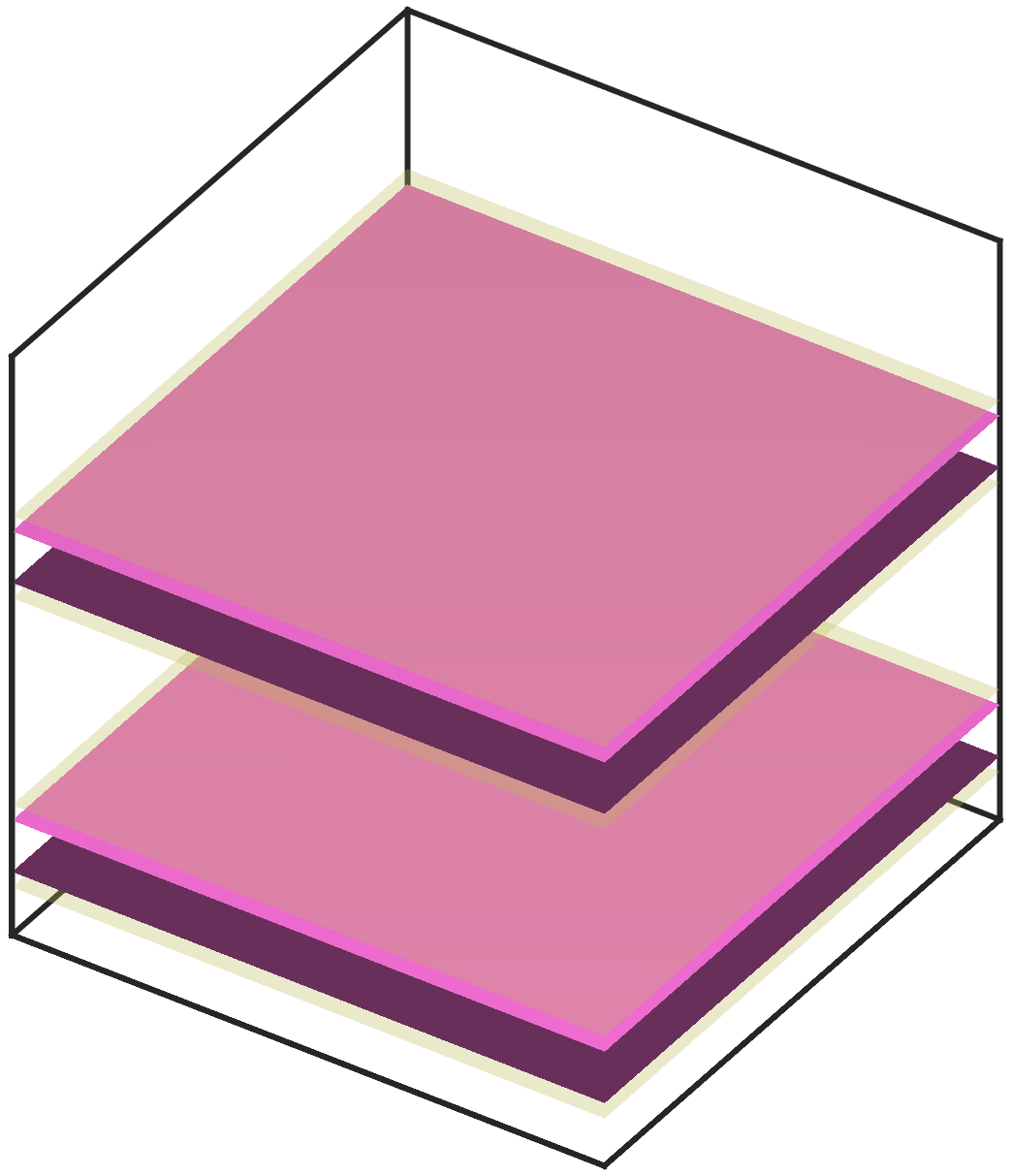}}\parbox[t][][b]{18.4ex}{\centering\includegraphics[width=17.49335ex]{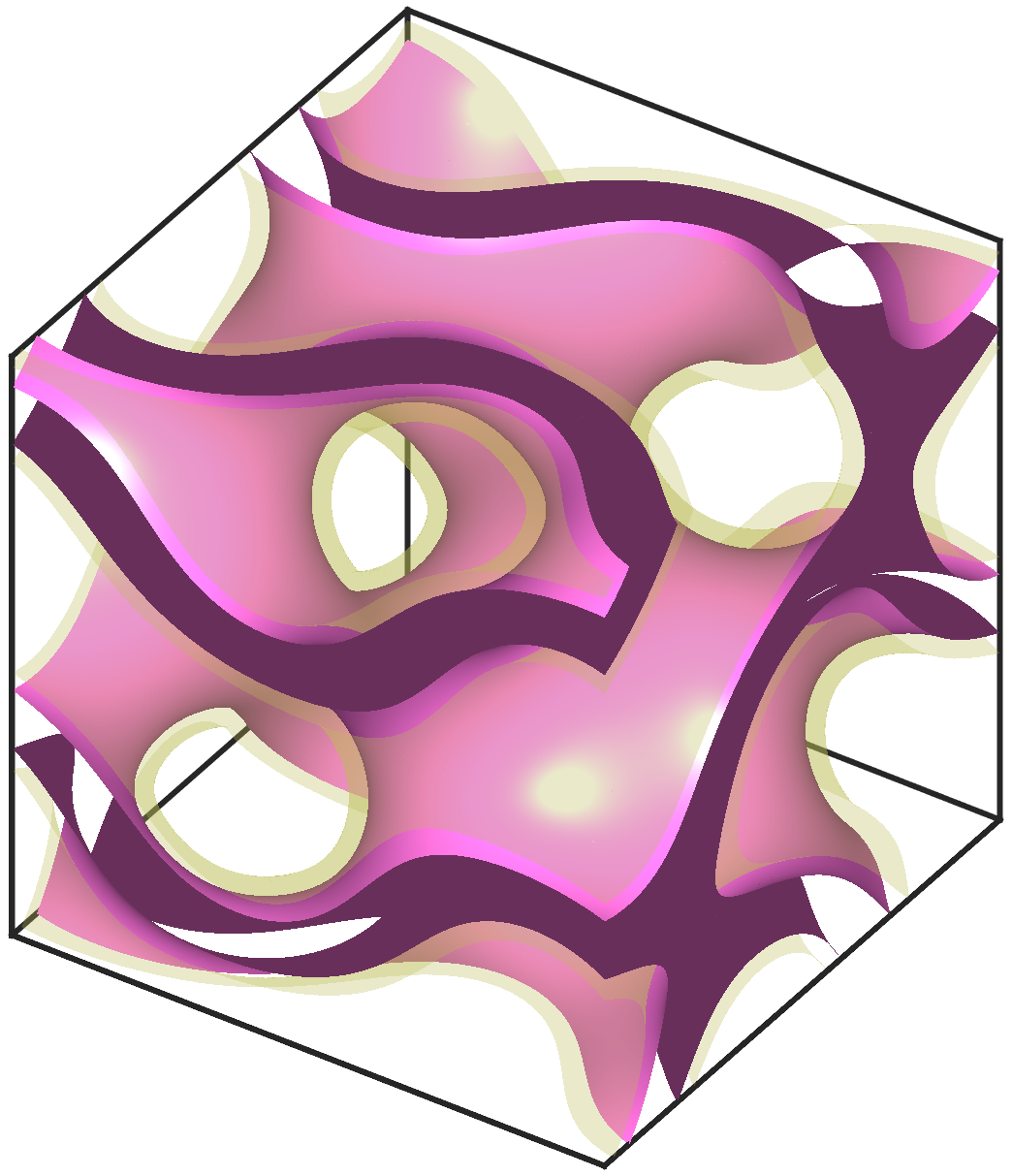}}\parbox[t][][b]{18.4ex}{\centering\includegraphics[width=17.49335ex]{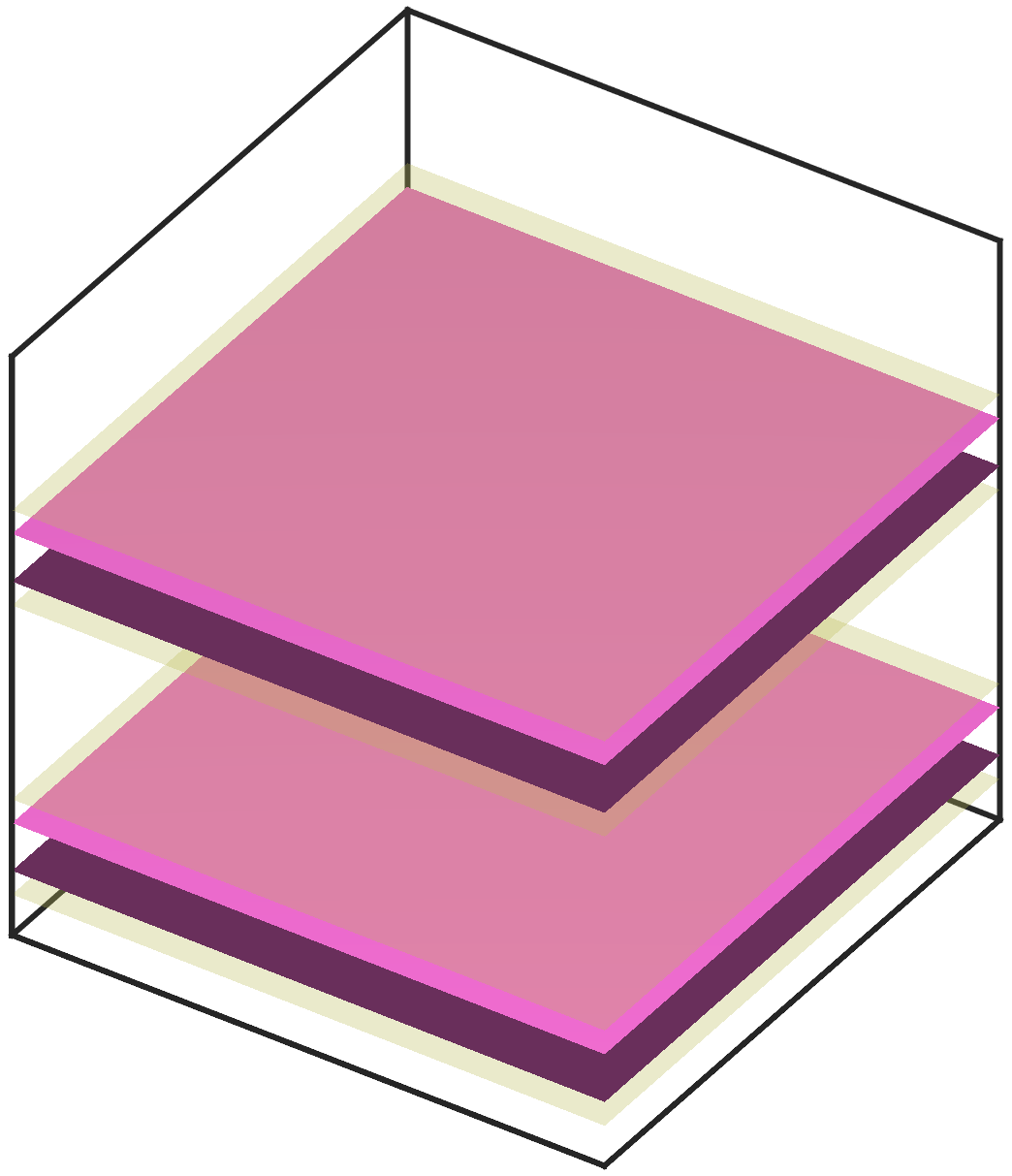}}\parbox[t][][b]{18.4ex}{\centering\includegraphics[width=18.36802ex]{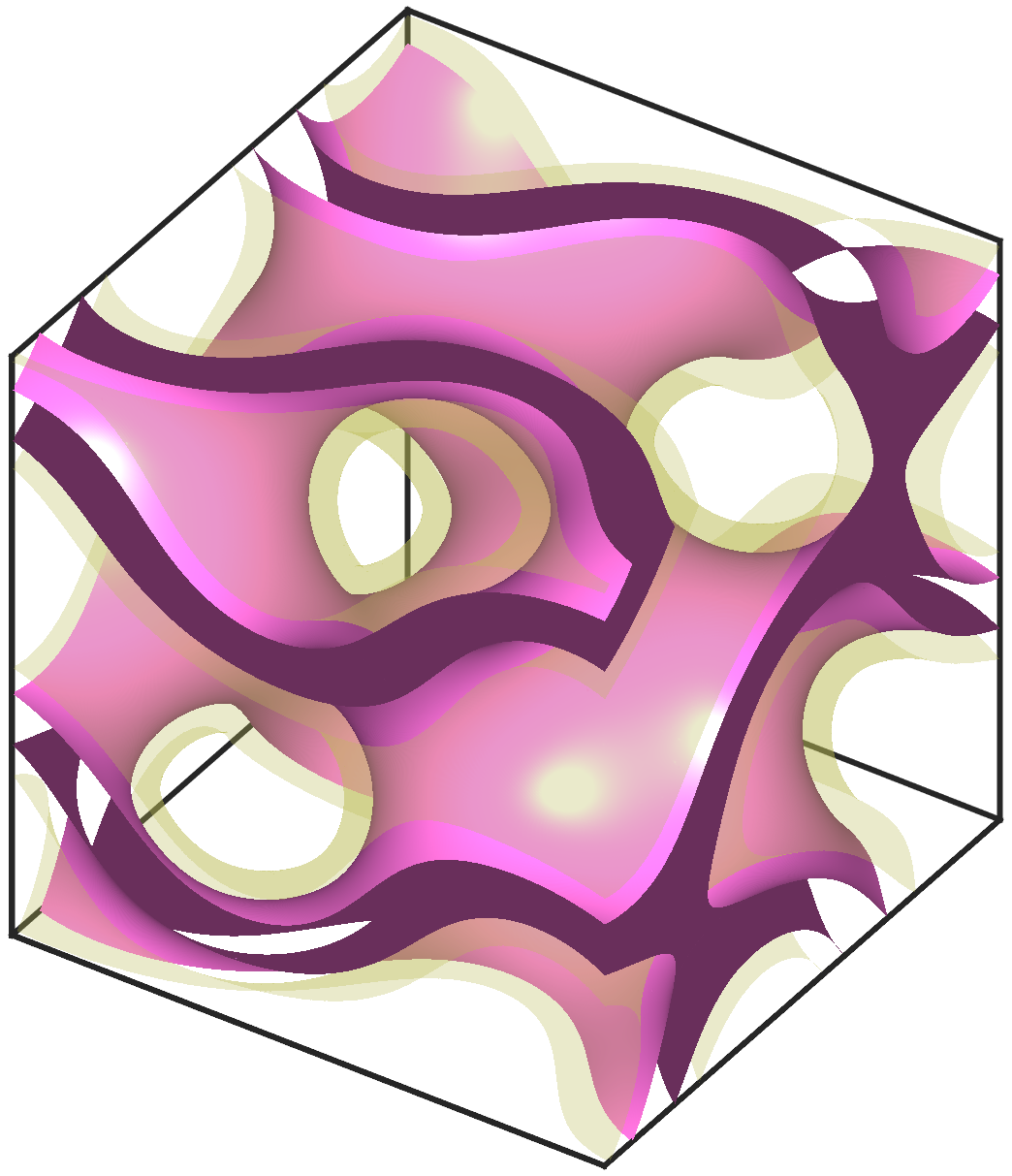}}

\caption{Four local minimizers. From left to right: $\zeta=0.6,\,0.6,\,1,\,1$. Next to each local minimizer is the energy-to-mass ratio.}
\label{figure gyroid simulation}
\end{figure}
In theory, the energy-to-mass ratio of the planar bilayer should equal that of the liposome in the limit of $m\to\infty$, which is given by $\sqrt[3]{9\gamma(\zeta\!+\!1)/8}$ in Corollary \ref{simple asymptotics of the optimal liposome candidate}. With $\gamma=500$, this constant equals $9.6549$ and $10.4004$ for $\zeta=0.6$ and $1$, respectively, which are slightly different than the respective numerical energy-to-mass ratios of the first and third local minimizers shown in Figure \ref{figure gyroid simulation}, with the relative error being $-0.35\%$ and $-0.25\%$, respectively. We believe that this error is due to the diffuse interfaces used in our simulations. In fact, the initial values in this simulation are interpolated from the terminal values obtained in another simulation with a coarser grid, where we chose $\veps=7\times10^{-2}$ and $\texttt{M}=\texttt{N}=\texttt{P}=256$, with other parameters being the same. In the simulation with a coarser grid, we obtained four local minimizers similar to those shown in Figure \ref{figure gyroid simulation}, with their energy-to-mass ratios being $9.5211$, $9.4955$, $10.2989$, and $10.3184$, respectively from left to right. As $\veps\to0$ and $\texttt{M}\,,\texttt{N}\,,\texttt{P}\to\infty$, we expect that the first and third energy-to-mass ratios converge to their respective theoretical values, and that the second and fourth ones maintain their relative differences to the first and third ones, respectively.

\section{Discussion}
\label{section discussion}
In Figure \ref{MorphologyCoefficient}, we can see that as $\zeta$ increases, the optimal morphology should transition from bilayer membrane to cylindrical micelle to spherical micelle. In the liquid drop model \cite{10.1063/5.0148456}, a ball loses stability when its mass exceeds a threshold. Similarly, we expect that there exist two thresholds of $\zeta$, beyond which the bilayer membrane and cylindrical micelle lose stability, respectively. For $\zeta=1$, the straight bilayer membrane is stable on any 2-D periodic strip \cite[Figure 1]{van2009stability}, and we think that the stability analysis therein can be generalized to any $\zeta\in(0,\infty)$, thus allowing us to determine the threshold of $\zeta$ beyond which the bilayer membrane is unstable. Note that such a threshold should be higher than $\zeta_1$ in Figure \ref{MorphologyCoefficient}.

As we mentioned in Remark \ref{remark on asymptotics of liposome candidates}, the inner $V$ layer of the optimal liposome is slightly thicker but has slightly less mass compared to the outer $V$ layer. We can intuitively explain this phenomenon in Figure \ref{Figure mismatch for curved bilayer}-b, where a mismatch occurs as soon as a lipid bilayer membrane is curved. The inner monolayer becomes slightly more densely packed, while the outer monolayer becomes slightly less crowded.
Those changes will inevitably increase the energy. To alleviate such a problem, some lipids may be transferred from the inner monolayer to the outer monolayer, as shown in Figure \ref{Figure mismatch for curved bilayer}-d. Our results indicate that when a closed bilayer membrane deforms (e.g., from a sphere to an ellipsoid), the lipids in the outer monolayer should flow from low curvature areas to high curvature areas, and vice versa for the inner monolayer. According to the fluid mosaic model \cite{singer1972fluid}, the lipids in a bilayer membrane can move easily within each monolayer, and they can also, albeit relatively slowly, move from one monolayer to the other (a movement known as flip-flop) \cite{allhusen2017ins,porcar2020lipid}. The flip-flop process can be facilitated by certain proteins known as flippase, floppase and scramblase. The flippase moves lipids from the outer monolayer to the inner monolayer (flipping), the floppase does the opposite (flopping), and the scramblase does both.
\begin{figure}[H]
\centering
\includegraphics[width=0.9\textwidth]{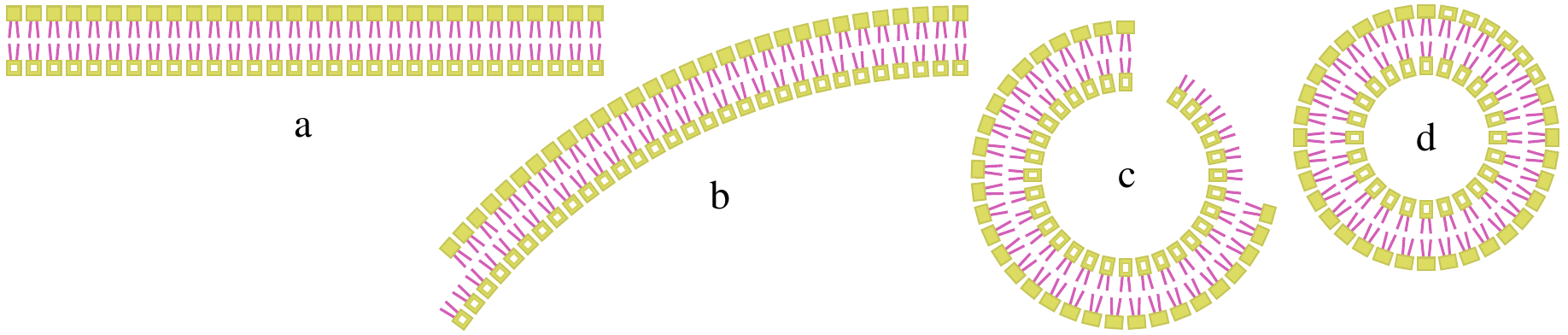}
\caption{Mismatch of the two monolayers in a curved bilayer.}
\label{Figure mismatch for curved bilayer}
\end{figure}

In this paper, we have studied a degenerate version of the Ohta\textendash Kawasaki energy and demonstrated its remarkable ability to reproduce the fascinating phenomena exhibited by self-assembling amphiphiles. We have presented some asymptotic and numerical evidence for the partial localization property of our model. Partial localization is coined in \cite[Section 1.2]{peletier2009partial} and refers to the concentration to lower-dimensional structures. An important example is the bilayer membrane formed by lipids: it is thin along one direction, but relatively large along the other two. Such a structure is vital to the biological membranes of every living cell. Our study may help us better understand the formation of lipid bilayer membranes. In fact, there are only two terms in our energy: the perimeter term which models the immiscibility between water and hydrophobic tails, and a Coulombic nonlocal term which models the attractive force between the heads and tails. It turns out that our model still possesses similar properties even if this Coulombic term is replaced by the 1-Wasserstein distance \cite[Section 9.5]{peletier2009partial}, which penalizes the heads for straying far from the tails. Neither our Coulombic term nor the 1-Wasserstein distance keeps track of which head is connected to which tail. Therefore, the specific structure of the lipid molecule (a head and a tail connected by a covalent bond) is not the essence of partial localization, although it is a practical way to enforce the long-range attractive force that is needed in our model.

For the variant model mentioned above (in which the nonlocal term is the 1-Wasserstein distance), only the case of $\zeta=1$ has been considered in \cite{peletier2009partial,lussardi2014variational}. In view of the rich complexity exhibited in our model for various $\zeta$, it might be interesting to revisit this variant model in the general cases $\zeta>0$. On this note, we also draw attention to the well-known linkage between the 2-Wasserstein distance and the nonlocal (negative) Sobolev space norm \cite{figalli2021invitation}.

Our study is a step towards understanding the pattern formation phenomena from the viewpoint of energetic competition. It is the competition between the short- and long- range terms in the Ohta\textendash Kawasaki energy that gives rise to various interesting mesoscopic periodic patterns that are commonly observed in block copolymers and many other systems \cite{xu2022ternary,choksi2012global,glasner2018multidimensional,muratov2002theory}. We show that in the degenerate case (i.e., only the interface of $U$ is penalized), the Ohta\textendash Kawasaki energy is capable of reproducing the partial localization feature of self-assembling amphiphiles. It is natural to ask how far can our results be generalized. For example, it might be of mathematical interest to explore other variant models with the Euclidean perimeter replaced by the 1-perimeter \cite{goldman2019optimality}, a fractional perimeter \cite{dipierro2017rigidity}, or a general nonlocal perimeter \cite{cesaroni2017isoperimetric}, with the Coulomb potential replaced by a Yukawa potential \cite{fall2018periodic}, a Riesz potential \cite{bonacini2014local}, a fractional inverse Laplacian kernel \cite[Appendix]{chan2019lamellar}, or a general nonlocal kernel \cite{luo2022nonlocal}.

\section*{Acknowledgments}

We would like to thank Xuenan Li, Johan W\"{a}rneg\r{a}rd, and Qi Zhang for helpful discussions. We thank the referee for helpful suggestions. This research is supported in part by US NSF DMS-1937254 and DMS-2309245.  We acknowledge computing resources from Columbia University's Shared Research Computing Facility project, which is supported by NIH Research Facility Improvement Grant 1G20RR030893-01, and associated funds from the New York State Empire State Development, Division of Science Technology and Innovation (NYSTAR) Contract C090171, both awarded April 15, 2010.

\section*{Data availability}
The data that support the findings of this study are openly available in Open Science Framework at \href{http://doi.org/10.17605/OSF.IO/U2896}{DOI:10.17605/OSF.IO/U2896}.

\section*{Declarations of interest}
None.

\appendix

\section{Helfrich and Willmore energies}\label{sec:Wilmore}

In this appendix we provide background on the Helfrich energy and Willmore energy. According to \cite[Equation (2)]{guckenberger2017theory}, the Helfrich energy has been used to model a sheet-like membrane that resembles a regular closed surface $S$ in $\mathbb R^3$: $\int_S\big(\lambda_1(H\!-\!H_0)^2\!+\!\lambda_2K\big)\dd{A}$, where $H$ is the mean curvature (mean of principal curvatures), $H_0$ is the spontaneous curvature, $K$ is the Gaussian curvature, the bending modulus $\lambda_1$ is a positive constant, and the Gaussian (or saddle-splay) modulus $\lambda_2$ is a constant. For monolayers, $H_0$ is nonzero in general; for bilayers, $H_0$ is zero because of symmetry. By the Gauss\textendash Bonnet formula, we have $\int_S K\dd{A}=4\pi(1\!-\!g)$, where $g$ is the genus of $S$, which is a constant as long as $S$ does not undergo topological changes. In this way the Helfrich energy for bilayer membranes can be reduced to the Willmore energy $\int_S H^2\dd{A}$, as long as no topological change happens.

The unique global minimizer of the Willmore energy is a sphere \cite{willmore1965note}. Under an additional constraint that $S$ is of genus $g$, there exists a constrained global minimizer of the Willmore energy for any given $g\in\mathbb N_0$ \cite{bauer2003existence}. For $g=0$ (spherical topology), the sphere is the only minimizer of the Willmore energy, and the minimum energy is $4\pi$. For $g=1$ (toroidal topology), the Clifford torus \cite{marques2014min} is the unique minimizer up to conformal transformations (this is because the Willmore energy is conformal invariant \cite[Section 5.1]{seifert1997configurations}), and the minimum energy is $2\pi^2$. For $g\geqslant2$, the minimizer is unknown but conjectured to be Lawson's surfaces \cite{hsu1992minimizing}. The minimum energy converges to $8\pi$ as $g$ goes to infinity (and is conjectured to be monotonically increasing) \cite{kuwert2010large}.

The above-mentioned toroidal minimizer of the Willmore energy for $g=1$ was conjectured in 1965 \cite{willmore1965note} and proved in 2014 \cite{marques2014min}. Such a long-held conjecture was partly supported by the experimental observation of toroidal structures formed by artificial membranes \cite[Figures 3 and 4]{michalet1995vesicles} (see also \cite{mutz1991observation,fourcade1992experimental}), as shown in Figure \ref{Clifford torus}. In addition, there were some numerical evidence from phase-field simulations of the Willmore energy \cite[Section 4.3.2]{du2006simulating}. This result is also manifested in our numerical simulations (see Figure \ref{figure torus simulation}), thus providing support for the first statement in Conjecture \ref{3-D conjecture of Gamma-convergence}.
\begin{figure}[H]
\centering
\includegraphics[height=70pt]{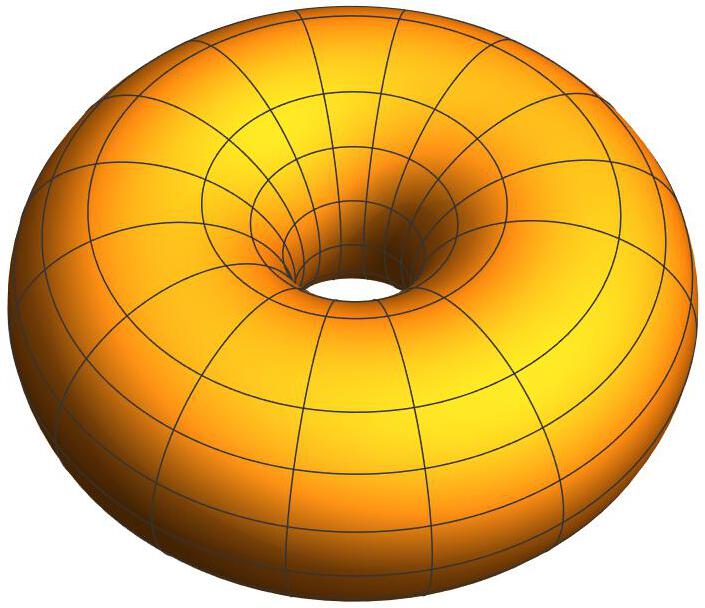}
\qquad
\includegraphics[height=70pt]{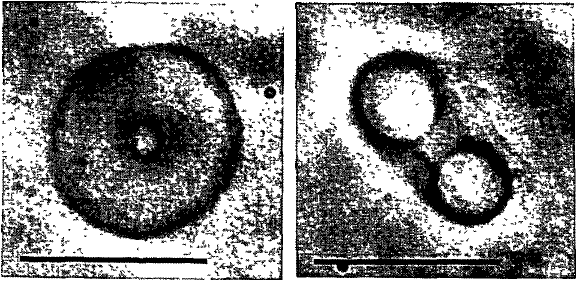}\\
\includegraphics[height=70pt]{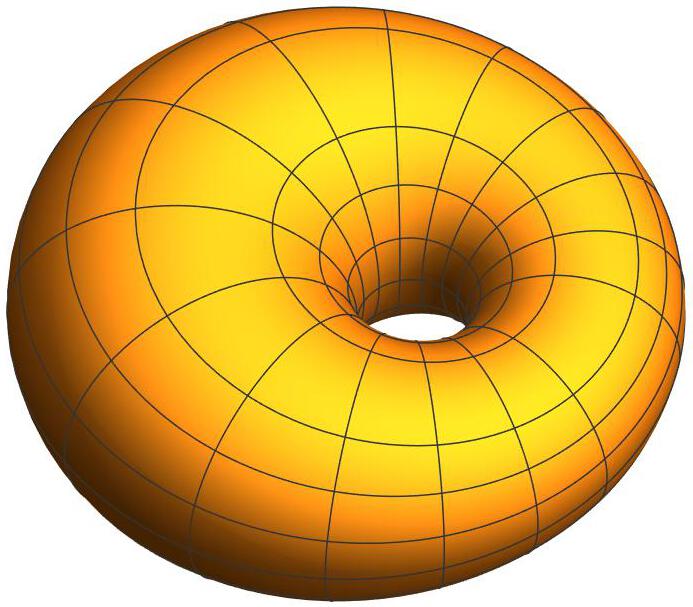}
\qquad
\includegraphics[height=70pt]{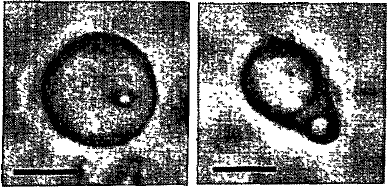}
\caption{Top-left: Clifford torus given by \cite[Equation (5.3)]{seifert1997configurations}. Bottom-left: Its image under a conformal transformation. Top-right: Clifford torus observed in \cite[Figure 3]{michalet1995vesicles}. Bottom-right: Deformed Clifford torus observed in \cite[Figure 4]{michalet1995vesicles}. Bar indicates 10 {\textmu}m. The right part is reproduced from \cite{michalet1995vesicles}.}
\label{Clifford torus}
\end{figure}

\section{Calculations of radially symmetric candidates}
\label{appendix Calculations of radially symmetric candidates}
In this appendix we derive the asymptotic results presented in Section \ref{subsec:Liposome candidates}.
\begin{proposition}
\label{proposition: energy for liposome candidates}
The energy \eqref{sharp energy nonrelaxed} of a liposome candidate is $E(U,V) = \text{Per}\;U +\gamma N(U,V)$, where
\begin{equation*}
\text{Per}\;U=\left\{
\begin{aligned}
&2\pi(R_1\!+\!R_2),&&n=2,\\
&4\pi(R_1^2\!+\!R_2^2),&&n=3.
\end{aligned}
\right.
\end{equation*}
For $n=2$, we have
\begin{equation}
\label{energy radial candidates n = 2}
\begin{aligned}
16\zeta^2N(U,V)/\pi = (&1\!-\!\zeta ^2)\big(R_2^4-R_1^4\big)+R_0^4-R_3^4+4 \Big(R_3^4 \ln R_3-R_0^4 \ln R_0\\
&+(\zeta\!+\!1)\big(2 R_0^2 R_1^2-(\zeta\!+\!1) R_1^4\big) \ln R_1-(\zeta\!+\!1)\big(2 R_3^2 R_2^2-(\zeta \!+\!1) R_2^4\big) \ln R_2\Big).
\end{aligned}
\end{equation}
For $n=3$, we have
\begin{equation}
\label{energy radial candidates n = 3}
\begin{aligned}
15 \zeta ^2N(U,V)/\pi = 6\big(R_0^5-R_3^5\big)+(\zeta\!+\!1)\Big(10 R_2^2 R_3^3-(6 \zeta \!+\!4) R_2^5+(6 \zeta\!+\!4) R_1^5-10 R_0^3 R_1^2\Big).
\end{aligned}
\end{equation}
\end{proposition}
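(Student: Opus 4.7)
The perimeter formula is immediate: $\partial U$ is the union of two concentric spheres (in 3D) or circles (in 2D) of radii $R_1$ and $R_2$, so $\text{Per}\,U$ is the sum of their surface areas (lengths), giving the stated expressions.

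The plan for $N(U,V)$ is to exploit radial symmetry by reducing the computation to an ODE. Using \eqref{alternative expression of N(U,V)}, we have $N(U,V) = \tfrac{1}{2}\int_{\mathbb{R}^n}\phi\,\rho$ where $\rho := \bm1_U - \bm1_V/\zeta$ and $\phi$ is the electrostatic potential from \eqref{eqn: electrostatic potential}, satisfying $-\Delta\phi = \rho$. Radial symmetry reduces this Poisson equation to the one-dimensional form $\bigl(r^{n-1}\phi'(r)\bigr)' = -r^{n-1}\rho(r)$, with $\rho$ piecewise constant on the five radial intervals $[0,R_0]$, $[R_0,R_1]$, $[R_1,R_2]$, $[R_2,R_3]$, $[R_3,\infty)$, taking values $0,\,-1/\zeta,\,1,\,-1/\zeta,\,0$ respectively.

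Next I would integrate once to obtain $r^{n-1}\phi'(r) = -\int_0^r s^{n-1}\rho(s)\,ds$, which yields a piecewise closed-form for $\phi'$ (polynomial in $r$ in 3D; polynomial plus an $r^{-1}$ term in 2D). Integrating a second time and matching $\phi$ continuously across each $R_i$ produces $\phi$ explicitly up to an additive constant, which is pinned down by $\phi\to 0$ at infinity; note that charge neutrality, precisely the mass constraint \eqref{volume constraint radial candidates}, ensures $\phi\equiv 0$ on $[R_3,\infty)$ so that this decay is automatic. Finally, $N(U,V)$ is obtained by the single-variable integral
\begin{equation*}
N(U,V) = \tfrac12\,\omega_{n-1}\bigl(-\tfrac{1}{\zeta}\mbox{$\int_{R_0}^{R_1}$}\phi\,r^{n-1}\,dr + \mbox{$\int_{R_1}^{R_2}$}\phi\,r^{n-1}\,dr - \tfrac{1}{\zeta}\mbox{$\int_{R_2}^{R_3}$}\phi\,r^{n-1}\,dr\bigr),
\end{equation*}
with $\omega_{n-1}$ the surface measure of the unit sphere in $\mathbb{R}^n$.

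The main obstacle is purely algebraic. The intermediate expressions are polynomials in $R_0,\ldots,R_3$ of degree up to $5$ in 3D, while in 2D they involve logarithmic terms of the form $R_i^2\ln R_i$ and $R_i^4\ln R_i$ arising from the 2D Green's function structure. Collecting these into the compact forms \eqref{energy radial candidates n = 2} and \eqref{energy radial candidates n = 3} requires careful bookkeeping and repeated application of the mass constraint $R_3^n - R_0^n = (\zeta+1)(R_2^n - R_1^n)$. The 2D case is especially delicate because cancellations among the logarithmic contributions must leave only the four combinations $R_i^4\ln R_i$ (with the specific coefficients involving $R_0^2R_1^2$ and $R_2^2R_3^2$) that survive in \eqref{energy radial candidates n = 2}. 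The computation is routine but tedious, and a computer algebra system is the natural tool to verify the final identity.
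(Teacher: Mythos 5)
Your proposal is correct and follows essentially the same route as the paper: the paper likewise solves the radial Poisson equation $-r^{1-n}\big(r^{n-1}\phi'\big)'=\bm1_U-\bm1_V/\zeta$ piecewise with $\phi'(0)=0$ and $\phi(\infty)=0$ (using the mass constraint \eqref{volume constraint radial candidates} to get $\phi\equiv 0$ outside $B(R_3)$), and then evaluates $N(U,V)$ via \eqref{alternative expression of N(U,V)} as a one-dimensional integral. The remaining work is the same algebraic bookkeeping you describe.
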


\begin{proposition}
\label{Lagrange multiplier equation}
Any minimizer of $E(U,V)$ among the liposome candidates must satisfy the following conditions if $n=2$,
\begin{align}
R_3^2 \ln R_3^2-R_0^2 \ln R_0^2&=(\zeta\!+\!1) \big(R_2^2 \ln R_2^2-R_1^2 \ln R_1^2\big),\notag\\
4 \zeta^2(\zeta\!+\!1)^{-1}\gamma^{-1}\big(R_1^{-1}\!+\!R_2^{-1}\big)&=\zeta\big(R_2^2\!-\!R_1^2\big)+\big(R_0^2-(\zeta\!+\!1) R_1^2\big) \ln\big(R_2^2/R_1^2\big),\notag
\end{align}
and satisfy the following equations if $n=3$ (see also \cite[Equation (A.1)]{bonacini2016optimal}),
\begin{equation*}
\begin{aligned}
R_3^2\!-\!R_0^2&=(\zeta\!+\!1)(R_2^2\!-\!R_1^2),\\
12\zeta^2\gamma^{-1}(R_1^{-1}\!+\!R_2^{-1})&=(3 \zeta\!+\!2)\big(R_3^2\!-\!R_0^2\big)+2(\zeta \!+\!1)\big(R_0^3/R_1\!-\!R_3^3/R_2\big).
\end{aligned}
\end{equation*}
\end{proposition}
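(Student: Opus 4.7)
The plan is to apply the method of Lagrange multipliers to the finite-dimensional problem of minimizing $E(U,V) = \text{Per}\;U + \gamma N(U,V)$ over the four radii $R_0, R_1, R_2, R_3$ subject to the two constraints
\[ g_1 := R_3^n - R_0^n - (\zeta+1)(R_2^n - R_1^n) = 0, \qquad g_2 := R_2^n - R_1^n - c_n m = 0, \]
where $c_n$ is the geometric constant fixed by Definition \ref{liposome candidate defintion}. Introducing multipliers $\lambda_1, \lambda_2$, the stationarity conditions $\partial E/\partial R_i = \lambda_1\,\partial g_1/\partial R_i + \lambda_2\,\partial g_2/\partial R_i$ must hold for $i = 0, 1, 2, 3$.

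The structural observation driving the argument is that $\text{Per}\;U$ depends only on $R_1, R_2$, and that $\partial g_2/\partial R_0 = \partial g_2/\partial R_3 = 0$. Hence the $i=0,3$ stationarity equations decouple from $\lambda_2$:
\[ \gamma\,\partial N/\partial R_0 = -n\lambda_1 R_0^{n-1}, \qquad \gamma\,\partial N/\partial R_3 = n\lambda_1 R_3^{n-1}, \]
from which $\lambda_1$ is eliminated immediately. Direct differentiation of \eqref{energy radial candidates n = 3} yields $\partial N/\partial R_0 \propto R_0^2[R_0^2 - (\zeta+1)R_1^2]$ and $\partial N/\partial R_3 \propto R_3^2[(\zeta+1)R_2^2 - R_3^2]$, and equating the two resulting expressions for $\lambda_1$ produces $R_3^2 - R_0^2 = (\zeta+1)(R_2^2 - R_1^2)$. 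In the $n=2$ case the analogous differentiation of \eqref{energy radial candidates n = 2} produces $\partial N/\partial R_0 \propto R_0[(\zeta+1)R_1^2 \ln R_1 - R_0^2 \ln R_0]$, together with a symmetric expression for $R_3$, which yields the stated first identity involving $R_i^2 \ln R_i^2$.

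For the second identity, I use the $i=1,2$ stationarity equations, which now involve both multipliers. Because $\partial g_1/\partial R_1$ and $\partial g_1/\partial R_2$ are proportional to $R_1^{n-1}$ and $R_2^{n-1}$ with opposite signs, and likewise for $\partial g_2$, dividing each equation by $R_i^{n-1}$ and summing eliminates $\lambda_1$ and $\lambda_2$ simultaneously, reducing the system to the scalar identity $R_1^{1-n}\partial E/\partial R_1 + R_2^{1-n}\partial E/\partial R_2 = 0$. Substituting the derivatives of $\text{Per}\;U$ together with the expressions for $\partial N/\partial R_1$ and $\partial N/\partial R_2$ computed from \eqref{energy radial candidates n = 2}--\eqref{energy radial candidates n = 3}, and then invoking the mass constraint $g_1 = 0$ in $n=2$ (which lets the logarithmic cross-terms combine into $(R_0^2 - (\zeta+1)R_1^2)\ln(R_2^2/R_1^2)$) or the first Lagrange identity just derived in $n=3$ (which lets the factor $(3\zeta+2)(R_2^2 - R_1^2)$ be rewritten in terms of $R_3^2 - R_0^2$), produces the stated second equation.

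The main technical obstacle lies in the book-keeping for $n=2$: differentiating the logarithmic terms in \eqref{energy radial candidates n = 2} generates numerous cross-terms of the form $R_i^3 \ln R_i$ and $R_i R_j^2 \ln R_j$ which collapse only after deliberate cancellations, and the expansion must be organized carefully to recognize the compact form on the right-hand side. In $n=3$, by contrast, all the derivatives are polynomial in the $R_i$ and the elimination of multipliers is routine algebra.
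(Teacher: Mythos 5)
Your proposal is correct and follows essentially the same route as the paper: introduce two Lagrange multipliers for the constraints \eqref{volume constraint radial candidates}--\eqref{2d 3d volume constraint radial candidates}, obtain the first identity by eliminating the multiplier between the $R_0$ and $R_3$ stationarity equations (your computed derivatives of \eqref{energy radial candidates n = 2} and \eqref{energy radial candidates n = 3} match), and obtain the second by combining the $R_1$ and $R_2$ equations so that both multipliers drop out, then simplifying with the constraints (and, for $n=3$, with the first identity). The only quibble is that in the $n=2$ simplification both constraints $g_1=0$ and $g_2=0$ are needed to collapse the logarithmic cross-terms, not just $g_1=0$; this is a bookkeeping detail, not a gap.
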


\begin{theorem}
\label{asymptotics of the optimal liposome candidate}
With $\zeta$ and $\gamma$ fixed, as $m\to\infty$, the minimizer of $E(U,V)$ among the liposome candidates has the following asymptotics if $n=2$,
\begin{equation*}
\begin{aligned}
&\frac{E(U,V)}m = \sqrt[3]{\gamma\frac{\zeta\!+\!1}{8/9}} + \frac{8 \pi ^2}{5}\frac{\zeta ^2\!+\!4 \zeta\!+\!1}{\gamma  (\zeta\!+\!1) m^2} + O\Big(\frac{1}{m^3}\Big),
\hspace{-25pt}
&R_2\!-\!R_1 = \sqrt[3]{\frac{24/\gamma}{\zeta\!+\!1}}
+
O\Big(\frac{1}{m^2}\Big),\\
&\big(R_3^2\!-\!R_2^2\big)-\big(R_1^2\!-\!R_0^2\big)=\frac{4 \zeta  (\zeta\!+\!2)}{\sqrt[3]{3(\gamma\zeta\!+\!\gamma)^2}}+O\Big(\frac{1}{m^2}\Big),
\hspace{-25pt}
&\frac{R_1\!+\!R_2}2 = \frac{m}{4\pi}\sqrt[3]{\gamma\frac{\zeta\!+\!1}{3}}
+
O\Big(\frac1{m}\Big),\\
&R_1\!-\!R_0 = \sqrt[3]{\frac{3}{\gamma}\frac{\zeta^3}{\zeta\!+\!1}}
+
\frac{2 \pi  \zeta}{\gamma  m}\frac{\zeta\!+\!2}{\zeta\!+\!1}
+
O\Big(\frac{1}{m^2}\Big),
\hspace{-25pt}
&R_3\!-\!R_2 = \sqrt[3]{\frac{3}{\gamma}\frac{\zeta^3}{\zeta\!+\!1}}
-
\frac{2 \pi  \zeta}{\gamma  m}\frac{\zeta\!+\!2}{\zeta\!+\!1}
+
O\Big(\frac{1}{m^2}\Big),
\end{aligned}
\end{equation*}
and has the following asymptotics if $n=3$,
\begin{equation*}
\begin{aligned}
&\frac{E(U,V)}m = \sqrt[3]{\gamma\frac{\zeta\!+\!1}{8/9}} + \frac{4 \pi}{15m}\frac{\zeta ^2\!+\!4 \zeta\!+\!16}{\big(\gamma(\zeta\!+\!1)/3\big)^{2/3}} + O\Big(\frac{1}{m^{3/2}}\Big),
\hspace{-35pt}
&R_2\!-\!R_1 = \sqrt[3]{\frac{24/\gamma}{\zeta\!+\!1}}+O\Big(\frac{1}{m}\Big),\\
&\big(R_3^3\!-\!R_2^3\big)-\big(R_1^3\!-\!R_0^3\big)=\frac{\sqrt{6m}\zeta(\zeta\!+\!2)}{\sqrt{\pi\gamma  (\zeta\!+\!1)}}+O\Big(\frac{1}{m^{1/2}}\Big),
\hspace{-35pt}
&\frac{R_1\!+\!R_2}2 =
\frac{\sqrt[6]{\gamma  (\zeta\!+\!1)/3}}{2\sqrt{2 \pi/m} }
+
O\Big(\frac{1}{m^{1/2}}\Big),\\
&R_1\!-\!R_0 = \sqrt[3]{\frac{3}{\gamma}\frac{\zeta^3}{\zeta\!+\!1}}
+
\frac{(\zeta\!+\!2) \sqrt{8\pi/m}}{\sqrt[6]{3(\gamma\zeta\!+\!\gamma)^5}\big/\zeta}
+
O\Big(\frac{1}{m}\Big),
\hspace{-35pt}
&R_3\!-\!R_2 = \sqrt[3]{\frac{3}{\gamma}\frac{\zeta^3}{\zeta\!+\!1}}
-
\frac{(\zeta\!+\!2) \sqrt{8\pi/m}}{\sqrt[6]{3(\gamma\zeta\!+\!\gamma)^5}\big/\zeta}
+
O\Big(\frac{1}{m}\Big).
\end{aligned}
\end{equation*}

\end{theorem}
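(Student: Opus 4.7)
The plan is to view the equations in Proposition \ref{Lagrange multiplier equation} together with the mass constraints \eqref{volume constraint radial candidates}--\eqref{2d 3d volume constraint radial candidates} as a four-equation algebraic system in the four radii $R_0<R_1<R_2<R_3$, and to solve this system by a perturbative expansion around a large ambient radius.

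First I would reparameterize by the mean radius $R:=(R_1+R_2)/2$ and the three layer thicknesses $a:=R_1-R_0$, $b:=R_2-R_1$, $c:=R_3-R_2$. Under the natural ansatz that $a,b,c$ remain bounded as $m\to\infty$---consistent with Corollary \ref{simple asymptotics of the optimal liposome candidate}, and forced by the fact that otherwise the perimeter alone would grow faster than $m$ while $E$ is at most $O(m)$---the mass constraint \eqref{2d 3d volume constraint radial candidates} makes $R$ grow at the rate $R\sim m^{1/(n-1)}$, so $\rho:=1/R$ is the natural small parameter in which every quantity admits a series expansion.

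I would then expand the system order by order in $\rho$. The two top orders of the total-mass relation \eqref{volume constraint radial candidates} yield $a_0+c_0=\zeta b_0$ and $a_0=c_0$, and substituting into the second equation of Proposition \ref{Lagrange multiplier equation} produces a single scalar equation for $b_0$ whose solution is $b_0=\sqrt[3]{24/(\gamma(\zeta\!+\!1))}$ and hence $a_0=c_0=\zeta b_0/2$, in agreement with Corollary \ref{simple asymptotics of the optimal liposome candidate}. Writing $a=a_0+\rho\,a_1+O(\rho^2)$, $b=b_0+O(\rho^2)$, $c=c_0+\rho\,c_1+O(\rho^2)$, the next order of the same system becomes an inhomogeneous linear system whose solution exhibits $a_1=-c_1$ with the explicit coefficient appearing in the statement---this is the curvature-induced asymmetry between the inner and outer $V$-layers recorded in Remark \ref{remark on asymptotics of liposome candidates}. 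Substituting these expansions into the explicit formulas of Proposition \ref{proposition: energy for liposome candidates} and into $\text{Per}\,U$ then gives the asymptotics of $E/m$.

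The main obstacle is the last step for $n=2$, where the nonlocal term \eqref{energy radial candidates n = 2} contains logarithmic factors $R_i^k\ln R_i$ that each individually diverge as $R\to\infty$. Expanding $\ln R_i=\ln R+\log(1+(R_i-R)/R)$ generates several powers of $\ln R$, and one must verify that the coefficients of $R^k\ln R$ all cancel between the four logarithmic summands in \eqref{energy radial candidates n = 2}: the top-order cancellation is forced by the total-mass relation to sufficient order, while the subleading $\ln R$ terms vanish precisely because of the reflection identities $a_1=-c_1$ and $a_0+c_0=\zeta b_0$. Once these cancellations are verified, what remains is a purely algebraic expansion that yields the stated coefficients and the stated $O(1/m^3)$ remainder. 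In three dimensions the nonlocal term \eqref{energy radial candidates n = 3} is polynomial, and the same strategy gives a longer but direct computation with remainder $O(m^{-3/2})$; both cases are well-suited to symbolic verification.
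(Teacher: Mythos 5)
Your overall route is the same as the paper's: treat the two mass constraints \eqref{volume constraint radial candidates}--\eqref{2d 3d volume constraint radial candidates} together with the two stationarity conditions of Proposition \ref{Lagrange multiplier equation} as a four-equation system in $R_0<R_1<R_2<R_3$, solve it by a formal power-series expansion in an inverse-radius parameter (the paper nondimensionalizes and uses $a=1/r_1$, $b=1-r_0/r_1$ rather than your mean radius and three thicknesses, but this is a cosmetic difference), and then substitute into the explicit formulas of Proposition \ref{proposition: energy for liposome candidates}. Your concern about the cancellation of the divergent $R^k\ln R$ contributions in \eqref{energy radial candidates n = 2} is legitimate, and your proposed resolution is the correct one.

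One step fails as written, however: the claim that ``the two top orders of the total-mass relation \eqref{volume constraint radial candidates} yield $a_0+c_0=\zeta b_0$ \emph{and} $a_0=c_0$.'' Only the first relation follows. Once you expand $a=a_0+a_1\rho+\cdots$, $b=b_0+b_1\rho+\cdots$, $c=c_0+c_1\rho+\cdots$, the second order of the mass constraint (for $n=3$, after dividing by $3R$) reads $(c_0-a_0)(a_0+b_0+c_0)+(a_1+b_1+c_1)=(\zeta+1)b_1$, a single relation coupling $c_0-a_0$ to the unknown first-order corrections; it does not force $a_0=c_0$. With only this and the second stationarity condition you have too few relations for the three leading coefficients $a_0,b_0,c_0$, so the leading-order problem is underdetermined and you cannot yet solve for $b_0$. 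The missing ingredient is the \emph{first} equation of Proposition \ref{Lagrange multiplier equation} (for $n=3$, $R_3^2-R_0^2=(\zeta+1)(R_2^2-R_1^2)$; for $n=2$, its logarithmic analogue): its second-order expansion gives $\tfrac12(a_0+b_0+c_0)(c_0-a_0)+(a_1+b_1+c_1)=(\zeta+1)b_1$, and subtracting this from the mass-constraint relation eliminates the first-order corrections and yields $(c_0-a_0)(a_0+b_0+c_0)=0$, hence $a_0=c_0$. This is precisely the role played by \eqref{2d transformed equation in terms of a,b,c,d -- 1} and \eqref{transformed equation in terms of a,b,c,d -- 1} in the paper's computation, where it produces $p_1=\zeta/2$, i.e.\ the inner $V$ layer has leading thickness $\zeta b_0/2=c_0$. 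Once you reinstate that equation in the order-by-order expansion, the rest of your plan goes through.
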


\begin{proposition}
\label{equal volume asymptotics proposition}
Under the additional assumption that the inner and outer $V$ layers have the same mass \cite[Equation (5.16)]{van2008partial}, i.e.,
\begin{equation}
\label{equal volume constraint}
R_3^n\!-\!R_2^n=R_1^n\!-\!R_0^n,
\end{equation}
with $\zeta$ and $\gamma$ fixed, as $m\to\infty$, the minimizer of $E(U,V)$ among the liposome candidates (satisfying \eqref{equal volume constraint}) has the following asymptotics if $n=2$,
\begin{equation*}
\begin{aligned}
&\frac{E(U,V)}m = \sqrt[3]{\gamma\frac{\zeta\!+\!1}{8/9}}
+
24 \pi ^2\frac{ 2 \zeta ^2\!+\!8 \zeta\!+\!7}{5 \gamma  (\zeta\!+\!1) m^2}
+
O\Big(\frac{1}{m^3}\Big),\hspace{-20pt}&\\
&R_1\!-\!R_0
=
\sqrt[3]{\frac{3}{\gamma}\frac{\zeta^3}{\zeta\!+\!1}}
+
\frac{6 \pi \zeta}{m \gamma}\frac{\zeta\!+\!2}{\zeta\!+\!1}
+
O\Big(\frac{1}{m^2}\Big),
\hspace{-20pt}
&R_2\!-\!R_1
=
\sqrt[3]{\frac{24/\gamma}{\zeta\!+\!1}}
+
O\Big(\frac{1}{m^2}\Big),\\
&R_3\!-\!R_2
=
\sqrt[3]{\frac{3}{\gamma}\frac{\zeta^3}{\zeta\!+\!1}}
-
\frac{6 \pi \zeta}{m \gamma}\frac{\zeta\!+\!2}{\zeta\!+\!1}
+
O\Big(\frac{1}{m^2}\Big),
\hspace{-20pt}
&\frac{R_1\!+\!R_2}2
=
\frac{\sqrt[3]{\gamma  (\zeta\!+\!1)/3}}{4 \pi/m}
+
O\Big(\frac{1}{m}\Big),
\end{aligned}
\end{equation*}
and has the following asymptotics if $n=3$,
\begin{equation*}
\begin{aligned}
&\frac{E(U,V)}m = \sqrt[3]{\gamma\frac{\zeta\!+\!1}{8/9}} + \frac{4 \pi}{5 m}\frac{7 \zeta ^2\!+\!28 \zeta\!+\!32}{\big(\gamma(\zeta\!+\!1)/3\big)^{2/3}} + O\Big(\frac{1}{m^{3/2}}\Big),\hspace{-20pt}&\\
&R_1\!-\!R_0
=
\sqrt[3]{\frac{3}{\gamma}\frac{\zeta^3}{\zeta\!+\!1}}
+
\frac{\zeta(\zeta \!+\!2) \sqrt{8 \pi/m}}{\big((\gamma\zeta\!+\!\gamma)/3\big)^{5/6}}
+
O\Big(\frac{1}{m}\Big),
\hspace{-20pt}
&R_2\!-\!R_1
=
\sqrt[3]{\frac{24/\gamma}{\zeta\!+\!1}}
+
O\Big(\frac{1}{m}\Big),\\
&R_3\!-\!R_2
=
\sqrt[3]{\frac{3}{\gamma}\frac{\zeta^3}{\zeta\!+\!1}}
-
\frac{\zeta(\zeta \!+\!2) \sqrt{8 \pi/m}}{\big((\gamma\zeta\!+\!\gamma)/3\big)^{5/6}}
+
O\Big(\frac{1}{m}\Big),
\hspace{-20pt}
&\frac{R_1\!+\!R_2}2
=
\frac{\sqrt[6]{\gamma  (\zeta\!+\!1)/3}}{2\sqrt{2 \pi/m} }
+
O\Big(\frac{1}{m^{1/2}}\Big).
\end{aligned}
\end{equation*}
\end{proposition}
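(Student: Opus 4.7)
The plan is to adapt the strategy used for Theorem \ref{asymptotics of the optimal liposome candidate} to the one-parameter family cut out by the equal-mass assumption. First I would combine the constraint \eqref{equal volume constraint} with the total mass constraints from Definition \ref{liposome candidate defintion} to deduce $R_3^n - R_2^n = R_1^n - R_0^n = \zeta(R_2^n - R_1^n)/2$, together with the fact that $R_2^n - R_1^n$ is fixed by $m$ (equal to $m/\pi$ for $n=2$ and $3m/(4\pi)$ for $n=3$). This algebraic reduction leaves only one free parameter, which I would take to be the midradius $r = (R_1+R_2)/2$.

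Next I would substitute these relations into the explicit energy formulas \eqref{energy radial candidates n = 2} and \eqref{energy radial candidates n = 3} from Proposition \ref{proposition: energy for liposome candidates}, obtaining a one-variable function $E = E(r; m, \zeta, \gamma)$ to minimize. Since we expect $r \to \infty$ as $m \to \infty$ (with $r = \Theta(m)$ for $n=2$ and $r = \Theta(\sqrt{m})$ for $n=3$), I would introduce $\epsilon = 1/r$ as a small parameter and Taylor-expand the logarithms and power terms appearing in the energy. All four layer thicknesses can then be read off from $r$ via the constraints, and they admit similar expansions in $\epsilon$. The stationarity condition $\partial_r E = 0$ is solved iteratively: the leading order fixes $R_2 - R_1$ to the planar bilayer thickness $\sqrt[3]{24/(\gamma(\zeta+1))}$, and the next order pins down the position-dependent correction that determines $r$ as a function of $m$. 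Back-substitution then recovers the asymptotics of $R_1 - R_0$ and $R_3 - R_2$, which differ by an $O(1/m)$ or $O(1/\sqrt{m})$ correction of opposite signs (inner thicker, outer thinner), as well as the energy-to-mass ratio.

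The principal obstacle is the careful bookkeeping of cancellations. The leading term in $E/m$ must collapse to the planar bilayer value $\sqrt[3]{9\gamma(\zeta+1)/8}$ shared with the unconstrained case of Theorem \ref{asymptotics of the optimal liposome candidate}, so the entire content of Proposition \ref{equal volume asymptotics proposition} lies in the subleading correction. One must track the logarithmic terms for $n=2$ and the fifth-power terms for $n=3$ through two orders of expansion simultaneously, and verify that the bending-order coefficient emerges exactly as $24\pi^2(2\zeta^2 + 8\zeta + 7)/\bigl(5\gamma(\zeta+1)\bigr)$ in 2-D and $4\pi(7\zeta^2 + 28\zeta + 32)/\bigl(5\bigl((\zeta+1)/3\bigr)^{2/3}\bigr)$ in 3-D. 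Comparing with the analogous calculation in Theorem \ref{asymptotics of the optimal liposome candidate}, one will see the factor-of-6-to-21 blow-up of the bending energy mentioned in Remark \ref{remark on nonrescaled liposome asymptotics}-\CircleAroundChar{1}, and that the difference $(R_1 - R_0) - (R_3 - R_2)$ comes out to exactly three times its value in the unconstrained case, consistent with Remark \ref{remark on asymptotics of liposome candidates}-\CircleAroundChar{3}.
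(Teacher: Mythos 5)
Your proposal is correct and follows essentially the same route as the paper: the paper also reduces the constrained family to a single free parameter (it uses $\kappa$ with $\kappa^{-n}=(R_1^n+R_2^n)/2$, equivalently a thickness variable $t$, rather than your midradius $r$, but the two are in bijection given $m$), substitutes into the explicit energy of Proposition \ref{proposition: energy for liposome candidates}, Taylor-expands in the large-$m$ limit, and perturbatively solves the one-variable stationarity condition. The algebraic reduction you state, $R_3^n-R_2^n=R_1^n-R_0^n=\zeta(R_2^n-R_1^n)/2$, and the expected scalings of $r$ are exactly what the paper uses.
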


\begin{proof} [of Proposition \ref{proposition: energy for liposome candidates}]
Similar to \cite[Page 106]{van2008copolymer}, we compute the electrostatic potential $\phi$ and then use \eqref{alternative expression of N(U,V)} to obtain $N(U,V)$. We know that $\phi(\vec x)$ is radially symmetric and can be written as $\phi(r)$ with $r=|\vec x|$, satisfying
\begin{equation*}
-r^{1-n}\frac\dd{\dd r}\big(r^{n-1}\phi'(r)\big)=[R_1\!\leq\!r\!\leq\!R_2]-\frac{[R_0\!\leq\!r\!\leq\!R_1\;\text{or}\;R_2\!\leq\!r\!\leq\!R_3]}\zeta,
\end{equation*}
where the left-hand side is due to Laplacian expressed in spherical coordinates, and $[\,\cdot\,]$ on the right-hand side is the Iverson bracket. Since $\phi$ is a continuously differentiable even function, we have $\phi'(0)=0$. We further require $\phi(\infty)=0$.

\noindent For $n=2$, we obtain
\begin{equation*}
4\zeta\phi(r)=\left\{
\begin{aligned}
&0,&&R_3\!<\!r\!<\!\infty\,,\\
&2\big(R_3^2\ln R_3\big)-R_3^2+r^2-2 R_3^2 \ln r,&&R_2\!<\!r\!<\!R_3\,,\\
&2 \big(R_3^2 \ln R_3\!-\!(\zeta\!+\!1)R_2^2 \ln R_2\big)-R_3^2+(\zeta\!+\!1) R_2^2-\zeta  r^2-2\big(R_3^2-(\zeta\!+\!1)  R_2^2\big)\ln r,&&R_1\!<\!r\!<\!R_2\,,\\
&2\big(R_3^2\ln R_3\!-\!(\zeta\!+\!1) R_2^2 \ln R_2\!+\!(\zeta\!+\!1) R_1^2 \ln R_1\big)-R_0^2+r^2-2 R_0^2 \ln r,&&R_0\!<\!r\!<\!R_1\,,\\
&2\big(R_3^2\ln R_3\!-\!(\zeta\!+\!1) R_2^2 \ln R_2\!+\!(\zeta\!+\!1) R_1^2 \ln R_1\!-\!R_0^2 \ln R_0\big),&&\parbox{\widthof{$R_0$}}{\raggedleft0}\!<\!r\!<\!R_0\,.
\end{aligned}
\right.
\end{equation*}
For $n=3$, we obtain
\begin{equation*}
6\zeta\phi(r)=\left\{
\begin{aligned}
&0,&&R_3\!<\!r\!<\!\infty\,,\\
&-3\big(R_3^2\big)+2R_3^3/r+r^2,&&R_2\!<\!r\!<\!R_3\,,\\
&-3\big(R_3^2\!-\!(\zeta\!+\!1) R_2^2\big)+2\big(R_3^3\!-\!(\zeta\!+\!1) R_2^3\big)/r-\zeta  r^2,&&R_1\!<\!r\!<\!R_2\,,\\
&-3\big(R_3^2\!-\!(\zeta\!+\!1) R_2^2\!+\!(\zeta\!+\!1) R_1^2\big)+2R_0^3/r+r^2,&&R_0\!<\!r\!<\!R_1\,,\\
&-3\big(R_3^2\!-\!(\zeta\!+\!1) R_2^2\!+\!(\zeta\!+\!1) R_1^2\!-\!R_0^2\big),&&\parbox{\widthof{$R_0$}}{\raggedleft0}\!<\!r\!<\!R_0\,.
\end{aligned}
\right.
\end{equation*}
Notice that we have used \eqref{volume constraint radial candidates} to simplify the expressions for $\phi$.
\end{proof}

\begin{proof} [of Proposition \ref{Lagrange multiplier equation}]
We minimize $E(U,V)$ given by Proposition \ref{proposition: energy for liposome candidates} with constraints \eqref{volume constraint radial candidates} and \eqref{2d 3d volume constraint radial candidates}.

For $n=2$, we obtain
\begin{equation*}
\begin{aligned}
&R_3^2 \ln R_3-(\zeta\!+\!1) R_2^2 \ln R_2=\lambda=R_0^2 \ln R_0-(\zeta\!+\!1) R_1^2 \ln R_1,\quad\text{and}\\
&\phantom{\;=\;}R_0^2 \Big(\!\ln R_1+\frac12\Big)-R_1^2 \Big((\zeta\!+\!1) \ln R_1+\frac12\Big)+\frac{2 \zeta ^2 R_1^{-1}}{\gamma(\zeta\!+\!1)}=\lambda-\frac{\mu}{\zeta\!+\!1}\\
&=R_3^2 \Big(\!\ln R_2+\frac12\Big)-R_2^2 \Big((\zeta\!+\!1) \ln R_2+\frac12\Big)-\frac{2 \zeta ^2R_2^{-1}}{\gamma(\zeta\!+\!1)},
\end{aligned}
\end{equation*}
where $\lambda$ and $\mu$ are Lagrange multipliers. From the last two equalities, we obtain
\begin{equation*}
\begin{aligned}
\frac{4 \zeta ^2(R_1^{-1}\!+\!R_2^{-1})}{\gamma(\zeta\!+\!1)}&=R_3^2 \big(\!\ln R_2^2+1\big)-R_2^2 \big((\zeta\!+\!1) \ln R_2^2+1\big)+R_1^2 \big((\zeta\!+\!1) \ln R_1^2+1\big)-R_0^2 \big(\!\ln R_1^2+1\big)\\
&=R_3^2-R_0^2+R_1^2-R_2^2+(\zeta\!+\!1)\big(R_1^2 \ln R_1^2-R_2^2 \ln R_2^2\big)+R_3^2\ln R_2^2-R_0^2\ln R_1^2\\
&=\zeta(R_2^2\!-\!R_1^2)+(\zeta\!+\!1)\big(R_1^2 \ln R_1^2-(R_1^2\!+\!m/\pi)\ln R_2^2\big)+\big(R_0^2+(\zeta\!+\!1)m/\pi\big)\ln R_2^2-R_0^2\ln R_1^2\\
&=\zeta(R_2^2\!-\!R_1^2)+\big(R_0^2-(\zeta\!+\!1)R_1^2\big)\ln\big(R_2^2/R_1^2\big).
\end{aligned}
\end{equation*}
where the third equality is due to \eqref{volume constraint radial candidates} and \eqref{2d 3d volume constraint radial candidates}.

For $n=3$, we obtain
\begin{equation*}
\begin{aligned}
(\zeta\!+\!1) R_1^2-R_0^2=\lambda&/3=(\zeta\!+\!1) R_2^2-R_3^2,\quad\text{and}\\
2\frac{R_0^3}{R_1}-(3 \zeta\!+\!2) R_1^2-\frac{12\zeta ^2R_1^{-1}}{(\zeta \!+\!1)\gamma}=\mu&-\lambda=2\frac{R_3^3}{R_2}-(3 \zeta\!+\!2)R_2^2+\frac{12\zeta ^2R_2^{-1}}{(\zeta \!+\!1)\gamma },
\end{aligned}
\end{equation*}
where $\mu$ and $\lambda$ are Lagrange multipliers.
\end{proof}

\begin{proof} [of Theorem \ref{asymptotics of the optimal liposome candidate}]

\noindent\textbf{The 2-D case ($n=2$)}\\
We assume that $\zeta$ and $\gamma$ are fixed. To obtain the asymptotics of $R_i$ as $m\to\infty$, we use the change of variables $r_i=R_i^2\pi/m$ and $\Gamma^{-1}=4\zeta^2(\zeta\!+\!1)^{-1}(\pi/m)^{3/2}/\gamma$, therefore transforming \eqref{volume constraint radial candidates}, \eqref{2d 3d volume constraint radial candidates} and Proposition \ref{Lagrange multiplier equation} into the following
\begin{equation*}
\begin{aligned}
r_3\!-\!r_0=\zeta\!+\!1&=(\zeta\!+\!1)\,(r_2\!-\!r_1),\\
r_3 \ln r_3-r_0 \ln r_0&=(\zeta\!+\!1) \left(r_2 \ln r_2-r_1 \ln r_1\right),\\
\Gamma^{-1}\big(1/\sqrt{r_1}+1/\sqrt{r_2}\big)&=\zeta(r_2\!-\!r_1)+\big(r_0-(\zeta\!+\!1)r_1\big)\ln(r_2/r_1),
\end{aligned}
\end{equation*}
or equivalently,
\begin{equation*}
\begin{aligned}
r_3=r_0\!+\!\zeta\!+\!1,&\quad r_2=r_1\!+\!1,\\
\frac{r_3}{r_1} \ln \frac{r_3}{r_1}-\frac{r_0}{r_1} \ln \frac{r_0}{r_1}&=(\zeta\!+\!1) \Big(\frac{r_2}{r_1} \ln \frac{r_2}{r_1}-\frac{r_1}{r_1} \ln \frac{r_1}{r_1}\Big),\\
\Gamma^{-1}\big(1/\sqrt{r_1}+1/\sqrt{r_2}\big)/r_1&=\zeta(r_2/r_1\!-\!1)+\big(r_0/r_1-(\zeta\!+\!1)\big)\ln(r_2/r_1).
\end{aligned}
\end{equation*}
Using the change of variables $a=1/r_1$ and $b=1-r_0/r_1$, we obtain
\begin{align}
\big(1\!+\!a (\zeta\!+\!1)\!-\!b\big) \ln\big(1\!+\!a (\zeta\!+\!1)\!-\!b\big) -(1\!-\!b) \ln(1\!-\!b)&=(\zeta\!+\!1)\,(1\!+\!a) \ln(1\!+\!a),\label{2d transformed equation in terms of a,b,c,d -- 1}\\
\Gamma^{-1}\Big(\sqrt{a}+\sqrt{a/(1\!+\!a)}\Big)a &= a \zeta - (b\!+\!\zeta)\ln(1\!+\!a).\label{2d transformed equation in terms of a,b,c,d -- 2}
\end{align}
Using the ansatz that $a,b\to0$ as $\Gamma\to\infty$, by Taylor-expanding \eqref{2d transformed equation in terms of a,b,c,d -- 1} around $a,b=0$, we obtain
\begin{equation*}
O(a^7\!+\!b^7)=a(\zeta\!+\!1)\Big(\frac{a \zeta }{2}-b-\zeta\frac{\zeta\!+\!2}6a^2+\frac{\zeta\!+\!1}{2}a b-\frac{b^2}{2}+\cdots\Big).
\end{equation*}
Assuming $b = p_1 a + p_2 a^2 + p_3 a^3 + p_4 a^4 + p_5 a^5 + O(a^6)$ and plugging it into the above equation, we obtain
\begin{equation*}
O(a^7)=a^2(\zeta\!+\!1)\Big(\frac{\zeta }{2}\!-\!p_1\Big)
-
a^3(\zeta\!+\!1)\Big(\zeta\frac{\zeta\!+\!2}{6}-\frac{\zeta\!+\!1}{2} p_1+\frac{p_1^2}{2}+p_2\Big)
+
\cdots,
\end{equation*}
from which we obtain the following solution
\begin{equation}
\label{2d expansion for b}
p_1=\frac{\zeta}{2},\quad p_2=-\zeta\frac{\zeta\!+\!2}{24},\quad p_3=\zeta\frac{\zeta\!+\!2}{48},\quad p_4 = -\zeta\frac{3 \zeta ^2\!+\!6\zeta\!+\!76}{5760/(\zeta\!+\!2)},\quad p_5 = \zeta\frac{\zeta ^2\!+\!2 \zeta\!+\!12}{1280/(\zeta\!+\!2)}.
\end{equation}
By plugging \eqref{2d expansion for b} into \eqref{2d transformed equation in terms of a,b,c,d -- 2} and Taylor-expanding it around $a=0$, we obtain
\begin{equation*}
\Gamma^{-1}=\frac{\zeta ^2}{48}\sqrt{a^3}-\frac{\zeta ^2}{64} \sqrt{a^5}+\frac{ \zeta^2\!+\!4 \zeta\!+\!46}{3840/\zeta ^2}\sqrt{a^7}-7 \frac{ \zeta ^2\!+\!4 \zeta \!+\!21}{15360/\zeta ^2}\sqrt{a^9}+O(\sqrt{a^{11}})\,,
\end{equation*}
from which we obtain
\begin{equation*}
\begin{aligned}
a &= \frac{4}{\big(\Gamma  \zeta ^2/6\big)^{2/3}} + \frac{8}{\big(\Gamma  \zeta ^2/6\big)^{4/3}} - 4\frac{2 \zeta ^2\!+\!8 \zeta\!-\!43}{15\big(\Gamma  \zeta ^2/6\big)^2} - 16\frac{2 \zeta ^2\!+\!8 \zeta\!-\!13}{15\big(\Gamma  \zeta ^2/6\big)^{8/3}} + O\Big(\frac{1}{\Gamma ^{10/3}}\Big)\\
&=
\frac{2^4\pi/m}{ \big(\gamma  (\zeta\!+\!1)/3\big)^{2/3}}
+
\frac{2^7\pi ^2/m^2}{\big(\gamma  (\zeta\!+\!1)/3\big)^{4/3}}
-
\frac{2^8 \pi ^3 (2 \zeta ^2\!+\!8 \zeta\!-\!43)}{15 m^3\big(\gamma (\zeta\!+\!1)/3\big)^2}
-
\frac{2^{12}\pi ^4 (2 \zeta ^2\!+\!8 \zeta\!-\!13)}{15 m^4 \big(\gamma  (\zeta\!+\!1)/3\big)^{8/3}}
+
O\Big(\frac{1}{m^5}\Big),
\end{aligned}
\end{equation*}
where the second equality is due to $\Gamma^{-1}=4\zeta^2(\zeta\!+\!1)^{-1}(\pi/m)^{3/2}/\gamma$. According to the definition of $a$ and $b$, we have
\begin{equation*}
R_0 = \sqrt{m/\pi} \sqrt{(1\!-\!b)/a}, \quad R_1 = \sqrt{m/\pi}\sqrt{1/a}, \quad R_2 = \sqrt{m/\pi}\sqrt{1+1/a}, \quad R_3 = \sqrt{m/\pi} \sqrt{(1\!-\!b)/a\!+\!\zeta\!+\!1},
\end{equation*}
which allow us to compute the asymptotics of $E(U,V)$, $R_{i+1}\!-\!R_i$, $(R_{1}\!+\!R_2)/2$, and $\big(R_3^2\!-\!R_2^2\big)-\big(R_1^2\!-\!R_0^2\big)$.

\noindent\textbf{The 3-D case ($n=3$)}\\
Similar to the 2-D case, we use the rescaling $r_i=R_i\sqrt[3]{4\pi/(3m)}$ and $\Gamma=3m\gamma/(4\pi)$. Using the change of variables $a=r_0^{-3}$ and $b=r_1/r_0\!-\!1$, we transform \eqref{volume constraint radial candidates}, \eqref{2d 3d volume constraint radial candidates} and Proposition \ref{Lagrange multiplier equation} into the following
\begin{align}
d^2\!-\!1&=(\zeta\!+\!1) \big(c^2\!-\!(b\!+\!1)^2\big),\label{transformed equation in terms of a,b,c,d -- 1}\\
12 \zeta ^2\Gamma^{-1}\big((b\!+\!1)^{-1}\!+\!c^{-1}\big)a&=(3 \zeta\!+\!2)(d^2\!-\!1)+2(\zeta\!+\!1)\big((b\!+\!1)^{-1}\!-\!d^3/c\big),\label{transformed equation in terms of a,b,c,d -- 2}
\end{align}
where $c=\sqrt[3]{a+(b\!+\!1)^3}$ and $d=\sqrt[3]{a(\zeta\!+\!1)+1}$. Using the ansatz that $a,b\to0$ as $\Gamma\to\infty$, by Taylor-expanding \eqref{transformed equation in terms of a,b,c,d -- 1} around $a,b=0$, we obtain
\begin{equation*}
O(a^7\!+\!b^7)=a(\zeta\!+\!1)\Big(\frac{2 b}{3}-\frac{a \zeta }{9}-\frac{2 b^2}{3}-\frac{4 a b}{9}+4\zeta\frac{\zeta\!+\!2}{81}a^2+\cdots\Big).
\end{equation*}
Assuming $b = p_1 a + p_2 a^2 + p_3 a^3 + p_4 a^4 + p_5 a^5 + O(a^6)$ and plugging it into the above equation, we obtain
\begin{equation*}
\begin{aligned}
O(a^7)=\;&(\zeta\!+\!1) \frac{a^2}{9} (6 p_1\!-\!\zeta) + (\zeta\!+\!1) \frac{2a^3}{81} \big(2 \zeta  (\zeta\!+\!2)-9 p_1 (3 p_1\!+\!2)+27 p_2\big)+\cdots,
\end{aligned}
\end{equation*}
from which we obtain the following solution
\begin{equation}
\label{expansion for b}
\begin{aligned}
&p_1 = \frac{\zeta }{6},\quad p_2 = -\zeta\frac{5 \zeta+4}{108},\quad p_3 = \zeta\frac{15 \zeta ^2\!+\!26 \zeta\!+\!12}{648},\quad p_4=\frac{167 \zeta ^3\!+\!452 \zeta ^2\!+\!424 \zeta\!+\!136}{-11664/\zeta},\\
&p_5=\frac{693 \zeta ^4\!+\!2561 \zeta ^3\!+\!3644 \zeta ^2\!+\!2348 \zeta \!+\!576}{69984/\zeta}.
\end{aligned}
\end{equation}
By plugging \eqref{expansion for b} into \eqref{transformed equation in terms of a,b,c,d -- 2} and Taylor-expanding it around $a=0$, we obtain
\begin{equation*}
\Gamma^{-1}=\frac{\zeta\!+\!1}{648} a^2 - \frac{(\zeta\!+\!1)^2}{648} a^3+\frac{ 187 \zeta^2\!+\!376\zeta\!+\!196 }{139968/(\zeta\!+\!1)}a^4-\frac{79 \zeta^2\!+\!160\zeta\!+\!88}{69984/(\zeta\!+\!1)^2}a^5+O\big(a^6\big).
\end{equation*}
from which we obtain
\begin{equation*}
\begin{aligned}
a&=\frac{18 \sqrt{2/\Gamma}}{\sqrt{\zeta\!+\!1}}+\frac{324}{\Gamma }+\sqrt{2}\frac{83 \zeta ^2\!+\!164 \zeta\!+\!74}{\big(\Gamma(\zeta\!+\!1)\big)^{3/2}/27}+\frac{29 \zeta ^2\!+\!56 \zeta\!+\!20}{\Gamma ^2 (\zeta\!+\!1)/972}+O\Big(\frac{1}{\Gamma ^{5/2}}\Big)\\
&=\frac{12 \sqrt{6 \pi/m}}{\sqrt{\gamma  (\zeta\!+\!1)}}+\frac{432 \pi }{\gamma  m}+24\sqrt{6}\frac{ 83 \zeta ^2\!+\!164 \zeta\!+\!74}{\big(\gamma  (\zeta\!+\!1) m/\pi\big)^{3/2}}+1728\frac{ 29 \zeta ^2\!+\!56 \zeta\!+\!20}{\gamma ^2 (\zeta\!+\!1) m^2/\pi ^2}+O\Big(\frac{1}{m^{5/2}}\Big),
\end{aligned}
\end{equation*}
where the second equality is due to $\Gamma=3m\gamma/(4\pi)$. According to the definition of $a$ and $b$, we have
\begin{equation*}
R_0=\sqrt[3]{\frac{3m}{4\pi a}}, \quad R_1 = (1\!+\!b)\sqrt[3]{\frac{3m}{4\pi a}}, \quad R_2 = \sqrt[3]{a\!+\!(1\!+\!b)^3}\sqrt[3]{\frac{3m}{4\pi a}}, \quad R_3 = \sqrt[3]{1\!+\!a(\zeta\!+\!1)}\sqrt[3]{\frac{3m}{4\pi a}},
\end{equation*}
which allow us to compute the asymptotics of $E(U,V)$, $R_{i+1}\!-\!R_i$, $(R_{1}\!+\!R_2)/2$, and $\big(R_3^3\!-\!R_2^3\big)-\big(R_1^3\!-\!R_0^3\big)$.
\end{proof}

\begin{proof} [of Proposition \ref{equal volume asymptotics proposition}]
Our proof is similar to \cite[Proof of Theorem 11]{van2008copolymer}. Define $\kappa>0$ via $\kappa^{-n}=(R_1^n+R_2^n)/2$, then according to \eqref{equal volume constraint}, \eqref{volume constraint radial candidates} and \eqref{2d 3d volume constraint radial candidates}, we obtain
\begin{equation*}
\begin{aligned}
(R_0^2\,,\,R_1^2\,,\,R_2^2\,,\,R_3^2)&=\kappa^{-2}+(-\zeta\!-\!1\,,\,-1\,,\,1\,,\,\zeta\!+\!1)\,m/(2 \pi),&&\text{for}\;n=2,\\
(R_0^3\,,\,R_1^3\,,\,R_2^3\,,\,R_3^3)&=\kappa^{-3}+(-\zeta\!-\!1\,,\,-1\,,\,1\,,\,\zeta\!+\!1)\,(3 m)/(8 \pi),&&\text{for}\;n=3.
\end{aligned}
\end{equation*}

\noindent\textbf{The 2-D case ($n=2$)}\\
Using a change of variables $\kappa=2\pi t/m$, we obtain the following asymptotics for $E(U,V)$ given by Proposition \ref{proposition: energy for liposome candidates}:
\begin{equation}
\label{2d energy with respect to thickness t}
\frac{E(U,V)}{m}=\Big(\frac{\zeta\!+\!1}{24}\gamma t^2\!+\!\frac{2}{t}\Big)+\pi ^2 t^3\frac{\gamma(\zeta\!+\!1)(\zeta^2\!+\!4\zeta\!+\!6)\,t^3\!-\!60}{60 m^2}+O\Big(\frac1{m^4}\Big).
\end{equation}
Minimizing $\gamma(\zeta\!+\!1)t^2/24\!+\!2/t$ with respect to $t$ ($t>0$) yields $t=2\sqrt[3]{3/(\gamma\zeta\!+\!\gamma)}$ and
\begin{equation*}
\frac{E(U,V)}{m} = \sqrt[3]{\gamma\frac{\zeta\!+\!1}{8/9} }+24\pi^2\frac{2 \zeta ^2\!+\!8 \zeta\!+\!7}{5 \gamma  (\zeta\!+\!1) m^2}+O\Big(\frac1{m^4}\Big).
\end{equation*}
In order to obtain a more detailed asymptotics of $t$, we differentiate \eqref{2d energy with respect to thickness t} with respect to $t$ and find its root. Using the ansatz $t=2\sqrt[3]{3/(\gamma\zeta\!+\!\gamma)}+Cm^{-2}+O\big(m^{-4}\big)$, where $C$ is independent of $m$, we obtain
\begin{equation*}
\frac{1}{20 m^2}\Big(48 \pi ^2\frac{ 4 \zeta ^2\!+\!16 \zeta\!+\!19}{\big(\gamma (\zeta\!+\!1)/3\big)^{2/3}}+5 \gamma  (\zeta\!+\!1) C\Big)=O\Big(\frac1{m^4}\Big),
\end{equation*}
from which we can solve for $C$,
\begin{equation*}
C=-\frac{16}{5} \pi ^2\frac{ 4 \zeta ^2\!+\!16 \zeta\!+\!19}{\big(\gamma  (\zeta\!+\!1)/3\big)^{5/3}}.
\end{equation*}
We are then able to compute the asymptotics of $R_{i+1}\!-\!R_i$ and $(R_{1}\!+\!R_2)/2$.

\vspace{3pt}
\noindent\textbf{The 3-D case ($n=3$)}\\
Using a change of variables $\kappa= \sqrt{4 \pi t/m}$, we obtain the following asymptotics for $E(U,V)$ given by Proposition \ref{proposition: energy for liposome candidates}:
\begin{equation}
\label{3d energy with respect to thickness t}
\frac{E(U,V)}{m}=\Big(\frac{\zeta\!+\!1}{24} \gamma t^2+\frac{2}{t}\Big)+\pi t^2\frac{7  \gamma  (\zeta \!+\!1)(\zeta ^2\!+\!4 \zeta\!+\!6)\, t^3\!-\!240}{120m}+O\Big(\frac{1}{m^2}\Big).
\end{equation}
Minimizing $\gamma(\zeta\!+\!1)t^2/24\!+\!2/t$ with respect to $t$ ($t>0$) yields $t=2\sqrt[3]{3/(\gamma\zeta\!+\!\gamma)}$ and
\begin{equation*}
\frac{E(U,V)}{m} =  \sqrt[3]{\gamma\frac{\zeta\!+\!1}{8/9} }+\frac{4 \pi}{5 m}\frac{7 \zeta ^2\!+\!28 \zeta\!+\!32}{\big(\gamma  (\zeta\!+\!1)/3\big)^{2/3}}+O\Big(\frac{1}{m^2}\Big).
\end{equation*}
In order to obtain a more detailed asymptotics of $t$, we differentiate \eqref{3d energy with respect to thickness t} with respect to $t$ and find its root. Using the ansatz $t=2\sqrt[3]{3/(\gamma\zeta\!+\!\gamma)}+Cm^{-1}+O\big(m^{-2}\big)$, where $C$ is independent of $m$, we obtain
\begin{equation*}
\frac{1}{4 m}\Big(8 \pi  \frac{7 \zeta ^2\!+\!28 \zeta\!+\!38}{\big(\gamma(\zeta\!+\!1)/3\big)^{1/3}}+\gamma  (\zeta\!+\!1) C\Big)=O\Big(\frac1{m^2}\Big),
\end{equation*}
from which we can solve for $C$,
\begin{equation*}
C=-\frac{8}{3} \pi  \frac{7 \zeta ^2\!+\!28 \zeta\!+\!38}{\big(\gamma(\zeta\!+\!1)/3\big)^{4/3}}.
\end{equation*}
We are then able to compute the asymptotics of $R_{i+1}\!-\!R_i$ and $(R_{1}\!+\!R_2)/2$.

\end{proof}

\section{Asymptotics with 1-Wasserstein distance}
\label{appendix Asymptotics with 1-Wasserstein distance}
In this appendix we make some clarification of Remark \ref{remark on asymptotics of liposome candidates}-\CircleAroundChar{3}. We are interested in $(R_1\!-\!R_0)-(R_3\!-\!R_2)$, which is the difference in the thickness between the inner and outer $V$ layers. For $n=2$, such a difference is $4 \pi\rho^2(\zeta\!+\!2)\zeta/(\zeta m\!+\!m)$ in Corollary \ref{rescaled liposome asymptotics}, and is $12 \pi\rho^2(\zeta\!+\!2)\zeta/(\zeta m\!+\!m)$ in Corollary \ref{rescaled liposome asymptotics under equal mass assumption}. The latter is exactly three times the former. Such a relation is also true for $n=3$. As we explain below, this is still true for $n=2$ in a variant model, where the Coulombic nonlocal term is replaced by the 1-Wasserstein distance (the case of $n=3$ in this variant model is unclear to us since the thickness of $V$ layers is not explicitly provided in \cite{lussardi2014variational}).

For this variant model, a candidate is constructed for the lim-sup inequality in \cite[Section 8.1]{peletier2009partial}. Using the notation in \cite{peletier2009partial}, $\kappa(s)$ in \cite[Figure 8]{peletier2009partial} is negative. The $V$ layers is given by the following \cite[Section 8.1]{peletier2009partial}:
\begin{equation*}
\text{supp}(v_\veps) = \big\{\psi_+(q,t):0\leqslant t\leqslant \mathfrak t_+(q,1)\big\}\cup\big\{\psi_-(q,t):\mathfrak t_-(q,-1)\leqslant t\leqslant 0\big\},
\end{equation*}
where
\begin{equation*}
\psi_+(q,t):=\tilde\gamma_+(q)+t\tilde v_+(q),\quad \psi_-(q,t):=\tilde\gamma_-(q)+t\tilde v_-(q),
\end{equation*}
\begin{equation*}
\mathfrak t_+(q,m)=\Big(1\!-\!\sqrt{1\!-\!2\veps\tilde\kappa_+m}\Big)\Big/\tilde\kappa_+\,,\quad \mathfrak t_-(q,m)=\Big(1\!-\!\sqrt{1\!-\!2\veps\tilde\kappa_-m}\Big)\Big/\tilde\kappa_-\,,
\end{equation*}
\begin{equation*}
\tilde\kappa_+=\kappa/(1\!-\!\veps\kappa),\quad \tilde\kappa_-=\kappa/(1\!+\!\veps\kappa).
\end{equation*}
Therefore, the thickness of the inner $V$ layer is
\begin{equation*}
\big(1/| \kappa|\!-\!\veps\big) \Big(1-\sqrt{3\!-\!2/\big(1\!-\!\veps| \kappa|\big)}\Big)=\veps+\frac{\veps^2 |\kappa|}{2}+O\big(\veps^3\big),
\end{equation*}
and the thickness of the outer $V$ layer is
\begin{equation*}
\big(1/| \kappa|\!+\!\veps\big) \Big(\sqrt{3\!-\!2/\big(1\!+\!\veps  | \kappa|\big)}-1\Big)=\veps-\frac{\veps^2 |\kappa|}{2}+O\big(\veps^3\big).
\end{equation*}
If we require the inner and outer $V$ layers to have the same mass, then their thickness would be $\veps+3|\kappa|\veps^2/2+O\big(\veps^3\big)$ and $\veps-3|\kappa|\veps^2/2+O\big(\veps^3\big)$, respectively, with the difference being exactly three times that of the above lim-sup candidate.

\bibliographystyle{apalike}
\bibliography{main}

\end{document}